\def\input@path{{figures/}}
\title{The weak order on integer posets}
\thanks{VPi~was partially supported by the French ANR grant SC3A~(15\,CE40\,0004\,01).}
\author{Gr\'egory Chatel}
\author{Vincent Pilaud}
\author{Viviane Pons}
\address[GC]{LIGM, Univ.\,Paris-Est Marne-la-Vall\'ee}
\email{gregory.chatel@univ-paris-est.fr}
\urladdr{\url{http://igm.univ-mlv.fr/~gchatel/}}
\address[VPi]{CNRS \& LIX, \'Ecole Polytechnique, Palaiseau}
\email{vincent.pilaud@lix.polytechnique.fr}
\urladdr{\url{http://www.lix.polytechnique.fr/~pilaud/}}
\address[VPo]{LRI, Univ.\,Paris-Sud}
\email{viviane.pons@lri.fr}
\urladdr{\url{http://www.lri.fr/~pons/}}
\newtheorem{theorem}{Theorem}
\newtheorem{corollary}[theorem]{Corollary}
\newtheorem{proposition}[theorem]{Proposition}
\newtheorem{lemma}[theorem]{Lemma}
\newtheorem{definition}[theorem]{Definition}
\theoremstyle{definition}
\newtheorem{example}[theorem]{Example}
\newtheorem{remark}[theorem]{Remark}
\newtheorem{claim}{Claim}
\newcommand{\R}{\mathbb{R}} 
\newcommand{\N}{\mathbb{N}} 
\newcommand{\set}[2]{\left\{ #1 \;\middle|\; #2 \right\}} 
\newcommand{\bigset}[2]{\big\{ #1 \;\big|\; #2 \big\}} 
\newcommand{\ssm}{\smallsetminus} 
\newcommand{\eqdef}{\mbox{\,\raisebox{0.2ex}{\scriptsize\ensuremath{\mathrm:}}\ensuremath{=}\,}} 
\renewcommand{\implies}{\;\Longrightarrow\;} 
\newcommandx{\rel}[1][1=R]{\mathbin{\mathrm{#1}}} 
\newcommandx{\notrel}[1][1=R]{\mathbin{\!\raisebox{.02cm}{$\not$}\hspace{.02cm}\mathrm{#1}\hspace*{.01cm}}} 
\newcommand{\less}{\vartriangleleft} 
\newcommand{\more}{\vartriangleright} 
\newcommand{\bless}{\blacktriangleleft} 
\newcommand{\bmore}{\blacktriangleright} 
\newcommand{\dashV}{\rotatebox[origin=c]{180}{$\Vdash$}} 
\newcommand{\IRel}{\mathcal{R}} 
\newcommand{\ITrans}{\mathcal{T}} 
\newcommand{\ISemiTrans}{\mathcal{ST}} 
\newcommand{\ISym}{\mathcal{S}} 
\newcommand{\IAntisym}{\mathcal{A}} 
\newcommand{\IEquiv}{\mathcal{E}} 
\newcommand{\IPos}{\mathcal{P}} 
\newcommand{\IInc}{\mathcal{I}} 
\newcommand{\IDec}{\mathcal{D}} 
\newcommand{\X}{\mathsf{X}} 
\newcommand{\WOEP}{\mathsf{WOEP}} 
\newcommand{\WOFP}{\mathsf{WOFP}} 
\newcommand{\WOIP}{\mathsf{WOIP}} 
\newcommand{\IWOIP}{\mathsf{IWOIP}} 
\newcommand{\DWOIP}{\mathsf{DWOIP}} 
\newcommand{\TOEP}{\mathsf{TOEP}} 
\newcommand{\TOFP}{\mathsf{TOFP}} 
\newcommand{\TOIP}{\mathsf{TOIP}} 
\newcommandx{\CO}[1][1=\signature]{\mathsf{CO}(#1)} 
\newcommandx{\COEP}[1][1=\signature]{\mathsf{COEP}(#1)} 
\newcommandx{\COFP}[1][1=\signature]{\mathsf{COFP}(#1)} 
\newcommandx{\COIP}[1][1=\signature]{\mathsf{COIP}(#1)} 
\newcommandx{\ICOIP}[1][1=\signature]{\mathsf{ICOIP}(#1)} 
\newcommandx{\DCOIP}[1][1=\signature]{\mathsf{DCOIP}(#1)} 
\newcommand{\BOEP}{\mathsf{BOEP}} 
\newcommand{\BOFP}{\mathsf{BOFP}} 
\newcommand{\BOIP}{\mathsf{BOIP}} 
\newcommand{\orientation}{\mathbb{O}} 
\newcommand{\positive}{^{+}} 
\newcommand{\negative}{^{-}} 
\newcommandx{\PIP}[1][1=\orientation]{\mathsf{PIP}(#1)} 
\newcommandx{\IPIP}[1][1=\orientation]{\mathsf{IPIP}(#1)} 
\newcommandx{\IPIPp}[1][1=\orientation]{\mathsf{IPIP}\positive(#1)} 
\newcommandx{\IPIPm}[1][1=\orientation]{\mathsf{IPIP}\negative(#1)} 
\newcommandx{\IPIPpm}[1][1=\orientation]{\mathsf{IPIP}^{\pm}(#1)} 
\newcommandx{\IPIPe}[1][1=\orientation]{\mathsf{IPIP}^{\varepsilon}(#1)} 
\newcommandx{\DPIP}[1][1=\orientation]{\mathsf{DPIP}(#1)} 
\newcommandx{\DPIPp}[1][1=\orientation]{\mathsf{DPIP}\positive(#1)} 
\newcommandx{\DPIPm}[1][1=\orientation]{\mathsf{DPIP}\negative(#1)} 
\newcommandx{\DPIPpm}[1][1=\orientation]{\mathsf{DPIP}^{\pm}(#1)} 
\newcommandx{\DPIPe}[1][1=\orientation]{\mathsf{DPIP}^{\varepsilon}(#1)} 
\newcommandx{\incompE}[1][1=\orientation]{\mathsf{sn}(#1)} 
\newcommandx{\PEP}[1][1=\orientation]{\mathsf{PEP}(#1)} 
\newcommandx{\incompF}[1][1=\orientation]{\mathsf{f}(#1)} 
\newcommandx{\PFP}[1][1=\orientation]{\mathsf{PFP}(#1)} 
\newcommandx{\Inc}[1]{#1^{\mathsf{Inc}}} 
\newcommandx{\Dec}[1]{#1^{\mathsf{Dec}}}
\newcommand{\rev}[1]{#1^{\mathsf{rev}}} 
\newcommand{\tc}[1]{#1^{\mathsf{tc}}} 
\newcommand{\tid}[1]{#1^{\mathsf{tid}}} 
\newcommand{\tdd}[1]{#1^{\mathsf{tdd}}} 
\newcommand{\IWOIPid}[1]{#1^{\mathsf{IWOIPid}}} 
\newcommand{\DWOIPdd}[1]{#1^{\mathsf{DWOIPdd}}} 
\newcommand{\WOIPd}[1]{#1^{\mathsf{WOIPd}}} 
\newcommand{\maxle}[1]{#1^{\mathsf{maxle}}} 
\newcommand{\minle}[1]{#1^{\mathsf{minle}}} 
\newcommand{\TOIPd}[1]{#1^{\mathsf{TOIPd}}} 
\newcommandx{\IPIPpid}[2][2=\orientation]{#1^{\mathsf{IPIP^{+}id}(#2)}} 
\newcommandx{\IPIPmid}[2][2=\orientation]{#1^{\mathsf{IPIP^{-}id}(#2)}} 
\newcommandx{\IPIPpmid}[2][2=\orientation]{#1^{\mathsf{IPIP^{\pm}id}(#2)}} 
\newcommandx{\IPIPid}[2][2=\orientation]{#1^{\mathsf{IPIPid}(#2)}} 
\newcommandx{\IPIPeid}[2][2=\orientation]{#1^{\mathsf{IPIP^{\varepsilon}id}(#2)}} 
\newcommandx{\DPIPpdd}[2][2=\orientation]{#1^{\mathsf{DPIP^{+}dd}(#2)}} 
\newcommandx{\DPIPmdd}[2][2=\orientation]{#1^{\mathsf{DPIP^{-}dd}(#2)}} 
\newcommandx{\DPIPpmdd}[2][2=\orientation]{#1^{\mathsf{DPIP^{\pm}dd}(#2)}} 
\newcommandx{\DPIPdd}[2][2=\orientation]{#1^{\mathsf{DPIPdd}(#2)}} 
\newcommandx{\DPIPedd}[2][2=\orientation]{#1^{\mathsf{DPIP^{\varepsilon}dd}(#2)}} 
\newcommandx{\PIPd}[2][2=\orientation]{#1^{\mathsf{PIPd}(#2)}} 
\newcommand{\PFPia}[1]{#1^{\mathsf{PFPia}}} 
\newcommand{\PFPda}[1]{#1^{\mathsf{PFPda}}} 
\newcommand{\meetT}{\wedge_\ITrans} 
\newcommand{\joinT}{\vee_\ITrans} 
\newcommand{\meetST}{\wedge_\ISemiTrans} 
\newcommand{\joinST}{\vee_\ISemiTrans} 
\newcommand{\meetR}{\wedge_\IRel} 
\newcommand{\joinR}{\vee_\IRel} 
\newcommand{\joinWO}{\vee_\fS} 
\newcommand{\meetWO}{\wedge_\fS} 
\newcommand{\joinWOIP}{\vee_\WOIP} 
\newcommand{\meetWOIP}{\wedge_\WOIP} 
\newcommand{\joinIWOIP}{\vee_\IWOIP} 
\newcommand{\meetIWOIP}{\wedge_\IWOIP} 
\newcommand{\joinDWOIP}{\vee_\DWOIP} 
\newcommand{\meetDWOIP}{\wedge_\DWOIP} 
\newcommand{\joinWOFP}{\vee_\WOFP} 
\newcommand{\meetWOFP}{\wedge_\WOFP} 
\newcommand{\joinTO}{\vee_\fB} 
\newcommand{\meetTO}{\wedge_\fB} 
\newcommand{\joinTOIP}{\vee_\TOIP} 
\newcommand{\meetTOIP}{\wedge_\TOIP} 
\newcommand{\joinTOFP}{\vee_\TOFP} 
\newcommand{\meetTOFP}{\wedge_\TOFP} 
\newcommand{\joinO}{\vee_\orientation} 
\newcommand{\meetO}{\wedge_\orientation} 
\newcommand{\joinPIP}{\vee_{\PIP}} 
\newcommand{\meetPIP}{\wedge_{\PIP}} 
\newcommand{\joinPEP}{\vee_{\PEP}} 
\newcommand{\meetPEP}{\wedge_{\PEP}} 
\newcommand{\joinPFP}{\vee_{\PFP}} 
\newcommand{\meetPFP}{\wedge_{\PFP}} 
\DeclareMathOperator{\inv}{inv} 
\DeclareMathOperator{\ver}{ver} 
\newcommand{\wole}{\preccurlyeq} 
\newcommand{\wo}{w_\circ} 
\newcommand{\bt}{\mathrm{bt}} 
\newcommand{\st}{\mathrm{st}} 
\newcommand{\fS}{\mathfrak{S}} 
\newcommand{\fB}{\mathfrak{B}} 
\newcommandx{\conflicts}[1][1=cf]{\mathsf{#1}} 
\newcommand{\free}{\mathcal{F}} 
\DeclareMathOperator{\conv}{conv} 
\newcommandx{\Asso}[1][1=\signature]{\mathsf{Asso}(#1)} 
\newcommandx{\Perm}[1][1=n]{\mathsf{Perm}(#1)} 
\newcommandx{\Para}[1][1=n]{\mathsf{Para}(#1)} 
\newcommandx{\PT}[1][1=\orientation]{\mathsf{PT}(#1)} 
\newcommandx{\graphG}[1][1=G]{\mathrm{#1}} 
\newcommandx{\tree}[1][1=T]{\mathrm{#1}} 
\newcommandx{\tuple}[1][1=T]{\mathcal{#1}} 
\newcommandx{\poset}{{\tree[P]}} 
\newcommand{\ground}{\mathrm{V}} 
\newcommand{\signature}{\varepsilon} 
\newcommandx{\Permutrees}[1][1=\orientation]{\mathrm{PT}(#1)} 
\newcommandx{\SchroderPermutrees}[1][1=\orientation]{\mathrm{SchrPT}(#1)} 
\newcommandx{\surjection}[1][1=\orientation]{\Psi_{#1}} 
\newcommand{\linearExtensions}{\mathcal{L}} 
\newcommand{\none}[1]{\underaccent{\phantom{.}}{\accentset{\phantom{.}}{#1}}}
\newcommand{\up}[1]{\underaccent{\phantom{.}}{\overline{#1}}}
\newcommand{\down}[1]{\accentset{\phantom{.}}{\underline{#1}}}
\newcommand{\updown}[1]{\underline{\overline{#1}}}
\newcommand{\fref}[1]{Figure~\ref{#1}} 
\newcommand{\ie}{\textit{i.e.}~} 
\newcommand{\eg}{\textit{e.g.}~} 
\newcommand{\viceversa}{\textit{vice versa}} 
\definecolor{darkblue}{rgb}{0,0,0.7} 
\definecolor{green}{RGB}{57,181,74} 
\newcommand{\darkblue}{\color{darkblue}} 
\newcommand{\defn}[1]{\emph{\darkblue #1}} 
\newcommand{\para}[1]{\medskip\noindent\framebox{\textsc{#1}}\quad} 
\def\part{\@startsection{part}{1}%
\z@{.7\linespacing\@plus\linespacing}{.5\linespacing}%
{\LARGE\sffamily\centering}}
\def\l@section{\@tocline{1}{3pt}{0pc}{}{}}
\let\oldtocpart=\tocpart
\renewcommand{\tocpart}[2]{\hspace{0em}\bf\large\oldtocpart{#1}{#2}}
\let\oldtocsection=\tocsection
\renewcommand{\tocsection}[2]{\hspace{0em}\bf\oldtocsection{#1}{#2}}
\begin{document}

\begin{abstract}
We explore lattice structures on integer binary relations (\ie binary relations on the set~$\{1, 2, \dots, n\}$ for a fixed integer~$n$) and on integer posets (\ie partial orders on the set~$\{1, 2, \dots, n\}$ for a fixed integer~$n$).
We first observe that the weak order on the symmetric group naturally extends to a lattice structure on all integer binary relations. We then show that the subposet of this weak order induced by integer posets defines as well a lattice. We finally study the subposets of this weak order induced by specific families of integer posets corresponding to the elements, the intervals, and the faces of the permutahedron, the associahedron, and some recent generalizations of those.
\end{abstract}

\vspace*{-1.3cm}
\maketitle
\vspace*{-.2cm}


The \defn{weak order} is the lattice on the symmetric group~$\fS(n)$ defined as the inclusion order of inversions, where an \defn{inversion} of~$\sigma \in \fS(n)$ is a pair of values~$a < b$ such that~$\sigma^{-1}(a) > \sigma^{-1}(b)$. It is a fundamental tool for the study of the symmetric group, in connection to reduced expressions of permutations as products of simple transpositions. Its Hasse diagram can also be seen as a certain acyclic orientation of the skeleton of the permutahedron (the convex hull of all permutations of~$\fS(n)$ seen as vectors in~$\R^n$).

This paper extends the weak order to all \defn{integer binary relations}, \ie binary relations on the set~$[n] \eqdef \{1, 2, \dots, n\}$ for a fixed integer~$n$.
A permutation~$\sigma \in \fS(n)$ is seen as an binary relation~$\less$ on~$[n]$ where~$u \less v$ when~$u$ appears before~$v$ in~$\sigma$.
Inversions of~$\sigma$ then translates to decreasing relations of~$\less$, \ie elements~$a < b$ such that~$b \less a$.
This interpretation enables to naturally extend the weak order to all binary relations on~$[n]$ as follows.
For any two binary relations~$\rel, \rel[S]$ on~$[n]$, we define
\[
\rel \wole \rel[S] \quad \iff \quad {\Inc{\rel} \supseteq \Inc{\rel[S]}} \text{ and } {\Dec{\rel} \subseteq \Dec{\rel[S]}},
\]
where~$\Inc{\rel} \eqdef \set{(a,b) \in \rel}{a \le b}$ and~$\Dec{\rel} \eqdef \set{(b,a) \in \rel}{a \le b}$ respectively denote the increasing and decreasing subrelations of~$\rel$. We call this order the \defn{weak order} on integer binary relations, see \fref{fig:weakOrderRelations}. The central result of this paper is the following statement, see \fref{fig:weakOrderPosets}.

\begin{theorem}
\label{thm:main}
For any~$n \in \N$, the weak order restricted to the set of all posets on~$[n]$ is a lattice.
\end{theorem}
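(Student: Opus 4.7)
I would begin from the elementary fact that the weak order on all integer binary relations on $[n]$ forms a lattice, where meet and join are computed componentwise in the $\Inc/\Dec$ decomposition:
\[
(\rel \meet \rel[S])^{\mathsf{Inc}} = \Inc{\rel} \cup \Inc{\rel[S]}, \qquad (\rel \meet \rel[S])^{\mathsf{Dec}} = \Dec{\rel} \cap \Dec{\rel[S]},
\]
and dually for joins. This is immediate from the definition of $\wole$. For two posets $P$ and $Q$, one checks easily that the ambient meet and join are reflexive and antisymmetric (any apparent antisymmetry conflict would already contradict the antisymmetry of $P$ or $Q$), but transitivity can fail on chains that mix an $\Inc$ pair with a $\Dec$ pair. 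The subset of posets is therefore not a sublattice of the relation lattice, and the theorem asserts it nonetheless carries its own lattice structure.

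The heart of the proof is an explicit construction of the meet $P \meetWOIP Q$ (the join being dual). Two naive attempts fail: the componentwise meet may not be transitive, while its transitive closure may push pairs outside the constraints $\Inc \supseteq \Inc{P} \cup \Inc{Q}$ and $\Dec \subseteq \Dec{P} \cap \Dec{Q}$. I would therefore define $P \meetWOIP Q$ iteratively from $A_0 = \Inc{P} \cup \Inc{Q}$ and $B_0 = \Dec{P} \cap \Dec{Q}$, alternating two operations: (i) enlarge $A$ to include every pair $(a,c)$ with $a \le c$ forced by a transitive composition within the current $A \cup B$; and (ii) shrink $B$ by removing any pair whose transitive composition with $A$ would produce a decreasing pair not allowed by the constraint $B \subseteq \Dec{P} \cap \Dec{Q}$. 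The process terminates with a pair $(A^*, B^*)$ defining a relation that is transitive (by the stopping condition) and antisymmetric (since every pair added to $A$ stays among increasing pairs and $B$ stays in $\Dec{P} \cap \Dec{Q}$, which cannot conflict with $A$ without already violating antisymmetry of $P$ or $Q$), hence a poset. The join $P \joinWOIP Q$ is built dually, starting from $\Inc{P} \cap \Inc{Q}$ and $\Dec{P} \cup \Dec{Q}$.

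To complete the proof I would verify the universal property. If $R$ is a poset with $R \wole P$ and $R \wole Q$, then $\Inc{R} \supseteq A_0$; applying $R$'s transitivity to each enlargement step shows $\Inc{R}$ contains every pair added to $A$, so $\Inc{R} \supseteq A^*$. Dually $\Dec{R} \subseteq B_0$, and the same transitivity of $R$ forces $\Dec{R}$ to exclude every pair removed from $B$, so $\Dec{R} \subseteq B^*$. Hence $R \wole P \meetWOIP Q$. The main obstacles will be (a) a careful case analysis of the four types of transitive compositions (Inc-Inc, Inc-Dec, Dec-Inc, Dec-Dec) to verify the enlargement and shrinking rules are well-defined and that all forced pairs land where they should, and (b) a confluence argument showing that the iterative process is order-independent so that $(A^*, B^*)$ is uniquely determined. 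The key technical lemma underlying (a) is that, for any two posets $P$ and $Q$, both $\Inc{P} \cap \Inc{Q}$ and $\Dec{P} \cap \Dec{Q}$ are transitively closed as sets of increasing (resp.\ decreasing) pairs, which is a direct consequence of the transitivity of $P$ and $Q$ taken individually.
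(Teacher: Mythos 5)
Your overall architecture is the same as the paper's: start from the componentwise meet $\Inc{P} \cup \Inc{Q}$ and $\Dec{P} \cap \Dec{Q}$ of two posets $P$ and $Q$, then repair transitivity by completing the increasing part and pruning the decreasing part. The gap is in your step~(i). You add to $A$ every increasing pair forced by a composition ``within the current $A \cup B$'', hence in particular a pair $(a,b)$ forced by an increasing pair $(a,c) \in A$ composed with a decreasing pair $(c,b) \in B$. Such a pair is not forced in every lower bound: a poset $R$ with $R \wole P$ and $R \wole Q$ satisfies $\Inc{R} \supseteq A$ but only $\Dec{R} \subseteq B$, so $R$ need not contain $(c,b)$, and transitivity of $R$ does not yield $(a,b) \in R$. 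Your verification of the universal property breaks at exactly this point. In fact, since $\tc{(\Inc{P} \cup \Inc{Q})}$ with empty decreasing part is itself a poset below both $P$ and $Q$, the increasing part of the meet must equal $\tc{(\Inc{P} \cup \Inc{Q})}$ exactly; so whenever a mixed composition forces an increasing pair outside this set, the only admissible repair is to delete the decreasing pair involved --- but your step~(ii) deletes a $B$-pair only when the forced pair is a \emph{disallowed decreasing} pair, so these cases slip through. Concretely, take $P = \{(1,3),(4,2)\}$ and $Q = \{(3,4),(4,2),(3,2)\}$ on $[4]$ (reflexive pairs omitted). Then $A_0 = \{(1,3),(3,4)\}$, $\tc{A_0} = \{(1,3),(3,4),(1,4)\}$ and $B_0 = \{(4,2)\}$; the composition of $(1,4)$ with $(4,2)$ forces the increasing pair $(1,2)$, which your step~(i) adds and never removes, even after step~(ii) deletes $(4,2)$. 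Your output then contains $(1,2)$, whereas the true meet is $\{(1,3),(3,4),(1,4)\}$, a lower bound that is not below your output, so the universal property fails. Your antisymmetry argument is similarly too quick, since a conflict could involve a pair added to $A$ by closure rather than one of $\Inc{P} \cup \Inc{Q}$; ruling this out requires the minimality argument of Proposition~\ref{prop:posetLattice}.

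The paper's construction repairs both defects at once: the meet is $\tdd{\big(\tc{(\Inc{P} \cup \Inc{Q})} \cup (\Dec{P} \cap \Dec{Q})\big)}$, where the transitive closure is applied to the increasing part \emph{alone} and the transitive decreasing deletion removes every decreasing pair $(b,a)$ admitting witnesses $i \le b$ and $j \ge a$ with $i \rel b \rel a \rel j$ but $i \notrel j$. This deliberately generous deletion condition makes a single pass suffice (Remark~\ref{rem:tdd} explains that the naive condition $i = b$ or $j = a$ would force exactly the iteration and convergence argument you flag as obstacle~(b)). To salvage your proof you would need to restrict step~(i) to compositions within $A$ alone, extend step~(ii) to delete a $B$-pair whenever any composition through it forces a pair that is not already mandatory, and then supply the termination, confluence, and antisymmetry arguments you defer.
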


Our motivation for this result is that many relevant combinatorial objects can be interpreted by specific integer posets, and the subposets of the weak order induced by these specific integer posets often correspond to classical lattice structures on these combinatorial objects. To illustrate this, we study specific integer posets corresponding to the elements, to the intervals, and to the faces in the classical weak order, the Tamari and Cambrian lattices~\cite{TamariFestschrift, Reading-CambrianLattices}, the boolean lattice, and other related lattices defined in~\cite{PilaudPons}. By this systematic approach, we rediscover and shed light on lattice structures studied by G.~Chatel and V.~Pons on Tamari interval posets~\cite{ChatelPons}, by G.~Chatel and V.~Pilaud on Cambrian and Schr\"oder-Cambrian trees~\cite{ChatelPilaud}, by D.~Krob, M.~Latapy, J.-C.~Novelli, H.-D.~Phan and S.~Schwer on pseudo-permutations~\cite{KrobLatapyNovelliPhanSchwer}, and by P.~Palacios and M.~Ronco~\cite{PalaciosRonco} and J.-C.~Novelli and J.-Y.~Thibon~\cite{NovelliThibon-trialgebras} on plane trees.

The research code for experiments and computations along this work is available online~\cite{code}.


\part{The weak order on integer posets}

\section{The weak order on integer binary relations}
\label{sec:weakOrder}

\subsection{Integer binary relations}
\label{subsec:integerBinaryRelations}

Our main object of focus are binary relations on integers. An \defn{integer (binary) relation} of size~$n$ is a binary relation on~$[n] \eqdef \{1, \dots, n\}$, that is, a subset~$\rel$ of~$[n]^2$. As usual, we write equivalently~$(u,v) \in \rel$ or~$u \rel v$, and similarly, we write equivalently~$(u,v) \notin \rel$ or~$u \notrel v$.
Recall that a relation~$\rel \in [n]^2$ is called:
\begin{itemize}
\item \defn{reflexive} if $u \rel u$ for all~$u \in [n]$,
\item \defn{transitive} if ${u \rel v}$ and~${v \rel w}$ implies~${u \rel w}$ for all~$u,v,w \in [n]$,
\item \defn{symmetric} if $u \rel v$ implies~$v \rel u$ for all~$u,v \in [n]$,
\item \defn{antisymmetric} if $u \rel v \text{ and } v \rel u$ implies~$u = v$ for all~$u,v \in [n]$.
\end{itemize}
From now on, we only consider reflexive relations. We denote by~$\IRel(n)$ (resp.~$\ITrans(n)$, resp.~$\ISym(n)$, resp.~$\IAntisym(n)$) the collection of all reflexive (resp.~reflexive and transitive, resp.~reflexive and symmetric, resp.~reflexive and antisymmetric) integer relations of size~$n$. We denote by~$\IEquiv(n)$ the set of \defn{integer equivalences} of size~$n$, that is, reflexive transitive symmetric integer relations, and by~$\IPos(n)$ the collection of \defn{integer posets} of size~$n$, that is, reflexive transitive antisymmetric integer relations. In all these notations, we forget the~$n$ when we consider a relation without restriction on its size.

A \defn{subrelation} of~$\rel \in \IRel(n)$ is a relation~$\rel[S] \in \IRel(n)$ such that~$\rel[S] \subseteq \rel$ as subsets of~$[n]^2$. We say that~$\rel[S]$ \defn{coarsens}~$\rel$ and~$\rel$ \defn{extends}~$\rel[S]$. The extension order defines a graded lattice structure on~$\IRel(n)$ whose meet and join are respectively given by intersection and union. The complementation~${\rel \mapsto \set{(u,v)}{u = v \text{ or } u \notrel v}}$ is an antiautomorphism of~$(\IRel(n), \subseteq, \cap, \cup)$ and makes it an ortho-complemented lattice.

Note that~$\ITrans(n)$, $\ISym(n)$ and~$\IAntisym(n)$ are all stable by intersection, while only~$\ISym(n)$ is stable by~union. In other words, $(\ISym(n), \subseteq, \cap, \cup)$ is a sublattice of~$(\IRel(n), \subseteq, \cap, \cup)$, while~${(\ITrans(n), \subseteq)}$ and~$(\IAntisym(n), \subseteq)$ are meet-semisublattice of~$(\IRel(n), \subseteq, \cap)$ but not sublattices of~$(\IRel(n), \subseteq, \cap, \cup)$. However, $(\ITrans(n), \subseteq)$ is a lattice. To see it, consider the \defn{transitive closure} of a relation~$\rel \in \IRel(n)$~defined~by
\[
\tc{\rel} \eqdef \bigset{(u,w) \in [n]^2}{\exists\; v_1, \dots, v_p \in [n] \text{ such that } u = v_1 \rel v_2 \rel \dots \rel v_{p-1} \rel v_p = w}.
\]
The transitive closure~$\tc{\rel}$ is the coarsest transitive relation containing~$\rel$. It follows that~$(\ITrans(n), \subseteq)$ is a lattice where the meet of~$\rel, \rel[S] \in \IRel(n)$ is given by~$\rel \cap \rel[S]$ and the join of~$\rel, \rel[S] \in \IRel(n)$ is given by~${\tc{(\rel \cup \rel[S])}}$. Since the transitive closure preserves symmetry, the subposet $(\IEquiv(n), \subseteq)$ of integer equivalences is a sublattice of~$(\ITrans(n),\subseteq)$.

\subsection{Weak order}
\label{subsec:weakOrder}

From now on, we consider both a relation~$\rel$ and the natural order~$<$ on~$[n]$ simultaneously. To limit confusions, we try to stick to the following convention throughout the paper. We denote integers by letters~$a,b,c$ when we know that~$a < b < c$ in the natural order. In contrast, we prefer to denote integers by letters~$u, v, w$ when we do not know their relative order. This only helps avoid confusions and is always specified.

Let~$\rel[I]_n \eqdef \set{(a,b) \in [n]^2}{a \le b}$ and~$\rel[D]_n \eqdef \set{(b,a) \in [n]^2}{a \le b}$. Observe that~$\rel[I]_n \cup \rel[D]_n = [n]^2$ while~$\rel[I]_n \cap \rel[D]_n = \set{(a,a)}{a \in [n]}$. We say that the relation~$\rel \in \IRel(n)$ is \defn{increasing} (resp.~\defn{decreasing}) when~$\rel \subseteq \rel[I]_n$ (resp.~$\rel \subseteq \rel[D]_n$). We denote by~$\IInc(n)$ (resp.~$\IDec(n)$) the collection of all increasing (resp.~decreasing) relations on~$[n]$. The \defn{increasing} and \defn{decreasing subrelations} of an integer relation~$\rel \in \IRel(n)$ are the relations defined~by:
\[
\Inc{\rel} \eqdef \rel \cap {}\rel[I]_n{} = \bigset{(a,b) \in \rel}{a \le b} \in \IInc(n)
\quad\text{and}\quad
\Dec{\rel} \eqdef \rel \cap {}\rel[D]_n{} = \bigset{(b,a) \in \rel}{a \le b} \in \IDec(n).
\]
In our pictures, we always represent an integer relation~$\rel \in \IRel(n)$ as follows: we write the numbers $1, \dots, n$ from left to right and we draw the increasing relations of~$\rel$ above in blue and the decreasing relations of~$\rel$ below in red. Although we only consider reflexive relations, we always omit the relations~$(i,i)$ in the pictures (as well as in our explicit examples). See \eg \fref{fig:weakOrderRelations}.

Besides the extension lattice mentioned above in Section~\ref{subsec:integerBinaryRelations}, there is another natural poset structure on~$\IRel(n)$, whose name will be justified in Section~\ref{sec:relevantFamiliesPermutahedron}.

\hvFloat[floatPos=p, capWidth=h, capPos=r, capAngle=90, objectAngle=90, capVPos=c, objectPos=c]{figure}
{\includegraphics[scale=.39]{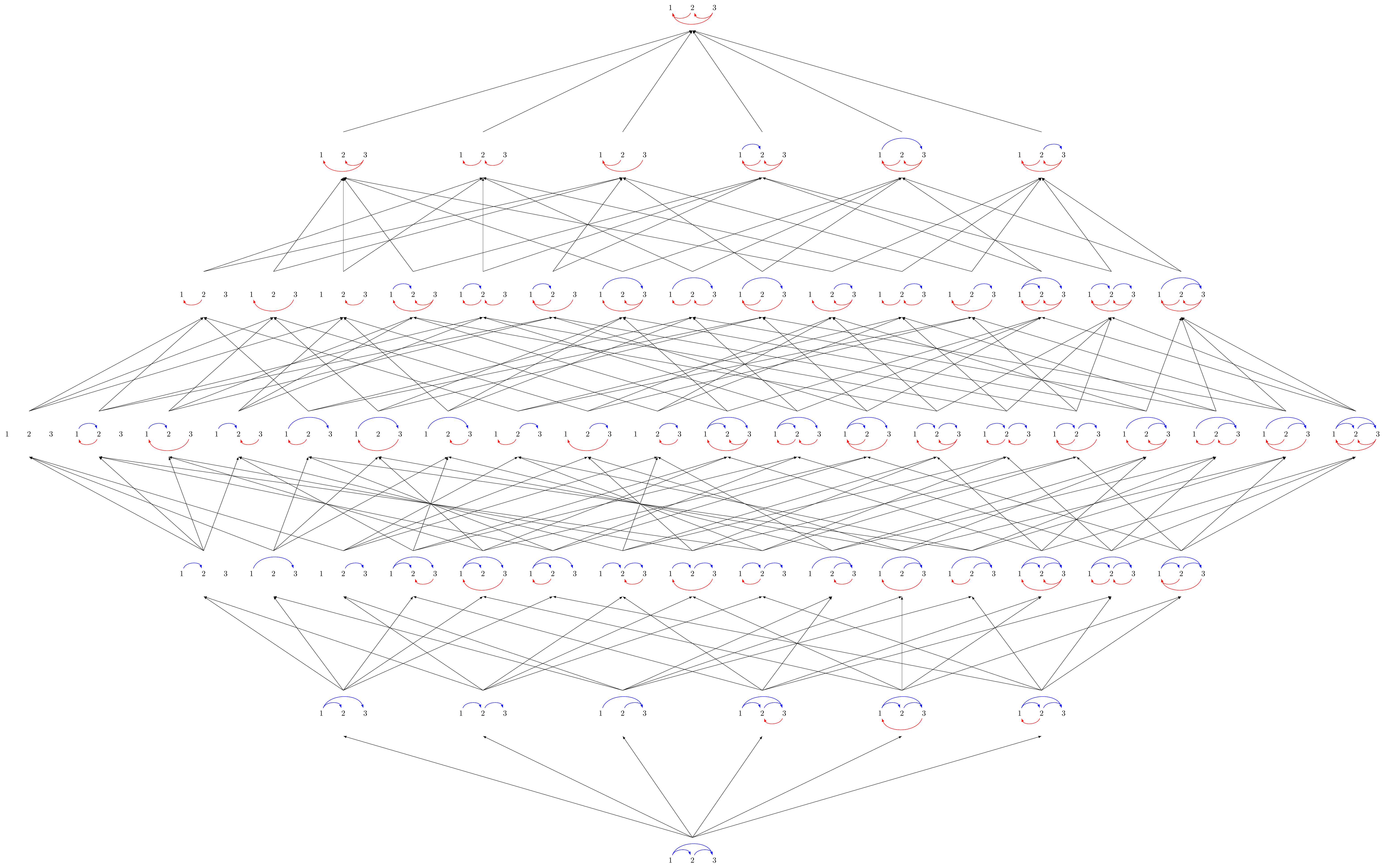}}
{The weak order on (reflexive) integer binary relations of size~$3$. All reflexive relations~$(i,i)$ for~$i \in [n]$ are omitted.}
{fig:weakOrderRelations}

\begin{definition}
\label{def:weakOrder}
The \defn{weak order} on~$\IRel(n)$ is the order defined by~$\rel \wole \rel[S]$ if~${\Inc{\rel} \supseteq \Inc{\rel[S]}}$ and~${\Dec{\rel} \subseteq \Dec{\rel[S]}}$.
\end{definition}

The weak order on~$\IRel(3)$ is illustrated in \fref{fig:weakOrderRelations}. Observe that the weak order is obtained by combining the extension lattice on increasing subrelations with the coarsening lattice on decreasing subrelations. In other words, $\IRel(n)$ is the square of an $\binom{n}{2}$-dimensional boolean lattice. It explains the following statement.

\begin{proposition}
\label{prop:reflexiveLattice}
The weak order~$(\IRel(n), \wole)$ is a graded lattice whose meet and join are given by
\[
{\rel[R]} \meetR {\rel[S]} = ( \Inc{\rel} \cup \Inc{\rel[S]} ) \cup ( \Dec{\rel} \cap \Dec{\rel[S]} )
\qquad\text{and}\qquad
{\rel[R]} \joinR {\rel[S]} = ( \Inc{\rel} \cap \Inc{\rel[S]} ) \cup ( \Dec{\rel} \cup \Dec{\rel[S]} ).
\]
\end{proposition}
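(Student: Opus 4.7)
The plan is to identify $(\IRel(n), \wole)$ with a product of two boolean lattices. Every reflexive relation $\rel \in \IRel(n)$ decomposes uniquely as ${\rel = \Inc{\rel} \cup \Dec{\rel}}$ with $\Inc{\rel} \cap \Dec{\rel}$ equal to the diagonal, so the assignment ${\rel \mapsto (\Inc{\rel}, \Dec{\rel})}$ is a bijection from $\IRel(n)$ to $\IInc(n) \times \IDec(n)$. Both factors, ordered by inclusion, are boolean lattices: a relation in $\IInc(n)$ (respectively $\IDec(n)$) is determined by an arbitrary subset of the $\binom{n}{2}$ strictly increasing (respectively decreasing) pairs, together with the forced diagonal, and the families $\IInc(n)$ and $\IDec(n)$ are evidently closed under union and intersection.

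By Definition~\ref{def:weakOrder}, the weak order transports under this bijection to the product order on $\IInc(n)^{\mathrm{op}} \times \IDec(n)$: the increasing coordinate is compared by reverse inclusion, the decreasing coordinate by inclusion. Since a product of lattices is a lattice with coordinatewise meet and join, this shows that $(\IRel(n), \wole)$ is a lattice.

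It then remains to translate the coordinatewise meet and join back to the language of integer relations. The meet in $\IInc(n)^{\mathrm{op}}$ is the join in $\IInc(n)$, which in a boolean lattice is union, while the meet in $\IDec(n)$ is intersection; reassembling the two coordinates via the bijection yields
\[
\rel \meetR \rel[S] = (\Inc{\rel} \cup \Inc{\rel[S]}) \cup (\Dec{\rel} \cap \Dec{\rel[S]}),
\]
and the symmetric computation for the join gives the second formula of the statement. Gradedness follows from the fact that a product of graded lattices is graded; explicitly, the function $\rho(\rel) \eqdef |\Dec{\rel}| - |\Inc{\rel}| + \binom{n}{2}$ (equivalently, counting the non-diagonal pairs in $\Dec{\rel}$ and the non-diagonal pairs \emph{missing} from $\Inc{\rel}$) serves as a rank function because each coordinate lattice is a boolean lattice graded by cardinality.

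There is no real obstacle in this argument; the entire content lies in recognizing that $\wole$ is nothing more than the coordinatewise order of two boolean lattices, one of which has been flipped upside down, after which the formulas for meet, join, and the rank function are forced.
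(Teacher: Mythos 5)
Your proof is correct and matches the paper's own viewpoint: the paragraph preceding Proposition~\ref{prop:reflexiveLattice} states exactly your observation that $\IRel(n)$ is the square of an $\binom{n}{2}$-dimensional boolean lattice (one factor ordered by reverse inclusion), and this is what "explains" the statement. The only cosmetic difference is that the paper's written proof verifies the universal property of the meet directly and derives gradedness from the description of cover relations, whereas you invoke the general facts that a product of lattices is a lattice with coordinatewise operations and that a product of graded lattices is graded; both routes are sound and rest on the same decomposition $\rel \mapsto (\Inc{\rel}, \Dec{\rel})$.
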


\begin{proof}
The weak order is clearly a poset (antisymmetry comes from the fact that~$\rel = \Inc{\rel} \cup \Dec{\rel}$). Its cover relations are all of the form~$\rel \wole \rel \ssm \{(a,b)\}$ for~$a \Inc{\rel} b$ or~$\rel \ssm \{(b,a)\} \wole \rel$ with~$b \Dec{\rel} a$. Therefore, the weak order is graded by~$\rel \mapsto |\Dec{\rel}| - |\Inc{\rel}|$. To check that it is a lattice, consider~$\rel, \rel[S] \in \IRel(n)$. Observe first that ${\rel[R]} \meetR {\rel[S]}$ is indeed below both~$\rel$ and~$\rel[S]$ in weak order. Moreover, if~$\rel[T] \wole \rel$ and~$\rel[T] \wole \rel[S]$, then~$\Inc{\rel[T]} \supseteq \Inc{\rel} \cup \Inc{\rel[S]}$ and~$\Dec{\rel[T]} \subseteq \Dec{\rel} \cap \Dec{\rel[S]}$, so that~$\rel[T] \wole {\rel[R]} \meetR {\rel[S]}$. This proves that~${\rel[R]} \meetR {\rel[S]}$ is indeed the meet of~$\rel$ and~$\rel[S]$. The proof is similar for the join.
\end{proof}

\begin{remark}
\label{remark:reverse}
Define the \defn{reverse} of a relation~$\rel \in \IRel$ as~$\rev{\rel} \eqdef \set{(u,v) \in [n]^2}{(v,u) \in \rel}$. Observe that ${\Inc{(\rev{\rel})} = \rev{(\Dec{\rel})}}$ and~$\Dec{(\rev{\rel})} = \rev{(\Inc{\rel})}$. Therefore, the reverse map~$\rel \mapsto \rev{\rel}$ defines an antiautomorphism of the weak order~$(\IRel(n), \wole, \meetR, \joinR)$. Note that it preserves symmetry, antisymmetry and transitivity.
\end{remark}


\section{The weak order on integer posets}
\label{sec:weakOrderIntegerPosets}

In this section, we show that the three subposets of the weak order~$(\IRel(n), \wole)$ induced by antisymmetric relations, by transitive relations, and by posets are all lattices (although the last two are not sublattices of~$(\IRel(n), \wole, \meetR, \joinR)$).


\subsection{Antisymmetric relations}
\label{subsec:antisymmetricRelations}

We first treat the case of antisymmetric relations. \fref{fig:meetAS} shows the meet and join of two antisymmetric relations, and illustrates the following statement.

\begin{proposition}
\label{prop:antisymmetricLAttice}
The meet~$\meetR$ and the join~$\joinR$ both preserve antisymmetry. Thus, the antisymmetric relations induce a sublattice~$(\IAntisym(n), \wole, \meetR, \joinR)$ of the weak order~$(\IRel(n), \wole, \meetR, \joinR)$.
\end{proposition}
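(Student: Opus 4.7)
The proof will be a short direct check: verify that whenever two distinct elements $u \neq v$ appear in both directions in $\rel \meetR \rel[S]$ or in $\rel \joinR \rel[S]$, the antisymmetry of $\rel$ or of $\rel[S]$ is already violated. The only mildly annoying bookkeeping will be tracking which piece (increasing or decreasing part) each ordered pair lies in.

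The plan is as follows. Fix $\rel, \rel[S] \in \IAntisym(n)$ and take distinct $u, v \in [n]$; I may assume without loss of generality that $u < v$, so that $(u,v) \in \rel[I]_n$ is the candidate ``increasing pair'' and $(v,u) \in \rel[D]_n$ is the candidate ``decreasing pair''. Note that $\rel[I]_n \cap \rel[D]_n$ only contains the diagonal, so there is no ambiguity about where each pair sits.

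First I would handle the meet. Suppose for contradiction that both $(u,v)$ and $(v,u)$ lie in $\rel \meetR \rel[S] = (\Inc{\rel} \cup \Inc{\rel[S]}) \cup (\Dec{\rel} \cap \Dec{\rel[S]})$. Then $(u,v) \in \Inc{\rel} \cup \Inc{\rel[S]}$ and $(v,u) \in \Dec{\rel} \cap \Dec{\rel[S]}$. So $(v,u)$ belongs to both $\rel$ and $\rel[S]$, while $(u,v)$ belongs to at least one of them; in either case we obtain a pair $u \neq v$ with both $(u,v)$ and $(v,u)$ in the same antisymmetric relation, a contradiction.

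Next I would handle the join symmetrically. Assume $(u,v)$ and $(v,u)$ both lie in $\rel \joinR \rel[S] = (\Inc{\rel} \cap \Inc{\rel[S]}) \cup (\Dec{\rel} \cup \Dec{\rel[S]})$. Then $(u,v) \in \Inc{\rel} \cap \Inc{\rel[S]}$ and $(v,u) \in \Dec{\rel} \cup \Dec{\rel[S]}$, so $(u,v)$ is in both $\rel$ and $\rel[S]$ while $(v,u)$ is in at least one of them, again violating antisymmetry of one of the two.

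There is really no obstacle here; the only subtle point is to observe that the splitting $\rel = \Inc{\rel} \cup \Dec{\rel}$ according to whether $a \le b$ forces the ``positions'' of $(u,v)$ and $(v,u)$ inside the meet/join formulas, which is what lets the antisymmetry assumption on $\rel$ and $\rel[S]$ be invoked. Combined with Proposition~\ref{prop:reflexiveLattice}, antisymmetry of both $\meetR$ and $\joinR$ immediately upgrades $(\IAntisym(n), \wole)$ to a sublattice of $(\IRel(n), \wole, \meetR, \joinR)$.
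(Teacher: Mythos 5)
Your proof is correct and follows essentially the same route as the paper: split the meet (resp.\ join) into its increasing and decreasing parts, observe that a decreasing pair in the meet must lie in $\Dec{\rel} \cap \Dec{\rel[S]}$ while the corresponding increasing pair would have to lie in $\Inc{\rel} \cup \Inc{\rel[S]}$, and invoke antisymmetry of $\rel$ or $\rel[S]$. The paper phrases this as a direct implication ($(b,a)$ in the meet forces $(a,b)$ out) and states that the join case is identical, whereas you argue by contradiction and write out both cases, but the content is the same.
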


\begin{proof}
Let~$\rel, \rel[S] \in \IAntisym(n)$. Let~$a < b \in [n]$ be such that~$(b,a) \in {\rel[R]} \meetR {\rel[S]}$. Since~$(b,a)$ is decreasing and~$\Dec{({\rel[R]} \meetR {\rel[S]})} = \Dec{\rel} \cap \Dec{\rel[S]}$, we have~$b \Dec{\rel} a$ and~$b \Dec{\rel[S]} a$. By antisymmetry of~$\rel$ and~$\rel[S]$, we obtain that~$a \Inc{\notrel} b$ and $a \Inc{\notrel[S]} b$. Therefore, $(a,b) \notin \Inc{\rel} \cup \Inc{\rel[S]} = \Inc{({\rel[R]} \meetR {\rel[S]})}$. We conclude that~$(b,a) \in {\rel[R]} \meetR {\rel[S]}$ implies~$(a,b) \notin {\rel[R]} \meetR {\rel[S]}$ and thus that~${\rel[R]} \meetR {\rel[S]}$ is antisymetric. The proof is identical for~$\joinR$.
\end{proof}

\begin{figure}[ht]
	\centerline{
    \begin{tabular}{c@{\qquad}c@{\qquad}c@{\qquad}c}
        $\rel[R] \in \IAntisym(4)$ &
        $\rel[S] \in \IAntisym(4)$ &
        $\rel \meetR \rel[S] \in \IAntisym(4)$ &
        $\rel \joinR \rel[S] \in \IAntisym(4)$
        \\[-.3cm]
        \scalebox{0.8}{\input{relations/AS1.tex}} &
        \scalebox{0.8}{\input{relations/AS2.tex}} &
        \scalebox{0.8}{\input{relations/AS3.tex}} &
        \scalebox{0.8}{\input{relations/AS4.tex}}
    \end{tabular}
}
	\vspace{-.5cm}
	\caption{Two antisymmetric relations~$\rel, \rel[S]$ and their meet~${\rel[R]} \meetR {\rel[S]}$ and join~${\rel[R]} \joinR {\rel[S]}$.}
	\label{fig:meetAS}
	\vspace{-.4cm}
\end{figure}

Our next two statements describe all cover relations in~$(\IAntisym(n), \wole)$.

\begin{proposition}
\label{prop:coverRelationsAntisymmetric}
All cover relations in~$(\IAntisym(n), \wole)$ are cover relations in~$(\IRel(n), \wole)$. In particular,~$(\IAntisym(n), \wole)$ is still graded by~$\rel \mapsto |\Dec{\rel}|-|\Inc{\rel}|$.
\end{proposition}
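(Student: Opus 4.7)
The plan is to show that whenever $\rel \wole \rel[S]$ with $\rel \ne \rel[S]$ in $\IAntisym(n)$, we can always insert an antisymmetric relation $\rel'$ between them that differs from $\rel$ by exactly one pair. Since any cover relation in $(\IRel(n),\wole)$ is a single-element modification (as recalled in the proof of Proposition~\ref{prop:reflexiveLattice}), this will force every cover in $\IAntisym(n)$ to already be a cover in $\IRel(n)$.

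Assume $\rel \lneq \rel[S]$ in $\IAntisym(n)$. We split into two cases. First, if there exists $a<b$ with $(a,b) \in \Inc{\rel} \ssm \Inc{\rel[S]}$, take $\rel' \eqdef \rel \ssm \{(a,b)\}$: removing a pair preserves antisymmetry, and one immediately checks that $\Inc{\rel'} \supseteq \Inc{\rel[S]}$ and $\Dec{\rel'} = \Dec{\rel} \subseteq \Dec{\rel[S]}$, so $\rel \wole \rel' \wole \rel[S]$. Second, if $\Inc{\rel} = \Inc{\rel[S]}$, then necessarily $\Dec{\rel} \subsetneq \Dec{\rel[S]}$, and we pick $(b,a) \in \Dec{\rel[S]} \ssm \Dec{\rel}$. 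Here $\rel' \eqdef \rel \cup \{(b,a)\}$ is still antisymmetric: indeed, if we had $(a,b) \in \rel$, then $(a,b) \in \Inc{\rel} = \Inc{\rel[S]} \subseteq \rel[S]$ while also $(b,a) \in \rel[S]$, contradicting antisymmetry of~$\rel[S]$. Again $\rel \wole \rel' \wole \rel[S]$ by direct inspection.

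In either case $\rel'$ lies strictly between $\rel$ and~$\rel[S]$ in $\IAntisym(n)$ unless $\rel' = \rel[S]$, which happens exactly when $\rel$ and $\rel[S]$ differ by a single pair. Thus any cover $\rel \wole \rel[S]$ in $\IAntisym(n)$ is already a cover in $\IRel(n)$. The grading statement then follows at once: on such a cover, either $|\Inc{\rel}|$ decreases by one and $|\Dec{\rel}|$ is unchanged, or $|\Dec{\rel}|$ increases by one and $|\Inc{\rel}|$ is unchanged, so $\rel \mapsto |\Dec{\rel}| - |\Inc{\rel}|$ jumps by exactly~$1$.

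The only subtle step is the second case, where adding a decreasing pair might \emph{a priori} create a symmetric conflict; the trick is that antisymmetry of the target~$\rel[S]$, combined with the assumption $\Inc{\rel} = \Inc{\rel[S]}$, rules out this conflict automatically. Everything else is a straightforward bookkeeping of~$\Inc{\cdot}$ and~$\Dec{\cdot}$.
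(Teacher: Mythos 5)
Your proof is correct and follows essentially the same route as the paper's: in both cases one exhibits an intermediate antisymmetric relation obtained from $\rel$ by deleting a single increasing pair or adding a single decreasing pair, and the cover hypothesis forces it to equal $\rel[S]$. The only difference is that you spell out the antisymmetry check in the addition case (using $\Inc{\rel} = \Inc{\rel[S]}$ and antisymmetry of $\rel[S]$), which the paper dismisses with ``similarly''; this is a welcome but inessential elaboration.
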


\begin{proof}
Consider a cover relation~$\rel \wole \rel[S]$ in~$(\IAntisym(n), \wole)$. We have~$\Inc{\rel} \supseteq \Inc{\rel[S]}$ and~$\Dec{\rel} \subseteq \Dec{\rel[S]}$ where at least one of the inclusions is strict. Suppose first that~$\Inc{\rel} \ne \Inc{\rel[S]}$. Let~${(a,b) \in \Inc{\rel} \ssm \Inc{\rel[S]}}$ and~$\rel[T] \eqdef \rel \ssm \{(a,b)\}$. Note that~$\rel[T]$ is still antisymmetric as it is obtained by removing an arc from an antisymmetric relation. Moreover, we have~$\rel \ne \rel[T]$ and~$\rel \wole \rel[T] \wole \rel[S]$. Since~$\rel[S]$ covers~$\rel$, this implies that~$\rel[S] = \rel[T] = \rel \ssm \{(a,b)\}$. We prove similarly that if~$\Dec{\rel} \ne \Dec{\rel[S]}$, there exists~$a < b$ such that~$\rel[S] = \rel \cup \{(b,a)\}$. In both cases, $\rel \wole \rel[S]$ is a cover relation in~$(\IRel(n), \wole)$.
\end{proof}

\begin{corollary}
\label{coro:coverRelationsAntisymmetric}
In the weak order~$(\IAntisym(n), \wole)$, the antisymmetric relations that cover a given antisymmetric relation~$\rel \in \IAntisym(n)$ are precisely the relations
\begin{itemize}
\item $\rel \ssm \{(a,b)\}$ for~$a < b$ such that~$a \rel b$,
\item $\rel \cup \{(b,a)\}$ for~$a < b$ such that~$a \notrel b$ and $b \notrel a$.
\end{itemize}
\end{corollary}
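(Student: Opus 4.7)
The plan is to combine Proposition~\ref{prop:coverRelationsAntisymmetric} with the explicit description of cover relations in $(\IRel(n), \wole)$ given in the proof of Proposition~\ref{prop:reflexiveLattice}.

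First, I would observe that both types of modifications in the statement do produce antisymmetric relations strictly comparable to $\rel$ in the weak order: removing an increasing pair $(a,b)$ preserves antisymmetry trivially (removing arcs never creates new symmetric pairs), and adding a decreasing pair $(b,a)$ when $a \notrel b$ and $b \notrel a$ produces an antisymmetric relation because the only potential conflict would be $a \rel b$, which is excluded. In both cases the modification changes the quantity $|\Dec{\cdot}|-|\Inc{\cdot}|$ by exactly one, so by the grading statement in Proposition~\ref{prop:coverRelationsAntisymmetric} these are actual cover relations in $(\IAntisym(n), \wole)$.

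Conversely, suppose $\rel \wole \rel[S]$ is a cover relation in $(\IAntisym(n), \wole)$. By Proposition~\ref{prop:coverRelationsAntisymmetric}, it is also a cover relation in $(\IRel(n), \wole)$, so the proof of Proposition~\ref{prop:reflexiveLattice} gives two cases: either $\rel[S] = \rel \ssm \{(a,b)\}$ with $a \Inc{\rel} b$ (equivalently $a < b$ and $a \rel b$), or $\rel[S] = \rel \cup \{(b,a)\}$ with $(b,a)$ a new decreasing pair (equivalently $a < b$ and $b \notrel a$). In the first case we fall directly into the first family. In the second case, antisymmetry of~$\rel[S]$ additionally forces $a \notrel b$, so we fall into the second family.

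I do not expect a genuine obstacle here: the corollary is essentially a bookkeeping consequence of Proposition~\ref{prop:coverRelationsAntisymmetric} together with the known structure of the cover relations of~$(\IRel(n), \wole)$. The only point deserving care is to ensure, in the ``adding $(b,a)$'' case, that one lists \emph{both} constraints $a \notrel b$ (needed to retain antisymmetry) and $b \notrel a$ (needed for the arc to actually be new), which comes from combining Proposition~\ref{prop:coverRelationsAntisymmetric} with the antisymmetry assumption on~$\rel[S]$.
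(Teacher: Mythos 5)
Your argument is correct and is exactly the intended justification: the paper states this as an immediate corollary of Proposition~\ref{prop:coverRelationsAntisymmetric}, relying on the single-arc description of covers in~$(\IRel(n), \wole)$ from the proof of Proposition~\ref{prop:reflexiveLattice} and the antisymmetry check you carry out. Your use of the grading to confirm that the listed modifications are genuine covers is a valid (and slightly more explicit) way to close the forward direction.
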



\subsection{Transitive relations}
\label{subsec:transitiveRelations}

We now consider transitive relations. Observe first that the subposet $(\ITrans(n), \wole)$ of~$(\IRel(n), \wole)$ is not a sublattice since~$\meetR$ and~$\joinR$ do not preserve transitivity (see \eg \fref{fig:meetT}). When~$\rel$ and~$\rel[S]$ are transitive, we need to transform~${\rel[R]} \meetR {\rel[S]}$ to make it a transitive relation~${\rel[R]} \meetT {\rel[S]}$. We proceed in two steps described below.

\para{Semitransitive relations}
Before dealing with transitive relations, we introduce the intermediate notion of semitransitivity. We say that a relation~$\rel \in \IRel$ is \defn{semitransitive} when both~$\Inc{\rel}$ and~$\Dec{\rel}$ are transitive.
We denote by~$\ISemiTrans(n)$ the collection of all semitransitive relations of size~$n$.
\fref{fig:meetST} illustrates the following statement.

\begin{figure}[t]
	\centerline{
    \begin{tabular}{c@{\qquad}c@{\qquad}c@{\qquad}c}
        $\rel \in \ISemiTrans(4)$ &
        $\rel[S] \in \ISemiTrans(4)$ &
        $\rel \meetR \rel[S] \notin \ISemiTrans(4)$ &
        $\rel \meetST \rel[S] \in \ISemiTrans(4)$
        \\
        \scalebox{0.8}{\input{relations/ST1.tex}} &
        \scalebox{0.8}{\input{relations/ST2.tex}} &
        \scalebox{0.8}{\input{relations/ST3.tex}} & 
        \scalebox{0.8}{\input{relations/ST4.tex}}
    \end{tabular}
}
    \vspace{-.3cm}
	\caption{Two semi-transitive relations~$\rel[R], \rel[S]$ and their meets~${\rel[R]} \meetR {\rel[S]}$ and ${\rel[R]} \meetST {\rel[S]}$.}
	\label{fig:meetST}
	\vspace{-.4cm}
\end{figure}

\begin{proposition}
\label{prop:semitransitiveLattice}
The weak order~$(\ISemiTrans(n), \wole)$ is a lattice whose meet and join are given by
\[
{\rel[R]} \meetST {\rel[S]} = \tc{( \Inc{\rel} \cup \Inc{\rel[S]} )} \cup ( \Dec{\rel} \cap \Dec{\rel[S]} )
\quad\text{and}\quad
{\rel[R]} \joinST {\rel[S]} = ( \Inc{\rel} \cap \Inc{\rel[S]} ) \cup \tc{( \Dec{\rel} \cup \Dec{\rel[S]} )}.
\]
\end{proposition}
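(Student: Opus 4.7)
The plan is to verify, in sequence, that the proposed formulas yield (i) a semitransitive relation, (ii) a lower (resp.\ upper) bound, and (iii) the greatest lower (resp.\ least upper) bound. Antisymmetry of $\wole$ on $\ISemiTrans(n)$ is immediate (inherited from $\IRel(n)$), so this suffices to establish the lattice structure.

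For (i), observe that $\Inc{\rel} \cup \Inc{\rel[S]} \subseteq \rel[I]_n$ is increasing, so its transitive closure remains increasing; symmetrically $\Dec{\rel} \cap \Dec{\rel[S]} \subseteq \rel[D]_n$ is decreasing. Thus in ${\rel[R]} \meetST {\rel[S]}$, the increasing part is exactly $\tc{(\Inc{\rel} \cup \Inc{\rel[S]})}$ and the decreasing part is exactly $\Dec{\rel} \cap \Dec{\rel[S]}$. The former is transitive by construction; the latter is transitive as an intersection of transitive relations (here using that $\Dec{\rel}$ and $\Dec{\rel[S]}$ are transitive by semitransitivity of $\rel$ and $\rel[S]$). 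Hence ${\rel[R]} \meetST {\rel[S]} \in \ISemiTrans(n)$, and symmetrically for $\joinST$.

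For (ii), computing~$\Inc{({\rel[R]} \meetST {\rel[S]})} \supseteq \Inc{\rel} \cup \Inc{\rel[S]} \supseteq \Inc{\rel}$ and $\Dec{({\rel[R]} \meetST {\rel[S]})} = \Dec{\rel} \cap \Dec{\rel[S]} \subseteq \Dec{\rel}$ immediately gives ${\rel[R]} \meetST {\rel[S]} \wole \rel$, and similarly ${\rel[R]} \meetST {\rel[S]} \wole \rel[S]$. The key step is (iii): let $\rel[T] \in \ISemiTrans(n)$ with $\rel[T] \wole \rel$ and $\rel[T] \wole \rel[S]$. Then $\Inc{\rel[T]} \supseteq \Inc{\rel} \cup \Inc{\rel[S]}$; here is the point where semitransitivity of $\rel[T]$ enters, since $\Inc{\rel[T]}$ is transitive, hence $\Inc{\rel[T]} \supseteq \tc{(\Inc{\rel} \cup \Inc{\rel[S]})} = \Inc{({\rel[R]} \meetST {\rel[S]})}$. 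On the decreasing side, $\Dec{\rel[T]} \subseteq \Dec{\rel} \cap \Dec{\rel[S]} = \Dec{({\rel[R]} \meetST {\rel[S]})}$. Therefore~$\rel[T] \wole {\rel[R]} \meetST {\rel[S]}$, proving that our candidate is indeed the meet.

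The statement for $\joinST$ follows by the same argument after swapping the roles of $\Inc{\cdot}$ and $\Dec{\cdot}$; formally, one may invoke the reverse antiautomorphism of Remark~\ref{remark:reverse}, which preserves semitransitivity and exchanges $\meetST$ with $\joinST$. The mild subtlety to keep in mind throughout is that $\Inc{\rel}$ and $\Dec{\rel}$ overlap on the diagonal (since we work with reflexive relations), but this causes no issue as the diagonal lies in every relation considered. No step presents a serious obstacle; the crucial observation is simply that transitivity of $\Inc{\rel[T]}$ (and $\Dec{\rel[T]}$ for the dual case) is exactly what allows us to absorb the transitive closure appearing in the formulas.
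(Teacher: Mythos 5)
Your proof is correct and follows essentially the same route as the paper's: show the candidate is semitransitive and a lower (resp.\ upper) bound, then use semitransitivity of the competitor $\rel[T]$ to absorb the transitive closure and conclude it is the greatest lower (resp.\ least upper) bound. You merely spell out a few details the paper leaves as observations (e.g.\ that the increasing and decreasing parts of the candidate are exactly the two displayed pieces), and your appeal to the reverse antiautomorphism for the join is a clean way to phrase the paper's ``the proof is similar.''
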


\begin{proof}
Let~$\rel, \rel[S] \in \ISemiTrans(n)$. Observe first that~${\rel[R]} \meetST {\rel[S]}$ is indeed semitransitive and below both~$\rel$ and~$\rel[S]$. Moreover, if a semitransitive relation~$\rel[T]$ is such that~$\rel[T] \wole \rel$ and~$\rel[T] \wole \rel[S]$, then~$\Inc{\rel[T]} \supseteq \Inc{\rel} \cup \Inc{\rel[S]}$ and~$\Dec{\rel[T]} \subseteq \Dec{\rel} \cap \Dec{\rel[S]}$. By semitransitivity of~$\rel[T]$, we get~$\Inc{\rel[T]} \supseteq \tc{(\Inc{\rel} \cup \Inc{\rel[S]})}$, so that~$\rel[T] \wole {\rel[R]} \meetST {\rel[S]}$. This proves that~${\rel[R]} \meetST {\rel[S]}$ is indeed the meet of~$\rel$ and~$\rel[S]$. The proof is similar for the join.
\end{proof}

As in the previous section, we describe all cover relations in~$(\ISemiTrans(n), \wole)$.

\begin{proposition}
\label{prop:coverRelationsSemitransitive}
All cover relations in~$(\ISemiTrans(n), \wole)$ are cover relations in~$(\IRel(n), \wole)$. In particular, $(\ISemiTrans(n), \wole)$ is still graded by~$\rel \mapsto |\Dec{\rel}|-|\Inc{\rel}|$.
\end{proposition}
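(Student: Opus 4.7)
The plan is to mirror the proof of Proposition~\ref{prop:coverRelationsAntisymmetric}: given a cover relation $\rel \wole \rel[S]$ in $(\ISemiTrans(n), \wole)$, I will produce a semitransitive $\rel[T]$ that differs from $\rel$ by a single arc and satisfies $\rel \wole \rel[T] \wole \rel[S]$; the covering hypothesis then forces $\rel[T] = \rel[S]$, so $\rel \wole \rel[S]$ is automatically a cover in $(\IRel(n), \wole)$. Since $\Inc{\rel} \supseteq \Inc{\rel[S]}$ and $\Dec{\rel} \subseteq \Dec{\rel[S]}$ with at least one inclusion strict, I split into two symmetric cases. The extra subtlety compared with the antisymmetric setting is that removing or inserting an arbitrary arc can destroy transitivity of $\Inc{\rel}$ or $\Dec{\rel}$, so the arc has to be chosen extremally.

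If $\Inc{\rel} \ne \Inc{\rel[S]}$, I choose $(a,b) \in \Inc{\rel} \ssm \Inc{\rel[S]}$ minimizing $b - a$ and set $\rel[T] \eqdef \rel \ssm \{(a,b)\}$. Only transitivity of $\Inc{\rel[T]}$ is at stake, and a failure would exhibit $v$ with $a < v < b$ and $(a,v), (v,b) \in \Inc{\rel}$. Transitivity of $\Inc{\rel[S]}$ forbids both sub-arcs from belonging to $\Inc{\rel[S]}$ (otherwise $(a,b) \in \Inc{\rel[S]}$), so one of them lies in $\Inc{\rel} \ssm \Inc{\rel[S]}$ with strictly smaller span, contradicting the minimality of $b - a$.

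If instead $\Dec{\rel} \ne \Dec{\rel[S]}$, I choose $(b,a) \in \Dec{\rel[S]} \ssm \Dec{\rel}$ maximizing $b - a$ and set $\rel[T] \eqdef \rel \cup \{(b,a)\}$. Now only transitivity of $\Dec{\rel[T]}$ is at stake, and a failure must compose the new arc with a preexisting arc of $\Dec{\rel}$: either some $(a,w) \in \Dec{\rel}$ with $w < a$ for which $(b,w) \notin \Dec{\rel[T]}$, or some $(u,b) \in \Dec{\rel}$ with $u > b$ for which $(u,a) \notin \Dec{\rel[T]}$. In either case, transitivity of $\Dec{\rel[S]}$ places the missing arc in $\Dec{\rel[S]} \ssm \Dec{\rel}$ with strictly larger span, contradicting the maximality of $b - a$. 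The grading claim is then immediate, since each cover relation in $(\IRel(n), \wole)$ changes $|\Dec{\rel}| - |\Inc{\rel}|$ by exactly one.

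The main obstacle is identifying the correct extremality conventions on the two sides: \emph{minimal} span for the arc removed from $\Inc{\rel}$ but \emph{maximal} span for the arc added to $\Dec{\rel}$. Once those choices are made, the verifications that $\rel[T]$ sits in the weak order interval between $\rel$ and $\rel[S]$ and remains semitransitive are routine.
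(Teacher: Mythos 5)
Your proof is correct and follows essentially the same route as the paper's: remove an increasing arc $(a,b) \in \Inc{\rel} \ssm \Inc{\rel[S]}$ of \emph{minimal} span (resp.\ add a decreasing arc of $\Dec{\rel[S]} \ssm \Dec{\rel}$ of \emph{maximal} span), and use transitivity of $\Inc{\rel[S]}$ (resp.\ $\Dec{\rel[S]}$) to rule out a failure of semitransitivity. The only difference is cosmetic: you argue the contradiction in the contrapositive direction in the increasing case and spell out the decreasing case, which the paper leaves as "similar".
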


\begin{proof}
Consider a cover relation~$\rel \wole \rel[S]$ in~$(\ISemiTrans(n), \wole)$. We have~$\Inc{\rel} \supseteq \Inc{\rel[S]}$ and~$\Dec{\rel} \subseteq \Dec{\rel[S]}$ where at least one of the inclusions is strict. Suppose first that~$\Inc{\rel} \ne \Inc{\rel[S]}$. Let~$(a,b) \in \Inc{\rel} \ssm \Inc{\rel[S]}$ be such that~$b-a$ is minimal, and let~$\rel[T] \eqdef \rel \ssm \{(a,b)\}$. Observe that there is no~$a < i < b$ such that~$a \rel i \rel b$. Otherwise, by minimality of~$b-a$, we would have~$a \rel[S] i$ and~$i \rel[S] b$ while~$a \notrel[S] b$, contradicting the transitivity of~$\Inc{\rel[S]}$. It follows that~$\Inc{\rel[T]}$ is still transitive. Since~$\Dec{\rel[T]} =\Dec{\rel}$ is also transitive, we obtain that~$\rel[T]$ is semitransitive. Moreover, we have~$\rel \ne \rel[T]$ and~$\rel \wole \rel[T] \wole \rel[S]$. Since~$\rel[S]$ covers~$\rel$, this implies that~$\rel[S] = \rel[T] = \rel \ssm \{(a,b)\}$. We prove similarly that if~$\Dec{\rel} \ne \Dec{\rel[S]}$, there exists~$(b,a)$ such that~$\rel[S] = \rel \cup \{(b,a)\}$: in this case, one needs to pick $(b,a) \in \Dec{\rel[S]} \ssm \Dec{\rel[R]}$ with $b-a$ maximal. In both cases, $\rel \wole \rel[S]$ is a cover relation in~$(\IRel(n), \wole)$.
\end{proof}

\begin{corollary}
\label{coro:coverRelationsSemitransitive}
In the weak order~$(\ISemiTrans(n), \wole)$, the semitransitive relations that cover a given semitransitive relation~$\rel \in \ISemiTrans(n)$ are precisely the relations
\begin{itemize}
\item $\rel \ssm \{(a,b)\}$ for~$a < b$ such that~$a \rel b$ and there is no~$a < i < b$ with~$a \rel i \rel b$,
\item $\rel \cup \{(b,a)\}$ for~$a < b$ such that~$b \notrel a$ and there is no~$i < a$ with~$a \rel i$~but~$b \notrel i$ and similarly no~$b < j$ with~$j \rel b$ but~$j \notrel a$.
\end{itemize}
\end{corollary}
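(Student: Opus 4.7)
The plan is to leverage Proposition~\ref{prop:coverRelationsSemitransitive} and reduce the corollary to checking when each atomic modification preserves semitransitivity. That proposition tells us every cover $\rel \wole \rel[S]$ in~$(\ISemiTrans(n), \wole)$ is already a cover in~$(\IRel(n), \wole)$, so either $\rel[S] = \rel \ssm \{(a,b)\}$ for some $a < b$ with $a \rel b$, or $\rel[S] = \rel \cup \{(b,a)\}$ for some $a < b$ with $a \notrel b$ and $b \notrel a$. Conversely, any such move that lands in $\ISemiTrans(n)$ is automatically a cover there, because no strictly intermediate element exists even in $(\IRel(n), \wole)$. So it remains to characterize, in each case, exactly when semitransitivity is preserved.

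For the removal case $\rel[S] = \rel \ssm \{(a,b)\}$, the decreasing part is unchanged and hence still transitive. Transitivity of $\Inc{\rel[S]}$ can fail only at a triple $(a,i,b)$ with $a < i < b$ and $a \rel i \rel b$, since for any other increasing chain $u \rel v \rel w$ in $\rel[S]$, the conclusion $(u,w) \in \Inc{\rel}$ is different from $(a,b)$ and hence still lies in $\Inc{\rel[S]}$. Thus $\rel[S]$ is semitransitive iff no such intermediate $i$ exists, matching the first bullet of the corollary.

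For the addition case $\rel[S] = \rel \cup \{(b,a)\}$, the increasing part is unchanged. Any new transitivity failure in $\Dec{\rel[S]}$ must involve the new arc $(b,a)$. Decomposing: either $(b,a)$ plays the role of the left arc in a composition, giving some $i < a$ with $a \rel i$ and requiring $b \rel i$ (so that the transitive conclusion $(b,i) \in \Dec{\rel}$ is available); or $(b,a)$ plays the role of the right arc, giving some $j > b$ with $j \rel b$ and requiring $j \rel a$. The case where $(b,a)$ plays both roles simultaneously is impossible, since it would force $a = b$. These two requirements are precisely the two obstructions listed in the second bullet, so $\rel[S]$ is semitransitive iff neither occurs.

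I do not foresee any substantive difficulty beyond this routine case analysis; the entire work consists in careful bookkeeping of the transitivity triples that involve the arc being modified, and noting that Proposition~\ref{prop:coverRelationsSemitransitive} has already done the heavy lifting of reducing covers in $\ISemiTrans(n)$ to covers in $\IRel(n)$.
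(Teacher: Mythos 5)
Your overall strategy is exactly the intended one: the paper offers no separate argument for this corollary, which is meant to follow from Proposition~\ref{prop:coverRelationsSemitransitive} by precisely the obstruction analysis you carry out, and both your removal case and your two-way decomposition of the added arc $(b,a)$ as left or right hypothesis of a decreasing chain are correct. There is, however, one slip in your reduction step: you claim that a cover in $(\IRel(n), \wole)$ obtained by adding a decreasing arc requires both $a \notrel b$ \emph{and} $b \notrel a$. That extra condition $a \notrel b$ belongs to the antisymmetric setting of Corollary~\ref{coro:coverRelationsAntisymmetric}; in $(\IRel(n), \wole)$ the addition $\rel \cup \{(b,a)\}$ is a cover as soon as $b \notrel a$, and semitransitive relations need not be antisymmetric, so $a \rel b$ may well hold while $(b,a)$ is added (e.g.\ passing from $\{(1,2)\}$ to $[2]^2$, which is a genuine cover in $\ISemiTrans(2)$). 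The corollary's second bullet accordingly requires only $b \notrel a$. As written, your candidate list for direction ``every cover is of one of the listed forms'' wrongly excludes these covers. The fix is immediate --- drop the condition $a \notrel b$ --- and none of your subsequent obstruction analysis uses it, so the rest of the proof stands unchanged.
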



\para{Transitive relations}
Now we consider transitive relations.

Note that~$\ITrans(n) \subseteq \ISemiTrans(n)$ but $\ISemiTrans(n) \not\subseteq \ITrans(n)$. In particular, ${\rel[R]} \meetST {\rel[S]}$ and ${\rel[R]} \joinST {\rel[S]}$ may not be transitive even if~$\rel$ and~$\rel[S]$ are (see \fref{fig:meetT}).
To see that the subposet of the weak order induced by transitive relations is indeed a lattice, we therefore need operations which ensure transitivity and are compatible with the weak order. For~$\rel \in \IRel$, define the \defn{transitive decreasing deletion} of~$\rel$ as
\[
\tdd{\rel} \eqdef {\rel} \ssm \bigset{(b,a) \in \Dec{\rel}}{\exists \; i \le b \text{ and } j \ge a \text{ such that } i \rel b \rel a \rel j \text{ while } i \notrel j},
\]
and the \defn{transitive increasing deletion} of~$\rel$ as
\[
\tid{\rel} \eqdef {\rel} \ssm \bigset{(a,b) \in \Inc{\rel}}{\exists \; i \ge a \text{ and } j \le b \text{ such that } i \rel a \rel b \rel j \text{ while } i \notrel j}.
\]
Note that in these definitions, $i$ and~$j$ may coincide with~$a$ and~$b$ (since we assumed that all our relations are reflexive). \fref{fig:meetT} illustrates the transitive decreasing deletion: the rightmost relation~${\rel[R]} \meetT {\rel[S]}$ is indeed obtained as~$\tdd{\left({\rel[R]} \meetST {\rel[S]}\right)}$. Observe that two decreasing relations have been deleted: $(3,1)$ (take~$i=2$ and~$j=1$, or~$i=3$ and~$j=2$) and $(4,1)$ (take $i=4$ and $j=2$).  

\begin{remark}
\label{rem:tdd}
The idea of the transitive decreasing deletion is to delete all decreasing relations which prevent the binary relation to be transitive. It may thus seem more natural to assume in the definition of~$\tdd{\rel}$ that either~$i = b$ or~$j = a$. However, this would not suffice to rule out all non-transitive relations, consider for example the relation~$[4]^2 \ssm \{(2,3),(3,2)\}$. We would therefore need to iterate the deletion process, which would require to prove a converging property. Our definition of~$\tdd{\rel}$ simplifies the presentation as it requires only one deletion step.
\end{remark}

\begin{lemma}
\label{lem:transDelOrder}
For any relation~$\rel \in \IRel$, we have $\tdd{\rel} \wole \rel \wole \tid{\rel}$.
\end{lemma}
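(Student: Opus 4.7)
My plan is to observe that the two operations are extremely constrained in what they modify: $\tdd{\rel}$ only deletes decreasing pairs, and $\tid{\rel}$ only deletes increasing pairs. Once this is noted, both inequalities in the weak order follow directly from Definition~\ref{def:weakOrder} without any real computation.

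More precisely, to show $\tdd{\rel} \wole \rel$, I would argue as follows. By construction, the only elements of $\rel$ that are potentially discarded in forming $\tdd{\rel}$ belong to $\Dec{\rel}$. Hence $\Inc{\tdd{\rel}} = \Inc{\rel}$, giving in particular $\Inc{\tdd{\rel}} \supseteq \Inc{\rel}$, while $\Dec{\tdd{\rel}} \subseteq \Dec{\rel}$ holds by inclusion of sets. Both conditions of Definition~\ref{def:weakOrder} are therefore satisfied, so $\tdd{\rel} \wole \rel$.

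The proof that $\rel \wole \tid{\rel}$ is symmetric: the definition of $\tid{\rel}$ only removes pairs from $\Inc{\rel}$, so $\Dec{\tid{\rel}} = \Dec{\rel}$ and $\Inc{\tid{\rel}} \subseteq \Inc{\rel}$, which are exactly the two conditions required by Definition~\ref{def:weakOrder} for $\rel \wole \tid{\rel}$.

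There is no real obstacle here; the lemma is essentially a bookkeeping statement whose purpose is to set up the forthcoming construction of the meet and join on $\ITrans(n)$ (where deleting only decreasing, resp. increasing, arcs guarantees that one moves downward, resp. upward, in the weak order). The substantive content about transitivity of $\tdd{\rel}$ and $\tid{\rel}$ is the subject of a separate statement and does not enter this proof.
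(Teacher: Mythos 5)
Your proof is correct and follows exactly the paper's own argument: $\tdd{\rel}$ removes only decreasing pairs, so $\Inc{(\tdd{\rel})} = \Inc{\rel}$ and $\Dec{(\tdd{\rel})} \subseteq \Dec{\rel}$, which is precisely the definition of $\tdd{\rel} \wole \rel$, and symmetrically for $\tid{\rel}$. Nothing to add.
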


\begin{proof}
$\tdd{\rel}$ is obtained from~$\rel$ by deleting decreasing relations. Therefore~$\Inc{(\tdd{\rel})} = \Inc{\rel}$ and ${\Dec{(\tdd{\rel})} \subseteq \Dec{\rel}}$ and thus~$\tdd{\rel} \wole \rel$ by definition of the weak order. The argument is similar for~$\tid{\rel}$.
\end{proof}

\begin{lemma}
\label{lem:transDelTransitive}
If~$\rel \in \IRel$ is semitransitive, then~$\tdd{\rel}$ and~$\tid{\rel}$ are transitive.
\end{lemma}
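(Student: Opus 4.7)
The plan is to establish the result for $\tdd{\rel}$ first, and then derive it for $\tid{\rel}$ by applying the $\tdd$ statement to $\rev{\rel}$: unpacking definitions, one checks that $\rev{(\tid{\rel})} = \tdd{(\rev{\rel})}$, while reversal preserves both semi-transitivity (using $\Inc{(\rev{\rel})} = \rev{(\Dec{\rel})}$ and $\Dec{(\rev{\rel})} = \rev{(\Inc{\rel})}$) and transitivity.

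To show $\tdd{\rel}$ is transitive, I use that $\Inc{(\tdd{\rel})} = \Inc{\rel}$ and $\Dec{(\tdd{\rel})} \subseteq \Dec{\rel}$ (from Lemma~\ref{lem:transDelOrder}), both transitive by hypothesis. Assume $u \tdd{\rel} v$ and $v \tdd{\rel} w$; the goal is $(u,w) \in \tdd{\rel}$. I proceed by case analysis on the relative order of $u, v, w$. The case $u \le v \le w$ follows immediately from transitivity of $\Inc{\rel}$.

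Next, in the four cases where exactly one of $(u,v), (v,w)$ is decreasing (in $\rel$), I further distinguish based on whether $(u,w)$ should be increasing or decreasing. If $(u,w)$ should be increasing (\ie $u \le w$), I suppose for contradiction that $u \notrel w$; then $(i', j') = (u, w)$ fits the deletion template for the unique decreasing edge among $(u, v)$ and $(v, w)$, contradicting the hypothesis that this edge lies in $\tdd{\rel}$. If $(u,w)$ should be decreasing, the same argument first yields $(u,w) \in \Dec{\rel}$; then, assuming for contradiction that $(u,w)$ is deleted with witness $i \le u, j \ge w$ and $i \notrel j$, I use transitivity of $\Inc{\rel}$ to propagate the witness: the chain $i \Inc{\rel} u \Inc{\rel} v$ (when $u \le v$) turns $(i, j)$ into a deletion witness for $(v, w)$, and symmetrically, the chain $v \Inc{\rel} w \Inc{\rel} j$ (when $v \le w$) turns $(i, j)$ into a deletion witness for $(u, v)$. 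Either contradicts the hypothesis.

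The hardest case is $u > v > w$ with both $(u,v), (v,w) \in \Dec{\rel} \cap \tdd{\rel}$. Transitivity of $\Dec{\rel}$ immediately gives $(u,w) \in \Dec{\rel}$, so the remaining task is to rule out deletion of $(u,w)$. Assuming a deletion witness $i \le u, j \ge w$ with $i \rel u, u \rel w, w \rel j, i \notrel j$, I split into four sub-cases according to whether $i \le v$ or $i > v$, and whether $j \ge v$ or $j < v$. In each sub-case I have three possible escape routes: (a) use $(i, j)$ directly as a deletion witness for $(u, v)$ or $(v, w)$, provided an auxiliary relation $i \rel v$ or $v \rel j$ can be verified; (b) if that auxiliary fails, use the ``shortened'' pair $(i, v)$ or $(v, j)$ as a deletion witness for the relevant edge; (c) if both $i \rel v$ and $v \rel j$ hold and are decreasing, combine $i \Dec{\rel} v$ with $v \Dec{\rel} j$ by transitivity of $\Dec{\rel}$ to obtain $i \rel j$, contradicting $i \notrel j$. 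The hard part is precisely the combinatorial bookkeeping of these four sub-cases and verifying that one of (a), (b), or (c) always applies.
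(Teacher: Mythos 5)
Your proof is correct and follows essentially the same route as the paper's: a case analysis on the relative order of $u,v,w$, using the fact that $(u,v)$ and $(v,w)$ survive the deletion to rule out any deletion witness for $(u,w)$, with the same three escape routes (direct witness, shortened witness, transitivity of $\Dec{\rel}$) in the all-decreasing case. The only cosmetic differences are that you make the reduction of $\tid{\rel}$ to $\tdd{(\rev{\rel})}$ explicit where the paper simply invokes symmetry, and that you group the four mixed cases by the orientation of $(u,w)$ rather than listing all six orderings.
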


\begin{proof}
We prove the result for~$\tdd{\rel}$, the proof being symmetric for~$\tid{\rel}$. Set
\[
\rel[U] \eqdef \set{(b,a) \in \Dec{\rel}}{\exists \; i \le b \text{ and } j \ge a \text{ such that } i \rel b \rel a \rel j \text{ while } i \notrel j},
\]
so that~$\tdd{\rel} = \rel \ssm \rel[U]$ with~$\Inc{(\tdd{\rel})} = \Inc{\rel}$ and~$\Dec{(\tdd{\rel})} = \Dec{\rel} \ssm \rel[U]$.
Let~$u,v,w \in [n]$ be such that $u \tdd{\rel} v$ and~$v \tdd{\rel} w$. We want to prove that~$u \tdd{\rel} w$. We distinguish six cases according to the relative order of~$u,v,w$:
\begin{enumerate}[(i)]
\item If~$u < v < w$, then~$u \Inc{\rel} v$ and~$v \Inc{\rel} w$. Thus $u \Inc{\rel} w$ by transitivity~of~$\Inc{\rel}$, and thus~$u \tdd{\rel} w$.
\item If~$u < w < v$, then~$u \Inc{\rel} v$ and~$v \Dec{\rel} w$. Since~$v \notrel[U] w$, we have~$u \Inc{\rel} w$ and thus~$u \tdd{\rel} w$.
\item If~$v < u < w$, then~$u \Dec{\rel} v$ and~$v \Inc{\rel} w$. Since~$u \notrel[U] v$, we have~$u \Inc{\rel} w$ and thus~$u \tdd{\rel} w$.
\item If~$v < w < u$, then~$u \Dec{\rel} v$ and~$v \Inc{\rel} w$. Since~$u \notrel[U] v$, we have~$u \Dec{\rel} w$. Assume by contradiction that~$u \rel[U] w$. Hence there is~$i \le u$ and~$j \ge w$ such that~$i \rel u \rel w \rel j$ but~$i \notrel j$. Since~$v \Inc{\rel} w$ and~$w \Inc{\rel} j$, the transitivity of~$\Inc{\rel}$ ensures that~$v \rel j$. We obtain that~$u \rel[U] v$, a contradiction. Therefore, $u \notrel[U] w$ and~$u \tdd{\rel} w$.
\item If~$w < u < v$, then~$u \Inc{\rel} v$ and~$v \Dec{\rel} w$. Since~$v \notrel[U] w$, we have~$u \Dec{\rel} w$. Assume by contradiction that~$u \rel[U] w$. Hence there is~$i \le u$ and~$j \ge w$ such that~$i \rel u \rel w \rel j$ but~$i \notrel j$. Since~$i \Inc{\rel} u$ and~$u \Inc{\rel} v$, the transitivity of~$\Inc{\rel}$ ensures that~$i \rel v$. We obtain that~$v \rel[U] w$, a contradiction. Therefore, $u \notrel[U] w$ and~$u \tdd{\rel} w$.
\item If~$w < v < u$, then~$u \Dec{\rel} v$ and~$v \Dec{\rel} w$, so that~$u \Dec{\rel} w$ by transitivity of~$\Dec{\rel}$. Assume by contradiction that~$u \rel[U] w$. Hence there is~$i \le u$ and~$j \ge w$ such that~$i \rel u \rel w \rel j$ but~$i \notrel j$. Since~$u \notrel[U] v$ and~$v \notrel[U] w$, we obtain that~$i \rel v$ and~$v \rel j$. If~$i \le v$, then we have~$i \le v$ and $j \ge w$ with~$i \rel v \rel w \rel j$ and~$i \notrel j$ contradicting the fact that~$v \notrel[U] w$. Similarly, if~$j \ge v$, we have~$i \le u$ and~$j \ge v$ with~$i \rel u \rel v \rel j$ and~$i \notrel j$ contradicting the fact that~$u \notrel[U] v$. Finally, if~$j < v < i$, we have~$i \Dec{\rel} v \Dec{\rel} j$ and~$i \Dec{\notrel} j$ contradicting the transitivity of~$\Dec{\rel}$. \qedhere
\end{enumerate}
\end{proof}

\begin{remark}
We observed earlier that the transitive closure~$\tc{\rel}$ is the coarsest transitive relation containing~$\rel$. For~$\rel \in \ISemiTrans$, Lemmas~\ref{lem:transDelOrder} and~\ref{lem:transDelTransitive} show that~$\tdd{\rel}$ is a transitive relation below~$\rel$ in weak order. However, there might be other transitive relations~$\rel[S]$ with~$\rel[S] \wole \rel$ and which are not comparable to~$\tdd{\rel}$ in weak order. For example, consider~$\rel \eqdef \{(1,3), (3,2)\}$ and~$\rel[S] \eqdef \{(1,2),(1,3),(3,2)\}$. Then~$\rel[S]$ is transitive and~$\rel[S] \wole \rel$ while~$\rel[S]$ is incomparable to~$\tdd{\rel} = \{(1,3)\}$ in weak order.
\end{remark}

We use the maps~$\rel \mapsto \tdd{\rel}$ and~$\rel \mapsto \tid{\rel}$ to obtain the main result of this section. \fref{fig:meetT} illustrates all steps of a meet computation in~$\ITrans(4)$.

\begin{figure}[t]
	\centerline{
    \begin{tabular}{c@{\qquad}c@{\qquad}c@{\quad}c@{\quad}c}
        $\rel[R] \in \ITrans(4)$ &
        $\rel[S] \in \ITrans(4)$ &
        $\rel[R] \meetR \rel[S] \notin \ISemiTrans(4)$ &
        $\rel[R] \meetST \rel[S] \in \ISemiTrans(4) \setminus \ITrans(4)$ &
        $\rel[R] \meetT \rel[S] \in \ITrans(4)$
        \\[-.3cm]
        \scalebox{0.8}{\input{relations/poset1.tex}} &
        \scalebox{0.8}{\input{relations/poset2.tex}} &
        \scalebox{0.8}{\input{relations/R1.tex}} & 
        \scalebox{0.8}{\input{relations/ST.tex}} &
        \scalebox{0.8}{\input{relations/poset3.tex}} 
    \end{tabular}
}
    \vspace{-.3cm}
	\caption{Two transitive relations~$\rel[R], \rel[S]$ and their meets~${\rel[R]} \meetR {\rel[S]}$, ${\rel[R]} \meetST {\rel[S]}$ and~${\rel[R]} \meetT {\rel[S]}$.}
	\label{fig:meetT}
	\vspace{-.4cm}
\end{figure}

\begin{proposition}
\label{prop:transitiveLattice}
The weak order~$(\ITrans(n), \wole)$ is a lattice whose meet and join are given by
\[
{\rel[R]} \meetT {\rel[S]} = \tdd{\big( \tc{(\Inc{\rel} \cup \Inc{\rel[S]})} \cup (\Dec{\rel} \cap \Dec{\rel[S]}) \big)}
\quad\text{and}\quad
{\rel[R]} \joinT {\rel[S]} = \tid{\big( (\Inc{\rel} \cap \Inc{\rel[S]}) \cup \tc{(\Dec{\rel} \cup \Dec{\rel[S]})} \big)}.
\]
\end{proposition}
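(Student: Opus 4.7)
The plan is to parallel Proposition~\ref{prop:semitransitiveLattice} and then correct the output with the transitive decreasing deletion $\tdd$ to restore transitivity. Since $\ITrans(n) \subseteq \ISemiTrans(n)$, the bracketed expression $\tc{(\Inc{\rel} \cup \Inc{\rel[S]})} \cup (\Dec{\rel} \cap \Dec{\rel[S]})$ is exactly the semitransitive meet $\rel[M] \eqdef {\rel[R]} \meetST {\rel[S]}$, and hence lies in $\ISemiTrans(n)$. Lemma~\ref{lem:transDelTransitive} then gives that $\tdd{\rel[M]}$ is transitive, while Lemma~\ref{lem:transDelOrder} combined with Proposition~\ref{prop:semitransitiveLattice} yields $\tdd{\rel[M]} \wole \rel[M] \wole \rel, \rel[S]$. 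Thus ${\rel[R]} \meetT {\rel[S]} \eqdef \tdd{\rel[M]}$ is a transitive lower bound of $\rel$ and $\rel[S]$ in weak order.

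The core step is to show that ${\rel[R]} \meetT {\rel[S]}$ is the \emph{greatest} such lower bound. I fix a transitive $\rel[T]$ with $\rel[T] \wole \rel, \rel[S]$; by Proposition~\ref{prop:semitransitiveLattice} applied to $\rel[T] \in \ISemiTrans(n)$, $\rel[T] \wole \rel[M]$. The increasing half of $\rel[T] \wole \tdd{\rel[M]}$ is then automatic, as $\Inc{\tdd{\rel[M]}} = \Inc{\rel[M]} \subseteq \Inc{\rel[T]}$. It remains to show that no $(b,a) \in \Dec{\rel[T]}$ is deleted by $\tdd$ from $\rel[M]$. Assume for contradiction that such $(b,a)$ admits witnesses $i \le b$ and $j \ge a$ with $i \rel[M] b$, $a \rel[M] j$, and $i \notrel[M] j$. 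Since $\Inc{\rel[T]} \supseteq \Inc{\rel[M]}$, the increasing chains certifying $i \rel[M] b$ and $a \rel[M] j$ propagate to $\rel[T]$, and transitivity of $\rel[T]$ applied to $i \rel[T] b \rel[T] a \rel[T] j$ gives $i \rel[T] j$.

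The cases $i = j$ (reflexivity of $\rel[M]$) and $i > j$ (yielding $(i,j) \in \Dec{\rel[T]} \subseteq \Dec{\rel} \cap \Dec{\rel[S]} = \Dec{\rel[M]}$) both immediately contradict $i \notrel[M] j$. The delicate case is $i < j$, where $i \Inc{\rel[T]} j$ does not lie in $\rel[M]$ a priori. For it, I would argue by analysing the positions of $i, a, b, j$: locating the crossing $v_{l^*-1} < a \le v_{l^*}$ of $a$ with an increasing chain $i = v_0 < v_1 < \cdots < v_k = b$ in $\Inc{\rel} \cup \Inc{\rel[S]}$ witnessing $i \rel[M] b$ (or the symmetric crossing of $b$ on the $a \rel[M] j$ side), transitivity of $\rel[T]$ applied to $v_{l^*} \rel[T] b \rel[T] a$ produces $(v_{l^*}, a) \in \Dec{\rel[T]} \subseteq \Dec{\rel} \cap \Dec{\rel[S]}$; then transitivity of $\rel$ (or $\rel[S]$, depending on which of the two contains the edge $(v_{l^*-1}, v_{l^*})$) upgrades this edge to $(v_{l^*-1}, a) \in \Inc{\rel} \cup \Inc{\rel[S]}$. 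Splicing yields a chain $i = v_0 < \cdots < v_{l^*-1} < a$ in $\Inc{\rel} \cup \Inc{\rel[S]}$, hence $(i, a) \in \tc{(\Inc{\rel} \cup \Inc{\rel[S]})}$, and composing with $(a, j)$ gives $(i, j) \in \Inc{\rel[M]}$, contradicting $i \notrel[M] j$.

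The formula for $\joinT$ follows by the dual argument using $\tid$ in place of $\tdd$, or formally from the $\meetT$ case via the reverse antiautomorphism of Remark~\ref{remark:reverse}, which swaps $\Inc{}$ and $\Dec{}$ and preserves transitivity. The main obstacle is the case analysis in the previous paragraph: one has to verify in every geometric configuration that no ``V-shaped'' derived chain $i \to \cdots \to b \to a \to \cdots \to j$ can survive in a transitive $\rel[T]$ below $\rel$ and $\rel[S]$ without either violating $\Dec{\rel[T]} \subseteq \Dec{\rel} \cap \Dec{\rel[S]}$ or collapsing, under transitivity of $\rel$ and $\rel[S]$, to an honest increasing chain from $i$ to $j$ in $\Inc{\rel} \cup \Inc{\rel[S]}$.
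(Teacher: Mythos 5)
Your proof is correct and reaches the conclusion by a genuinely different route from the paper's. Both arguments agree up to the point where one must show that a transitive~$\rel[T]$ with~$\rel[T] \wole \rel$ and~$\rel[T] \wole \rel[S]$ satisfies~$\rel[T] \wole \tdd{\rel[M]}$ (where~$\rel[M] \eqdef \rel \meetST \rel[S]$), and both start from a pair~$(b,a) \in \Dec{\rel[T]}$ deleted by the transitive decreasing deletion, together with its witnesses~$i \le b$ and~$j \ge a$. The paper then argues by minimality: it picks~$(b,a)$ with~$b-a$ minimal and invokes Lemma~\ref{lem:simplifytdd} to manufacture a strictly smaller offending pair~$(k,a) \in \Dec{\rel[T]} \ssm \tdd{\rel[M]}$, a contradiction. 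You instead derive a direct contradiction with~$i \notrel[M] j$, with no minimality or induction: you push the witnesses into~$\rel[T]$ to get~$i \rel[T] j$, dispose of the cases~$i \ge j$ immediately, and in the case~$i < j$ perform chain surgery --- pull a chain vertex down past the obstruction using~$\Dec{\rel[T]} \subseteq \Dec{\rel} \cap \Dec{\rel[S]}$ and transitivity of~$\rel$ or~$\rel[S]$, then splice to exhibit~$(i,j) \in \tc{(\Inc{\rel} \cup \Inc{\rel[S]})} = \Inc{\rel[M]}$. This buys a self-contained, induction-free argument at the cost of a configuration analysis; the paper's minimality trick avoids the configurations but relies on the technical Lemma~\ref{lem:simplifytdd}, which it reuses elsewhere. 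One caveat: the two crossings you name (of~$a$ with the chain from~$i$ to~$b$, which requires~$i < a$, and of~$b$ with the chain from~$a$ to~$j$, which requires~$j > b$) do not cover the configuration~$a \le i < j \le b$; there you must instead locate the crossing of~$j$ with the chain~$i = v_0 < \dots < v_k = b$ (or, symmetrically, of~$i$ with the chain from~$a$ to~$j$). The identical pull-down-and-splice step closes it: from~$v_{\ell-1} < j < v_\ell$ and~$v_\ell \rel[T] b \rel[T] a \rel[T] j$ one gets~$(v_\ell, j) \in \Dec{\rel} \cap \Dec{\rel[S]}$, hence~$(v_{\ell-1}, j) \in \Inc{\rel} \cup \Inc{\rel[S]}$, and the spliced chain gives~$(i,j) \in \Inc{\rel[M]}$. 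So this is an omission of bookkeeping rather than of an idea, and you flag it yourself as the remaining verification.
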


Before proving the proposition, we state the following technical lemma which we will used repeatedly in our proofs.

\begin{lemma}
\label{lem:simplifytdd}
Let~$\rel[R]$ and~$\rel[S]$ be two transitive relations, let~$\rel[M] = \rel[R] \meetST \rel[S]$, and let~$1 \le a < b \le n$ such that~$b \rel[M] a$ and $b \notrel[\tdd{M}] a$. By definition of $\tdd{\rel[M]}$, there exist~$i \le b$ and~$j \ge a$ such that~$i \rel[M] b \rel[M] a \rel[M] j$ while $i \notrel[M] j$. Then we have
\begin{itemize}
\item either $i \ne b$ or $j \ne a$,
\item if $i \ne b$, there is~$a < k < b$ such that $i \rel[M] k \rel[M] b$ with $(k,b) \in \rel[R] \cup \rel[S]$ and $k \tdd{\notrel[M]} a$,
\item if $j \ne a$, there is~$a < k < b$ such that $a \rel[M] k \rel[M] j$ with $(a,k) \in \rel[R] \cup \rel[S]$ and $b \tdd{\notrel[M]} k$,
\end{itemize}
Besides if $\rel$ and $\rel[S]$ are also antisymmetric, then in both cases, $b \tdd{\notrel[M]} k \tdd{\notrel[M]} a$.
\end{lemma}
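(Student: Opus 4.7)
The plan is to unpack the definition of the semitransitive meet $\rel[M] = \rel \meetST \rel[S]$, namely $\Inc{\rel[M]} = \tc{(\Inc{\rel} \cup \Inc{\rel[S]})}$ and $\Dec{\rel[M]} = \Dec{\rel} \cap \Dec{\rel[S]}$, and to extract $k$ as a well-chosen vertex of a chain witnessing membership in this transitive closure. The first bullet is immediate: if $i = b$ and $j = a$, the hypothesis $i \notrel[M] j$ would read $b \notrel[M] a$, contradicting $b \rel[M] a$.

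For the second bullet, assume $i \ne b$, so $i < b$ and $(i,b) \in \Inc{\rel[M]} = \tc{(\Inc{\rel} \cup \Inc{\rel[S]})}$. I would fix a strictly increasing chain $i = v_0 < v_1 < \cdots < v_p = b$ with $p \ge 1$ whose edges all lie in $\Inc{\rel} \cup \Inc{\rel[S]}$ (repeated reflexive steps can be skipped), and set $k \eqdef v_{p-1}$. Three of the four required properties follow directly from the chain: $k < b$, $(k,b) \in \rel \cup \rel[S]$, $k \rel[M] b$, and $i \rel[M] k$ (via the sub-chain $v_0, \dots, v_{p-1}$, or trivially by reflexivity when $p = 1$). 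The main obstacle is the strict inequality $k > a$, which I plan to establish by contradiction. Suppose $k \le a$; without loss of generality $(k,b) \in \Inc{\rel}$, so $k \rel b$, and since $(b,a) \in \Dec{\rel} \cap \Dec{\rel[S]}$ gives $b \rel a$, the transitivity of $\rel$ yields $k \rel a$, hence $(k,a) \in \Inc{\rel} \subseteq \Inc{\rel[M]}$. Together with $(i,k), (a,j) \in \Inc{\rel[M]}$ (the latter from $a \rel[M] j$ and $a \le j$) and the chain of natural-order inequalities $i \le k \le a \le j$, the transitivity of $\Inc{\rel[M]}$ forces $(i,j) \in \Inc{\rel[M]} \subseteq \rel[M]$, contradicting $i \notrel[M] j$.

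Once $a < k < b$ is secured, the condition $k \tdd{\notrel[M]} a$ comes essentially for free: either $(k,a) \notin \rel[M]$ (so a fortiori $(k,a) \notin \tdd{\rel[M]}$), or the original pair $(i,j)$ itself witnesses $(k,a)$ lying in the deletion set defining $\tdd{\rel[M]}$, since $i \le k$, $j \ge a$, $i \rel[M] k \rel[M] a \rel[M] j$, and $i \notrel[M] j$. The third bullet follows by the mirror argument on a strictly increasing chain $a = u_0 < u_1 < \cdots < u_q = j$ witnessing $(a,j) \in \Inc{\rel[M]}$, taking $k \eqdef u_1$ and establishing $k < b$ by the same contradiction (if $k \ge b$, transitivity produces $(b,k) \in \Inc{\rel[M]}$, and then $(i,b), (b,k), (k,j) \in \Inc{\rel[M]}$ force $(i,j) \in \rel[M]$).

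For the antisymmetry addendum, assume $\rel$ and $\rel[S]$ are antisymmetric. In the second-bullet case, $(k,b) \in \rel \cup \rel[S]$ with $k \ne b$ forces $b \notrel k$ (if $k \rel b$) or $b \notrel[S] k$ (if $k \rel[S] b$), so $(b,k) \notin \Dec{\rel} \cap \Dec{\rel[S]} = \Dec{\rel[M]}$; since $b > k$ this means $(b,k) \notin \rel[M]$, whence $b \tdd{\notrel[M]} k$. The symmetric argument from $(a,k) \in \rel \cup \rel[S]$ yields $k \tdd{\notrel[M]} a$ in the third-bullet case. Combined with the ``free'' direction already proved in each bullet, both cases yield $b \tdd{\notrel[M]} k \tdd{\notrel[M]} a$.
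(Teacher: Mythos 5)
Your proof is correct and follows essentially the same route as the paper's: extract $k$ as the penultimate vertex of a chain witnessing $(i,b)\in\tc{(\Inc{\rel}\cup\Inc{\rel[S]})}$ (resp.\ the second vertex of a chain for $(a,j)$), rule out $k\le a$ (resp.\ $k\ge b$) via transitivity of $\rel$ and of $\Inc{\rel[M]}$ against $i \notrel[M] j$, reuse the pair $(i,j)$ to show $(k,a)$ (resp.\ $(b,k)$) is deleted by $\tdd{(\cdot)}$, and invoke antisymmetry on the edge $(k,b)$ (resp.\ $(a,k)$) for the addendum. The only differences are presentational: you make the chain decomposition and the case split on whether $(k,a)\in\rel[M]$ explicit, where the paper argues the same point by contradiction.
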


\begin{proof}
Since~$b \rel[M] a$ and $i \notrel[M] j$, we cannot have both~$i = b$ and~$j = a$. By symmetry, we can assume that~$i \ne b$. Since~$(i,b) \in \Inc{\rel[M]} = \tc{(\Inc{\rel} \cup \Inc{\rel[S]})}$, there exists~$i \le k < b$ such that~$(i,k) \in \Inc{\rel[M]}$ and~${(k,b) \in \Inc{\rel} \cup \Inc{\rel[S]}}$. Assume without loss of generality that~$k \Inc{\rel} b$. We obtain that~$k \rel b \rel a$ and thus that~$k \rel a$ by transitivity of~$\rel$. We want to prove that $k > a$.

Assume that~$k \le a$. We then have~$(k,a) \in \Inc{\rel} \subseteq \Inc{\rel[M]}$ and thus that~${i \Inc{\rel[M]} k \Inc{\rel[M]} a \Inc{\rel[M]} j}$ while~$i \Inc{\notrel[M]} j$ contradicting the transitivity of~$\Inc{\rel[M]}$. We then have $a < k < b$.  There is left to prove that $k \tdd{\notrel[M]} a$. Suppose that we have $k \tdd{\rel[M]} a$, then we have $i \rel[M] k \rel[M] a \rel[M] j$ which implies $i \rel[M] j$ because $(k,a)$ is not deleted by the transitive decreasing deletion. This contradicts our initial statement $i \notrel[M] j$.

Besides, if $\rel$ is antisymmetric, then $k \rel b$ implies $b \notrel k$ which in turns gives $b \tdd{\notrel[M]} k$.
\end{proof}

\begin{proof}[Proof of Proposition~\ref{prop:transitiveLattice}]
The weak order~$(\ITrans(n), \wole)$ is a subposet of~$(\IRel(n), \wole)$. It is also clearly bounded: the weak order minimal transitive relation is~$\rel[I]_n = \set{(a,b) \in [n]^2}{a \le b}$ while the weak order maximal transitive relation is~$\rel[D]_n = \set{(b,a) \in [n]^2}{a \le b}$. Therefore, we only have to show that any two transitive relations admit a meet and a join. We prove the result for the meet, the proof for the join being symmetric.

Let~$\rel, \rel[S] \in \ITrans(n)$ and~$\rel[M] \eqdef {\rel[R]} \meetST {\rel[S]} = \tc{(\Inc{\rel} \cup \Inc{\rel[S]})} \cup ( \Dec{\rel} \cap \Dec{\rel[S]})$, so that~${\rel[R]} \meetT {\rel[S]} = \tdd{\rel[M]}$. First we have~$\rel[M] \wole \rel$ so that~${\rel[R]} \meetT {\rel[S]} = \tdd{\rel[M]} \wole \rel[M] \wole \rel$ by Lemma~\ref{lem:transDelOrder}. Similarly, ${\rel[R]} \meetT {\rel[S]} \wole \rel[S]$. Moreover, ${\rel[R]} \meetT {\rel[S]}$ is transitive by Lemma~\ref{lem:transDelTransitive}. It thus remains to show that~${\rel[R]} \meetT {\rel[S]}$ is larger than any other transitive relation smaller than both~$\rel$ and~$\rel[S]$.

Consider thus another transitive relation~$\rel[T] \in \ITrans(n)$ such that~$\rel[T] \wole \rel$ and~$\rel[T] \wole \rel[S]$. We need to show that~$\rel[T] \wole {\rel[R]} \meetT {\rel[S]} = \tdd{\rel[M]}$. Observe that~$\rel[T] \wole \rel[M]$ since~$\rel[T]$ is semitransitive and~$\rel[M] = {\rel[R]} \meetST {\rel[S]}$ is larger than any semitransitive relation smaller than both~$\rel$ and~$\rel[S]$. It implies in particular that~$\Inc{\rel[T]} \supseteq \Inc{\rel[M]} = \Inc{(\tdd{\rel[M]})}$ and that~$\Dec{\rel[T]} \subseteq \Dec{\rel[M]}$.

Assume by contradiction that~$\rel[T] \not\wole \tdd{\rel[M]}$. Since~$\Inc{\rel[T]} \supseteq \Inc{(\tdd{\rel[M]})}$, this means that there exist~$(b,a) \in \Dec{\rel[T]} \ssm \tdd{\rel[M]}$. We choose $(b,a) \in \Dec{\rel[T]} \ssm \tdd{\rel[M]}$ such that~$b-a$ is minimal. Since~${\Dec{\rel[T]} \subseteq \Dec{\rel[M]}}$, we have~$(b,a) \in \Dec{\rel[M]} \ssm \tdd{\rel[M]}$. By definition of~$\tdd{\rel[M]}$, there exists~$i \le b$ and $j \ge a$ such that~$i \rel[M] b \rel[M] a \rel[M] j$ while~$i \notrel[M] j$. We use Lemma~\ref{lem:simplifytdd} and assume without loss of generality that there is $a < k < b$ with $(k,b) \in \rel \cup \rel[S]$ and $k \tdd{\notrel[M]} a$. Since~$(k,b) \in ( \Inc{\rel} \cup \Inc{\rel[S]} ) \subseteq \Inc{\rel[T]}$ and~$b \rel[T] a$ we have~$k \rel[T] a$ by transitivity of~$\rel[T]$. Since~$k > a$, we get~$(k,a) \in \Dec{\rel[T]} \subseteq \Dec{\rel[M]}$. But by Lemma~\ref{lem:simplifytdd}, $(k,a) \notin \tdd{\rel[M]}$: it has been deleted by the transitive decreasing deletion and thus contradicts the minimality of $b - a$.
\end{proof}

\begin{remark}
In contrast to Propositions~\ref{prop:coverRelationsAntisymmetric} and~\ref{prop:coverRelationsSemitransitive} and Corollaries~\ref{coro:coverRelationsAntisymmetric} and~\ref{coro:coverRelationsSemitransitive}, the cover relations in~$(\ITrans(n), \wole)$ are more complicated to describe. In fact, the lattice~$(\ITrans(n), \wole)$ is not graded as soon as~$n \ge 3$. Indeed, consider the maximal chains from~$\rel[I]_3$ to~$\rel[D]_3$ in~$(\ITrans(3), \wole)$. Those chains passing through the trivial reflexive relation~$\set{(i,i)}{i \in [n]}$ all have length~$6$, while those passing through the full relation~$[3]^2$ all have length~$4$.
\end{remark}


\subsection{Integer posets}
\label{subsec:integerPosets}

\begin{figure}[t]
	\centerline{\includegraphics[scale=.7]{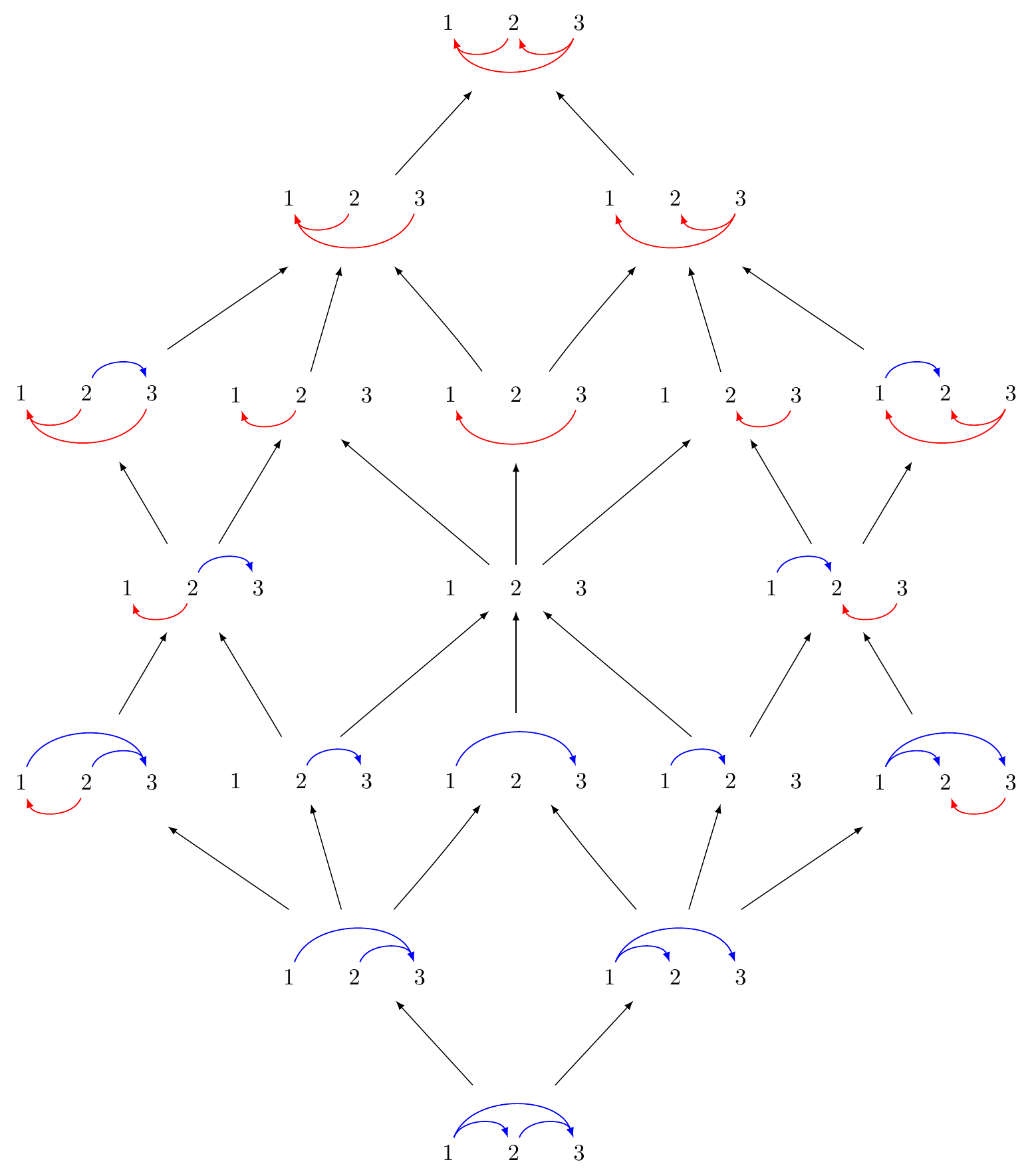}}
	\vspace{-.3cm}
	\caption{The weak order on integer posets of size~$3$.}
	\label{fig:weakOrderPosets}
	\vspace{-.4cm}
\end{figure}

We finally arrive to the subposet of the weak order induced by integer posets. The weak order on~$\IPos(3)$ is illustrated in \fref{fig:weakOrderPosets}. We now have all tools to show Theorem~\ref{thm:main} announced in the introduction.

\begin{proposition}
\label{prop:posetLattice}
The transitive meet~$\meetT$ and the transitive join~$\joinT$ both preserve antisymmetry. In other words, $(\IPos(n), \wole, \meetT, \joinT)$ is a sublattice of~$(\ITrans(n), \wole, \meetT, \joinT)$.
\end{proposition}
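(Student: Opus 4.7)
The plan is to prove the statement for the meet; the argument for the join is entirely symmetric (replacing $\tdd$ by $\tid$, swapping roles of $\Inc{}$ and $\Dec{}$). So fix $\rel[R], \rel[S] \in \IPos(n)$ and set $\rel[M] \eqdef \rel[R] \meetST \rel[S] = \tc{(\Inc{\rel} \cup \Inc{\rel[S]})} \cup (\Dec{\rel} \cap \Dec{\rel[S]})$, so that $\rel[R] \meetT \rel[S] = \tdd{\rel[M]}$. We already know from Proposition~\ref{prop:transitiveLattice} that $\tdd{\rel[M]}$ is transitive, so all that remains is antisymmetry. I will argue by contradiction: assume there exist $a < b$ with $(a,b)$ and $(b,a)$ both in $\tdd{\rel[M]}$, and then exhibit a witness $(i,j)$ showing that $(b,a)$ must have been deleted by $\tdd$, contradicting $(b,a) \in \tdd{\rel[M]}$.

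Since $\tdd$ only removes decreasing pairs, $(a,b) \in \tdd{\rel[M]}$ forces $(a,b) \in \Inc{\rel[M]} = \tc{(\Inc{\rel} \cup \Inc{\rel[S]})}$, so there is a strictly increasing chain $a = v_0 < v_1 < \dots < v_k = b$ with each $(v_{\ell}, v_{\ell+1}) \in \Inc{\rel} \cup \Inc{\rel[S]}$. Simultaneously, $(b,a) \in \tdd{\rel[M]} \subseteq \Dec{\rel[M]} = \Dec{\rel} \cap \Dec{\rel[S]}$ gives $b \rel a$ and $b \rel[S] a$. I split on the length of the chain. If $k = 1$, then $(a,b)$ is a direct edge of $\rel$ or $\rel[S]$, say $a \rel b$; combined with $b \rel a$, this violates antisymmetry of~$\rel$, a contradiction. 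If $k \ge 2$, then the intermediate vertex $v_1$ satisfies $a < v_1 < b$, so it is a genuine candidate for the witness.

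Concretely, I will use the pair $i \eqdef v_1$ and $j \eqdef a$ in the definition of $\tdd$. The inequalities $i \le b$ and $j \ge a$ are immediate. We have $i \rel[M] b$ via the subchain $v_1 < v_2 < \dots < v_k = b$, we have $b \rel[M] a$ by assumption, and $a \rel[M] j = a$ by reflexivity. The key point is to verify $i \notrel[M] j$, i.e.\ $(v_1, a) \notin \Dec{\rel} \cap \Dec{\rel[S]}$: assuming without loss of generality that $(a, v_1) \in \Inc{\rel}$ (the other case being symmetric), antisymmetry of $\rel$ forces $v_1 \notrel a$, hence $(v_1, a) \notin \Dec{\rel}$, hence $(v_1, a) \notin \rel[M]$. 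Thus $(b,a)$ satisfies the deletion condition of $\tdd$, contradicting $(b,a) \in \tdd{\rel[M]}$. This finishes the proof for the meet.

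The main obstacle is really the case $k \ge 2$: one must identify the right witness pair for the $\tdd$-deletion, and the natural choices $i=a,j=b$ or $i=b,j=a$ fail because reflexivity or the very relation $(a,b)$ prevents $i \notrel[M] j$. The trick is to step one link into the chain, exploit antisymmetry of the original poset to cut the decreasing relation $(v_1,a)$ out of $\rel[M]$, and thereby certify that $(b,a)$ is not in $\tdd{\rel[M]}$. The symmetric argument (using $\tid$ applied to ${(\Inc{\rel} \cap \Inc{\rel[S]}) \cup \tc{(\Dec{\rel} \cup \Dec{\rel[S]})}}$, a strictly decreasing chain from $b$ to $a$, and the intermediate vertex adjacent to $b$) handles the join.
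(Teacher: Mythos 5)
Your proof is correct, and it reaches the contradiction by a different mechanism than the paper's. Both arguments begin identically: a violation of antisymmetry, say $(a,b),(b,a)\in\tdd{\rel[M]}$ with $a<b$, forces $(a,b)\in\Inc{\rel[M]}=\tc{(\Inc{\rel}\cup\Inc{\rel[S]})}$ and $(b,a)\in\Dec{\rel[M]}=\Dec{\rel}\cap\Dec{\rel[S]}$, and antisymmetry of $\rel$ and $\rel[S]$ rules out the case where $(a,b)$ is a single edge of $\Inc{\rel}\cup\Inc{\rel[S]}$. From there the paper takes the violating pair with difference minimal, extracts an intermediate vertex, and uses the transitivity of $\tdd{\rel[M]}$ (Lemma~\ref{lem:transDelTransitive}) to manufacture a strictly smaller violating pair, contradicting minimality. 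You instead exhibit an explicit witness $(i,j)=(v_1,a)$ certifying that $(b,a)$ satisfies the deletion condition of the transitive decreasing deletion: the checks $v_1\le b$, $a\ge a$, $v_1\rel[M] b$ (via the tail of the chain), $b\rel[M] a$, $a\rel[M] a$ (reflexivity, which the paper explicitly permits for the witness), and $v_1\notrel[M] a$ (by antisymmetry of whichever of $\rel,\rel[S]$ contains the first chain edge $(a,v_1)$, since $(v_1,a)$ could only enter $\rel[M]$ through $\Dec{\rel}\cap\Dec{\rel[S]}$) are all valid, so $(b,a)\notin\tdd{\rel[M]}$ --- a direct contradiction needing neither a minimality argument nor the transitivity of $\tdd{\rel[M]}$. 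Your route is more self-contained for this step, as it only unwinds the definition of the deletion, whereas the paper leverages the already-established transitivity of the meet; your indication of the join case, taking the witness adjacent to $b$ in the decreasing chain for $(b,a)$, is the correct mirror image.
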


\begin{proof}
Let~$\rel, \rel[S] \in \IPos(n)$. Let~$\rel[M] \eqdef \rel \meetST \rel[S] = \tc{(\Inc{\rel} \cup \Inc{\rel[S]})} \cup ( \Dec{\rel} \cap \Dec{\rel[S]})$, so that~${\rel[R]} \meetT {\rel[S]} = \tdd{\rel[M]}$. Assume that~$\tdd{\rel[M]}$ is not antisymmetric. Let~$a < c \in [n]$ be such that~$\{(a,c), (c,a)\} \subseteq \tdd{\rel[M]}$ with $c-a$ minimal. Since~$(c,a) \in \Dec{(\tdd{\rel[M]})} \subseteq \Dec{\rel[M]} = \Dec{\rel} \cap \Dec{\rel[S]}$, we have~$(a,c) \notin \Inc{\rel} \cup \Inc{\rel[S]}$ by antisymmetry of~$\rel$ and~$\rel[S]$. Since~$(a,c) \in \tc{(\Inc{\rel} \cup \Inc{\rel[S]})} \ssm (\Inc{\rel} \cup \Inc{\rel[S]})$, there exists~$a < b < c$ such that~${\{(a,b), (b,c)\} \subseteq \tc{(\Inc{\rel} \cup \Inc{\rel[S]})}}$. Since~$c \tdd{\rel[M]} a \tdd{\rel[M]} b$, we obtain by transitivity of~$\tdd{\rel[M]}$ that~$\{(b,c), (c,b)\} \subseteq \tdd{\rel[M]}$, contradicting the minimality of~$c-a$.
\end{proof}

\begin{remark}
In contrast, there is no guarantee that the semitransitive meet of two transitive antisymmetric relations is antisymmetric. For example in \fref{fig:meetT}, $\rel[R]$ and $\rel[S]$ are antisymmetric but ${\rel[M]} = {\rel[R]} \meetST {\rel[S]}$ is not as it contains both~$(1,3)$ and~$(3,1)$. However, the relation~$(3,1)$ is removed by the transitive decreasing delation and the result $\tdd{\rel[M]} = {\rel[R]} \meetT {\rel[S]}$ is antisymmetric. 
\end{remark}

As in Propositions~\ref{prop:coverRelationsAntisymmetric} and~\ref{prop:coverRelationsSemitransitive} and Corollaries~\ref{coro:coverRelationsAntisymmetric} and~\ref{coro:coverRelationsSemitransitive}, the next two statements describe all cover relations in~$(\IPos(n), \wole)$.

\begin{proposition}
All cover relations in~$(\IPos(n), \wole)$ are cover relations in~$(\IRel(n), \wole)$. In particular, $(\IPos(n), \wole)$ is still graded by~$\rel \mapsto |\Dec{\rel}|-|\Inc{\rel}|$.
\end{proposition}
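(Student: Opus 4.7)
The plan is to mimic Propositions~\ref{prop:coverRelationsAntisymmetric} and~\ref{prop:coverRelationsSemitransitive}: starting from a cover $\rel \wole \rel[S]$ in $(\IPos(n), \wole)$, I will exhibit a poset sitting strictly between $\rel$ and $\rel[S]$ in the weak order, unless these two relations already differ by exactly one pair. The grading statement then follows immediately, since each such atomic change modifies $|\Dec{\rel}| - |\Inc{\rel}|$ by exactly~$1$.

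Setting $Y \eqdef \Inc{\rel} \ssm \Inc{\rel[S]}$ and $X \eqdef \Dec{\rel[S]} \ssm \Dec{\rel}$, the inclusions $\Inc{\rel} \supseteq \Inc{\rel[S]}$ and $\Dec{\rel} \subseteq \Dec{\rel[S]}$ imply $\rel \ssm \rel[S] = Y$ and $\rel[S] \ssm \rel = X$, so at least one of $X,Y$ is nonempty. I will treat the case $Y \ne \emptyset$ in detail; the case $Y = \emptyset$, $X \ne \emptyset$ is perfectly symmetric.

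Assume $Y \ne \emptyset$. The goal is to produce some $(a,b) \in Y$ that is a cover in the Hasse diagram of the poset $\rel$, \ie such that no $v \ne a,b$ satisfies $a \rel v \rel b$. For such a pair, the relation $\rel[T] \eqdef \rel \ssm \{(a,b)\}$ remains antisymmetric as a subset of an antisymmetric relation, and remains transitive precisely because $(a,b)$ was a Hasse cover of $\rel$; hence $\rel[T] \in \IPos(n)$. Moreover, directly from the definition of the weak order and from $(a,b) \in Y$, one checks $\rel \wole \rel[T] \wole \rel[S]$ with $\rel \ne \rel[T]$, so that the cover hypothesis forces $\rel[T] = \rel[S]$. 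This gives $\rel[S] = \rel \ssm \{(a,b)\}$ (and in passing $X = \emptyset$), which is a cover in $(\IRel(n), \wole)$.

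The main obstacle is the extraction of such a Hasse cover inside~$Y$. My plan is to iterate: start from any $(a,b) \in Y$; if it is not a Hasse cover of $\rel$, pick an intermediate $v \ne a,b$ with $a \rel v \rel b$. The transitivity of $\rel[S]$ prevents both $(a,v)$ and $(v,b)$ from lying in $\rel[S]$, and the inclusion $\Dec{\rel} \subseteq \Dec{\rel[S]}$ forces every element of $\rel \ssm \rel[S]$ to be increasing, so the missing half lies again in~$Y$. Taking the length of the longest $\rel$-chain between the two coordinates as a strictly decreasing measure along this iteration, the process terminates at a Hasse cover of $\rel$ inside~$Y$, closing the argument.
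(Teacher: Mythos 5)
Your proof is correct, and it reaches the same key fact as the paper --- namely that $\Inc{\rel} \ssm \Inc{\rel[S]}$ (resp.\ $\Dec{\rel[S]} \ssm \Dec{\rel}$) contains a pair that is a cover in the Hasse diagram of~$\rel$ (resp.\ of~$\rel[S]$), so that deleting (resp.\ inserting) it yields a poset strictly between $\rel$ and $\rel[S]$ --- but by a genuinely different route. The paper selects the pair by a double extremal choice: it first restricts to pairs $(a,b) \in \Inc{\rel} \ssm \Inc{\rel[S]}$ admitting no intermediate $a < i < b$ with $a \rel i \rel b$, then takes $b-a$ maximal among those, and rules out intermediates $j$ lying outside the natural-order interval $[a,b]$ by a contradiction that combines minimality of $a-j$, maximality of $b-a$, and the transitivity of~$\rel[S]$. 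You instead run a well-founded descent: whenever $a \rel v \rel b$ breaks the Hasse-cover property, transitivity of~$\rel[S]$ and $\Dec{\rel} \subseteq \Dec{\rel[S]}$ show that one of the two halves $(a,v)$, $(v,b)$ again lies in $\Inc{\rel} \ssm \Inc{\rel[S]}$, and the longest-$\rel$-chain length between the coordinates strictly drops (any chain from $a$ to $v$ extends by $b$, using antisymmetry to keep the elements distinct), so the process terminates at a Hasse cover. Your descent is arguably more transparent and makes the transitivity of $\rel \ssm \{(a,b)\}$ immediate, whereas the paper's extremal choice pins down a concrete pair in one step at the cost of a more delicate case analysis; both arguments use the transitivity of $\rel[S]$ in an essential and analogous way, and your handling of the symmetric decreasing case and of the grading consequence is sound.
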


\begin{proof}
Consider a cover relation~$\rel \wole \rel[S]$ in~$(\IPos(n), \wole)$. We have~$\Inc{\rel} \supseteq \Inc{\rel[S]}$ and~$\Dec{\rel} \subseteq \Dec{\rel[S]}$ where at least one of the inclusions is strict. Suppose first that~$\Inc{\rel} \ne \Inc{\rel[S]}$. Consider the set~$\rel[X] \eqdef \set{(a,b) \in \Inc{\rel} \ssm \Inc{\rel[S]}}{\not\!\exists \; a < i < b \text{ with } a \rel i \rel b}$. This set~$\rel[X]$ is nonempty as it contains any~$(a,b)$ in~$\Inc{\rel} \ssm \Inc{\rel[S]}$ with~$b-a$ minimal. Consider now~$(a,b) \in \rel[X]$ with~$b-a$ maximal and let~$\rel[T] \eqdef \rel \ssm \{(a,b)\}$. We claim that~$\rel[T]$ is still a poset. It is clearly still reflexive and antisymmetric. For transitivity, assume by means of contradiction that there is~$j \in [n] \ssm \{a,b\}$ such that~$a \rel j \rel b$. Since~$(a,b) \in \rel[X]$, we know that~$j < a$ or~$b < j$. As these two options are symmetric, assume for instance that~$j < a$ and choose~$j$ so that~$a-j$ is minimal. We claim that there is no~$j < i < b$ such that~$j \rel i \rel b$. Otherwise, since~$a \rel j \rel i$ and~$\rel$ is transitive, we have~$a \rel i \rel b$. Now, if~$i < a$, we have~$a \rel i \rel b$ and~$j < i < a$ contradicting the minimality of~$a-j$ in our choice of~$j$. If~$i > a$, we have~$a \rel i \rel b$ and~$a < i < b$ contradicting the fact that~$(a,b) \in \rel[X]$. Finally, if~$i = a$, we have~$a \rel j \rel a$ contradicting the antisymmetry of~$\rel$. This proves that there is no~$j < i < b$ such that~$j \rel i \rel b$. By maximality of~$b-a$ in our choice of~$(a,b)$ this implies that~$j \rel[S] b$. Since~$(a,j) \in \Dec{\rel} \subseteq \Dec{\rel[S]}$, we therefore obtain that~$a \rel[S] j \rel[S] b$ while~$a \notrel[S] b$, contradicting the transitivity of~$\rel[S]$. This proves that~$\rel[T]$ is transitive and it is thus a poset. Moreover, we have~$\rel \ne \rel[T]$ and~$\rel \wole \rel[T] \wole \rel[S]$. Since~$\rel[S]$ covers~$\rel$, this implies that~$\rel[S] = \rel[T] = \rel \ssm \{(a,b)\}$. We prove similarly that if~$\Dec{\rel} \ne \Dec{\rel[S]}$, there exists~$(b,a)$ such that~$\rel[S] = \rel \cup \{(b,a)\}$. In both cases, $\rel \wole \rel[S]$ is a cover relation in~$(\IRel(n), \wole)$.
\end{proof}

\begin{corollary}
In the weak order~$(\IPos(n), \wole)$, the posets that cover a given integer poset~$\rel \in \IPos(n)$ are precisely the posets
\begin{itemize}
\item the relations~$\rel \ssm \{(a,b)\}$ for~$a < b$ such that~$a \rel b$ and there is no~$i \in [n]$ with~$a \rel i \rel b$,
\item the relations~$\rel \cup \{(b,a)\}$ for~$a < b$ such that~$a \notrel b$ and~$b \notrel a$ and there is no~$i \ne a$ with~$a \rel i$ but~$b \notrel i$ and similarly no~$j \ne b$ with~$j \rel b$ but~$j \notrel a$.
\end{itemize}
\end{corollary}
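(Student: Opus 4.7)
The plan is to derive the corollary directly from the preceding proposition combined with the description of cover relations in~$(\IRel(n),\wole)$ recorded inside the proof of Proposition~\ref{prop:reflexiveLattice}. That description says that every cover $\rel \wole \rel[S]$ in~$(\IRel(n),\wole)$ has exactly one of two shapes: the removal shape $\rel[S] = \rel \ssm \{(a,b)\}$ with $a<b$ and $a \rel b$, or the addition shape $\rel[S] = \rel \cup \{(b,a)\}$ with $a<b$ and $b \notrel a$. Since the preceding proposition forces every cover in~$(\IPos(n),\wole)$ to already be a cover in~$(\IRel(n),\wole)$, and conversely, as soon as the resulting $\rel[S]$ is a poset nothing can lie strictly between~$\rel$ and~$\rel[S]$ in~$\IPos(n)$ (nothing lies strictly between them even in~$\IRel(n)$), the entire statement reduces to characterizing, for each shape, when $\rel[S]$ is again a poset.

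For the removal shape, I would observe that reflexivity is unaffected (since $a \ne b$) and that antisymmetry is inherited from~$\rel$, so only transitivity can fail. Since~$\rel$ is transitive, any failure $u \rel[S] v \rel[S] w$ with $u \notrel[S] w$ must use the deleted pair, forcing $(u,w) = (a,b)$ and the existence of some intermediate $i \notin \{a,b\}$ with $a \rel i \rel b$. Hence $\rel \ssm \{(a,b)\}$ is a poset if and only if no such intermediate~$i$ exists, which is exactly the first item (understanding~$i$ there as distinct from~$a$ and~$b$, as is needed for the reflexive relation not to trivialize the condition).

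For the addition shape, reflexivity is automatic and antisymmetry forces $a \notrel b$ (the condition $b \notrel a$ is already built into the shape). Since~$\rel$ is transitive, every new transitivity failure in~$\rel[S]$ must involve the inserted pair~$(b,a)$, and the two minimal ways this can happen are: some~$j \rel b$ together with $b \rel[S] a$ forcing $j \rel[S] a$, or $b \rel[S] a$ together with some~$a \rel i$ forcing $b \rel[S] i$. Any longer composite, such as $j \rel b \rel[S] a \rel i$, is subsumed by these two thanks to transitivity of~$\rel$ itself. For $j = b$ or $i = a$ the required conclusion holds trivially; for $j \ne b$ and $i \ne a$ the conditions $j \rel[S] a$ and $b \rel[S] i$ reduce respectively to $j \rel a$ and $b \rel i$. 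This yields exactly the two prohibitions in the second item.

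The step most prone to error is this transitivity analysis on the addition side: one must check that no longer composite chain introduces a constraint beyond the two listed, which is precisely what transitivity of~$\rel$ guarantees. Everything else is routine case-checking that parallels the analogous Corollaries~\ref{coro:coverRelationsAntisymmetric} and~\ref{coro:coverRelationsSemitransitive} already established in the paper.
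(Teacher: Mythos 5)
Your proposal is correct and follows exactly the derivation the paper intends: the corollary is stated without proof as an immediate consequence of the preceding proposition (covers in~$(\IPos(n),\wole)$ are covers in~$(\IRel(n),\wole)$), combined with the observation that a one-element modification of~$\rel$ is a cover in~$\IPos(n)$ as soon as it is a poset, so everything reduces to your case analysis of when each shape preserves transitivity and antisymmetry. Your reading of the first condition as requiring~$i \notin \{a,b\}$ (forced by reflexivity) is the right interpretation, and the transitivity analysis on the addition side is complete since transitivity only ever involves chains of length two.
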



\part{Weak order induced by some relevant families of posets}
\label{part:relevantFamilies}

In the rest of the paper, we present our motivation to study Theorem~\ref{thm:main}. We observe that many relevant combinatorial objects (for example permutations, binary trees, binary sequences, ...) can be interpreted by specific integer posets\footnote{A comment on the notations used along this section. We use different notations for the set of permutations~$\fS(n)$ and the set of corresponding posets~$\WOEP$. Although it might look like a complicated notation for a well-known object, we want our notation to clearly distinguish between the combinatorial objects and their corresponding posets.}. Moreover, the subposets of the weak order induced by these specific integer posets often correspond to classical lattice structures on these combinatorial objects (for example the classical weak order, the Tamari lattice, the boolean lattice, ...).  Table~\ref{tab:roadmap} summarizes the different combinatorial objects involved and a roadmap to their properties.

Rather than our previous notations~$\rel[R], \rel[S], \rel[M]$ used for integer binary relations, we will denote integer posets by~$\less, \bless, \dashv$ so that $a \less b$ (resp. $a \bless b$ and $a \dashv v$) means that $a$ is in relation with $b$ for the $\less$ relation. These notations emphasize the notion of \emph{order} and allow us to write~$a \more b$ for~${b \less a}$, in particular when~$a < b$. To make our presentation easier to read, we have decomposed some of our proofs into technical but straightforward claims that are proved separately~in~Appendix~\ref{sec:missingClaims}.

\hvFloat[floatPos=p, capWidth=h, capPos=r, capAngle=90, objectAngle=90, capVPos=c, objectPos=c]{table}
{
\begin{tabular}{l|l||c|c|c|c|c}
\\[-2.2cm]
\multicolumn{2}{c||}{} 	& weak order & Tamari lattice & Cambrian lattices~\cite{Reading-CambrianLattices} & boolean lattice & Permutree lattices \\
\hline\hline
\multirow{7}{*}{\raisebox{-2.9cm}{Elements}}
& \multirow{2}{*}{\raisebox{-1.9cm}{\begin{minipage}[c]{2.5cm} combinatorial \\ objects \end{minipage}}}
& \begin{minipage}[c]{2cm} \begin{center} permutations \\ $2751346$ \end{center} \end{minipage}
& binary trees
& Cambrian trees~\cite{ChatelPilaud}
& \begin{minipage}[c]{3cm} \begin{center} ~\\ binary sequences \\ ${-}{+}{+}{-}{+}{-}$ \end{center} \end{minipage}
& permutrees~\cite{PilaudPons}
\\[.3cm]
& 
& \raisebox{.1cm}{\includegraphics[scale=.7]{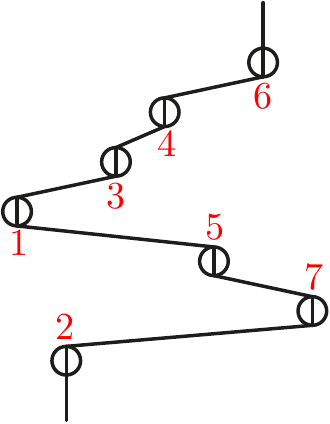}}
& \includegraphics[scale=.7]{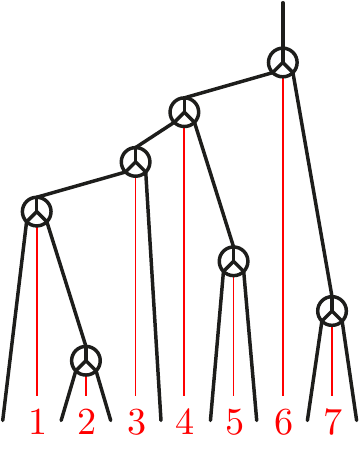}
& \includegraphics[scale=.7]{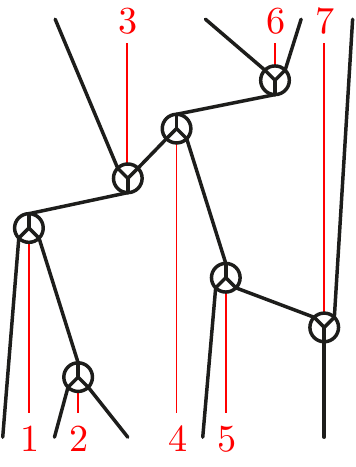}
& \includegraphics[scale=.7]{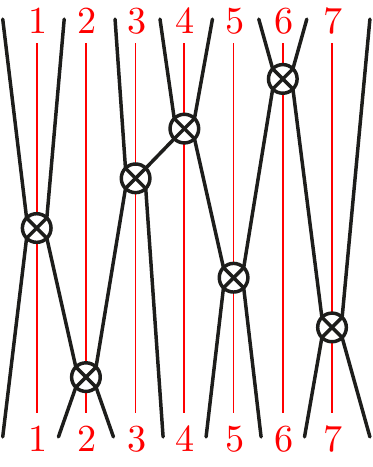}
& \includegraphics[scale=.7]{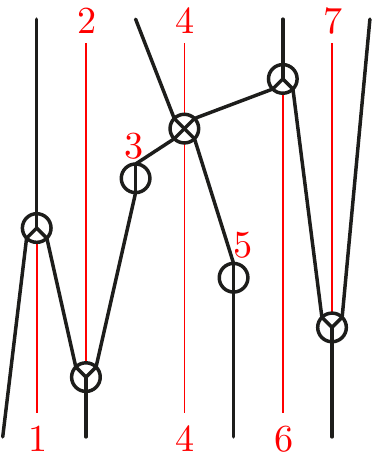}
\\
\arrayrulecolor{lightgray}\cline{2-7}\arrayrulecolor{black}
& notation
& $\WOEP(n)$
& $\TOEP(n)$
& $\COEP$
& $\BOEP(n)$
& $\PEP$
\\
\arrayrulecolor{lightgray}\cline{2-7}\arrayrulecolor{black}
& characterization
& Prop.~\ref{prop:characterizationWOEP}
& Prop.~\ref{prop:characterizationTOEP}
& Prop.~\ref{prop:characterizationPEP}
& Prop.~\ref{prop:characterizationPEP}
& Prop.~\ref{prop:characterizationPEP}
\\
\arrayrulecolor{lightgray}\cline{2-7}\arrayrulecolor{black}
& lattice properties
& Prop.~\ref{prop:WOEPsublattice} \& Coro.~\ref{coro:WOEPsublattice}
& Prop.~\ref{prop:TOEPsublattice} \& Coro.~\ref{coro:TOEPCOEPBOEPsublattices}
& Coro.~\ref{coro:TOEPCOEPBOEPsublattices} & Coro.~\ref{coro:TOEPCOEPBOEPsublattices}
& Thms.~\ref{thm:coveringOrientationElementsSublatticeIPos} \&~\ref{thm:nonCoveringOrientationElementsSublatticeWOIP}
\\
\arrayrulecolor{lightgray}\cline{2-7}\arrayrulecolor{black}
& \multirow{2}{*}{enumeration}
& $1, 2, 6, 24, 120, ...$ 
& $1, 2, 5, 14, 42, ...$
& $1, 2, 5, 14, 42, ...$
& $1, 2, 4, 8, 16, ...$
& depends on
\\
&
& \href{https://oeis.org/A000142}{\cite[A000142]{OEIS}}
& \href{https://oeis.org/A000108}{\cite[A000108]{OEIS}}
& \href{https://oeis.org/A000108}{\cite[A000108]{OEIS}}
& \href{https://oeis.org/A000079}{\cite[A000079]{OEIS}}
& the orientation~$\orientation$
\\[.2cm]
\hline
\multirow{7}{*}{\raisebox{-1.7cm}{Intervals}}
& \multirow{2}{*}{\raisebox{-1cm}{\begin{minipage}[c]{2.5cm} combinatorial \\ objects \end{minipage}}}
& \begin{minipage}[c]{3.3cm} \begin{center} ~\\ weak order intervals \\ $[213, 321]$ \end{center} \end{minipage}
& \begin{minipage}[c]{3cm} \begin{center} ~\\ Tamari lattice \\ intervals \end{center} \end{minipage}
& \begin{minipage}[c]{3cm} \begin{center} ~\\ Cambrian lattice \\ intervals \end{center} \end{minipage}
& \begin{minipage}[c]{3cm} \begin{center} ~\\ Boolean lattice \\ intervals \end{center} \end{minipage}
& \begin{minipage}[c]{3cm} \begin{center} ~\\ Permutree lattice \\ intervals \end{center} \end{minipage}
\\[.3cm]
&
& \raisebox{.15cm}{\includegraphics[scale=.7]{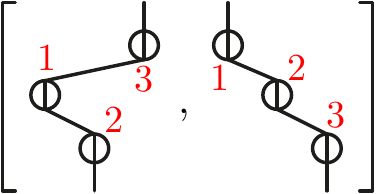}}
& \includegraphics[scale=.7]{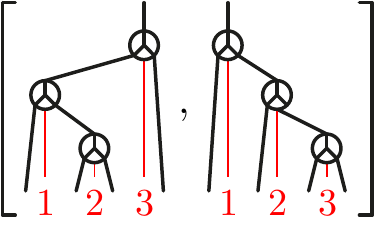}
& \includegraphics[scale=.7]{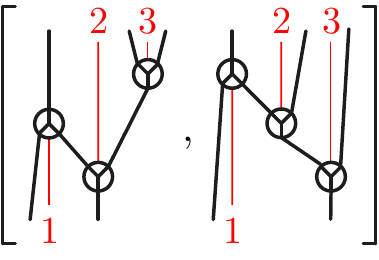}
& \includegraphics[scale=.7]{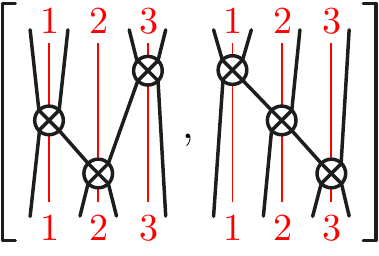}
& \includegraphics[scale=.7]{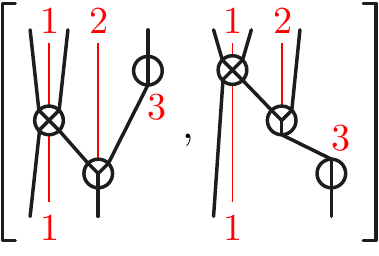}
\\
\arrayrulecolor{lightgray}\cline{2-7}\arrayrulecolor{black}
& notation
& $\WOIP(n)$
& $\TOIP(n)$
& $\COIP$
& $\BOIP(n)$
& $\PIP$
\\
\arrayrulecolor{lightgray}\cline{2-7}\arrayrulecolor{black}
& characterization 
& Prop.~\ref{prop:characterizationWOIP}
& Coro.~\ref{coro:characterizationTOIP}
& Coro.~\ref{coro:characterizationPIP}
& Coro.~\ref{coro:characterizationPIP}
& Coro.~\ref{coro:characterizationPIP}
\\
\arrayrulecolor{lightgray}\cline{2-7}\arrayrulecolor{black}
& lattice properties
& Coro.~\ref{coro:WOIPLattice}
& Coro.~\ref{coro:TOIPLattice} \& Prop.~\ref{prop:TOIPSublattice}
& Coro.~\ref{coro:PIPLattice} \& \ref{coro:TOIPCOIPBOIPsublattices}
& Coro.~\ref{coro:PIPLattice} \& \ref{coro:TOIPCOIPBOIPsublattices}
& Coro.~\ref{coro:PIPLattice}
\\
\arrayrulecolor{lightgray}\cline{2-7}\arrayrulecolor{black}
& \multirow{2}{*}{enumeration}
& $1, 3, 17, 151, 1899, ...$ 
& $1, 3, 13, 68, 399, ...$
& depends on
& $1, 3, 9, 27, 81, ...$
& depends on
\\
&
& \href{https://oeis.org/A007767}{\cite[A007767]{OEIS}}
& \href{https://oeis.org/A000260}{\cite[A000260]{OEIS}}
& the signature~$\signature$
& \href{https://oeis.org/A000244}{\cite[A000244]{OEIS}}
& the orientation~$\orientation$
\\[.2cm]
\hline
\multirow{7}{*}{\raisebox{-1.8cm}{Faces}}
& \multirow{2}{*}{\raisebox{-1.1cm}{\begin{minipage}[c]{2.5cm} combinatorial \\ objects \end{minipage}}}
& \begin{minipage}[c]{3cm} \begin{center} ~\\ ordered partitions \\ $125 | 37 | 46$ \end{center} \end{minipage}
& Schr\"oder trees
& \begin{minipage}[c]{3.5cm} \begin{center} ~\\ Schr\"oder \\ Cambrian trees~\cite{ChatelPilaud} \end{center} \end{minipage}
& \begin{minipage}[c]{3cm} \begin{center} ~\\ ternary sequences \\ ${0}{+}{+}{-}{+}{-}$ \end{center} \end{minipage}
& \begin{minipage}[c]{3cm} \begin{center} ~\\ Schr\"oder \\ permutrees~\cite{PilaudPons} \end{center} \end{minipage}
\\[.3cm]
& 
& \raisebox{.15cm}{\includegraphics[scale=.7]{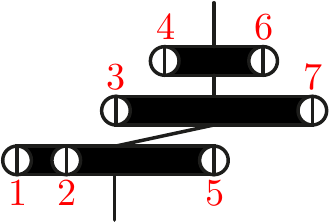}}
& \includegraphics[scale=.7]{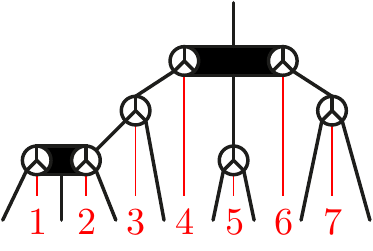}
& \includegraphics[scale=.7]{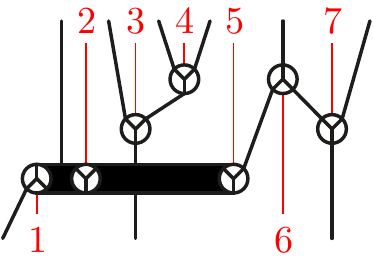}
& \includegraphics[scale=.7]{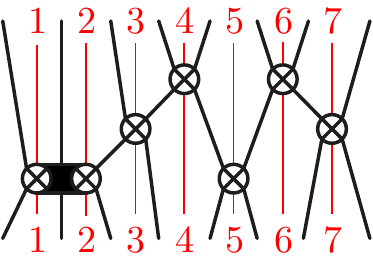}
& \includegraphics[scale=.7]{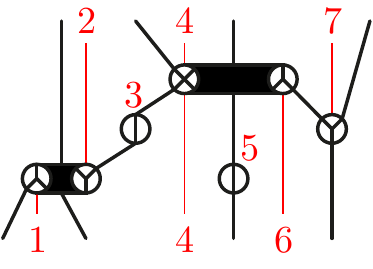}
\\
\arrayrulecolor{lightgray}\cline{2-7}\arrayrulecolor{black}
& notation
& $\WOFP(n)$
& $\TOFP(n)$
& $\COFP$
& $\BOFP(n)$
& $\PFP$
\\
\arrayrulecolor{lightgray}\cline{2-7}\arrayrulecolor{black}
& characterization
& Prop.~\ref{prop:characterizationWOFP}
& Prop.~\ref{prop:characterizationTOFP}
& Prop.~\ref{prop:characterizationPFP}
& Prop.~\ref{prop:characterizationPFP}
& Prop.~\ref{prop:characterizationPFP}
\\
\arrayrulecolor{lightgray}\cline{2-7}\arrayrulecolor{black}
& lattice properties 
& Rem.~\ref{rem:WOFPnotSublattice}
& Rem.~\ref{rem:TOFPnotSublattice}
& Rem.~\ref{rem:PFPnotSublattice}
& Coro.~\ref{coro:PIPLattice} \& \ref{coro:TOIPCOIPBOIPsublattices}
& Rem.~\ref{rem:PFPnotSublattice}
\\
\arrayrulecolor{lightgray}\cline{2-7}\arrayrulecolor{black}
& \multirow{2}{*}{enumeration}
& $1, 3, 13, 75, 541, ...$
& $1, 3, 11, 45, 197, ...$
& $1, 3, 11, 45, 197, ...$
& $1, 3, 9, 27, 81, ...$
& depends on
\\
&
& \href{https://oeis.org/A000670}{\cite[A000670]{OEIS}}
& \href{https://oeis.org/A001003}{\cite[A001003]{OEIS}}
& \href{https://oeis.org/A001003}{\cite[A001003]{OEIS}}
& \href{https://oeis.org/A000244}{\cite[A000244]{OEIS}}
& the orientation~$\orientation$
\end{tabular}
}
{A roadmap through the combinatorial objects considered in Part~\ref{part:relevantFamilies}.}
{tab:roadmap}

\section{From the permutahedron}
\label{sec:relevantFamiliesPermutahedron}

We start with relevant families of posets corresponding to the elements, the intervals, and the faces of the permutahedron. Further similar families of posets will appear in Sections~\ref{sec:relevantFamiliesAssociahedron}~and~\ref{sec:relevantFamiliesPermutreehedra}.

Let~$\fS(n)$ denote the symmetric group on~$[n]$. For~$\sigma \in \fS(n)$, we denote by
\begin{align*}
& \ver(\sigma) \eqdef \bigset{(a,b) \in [n]^2}{a \le b \text{ and } \sigma^{-1}(a) \le \sigma^{-1}(b)} \\
\text{and}\quad
& \inv(\sigma) \eqdef \bigset{(b,a) \in [n]^2}{a \le b \text{ and } \sigma^{-1}(a) \ge \sigma^{-1}(b)}
\end{align*}
the set of \defn{versions} and \defn{inversions} of~$\sigma$ respectively\footnote{Throughout the paper, we only work with versions and inversions of values (sometimes called left inversions, or coinversions). The cover relations of the weak order are thus given by transpositions of consecutive positions (sometimes called right weak order). As there is no ambiguity in the paper, we never specify this convention.}. Inversions are classical (although we order their entries in a strange way), while versions are borrowed from~\cite{KasselLascouxReutenauer}. Clearly, the versions of~$\sigma$ determine the inversions of~$\sigma$ and \viceversa.
The \defn{weak order} on~$\fS(n)$ is defined as the inclusion order of inversions, or as the clusion (reverse inclusion) order of the versions:
\[
\sigma \wole \tau \iff \inv(\sigma) \subseteq \inv(\tau) \iff \ver(\sigma) \supseteq \ver(\tau).
\]
It is known that the weak order~$(\fS(n), \wole)$ is a lattice. We denote by~$\meetWO$ and~$\joinWO$ its meet and join, and by~$e \eqdef [1,2,\dots,n]$ and~$\wo \eqdef [n,\dots,2,1]$ the weak order minimal and maximal permutations.


\subsection{Weak Order Element Posets}
\label{subsec:WOEP}

We see a permutation~$\sigma \in \fS(n)$ as a total order~$\less_\sigma$ on~$[n]$ defined by~$u \less_\sigma v$ if~$\sigma^{-1}(u) \le \sigma^{-1}(v)$ (\ie $u$ is before $v$ in $\sigma$). In other words, $\less_\sigma$ is the chain~$\sigma(1) \less_\sigma \dots \less_\sigma \sigma(n)$ as illustrated in \fref{fig:woep}.

\begin{figure}[t]
	\vspace{-.3cm}
	\centerline{
    \begin{tabular}{l@{\qquad$\longleftrightarrow$\qquad}c@{\;$=$\;}l}
        $\sigma = 2751346$ & ${\less_\sigma}$ & \raisebox{-1.8cm}{\scalebox{0.8}{\input{relations/poset2751346}}} \\[-.75cm]
        $\ver(\sigma) = \{ (1,3), (1,4), \dots, (5,6)\}$ & ${\Inc{\less_\sigma}}$ & \raisebox{-.1cm}{\scalebox{0.8}{\input{relations/poset2751346inc}}} \\[.3cm]
        $\inv(\sigma) = \{ (2,1), (7,5), \dots, (5,4) \}$ & ${\Dec{\less_\sigma}}$ & \raisebox{-1.85cm}{\scalebox{0.8}{\input{relations/poset2751346dec}}}
    \end{tabular}
}
	\vspace{-.4cm}
	\caption{A Weak Order Element Poset ($\WOEP$).}
	\label{fig:woep}
	\vspace{-.4cm}
\end{figure}

We say that~$\less_\sigma$ is a \defn{weak order element poset}, and we denote by
\[
\WOEP(n) \eqdef \bigset{{\less_\sigma}}{\sigma \in \fS(n)}
\]
the set of all total orders on~$[n]$. The following characterization of these elements is immediate.

\begin{proposition}
\label{prop:characterizationWOEP}
A poset~${\less} \in \IPos(n)$ is in~$\WOEP(n)$ if and only if~$\forall \; u,v \in [n]$, either~${u \less v}$~or~${u \more v}$.
\end{proposition}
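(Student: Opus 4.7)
The statement is straightforward, so the plan is short.

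For the forward direction, suppose $\less = {\less_\sigma}$ for some permutation $\sigma \in \fS(n)$. By definition, $u \less_\sigma v$ exactly when $\sigma^{-1}(u) \le \sigma^{-1}(v)$, and $u \more_\sigma v$ exactly when $\sigma^{-1}(v) \le \sigma^{-1}(u)$. Since $\sigma^{-1}$ takes values in $[n]$, the two integers $\sigma^{-1}(u)$ and $\sigma^{-1}(v)$ are always comparable in the natural order, so one of $u \less_\sigma v$ or $u \more_\sigma v$ must hold for every pair $u, v \in [n]$.

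For the backward direction, assume ${\less} \in \IPos(n)$ satisfies the comparability condition. Then $\less$ is a total order on the finite set $[n]$: it is reflexive, transitive, antisymmetric (as an element of $\IPos(n)$) and, by the hypothesis, any two elements are comparable. A finite totally ordered set admits a unique increasing listing, so we can write the elements of $[n]$ as $u_1 \less u_2 \less \dots \less u_n$. Define $\sigma \in \fS(n)$ by $\sigma(i) \eqdef u_i$; then $\sigma^{-1}(u_i) = i$, and for any $u, v \in [n]$ we have $u \less v$ iff $u = u_i$ and $v = u_j$ with $i \le j$, iff $\sigma^{-1}(u) \le \sigma^{-1}(v)$, iff $u \less_\sigma v$. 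Hence ${\less} = {\less_\sigma} \in \WOEP(n)$.

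There is no real obstacle here: the only content is recognizing that a reflexive, transitive, antisymmetric relation in which every pair is comparable is literally a total order on a finite set, and that such total orders are in bijection with permutations via the listing map. The proof itself should take only a few lines.
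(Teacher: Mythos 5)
Your proof is correct and is exactly the standard argument the paper has in mind when it calls this characterization ``immediate'' (the paper gives no written proof): total orders on $[n]$ are precisely the posets in which every pair is comparable, and these correspond bijectively to permutations via the listing map $\sigma(i) = u_i$. Nothing further is needed.
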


In other words, the $\WOEP$ are the maximal posets, with~$\binom{n}{2}$ relations (this should help spotting them on \fref{fig:weakOrderPosets}).
The following proposition connects the weak order on~$\fS(n)$ to that on~$\IPos(n)$. It justifies the term ``weak order'' used in Definition~\ref{def:weakOrder}.

\begin{proposition}
\label{prop:weakOrderWOEP}
For~$\sigma \in \fS(n)$, the increasing (resp.~decreasing) relations of~$\less_\sigma$ are the versions (resp.~inversions) of~$\sigma$: ${\Inc{\less_\sigma}} = \ver(\sigma)$ and~${\Dec{\less_\sigma}} = \inv(\sigma)$. Therefore, for any permutations~${\sigma, \sigma' \in \fS(n)}$, we have~$\sigma \wole \sigma'$ if and only if~${\less_\sigma} \wole {\less_{\sigma'}}$.
\end{proposition}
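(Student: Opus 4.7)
The proof is essentially an unpacking of definitions, so my plan is to just verify carefully that the two descriptions of the relevant sets coincide.

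First, I would prove the two set equalities $\Inc{\less_\sigma} = \ver(\sigma)$ and $\Dec{\less_\sigma} = \inv(\sigma)$ by direct computation. By definition of $\Inc{\cdot}$, we have $\Inc{\less_\sigma} = \{(a,b) \in [n]^2 : a \le b \text{ and } a \less_\sigma b\}$. By definition of~$\less_\sigma$, the condition $a \less_\sigma b$ means $\sigma^{-1}(a) \le \sigma^{-1}(b)$, which matches exactly the definition of $\ver(\sigma)$. The same unpacking with $(b,a)$ in place of $(a,b)$ and the inequality reversed gives $\Dec{\less_\sigma} = \inv(\sigma)$.

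Next, for the equivalence of the two weak orders, I would chain through the two definitions using the set equalities just established. The weak order on $\fS(n)$ is defined by
\[
\sigma \wole \sigma' \iff \inv(\sigma) \subseteq \inv(\sigma') \iff \ver(\sigma) \supseteq \ver(\sigma'),
\]
while the weak order on $\IPos(n)$ (Definition~\ref{def:weakOrder}) gives
\[
{\less_\sigma} \wole {\less_{\sigma'}} \iff \Inc{\less_\sigma} \supseteq \Inc{\less_{\sigma'}} \text{ and } \Dec{\less_\sigma} \subseteq \Dec{\less_{\sigma'}}.
\]
Substituting the set equalities from the first part rewrites the right-hand side as $\ver(\sigma) \supseteq \ver(\sigma')$ and $\inv(\sigma) \subseteq \inv(\sigma')$, which is exactly the condition $\sigma \wole \sigma'$.

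There is no real obstacle here: the only subtle point (which I would mention in passing) is that for total orders in $\WOEP(n)$ the two conditions defining $\less_\sigma \wole \less_{\sigma'}$ are actually equivalent individually, since $\ver(\sigma) \sqcup \{(b,a) : (b,a) \in \inv(\sigma)\}$ covers all ordered pairs $(a,b)$ with $a \le b$, so knowing one of the two subrelations determines the other. This is consistent with the classical statement that $\sigma \wole \sigma'$ can be tested by inversions alone (or versions alone) on $\fS(n)$.
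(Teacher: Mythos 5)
Your proof is correct and follows essentially the same route as the paper, which simply unpacks the definitions to get ${\Inc{\less_\sigma}} = \set{(a,b)}{a < b \text{ and } \sigma^{-1}(a) < \sigma^{-1}(b)} = \ver(\sigma)$ and leaves the symmetric identity and the order equivalence implicit. Your extra remark that the two inclusion conditions are individually equivalent for total orders is a harmless elaboration, not a divergence.
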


\begin{proof}
${\Inc{\less_\sigma}} = \set{(a,b) }{a < b \text{ and } a \less_\sigma b} = \set{(a,b)}{a < b \text{ and } \sigma^{-1}(a) < \sigma^{-1}(b)} = \ver(\sigma)$.
\end{proof}

We thus obtain that the subposet of the weak order~$(\IPos(n), \wole)$ induced by the set~$\WOEP(n)$ is isomorphic to the weak order on~$\fS(n)$, and thus is a lattice. To conclude on $\WOEP(n)$, we mention the following stronger statement which will be derived in Corollary~\ref{coro:WOEPsublattice}.

\begin{proposition}
\label{prop:WOEPsublattice}
The set~$\WOEP(n)$ induces a sublattice of the weak order~$(\IPos(n), \wole, \meetT, \joinT)$.
\end{proposition}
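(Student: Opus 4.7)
The plan is to prove that for any $\sigma, \tau \in \fS(n)$, the meet $\less_\sigma \meetT \less_\tau$ is itself a total order, hence lies in $\WOEP(n)$. Granting this, the order-isomorphism of Proposition~\ref{prop:weakOrderWOEP} combined with the universal property of meets pins down $\less_\sigma \meetT \less_\tau = \less_{\sigma \meetWO \tau}$: indeed $\less_{\sigma \meetWO \tau}$ is a lower bound of $\less_\sigma$ and $\less_\tau$ in $\IPos(n)$, and conversely any total order lying below both must correspond to a permutation weakly below both $\sigma$ and $\tau$ via Proposition~\ref{prop:weakOrderWOEP}. The join case then follows either by the dual argument using $\tid$, or more cheaply by invoking the reverse antiautomorphism of Remark~\ref{remark:reverse}, which swaps $\meetT$ with $\joinT$ and preserves $\WOEP(n)$.

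Set $\rel[M] \eqdef \tc{(\Inc{\less_\sigma} \cup \Inc{\less_\tau})} \cup (\Dec{\less_\sigma} \cap \Dec{\less_\tau})$, so that $\less_\sigma \meetT \less_\tau = \tdd{\rel[M]}$ by Proposition~\ref{prop:transitiveLattice}. Using the characterization of Proposition~\ref{prop:characterizationWOEP} and the antisymmetry granted by Proposition~\ref{prop:posetLattice}, it suffices to show that every pair $a < b$ is comparable in $\tdd{\rel[M]}$. The easy case is when $(a,b) \in \Inc{\less_\sigma} \cup \Inc{\less_\tau}$: then $(a,b) \in \Inc{\rel[M]}$, and since $\tdd$ removes only decreasing pairs, $(a,b)$ survives in $\tdd{\rel[M]}$. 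In the remaining case both $\less_\sigma$ and $\less_\tau$ place $b$ before $a$, so $(b,a) \in \Dec{\less_\sigma} \cap \Dec{\less_\tau} \subseteq \Dec{\rel[M]}$, and the issue becomes whether $(b,a)$ survives the transitive decreasing deletion.

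The core technical step, which I would prove by strong induction on $b - a$, is the claim: \emph{if $(b,a) \in \Dec{\rel[M]}$ and $(a,b) \notin \Inc{\rel[M]}$, then $(b,a) \in \tdd{\rel[M]}$.} Suppose to the contrary $(b,a) \notin \tdd{\rel[M]}$. Using antisymmetry of $\less_\sigma$ and $\less_\tau$, Lemma~\ref{lem:simplifytdd} produces $a < k < b$ satisfying either $(k,b) \in {\less_\sigma} \cup {\less_\tau}$ together with $k \tdd{\notrel[M]} a$, or the symmetric statement on $(a,k)$. Treating the first case (the other is analogous), we may assume $k \less_\sigma b$, and chaining with $b \less_\sigma a$ (from $(b,a) \in \inv(\sigma)$) yields $(k,a) \in \inv(\sigma)$ by transitivity of $\less_\sigma$. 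If additionally $(k,a) \in \inv(\tau)$, then $(k,a) \in \Dec{\rel[M]}$, and since $k \tdd{\notrel[M]} a$ the induction hypothesis at the strictly smaller parameter $k - a$ forces $(a,k) \in \Inc{\rel[M]}$; if instead $(k,a) \notin \inv(\tau)$, then $(a,k) \in \ver(\tau) \subseteq \Inc{\rel[M]}$ outright. Either way $(a,k) \in \Inc{\rel[M]}$, and combining with $(k,b) \in \Inc{\rel[M]}$ via transitivity of the transitive closure $\Inc{\rel[M]} = \tc{(\Inc{\less_\sigma} \cup \Inc{\less_\tau})}$ gives $(a,b) \in \Inc{\rel[M]}$, the desired contradiction. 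The base case $b - a = 1$ is immediate, as the open interval $(a,b)$ is empty so Lemma~\ref{lem:simplifytdd} cannot produce any valid $k$. The main obstacle is to organize the case analysis so that the induction hypothesis is invoked only at the strictly smaller parameter $k - a$, and to exploit the antisymmetric enhancement of Lemma~\ref{lem:simplifytdd} which guarantees $a < k < b$; once this is arranged, the iterated transitivities close the argument at once.
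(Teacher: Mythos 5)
Your proof is correct, but it takes a different route from the paper. The paper never proves Proposition~\ref{prop:WOEPsublattice} directly where it is stated: it is deferred to Corollary~\ref{coro:WOEPsublattice}, which obtains it by specializing the general Proposition~\ref{prop:orientationIncompSublatticesIPos} (stability of $\conflicts_{\incompE}$-free posets under~$\meetT$ and~$\joinT$) to the orientation~$\orientation = (n,\varnothing,\varnothing)$, for which $\orientation$-snakes degenerate to direct comparability and the $\conflicts_{\incompE}$-free posets are exactly the total orders. You instead give a self-contained argument: writing~$\rel[M] = {\less_\sigma} \meetST {\less_\tau}$, you show by strong induction on~$b-a$ that a decreasing pair~$(b,a) \in \Dec{\rel[M]}$ with~$(a,b) \notin \Inc{\rel[M]}$ survives the transitive decreasing deletion, using Lemma~\ref{lem:simplifytdd} to produce the intermediate~$k$ and totality of~$\less_\sigma, \less_\tau$ to force~$(a,k)$ or~$(k,b)$ into~$\Inc{\rel[M]}$; together with antisymmetry (Proposition~\ref{prop:posetLattice}) this makes~${\less_\sigma} \meetT {\less_\tau}$ a total order, and the join follows by the reversal antiautomorphism of Remark~\ref{remark:reverse}. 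The combinatorial engine is the same as in the paper's Claim~\ref{claim:tdddashvConflictFree} --- Lemma~\ref{lem:simplifytdd} plus a minimal-gap argument --- but your version needs only the material of Part~1 together with Propositions~\ref{prop:characterizationWOEP} and~\ref{prop:weakOrderWOEP}, and yields the explicit identification~${\less_\sigma} \meetT {\less_\tau} = {\less_{\sigma \meetWO \tau}}$ as a bonus, whereas the paper's detour through conflict functions and $\orientation$-snakes buys the analogous statement for all permutree element posets~$\PEP$ at once. Two cosmetic remarks: the inequalities~$a < k < b$ come from the main statement of Lemma~\ref{lem:simplifytdd}, not from its antisymmetric enhancement (which you do not actually need, since you only use~$k \tdd{\notrel[M]} a$ in the first case and~$b \tdd{\notrel[M]} k$ in the second, both supplied by the unenhanced lemma); and in your ``easy case'' you may as well take~$(a,b) \in \Inc{\rel[M]} = \tc{(\Inc{\less_\sigma} \cup \Inc{\less_\tau})}$ rather than~$(a,b) \in \Inc{\less_\sigma} \cup \Inc{\less_\tau}$, so that the dichotomy with the hypothesis of your claim is exact.
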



\subsection{Weak Order Interval Posets}
\label{subsec:WOIP}

For two permutations~$\sigma, \sigma' \in \fS(n)$ with~$\sigma \wole \sigma'$, we denote by~$[\sigma, \sigma'] \eqdef \set{\tau \in \fS(n)}{\sigma \wole \tau \wole \sigma'}$ the weak order interval between~$\sigma$ and~$\sigma'$. As illustrated in \fref{fig:woip}, we can see such an interval as the set of linear extensions of a poset.

\begin{proposition}
\label{prop:definitionWOIP}
The permutations of~$[\sigma, \sigma']$ are precisely the linear extensions of the poset
\[
\less_{[\sigma,\sigma']} \eqdef \bigcap_{\sigma \wole \tau \wole \sigma'} {\less_\tau} = {\less_{\sigma}} \cap {\less_{\sigma'}} = {\Inc{\less_{\sigma'}}} \cup {\Dec{\less_\sigma}}.
\]
\end{proposition}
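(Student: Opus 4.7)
The plan is to prove the proposition in three steps, working from the most concrete description outwards. Let me write $\poset \eqdef {\Inc{\less_{\sigma'}}} \cup {\Dec{\less_\sigma}}$ for the rightmost expression; the strategy is to first show $\poset = {\less_\sigma} \cap {\less_{\sigma'}}$ (so that $\poset$ is automatically a poset), then to identify its linear extensions with $[\sigma,\sigma']$, and finally to derive the chain of equalities.

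\smallskip

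First I would verify that $\poset = {\less_\sigma} \cap {\less_{\sigma'}}$. Since $\sigma \wole \sigma'$, Proposition~\ref{prop:weakOrderWOEP} and Definition~\ref{def:weakOrder} give ${\Inc{\less_\sigma}} \supseteq {\Inc{\less_{\sigma'}}}$ and ${\Dec{\less_\sigma}} \subseteq {\Dec{\less_{\sigma'}}}$. Using that ${\less_\tau} = {\Inc{\less_\tau}} \cup {\Dec{\less_\tau}}$ with the two parts overlapping only on the reflexive diagonal, expanding the intersection yields
\[
{\less_\sigma} \cap {\less_{\sigma'}} = ({\Inc{\less_\sigma}} \cap {\Inc{\less_{\sigma'}}}) \cup ({\Dec{\less_\sigma}} \cap {\Dec{\less_{\sigma'}}}) = {\Inc{\less_{\sigma'}}} \cup {\Dec{\less_\sigma}},
\]
since the two cross terms only contribute diagonal pairs. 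As the intersection of two total orders, $\poset$ is automatically a reflexive antisymmetric transitive relation.

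\smallskip

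Next I would prove that the linear extensions of $\poset$ are exactly the elements of $[\sigma,\sigma']$. In one direction, if $\sigma \wole \tau \wole \sigma'$, then ${\Inc{\less_\tau}} \supseteq {\Inc{\less_{\sigma'}}}$ (because $\ver(\tau) \supseteq \ver(\sigma')$) and ${\Dec{\less_\tau}} \supseteq {\Dec{\less_\sigma}}$ (because $\inv(\tau) \supseteq \inv(\sigma)$), so ${\less_\tau} \supseteq \poset$ and $\tau$ is a linear extension of $\poset$. Conversely, if ${\less_\tau}$ is a total order extending $\poset$, then ${\Inc{\less_\tau}} = \ver(\tau) \supseteq {\Inc{\less_{\sigma'}}} = \ver(\sigma')$ gives $\tau \wole \sigma'$, while ${\Dec{\less_\tau}} = \inv(\tau) \supseteq {\Dec{\less_\sigma}} = \inv(\sigma)$ gives $\sigma \wole \tau$.

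\smallskip

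Finally, the first equality $\bigcap_{\sigma \wole \tau \wole \sigma'} {\less_\tau} = {\less_\sigma} \cap {\less_{\sigma'}}$ falls out: the inclusion $\subseteq$ is immediate by taking $\tau \in \{\sigma, \sigma'\}$, and the reverse inclusion is exactly the content of the previous step, since each $\tau$ in the interval satisfies ${\less_\tau} \supseteq \poset = {\less_\sigma} \cap {\less_{\sigma'}}$. There is no real obstacle here; the only point requiring a moment of care is ensuring that the cross terms $({\Inc{\less_\sigma}} \cap {\Dec{\less_{\sigma'}}})$ and $({\Dec{\less_\sigma}} \cap {\Inc{\less_{\sigma'}}})$ in Step~1 reduce to the reflexive diagonal, which follows directly from the definitions of the increasing and decreasing subrelations.
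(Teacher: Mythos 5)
Your proposal is correct and follows essentially the same route as the paper: both arguments rest on decomposing each $\less_\tau$ into its increasing and decreasing subrelations and translating the weak order comparisons via Proposition~\ref{prop:weakOrderWOEP}. The only difference is organizational --- you derive the first equality as a consequence of the linear-extension characterization, whereas the paper establishes all three expressions coincide directly by intersecting the increasing and decreasing parts separately --- but the mathematical content is identical.
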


\begin{proof}
We first prove that the three expressions for~$\less_{[\sigma,\sigma']}$ coincide. Indeed we have
\[
\bigcap_{\sigma \wole \tau \wole \sigma'} {\less_\tau} = \bigg( \bigcap_{\sigma \wole \tau \wole \sigma'} {\Inc{\less_\tau}} \bigg) \cup \bigg( \bigcap_{\sigma \wole \tau \wole \sigma'} {\Dec{\less_\tau}} \bigg) = {\Inc{\less_{\sigma'}} \cup {\Dec{\less_\sigma}}} = {\less_{\sigma}} \cap {\less_{\sigma'}},
\]
where the first equality is obtained by restriction to the increasing and decreasing relations, the second equality holds since~$\sigma \wole \tau \wole \sigma' \iff {\Inc{\less_\tau}} \supseteq {\Inc{\less_{\sigma'}}}$ and~${\Dec{\less_\sigma}} \subseteq {\Dec{\less_\tau}}$ by Proposition~\ref{prop:weakOrderWOEP}, and the last one follows from~${\Inc{\less_\sigma}} \supseteq {\Inc{\less_{\sigma'}}}$ and~${\Dec{\less_\sigma}} \subseteq {\Dec{\less_{\sigma'}}}$.

Consider now a permutation~$\tau$. By definition, $\less_\tau$ extends~${\Inc{\less_{\sigma'}} \cup {\Dec{\less_\sigma}}}$ if and only if ${\Inc{\less_\tau}} \supseteq {\Inc{\less_{\sigma'}}}$ and~${\Dec{\less_\sigma}} \subseteq {\Dec{\less_\tau}}$, which in turns is equivalent to~$\sigma \wole \tau \wole \sigma'$ by Proposition~\ref{prop:weakOrderWOEP}.
\end{proof}

\begin{figure}[t]
	\vspace{-.7cm}
	\input{example_woip}
    \vspace{-.8cm}
	\caption{A Weak Order Interval Poset ($\WOIP$).}
	\label{fig:woip}
	\vspace{-.4cm}
\end{figure}

We say that~$\less_{[\sigma,\sigma']}$ is a \defn{weak order interval poset}, and we denote by
\[
\WOIP(n) \eqdef \bigset{{\less_{[\sigma,\sigma']}}}{\sigma, \sigma' \in \fS(n), \; \sigma \wole \sigma'}
\]
the set of all weak order interval posets on~$[n]$. The following characterization of these posets already appeared in~\cite[Thm.~6.8]{BjornerWachs} and will be discussed in Section~\ref{subsec:IWOIPDWOIP}.

\begin{proposition}[\protect{\cite[Thm.~6.8]{BjornerWachs}}]
\label{prop:characterizationWOIP}
A poset~${\less} \in \IPos(n)$ is in~$\WOIP(n)$ if and only if $\forall \; a < b < c$,
\[
a \less c \implies a \less b \text{ or } b \less c
\qquad\text{and}\qquad
a \more c \implies a \more b \text{ or } b \more c.
\]
\end{proposition}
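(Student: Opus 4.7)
The plan is to use Proposition~\ref{prop:definitionWOIP}, which identifies~$\less_{[\sigma,\sigma']}$ with~${\Inc{\less_{\sigma'}}} \cup {\Dec{\less_\sigma}}$, as the bridge between the definition of~$\WOIP(n)$ and the two conditions. For the forward direction, suppose~$\less = \less_{[\sigma,\sigma']}$ and let~$a < b < c$ with~$a \less c$. Then~$(a,c) \in \Inc{\less_{\sigma'}}$, so~$a$ appears before~$c$ in the total order~$\less_{\sigma'}$. Splitting on whether~$b$ lies before~$a$, between~$a$ and~$c$, or after~$c$ in~$\less_{\sigma'}$, at least one of the pairs~$(a,b)$ with~$a<b$ or~$(b,c)$ with~$b<c$ ends up in~$\Inc{\less_{\sigma'}} \subseteq {\less}$, giving~$a \less b$ or~$b \less c$. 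The second implication is symmetric, splitting on the position of~$b$ in~$\less_\sigma$ and using~${\Dec{\less_\sigma}} \subseteq {\less}$.

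For the reverse direction, given~$\less \in \IPos(n)$ satisfying both conditions, I would build~$\sigma, \sigma' \in \fS(n)$ explicitly. Define a relation~$\less'$ on~$[n]$ by setting, for each pair~$a < b$: $a \less' b$ iff~$a \less b$ in~$\less$, and~$b \less' a$ otherwise (i.e.\ whenever~$b \less a$ in~$\less$ or~$a$ and~$b$ are incomparable in~$\less$). Define~$\less''$ dually, preserving the decreasing part of~$\less$ and adding incomparable pairs as new increasing relations. Both~$\less'$ and~$\less''$ are reflexive, antisymmetric, and total by construction; they extend~$\less$, and satisfy~${\Inc{\less'}} = {\Inc{\less}}$ and~${\Dec{\less''}} = {\Dec{\less}}$. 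Once transitivity of~$\less'$ and~$\less''$ is established (see the next paragraph), one has~${\less'} = {\less_{\sigma'}}$ and~${\less''} = {\less_\sigma}$ for uniquely determined permutations~$\sigma, \sigma'$. Since~$\sigma$ and~$\sigma'$ are linear extensions of~$\less$, we get ${\Inc{\less_{\sigma'}}} = {\Inc{\less}} \subseteq {\Inc{\less_\sigma}}$ and~${\Dec{\less_\sigma}} = {\Dec{\less}} \subseteq {\Dec{\less_{\sigma'}}}$, so~$\sigma \wole \sigma'$ by Proposition~\ref{prop:weakOrderWOEP}. Proposition~\ref{prop:definitionWOIP} then yields~$\less_{[\sigma,\sigma']} = {\Inc{\less}} \cup {\Dec{\less}} = {\less}$, as desired.

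The hard part is verifying transitivity of~$\less'$; transitivity of~$\less''$ is symmetric and appeals to the second condition instead of the first. Given~$u \less' v$ and~$v \less' w$, I would split into four cases depending on whether each of these two relations is \emph{old} (already in~$\less$) or \emph{new} (introduced to extend~$\less$ into~$\less'$, hence coming from an incomparable pair in~$\less$ with the larger integer on the left). The ``both old'' case is immediate from transitivity of~$\less$. In each of the two mixed cases and in the ``both new'' case, a further sub-split on the relative order of~$u, v, w$ as integers singles out an ordered triple~$a < b < c$ among~$\{u,v,w\}$; applying the first condition to this triple (to the unique old increasing relation available) forces the missing relation of~$\less$, after ruling out the contradicting direction via antisymmetry or transitivity of~$\less$ and the incomparability coming with each ``new'' relation. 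This missing relation then upgrades to~$u \less' w$. The main obstacle is purely combinatorial bookkeeping: one needs to cover all six possible orderings of~$u,v,w$ against the four old/new patterns, and keep track of which applications of the forcing condition and which antisymmetry arguments rule out each impossible sub-case.
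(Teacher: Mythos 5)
Your proof is correct, and its second half is essentially the paper's argument: your relations~$\less'$ and~$\less''$ are exactly the extremal extensions~$\maxle{\less}$ and~$\minle{\less}$ used in the proof of Proposition~\ref{prop:characterizationIWOIPDWOIP}, and your transitivity case analysis is precisely the content of Claim~\ref{claim:maxlePoset}, proved in Appendix~\ref{subsec:appendixIWOIPDWOIP} by the same old/new split combined with the relative integer order of~$u,v,w$. Where you genuinely diverge is the forward direction and the scope of what is characterized. You argue directly from the identity~${\less_{[\sigma,\sigma']}} = {\Inc{\less_{\sigma'}}} \cup {\Dec{\less_\sigma}}$ of Proposition~\ref{prop:definitionWOIP} by locating~$b$ among the three possible positions in the total order~$\less_{\sigma'}$ (resp.~$\less_\sigma$); this is short, self-contained, and correct. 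The paper instead proves the stronger Proposition~\ref{prop:characterizationIWOIPDWOIP}, characterizing separately the larger classes~$\IWOIP(n)$ and~$\DWOIP(n)$ of posets admitting a weak order maximal (resp.~minimal) linear extension, so its forward direction must run by contraposition, exhibiting two incomparable locally extremal linear extensions when the condition fails; Proposition~\ref{prop:characterizationWOIP} is then a corollary. Your route buys a cleaner proof of the stated proposition but does not deliver the separate~$\IWOIP$/$\DWOIP$ characterizations, which the paper reuses later (e.g.\ in Proposition~\ref{prop:IWOIPDWOIPLattices} and for the deletion maps). One small imprecision in your sketch: in the ``both new'' case of the transitivity check the hypothesis does not \emph{force} a missing relation of~$\less$ but rather \emph{rules out} the obstructing one (one gets~$w \not\less u$ for~$w < v < u$), after which~$u \less' w$ holds whether the pair~$\{u,w\}$ is comparable in~$\less$ or not --- exactly as in case~(i) of the proof of Claim~\ref{claim:maxlePoset}.
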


We now describe the weak order on~$\WOIP(n)$.

\begin{proposition}
\label{prop:weakOrderWOIP}
For any~$\sigma \wole \sigma'$ and~$\tau \wole \tau'$, we have ${\less_{[\sigma,\sigma']}} \wole {\less_{[\tau,\tau']}} \iff \sigma \wole \tau \text{ and } \sigma' \wole \tau'$.
\end{proposition}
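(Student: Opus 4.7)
The plan is to reduce the statement to Propositions~\ref{prop:definitionWOIP} and~\ref{prop:weakOrderWOEP} by carefully tracking which endpoint of the interval controls the increasing part of the poset and which controls the decreasing part.

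First I would unfold the definitions. By Proposition~\ref{prop:definitionWOIP}, the poset~${\less_{[\sigma,\sigma']}}$ decomposes as~${\less_{[\sigma,\sigma']}} = {\Inc{\less_{\sigma'}}} \cup {\Dec{\less_\sigma}}$, so that
\[
{\Inc{\less_{[\sigma,\sigma']}}} = {\Inc{\less_{\sigma'}}} \qquad\text{and}\qquad {\Dec{\less_{[\sigma,\sigma']}}} = {\Dec{\less_\sigma}},
\]
and similarly for~${\less_{[\tau,\tau']}}$. Applying Definition~\ref{def:weakOrder} directly, the inequality~${\less_{[\sigma,\sigma']}} \wole {\less_{[\tau,\tau']}}$ is equivalent to the conjunction
\[
{\Inc{\less_{\sigma'}}} \supseteq {\Inc{\less_{\tau'}}} \qquad\text{and}\qquad {\Dec{\less_\sigma}} \subseteq {\Dec{\less_\tau}}.
\]

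Next I would translate each of these two conditions back into the weak order on~$\fS(n)$. The key observation is that~$\less_\pi$ is a \emph{total} order for any~$\pi \in \fS(n)$, so for every~$a < b$ exactly one of~$(a,b)$ and~$(b,a)$ belongs to~$\less_\pi$; hence~$\Inc{\less_\pi}$ and~$\Dec{\less_\pi}$ determine each other by complementation in~$\{(a,b) : a < b\}$. Consequently, for any two permutations~$\pi, \pi'$, the inclusion~${\Inc{\less_\pi}} \supseteq {\Inc{\less_{\pi'}}}$ is equivalent to~${\Dec{\less_\pi}} \subseteq {\Dec{\less_{\pi'}}}$, and by Proposition~\ref{prop:weakOrderWOEP} each of these is equivalent to~$\pi \wole \pi'$. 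Applying this to the pairs~$(\sigma',\tau')$ and~$(\sigma,\tau)$ respectively yields ${\Inc{\less_{\sigma'}}} \supseteq {\Inc{\less_{\tau'}}} \iff \sigma' \wole \tau'$ and ${\Dec{\less_\sigma}} \subseteq {\Dec{\less_\tau}} \iff \sigma \wole \tau$, which combined give the announced equivalence.

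There is essentially no obstacle here: the whole argument is a bookkeeping exercise once Proposition~\ref{prop:definitionWOIP} is available, the only subtle point being to match the right endpoint of the interval with the right ``half'' of the poset (the maximum~$\sigma'$ controls~$\Inc{}$ while the minimum~$\sigma$ controls~$\Dec{}$). No transitive closure or transitive deletion is needed, because both posets already lie in~$\WOIP(n)$ and their increasing/decreasing parts coincide with those of genuine total orders.
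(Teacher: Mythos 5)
Your proof is correct and follows essentially the same route as the paper: both unfold the weak order via the decomposition ${\less_{[\sigma,\sigma']}} = {\Inc{\less_{\sigma'}}} \cup {\Dec{\less_\sigma}}$ from Proposition~\ref{prop:definitionWOIP} and then convert the two inclusions back to $\sigma' \wole \tau'$ and $\sigma \wole \tau$ via Proposition~\ref{prop:weakOrderWOEP}. Your extra remark that $\Inc{\less_\pi}$ and $\Dec{\less_\pi}$ determine each other for total orders is just making explicit what the paper's definition of the weak order on $\fS(n)$ (versions versus inversions) already encodes.
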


\begin{proof}
From the formula of Proposition~\ref{prop:definitionWOIP}, we have
\[
\begin{array}[b]{c@{\;\iff\;}c@{\text{ and }}c}
{\less_{[\sigma,\sigma']}} \wole {\less_{[\tau,\tau']}}
& {\Inc{\less_{[\sigma,\sigma']}}} \supseteq {\Inc{\less_{[\tau,\tau']}}} & {\Dec{\less_{[\sigma,\sigma']}}} \subseteq {\Dec{\less_{[\tau,\tau']}}} \\
& {\Inc{\less_{\sigma'}}} \supseteq {\Inc{\less_{\tau'}}} & {\Dec{\less_{\sigma}}} \subseteq {\Dec{\less_{\tau}}} \\
& \sigma' \wole \tau' & \sigma \wole \tau.
\end{array}
\qedhere
\]
\end{proof}

It follows that $(\WOIP(n), \wole)$ gets the lattice structure of a product, described in the next statement. See also Corollary~\ref{coro:WOIPMeetJoin} for an alternative description of the meet and join in this lattice.

\begin{corollary}
\label{coro:WOIPLattice}
The weak order~$(\WOIP(n), \wole)$ is a lattice whose meet and join are given by
\[
{\less_{[\sigma,\sigma']}} \meetWOIP {\less_{[\tau,\tau']}} = {\less_{[\sigma \meetWO \tau, \, \sigma' \meetWO \tau']}}
\qquad\text{and}\qquad
{\less_{[\sigma,\sigma']}} \joinWOIP {\less_{[\tau,\tau']}} = {\less_{[\sigma \joinWO \tau, \, \sigma' \joinWO \tau']}}.
\]
\end{corollary}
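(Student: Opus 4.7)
The plan is to deduce this corollary directly from Proposition~\ref{prop:weakOrderWOIP}, which already identifies the weak order on $\WOIP(n)$ with a componentwise order on pairs of permutations. First I would observe that the map $\Phi \colon {\less_{[\sigma,\sigma']}} \mapsto (\sigma, \sigma')$ is a well-defined bijection from $\WOIP(n)$ onto the set
\[
P(n) \eqdef \bigset{(\sigma, \sigma') \in \fS(n)^2}{\sigma \wole \sigma'},
\]
since $\sigma$ and $\sigma'$ are recovered from $\less_{[\sigma,\sigma']}$ as its unique weak order minimal and maximal linear extensions (equivalently, from the formula ${\less_{[\sigma,\sigma']}} = {\Inc{\less_{\sigma'}}} \cup {\Dec{\less_\sigma}}$ in Proposition~\ref{prop:definitionWOIP}). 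Proposition~\ref{prop:weakOrderWOIP} then states precisely that $\Phi$ is an isomorphism from $(\WOIP(n), \wole)$ onto $P(n)$ endowed with the componentwise order inherited from $(\fS(n), \wole) \times (\fS(n), \wole)$.

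Next I would check that $P(n)$ is a sublattice of this product. Since $(\fS(n), \wole, \meetWO, \joinWO)$ is a lattice, the product $\fS(n)^2$ is a lattice with componentwise meet and join. If $\sigma \wole \sigma'$ and $\tau \wole \tau'$, then monotonicity of the meet (resp.\ join) in any lattice yields $\sigma \meetWO \tau \wole \sigma' \meetWO \tau'$ and $\sigma \joinWO \tau \wole \sigma' \joinWO \tau'$, so both $(\sigma \meetWO \tau, \sigma' \meetWO \tau')$ and $(\sigma \joinWO \tau, \sigma' \joinWO \tau')$ lie in $P(n)$. Hence $P(n)$ is closed under the product meet and join, and is therefore a sublattice.

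Transporting this sublattice structure back through $\Phi^{-1}$ gives the lattice structure on $(\WOIP(n), \wole)$ together with the announced formulas for $\meetWOIP$ and $\joinWOIP$. There is no real obstacle in this argument: everything is packaged by Proposition~\ref{prop:weakOrderWOIP}, and the only substantive point is the trivial monotonicity verification ensuring that $P(n)$ is sublattice-closed. The most delicate conceptual detail, should one want to spell it out, is simply that the bottom and top of an interval are intrinsic to the poset $\less_{[\sigma,\sigma']}$, which is already implicit in Proposition~\ref{prop:definitionWOIP}.
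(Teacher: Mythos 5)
Your proposal is correct and matches the paper's (implicit) argument: the paper simply states that Proposition~\ref{prop:weakOrderWOIP} endows $(\WOIP(n),\wole)$ with ``the lattice structure of a product,'' and your write-up fills in exactly the routine details of that claim --- the bijection with comparable pairs via minimal/maximal linear extensions, and the monotonicity check showing that such pairs form a sublattice of $(\fS(n),\wole)^2$.
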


\begin{corollary}
The set~$\WOEP(n)$ induces a sublattice of the weak order~$(\WOIP, \wole, \meetT, \joinT)$.
\end{corollary}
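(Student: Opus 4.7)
The inclusion ${\WOEP(n) \subseteq \WOIP(n)}$ is immediate, since each permutation $\sigma \in \fS(n)$ corresponds to the trivial weak order interval~$[\sigma,\sigma]$, so that~${\less_\sigma} = {\less_{[\sigma,\sigma]}}$. The plan is then to verify closure of $\WOEP(n)$ under the meet and join of $\WOIP(n)$, and to reconcile the resulting operations with $\meetT, \joinT$.

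For closure, I would apply the formula from Corollary~\ref{coro:WOIPLattice} to trivial intervals. For any $\sigma, \tau \in \fS(n)$, one obtains
\[
{\less_\sigma} \meetWOIP {\less_\tau} = {\less_{[\sigma,\sigma]}} \meetWOIP {\less_{[\tau,\tau]}} = {\less_{[\sigma \meetWO \tau, \, \sigma \meetWO \tau]}} = {\less_{\sigma \meetWO \tau}} \in \WOEP(n),
\]
and symmetrically ${\less_\sigma} \joinWOIP {\less_\tau} = {\less_{\sigma \joinWO \tau}} \in \WOEP(n)$. This shows that $\WOEP(n)$ is closed under the meet and join of $(\WOIP(n), \wole)$, and hence forms a sublattice of~$(\WOIP(n), \wole, \meetWOIP, \joinWOIP)$.

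It remains to identify the operations $\meetWOIP, \joinWOIP$ with $\meetT, \joinT$ on $\WOEP(n)$, which is the subtle point. By Proposition~\ref{prop:WOEPsublattice}, the meet and join of two elements of $\WOEP(n)$ computed in $\IPos(n)$ via $\meetT, \joinT$ already lie in~$\WOEP(n)$, and in particular in $\WOIP(n)$. Since $(\WOIP(n), \wole)$ is a subposet of $(\IPos(n), \wole)$, any element of~$\WOIP(n)$ that serves as a greatest lower (resp.~least upper) bound in $\IPos(n)$ is also a greatest lower (resp.~least upper) bound in~$\WOIP(n)$. Therefore, the two candidate operations coincide on $\WOEP(n)$, and combining with the previous paragraph we conclude that $\WOEP(n)$ is a sublattice of~$(\WOIP(n), \wole, \meetT, \joinT)$, with meet and join corresponding to~$\meetWO$ and~$\joinWO$ under the identification~$\sigma \mapsto {\less_\sigma}$ of Proposition~\ref{prop:weakOrderWOEP}. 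No serious obstacle is expected: the statement is essentially a bookkeeping combination of Proposition~\ref{prop:WOEPsublattice} and Corollary~\ref{coro:WOIPLattice}.
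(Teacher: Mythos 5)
Your proposal is correct and follows the route the paper intends: the corollary is an immediate specialization of Corollary~\ref{coro:WOIPLattice} to the trivial intervals~$[\sigma,\sigma]$, giving ${\less_\sigma} \meetWOIP {\less_\tau} = {\less_{\sigma \meetWO \tau}} \in \WOEP(n)$ and dually for the join. Your extra paragraph reconciling $\meetWOIP,\joinWOIP$ with $\meetT,\joinT$ via Proposition~\ref{prop:WOEPsublattice} (a greatest lower bound computed in $\IPos(n)$ that happens to lie in the subposet $\WOIP(n)$ is also the greatest lower bound there) is a sound and welcome clarification of the notation in the statement, and introduces no circularity since Proposition~\ref{prop:WOEPsublattice} is established independently in Corollary~\ref{coro:WOEPsublattice}.
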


\begin{remark}
\label{rem:WOIPnotSublattice}
$(\WOIP(n), \wole, \meetWOIP, \joinWOIP)$ is not a sublattice of~$(\IPos(n), \wole, \meetT, \joinT)$. For example,
\vspace{-.5cm}
\begin{align*}
& {\less_{[231,321]}} \meetT {\less_{[312,321]}} = \raisebox{-.65cm}{\scalebox{.8}{\input{relations/woip_231_321}}} \meetT \raisebox{-.65cm}{\scalebox{.8}{\input{relations/woip_312_321}}} = \raisebox{-.65cm}{\scalebox{.8}{\input{relations/poset4}}} \\[-.2cm]
\text{while} \quad & {\less_{[231,321]}} \meetWOIP {\less_{[312,321]}} = {\less_{[123,321]}} =
\varnothing \text{ (trivial poset on~$[3]$)}.
\end{align*}
\end{remark}


\subsection{Weak Order Face Posets}
\label{subsec:WOFP}

The permutations of~$\fS(n)$ correspond to the vertices of the permutahedron~$\Perm \eqdef \conv\set{\big(\sigma(1), \dots, \sigma(n) \big)}{\sigma \in \fS(n)}$. We now consider all the faces of the permutahedron. The codimension~$k$ faces of~$\Perm$ correspond to \defn{ordered partitions} of~$[n]$ into~$k$ parts, or equivalently to \defn{surjections} from~$[n]$ to~$[k]$. We see an ordered partition~$\pi$ as a poset~$\less_\pi$ on~$[n]$ defined by~$u \less_\pi v$ if and only if~$u = v$ or~$\pi^{-1}(u) < \pi^{-1}(v)$, that is, the part of~$\pi$ containing~$u$ appears strictly before the part of~$\pi$ containing~$v$. See \fref{fig:wofp}. Note that a permutation~$\sigma$ belongs to the face of the permutahedron~$\Perm$ corresponding to an ordered partition~$\pi$ if and only if~$\less_\sigma$ is a linear extension of~$\less_\pi$. 

\begin{figure}[t]
	\vspace{-.45cm}
	\input{example_wofp}
	\vspace{-1.3cm}
	\caption{A Weak Order Face Poset ($\WOFP$).}
	\label{fig:wofp}
	\vspace{-.4cm}
\end{figure}

We say that~$\less_\pi$ is a \defn{weak order face poset}, and we denote by
\[
\WOFP(n) \eqdef \set{{\less_\pi}}{\pi \text{ ordered partition of } [n]}
\]
the set of all weak order face posets on~$[n]$. We first characterize these posets.

\begin{proposition}
\label{prop:characterizationWOFP}
The following conditions are equivalent for a poset~${\less} \in \IPos(n)$:
\begin{enumerate}[(i)]
\item ${\less} \in \WOFP(n)$,
\item $\forall \; u, v, w \in [n]$, \; $u \less w \implies u \less v \text{ or } v \less w$,
\item ${\less} \in \WOIP(n)$ and $\forall \; a < b < c$ with $a, c$ incomparable, $a \less b \iff b \more c$ and~${a \more b \iff b \less c}$.
\end{enumerate}
\end{proposition}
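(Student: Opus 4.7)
The plan is to prove the three-way equivalence by establishing the four implications $(i) \Rightarrow (ii)$, $(ii) \Rightarrow (i)$, $(ii) \Rightarrow (iii)$, and $(iii) \Rightarrow (ii)$ in turn. Throughout, reflexive relations of the form $u \less u$ play no substantive role, so the content of $(ii)$ is only non-trivial for triples in which $u \neq w$ and $v \notin \{u, w\}$.

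For $(i) \Rightarrow (ii)$, I simply unfold the definition of $\less_\pi$: if $u \less w$ with $u \neq w$, then $\pi^{-1}(u) < \pi^{-1}(w)$, and for any $v \in [n]$ one has either $\pi^{-1}(v) \geq \pi^{-1}(w)$ (forcing $u \less v$) or $\pi^{-1}(v) < \pi^{-1}(w)$ (forcing $v \less w$). For the reverse direction $(ii) \Rightarrow (i)$, I reconstruct an ordered partition from $\less$ by defining $u \sim v$ iff $u = v$ or $u, v$ are $\less$-incomparable. Reflexivity and symmetry of $\sim$ are clear, and $(ii)$ forces transitivity: for pairwise distinct $u \sim v \sim w$, a hypothetical comparison like $u \less w$ would by $(ii)$ yield $u \less v$ or $v \less w$, contradicting an incomparability. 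The relation $[u] \prec [v] \iff u \less v$ on the quotient is well-defined, the key point being that given $u \sim u'$ and $v \sim v'$, applying $(ii)$ first to the triple $(u, v', v)$ promotes $u \less v$ to $u \less v'$ (since $v' \not\less v$), and a second application to $(u, u', v')$ promotes this further to $u' \less v'$ (since $u \not\less u'$). Antisymmetry, transitivity and totality of $\prec$ are inherited from $\less$ together with the fact that representatives of distinct classes are comparable, so enumerating the classes in $\prec$-order produces an ordered partition $\pi$ with $\less_\pi = \less$.

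For $(ii) \Rightarrow (iii)$, applying $(ii)$ to the triples $(a, b, c)$ and $(c, b, a)$ for $a < b < c$ yields the two WOIP axioms of Proposition~\ref{prop:characterizationWOIP}; the incomparability biconditionals (for $a < b < c$ with $a, c$ incomparable) follow by applying $(ii)$ to each of the four triples $(a, c, b)$, $(c, a, b)$, $(b, c, a)$, $(b, a, c)$, each time using the incomparability of $a$ and $c$ to discard one of the two alternatives. The main obstacle is the converse $(iii) \Rightarrow (ii)$, which requires a case analysis on the relative natural order of $u, v, w$. Using the $\less \leftrightarrow \more$ symmetry of both $(ii)$ and $(iii)$ (which swaps the roles of $u$ and $w$), I reduce to the case $u < w$ and treat the three sub-cases $u < v < w$, $v < u < w$, and $u < w < v$. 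The first is immediate from the corresponding WOIP axiom. In each of the other two, I sort $\{u, v, w\}$ as $a < b < c$: when $a, c$ are comparable, the relevant WOIP axiom gives a disjunction of two alternatives, one of which is ruled out because it contradicts $u \less w$ via the antisymmetry of $\less$; when $a, c$ are incomparable, the appropriate biconditional from $(iii)$ delivers the required relation at once. The hard part is simply the bookkeeping of directions of $\less$ and $\more$ through these sub-cases, which the symmetry reduction cuts roughly in half.
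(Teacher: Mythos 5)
Your proof is correct, and it takes a slightly different route from the paper's. The paper proves the single cycle (i)~$\Rightarrow$~(ii)~$\Rightarrow$~(iii)~$\Rightarrow$~(i): the first two implications are exactly as in your write-up, but the loop is closed by going from (iii) back to (i), again via the incomparability relation~$\equiv$, whose transitivity and total ordering of classes are read off from the biconditionals in~(iii). You instead prove the two equivalences (i)~$\Leftrightarrow$~(ii) and (ii)~$\Leftrightarrow$~(iii), so you reconstruct the ordered partition directly from the ``betweenness'' condition~(ii) (your two-step promotion $u \less v \Rightarrow u \less v' \Rightarrow u' \less v'$ for the well-definedness of~$\prec$ on classes is a clean way to do this), and you must then separately establish (iii)~$\Rightarrow$~(ii), which costs you the case analysis on the relative natural order of~$u,v,w$ --- precisely the work the paper's cyclic arrangement avoids. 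Both arguments hinge on the same key idea (incomparability is an equivalence whose classes are linearly ordered); the paper's version is more economical, while yours has the small virtue of showing that~(ii) alone, with no reference to the natural order on~$[n]$, already encodes the ordered-partition structure. Your preliminary remark that~(ii) should be read for $u \neq w$ (otherwise reflexivity would force totality) is a point the paper leaves implicit, and it is worth making.
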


\begin{proof}
Assume that~${\less} = {\less_\pi} \in \WOFP(n)$ for an ordered partition~$\pi$ of~$[n]$, and let~$u,v,w \in [n]$ such that~$u \less w$. By definition, we have~${\pi^{-1}(u) < \pi^{-1}(w)}$. Therefore, we certainly have~${\pi^{-1}(u) < \pi^{-1}(v)}$ or~${\pi^{-1}(v) < \pi^{-1}(w)}$, and thus~$u \less v$ or~$v \less w$. This proves that (i) $\Longrightarrow$ (ii).

Assume now that~$\less$ satisfies~(ii). It immediately implies that~${\less} \in \WOIP(n)$ by the characterization of Proposition~\ref{prop:characterizationWOIP}. Consider now~$a < b < c$ such that~$a$ and~$c$ are incomparable in~$\less$. If~$a \less b$, then~(ii) implies that either~$a \less c$ or~$c \less b$. Since we assumed that~$a$ and~$c$ are incomparable, we obtain that~$b \more c$. We obtain similarly that~$b \more c \Longrightarrow a \less b$, that~$a \more b \Longrightarrow b \less c$ and that~$b \less c \Longrightarrow a \more b$. This shows that~(ii) $\Longrightarrow$ (iii).

Finally, assume that~$\less$ satisfies~(iii). Consider the $\less$ incomparability relation~$\equiv$ defined by~$u \equiv v$ when~$u$ and~$v$ are incomparable in~$\less$. Condition~(iii) ensures that~$\equiv$ is an equivalence relation. Moreover, the equivalence classes of~$\equiv$ are totally ordered. This shows that~$\less$ defines an ordered partition of~$[n]$ and thus that~(iii) $\Longrightarrow$ (i).
\end{proof}

We now consider the weak order on~$\WOFP(n)$. Since~$\WOFP(n) \subseteq \WOIP(n)$, Proposition~\ref{prop:weakOrderWOIP} shows that we have ${\less} \wole {\bless} \iff {\minle{\less}} \wole {\minle{\bless}} \text{ and } {\maxle{\less}} \wole {\maxle{\bless}}$. This order is precisely the \defn{facial weak order} on the permutahedron~$\Perm$ studied by A.~Dermenjian, C.~Hohlweg and V.~Pilaud in~\cite{DermenjianHohlwegPilaud}. They prove in particular that this order coincides with the \defn{pseudo-permutahedron} originally defined by D.~Krob, M.~Latapy, J.-C.~Novelli, H.-D.~Phan and S.~Schwer~\cite{KrobLatapyNovelliPhanSchwer} on ordered partitions as the transitive closure of the relations
\[
\lambda_1 | \cdots | \lambda_i | \lambda_{i+1} | \cdots | \lambda_k \; \prec \; \lambda_1 | \cdots | \lambda_i\lambda_{i+1} | \cdots | \lambda_k \; \prec \; \lambda_1 | \cdots | \lambda_{i+1} | \lambda_i | \cdots | \lambda_k,
\]
if~$\max(\lambda_i) < \min(\lambda_{i+1})$. This order is known to be a lattice~\cite{KrobLatapyNovelliPhanSchwer,DermenjianHohlwegPilaud}. We will discuss an alternative description of the meet and join in this lattice in Section~\ref{subsec:facesSublattices}.

\begin{remark}
\label{rem:WOFPnotSublattice}
$(\WOFP(n), \wole, \meetWOFP, \joinWOFP)$ is not a sublattice of~$(\IPos(n), \wole, \meetT, \joinT)$, nor a sublattice of~$(\WOIP(n), \wole, \meetWOIP, \joinWOIP)$. For example,
\[
{\less_{2|13}} \meetT {\less_{123}} = {\less_{2|13}} \meetWOIP {\less_{123}}  = \{(2,3)\}
\quad \text{while} \quad {\less_{2|13}} \meetWOFP {\less_{123}} = {\less_{12|3}} = \{(1,3), (2,3)\}.
\]
\end{remark}


\subsection{$\IWOIP(n)$ and $\DWOIP(n)$ and the $\WOIP$ deletion}
\label{subsec:IWOIPDWOIP}

We conclude our section on the permutahedron by introducing some variations on~$\WOIP(n)$ which are needed later and provide a proof of the characterization of~$\WOIP(n)$ given in Proposition~\ref{prop:characterizationWOIP}. 

Since the set of linear extensions of a poset is order-convex, a poset is in $\WOIP(n)$ if and only if it admits weak order minimal and maximal linear extensions. This motivates to consider separately two bigger families of posets. Denote by~$\IWOIP(n)$ (resp.~by~$\DWOIP(n)$) the set of posets of~$\IPos(n)$ which admit a weak order maximal (resp.~minimal) linear extension. Proposition~\ref{prop:characterizationWOIP} follows from the characterization of these posets, illustrated in~\fref{fig:IWOIP-DWOIP}.

\begin{proposition}
\label{prop:characterizationIWOIPDWOIP}
For a poset~${\less} \in \IPos(n)$,
\[
\begin{array}{l@{\quad\iff\quad}l}
{\less} \in \IWOIP(n) & \forall \; a < b < c, \;\; a \less c \implies a \less b \text{ or } b \less c, \\
{\less} \in \DWOIP(n) & \forall \; a < b < c, \;\; a \more c \implies a \more b \text{ or } b \more c.
\end{array}
\]
\end{proposition}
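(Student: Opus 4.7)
By Remark~\ref{remark:reverse}, the reverse antiautomorphism of $(\IPos(n), \wole)$ exchanges $\IWOIP(n)$ with $\DWOIP(n)$ and swaps the two conditions of the proposition (since weak order maximal linear extensions of $\less$ correspond to weak order minimal linear extensions of $\rev{\less}$). It therefore suffices to establish the characterization of $\IWOIP(n)$; the $\DWOIP(n)$ characterization then follows by applying the first result to~$\rev{\less}$.

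For the forward direction, suppose $\less \in \IWOIP(n)$ has weak order maximal linear extension $\sigma_{\max}$, and let $a < b < c$ with $a \less c$. If $b \less a$ then $b \less c$ by transitivity; if $c \less b$ then $a \less b$ by transitivity. The remaining case is when $(a,b)$ and $(b,c)$ are both $\less$-incomparable, where my plan is to derive a contradiction by producing two linear extensions whose combined inversions are incompatible with $\sigma_{\max}$. Since $\less$ is transitively closed and $a \not\less b$, the relation $\less \cup \{(b,a)\}$ is acyclic and extends to a linear order $\sigma''$, which in particular is a linear extension of $\less$ with $b$ before $a$; analogously there is a linear extension $\sigma'''$ of $\less$ with $c$ before $b$. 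From $\sigma'' \wole \sigma_{\max}$ and $\sigma''' \wole \sigma_{\max}$ one deduces $\{(b,a), (c,b)\} \subseteq \inv(\sigma_{\max})$, forcing $c$ before $b$ before $a$ in $\sigma_{\max}$ and hence $(c,a) \in \inv(\sigma_{\max})$. But $a \less c$ requires $a$ before $c$ in every linear extension of~$\less$, yielding the contradiction.

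For the converse, assume $\less$ satisfies the condition. My plan is to construct a candidate maximum explicitly by the greedy rule: at each step, place the largest element all of whose $\less$-predecessors have already been placed. Call the resulting linear extension $\sigma_{\max}$. The key claim is that every version $(x,y)$ of $\sigma_{\max}$ (\ie $x < y$ with $x$ before $y$ in $\sigma_{\max}$) satisfies $x \less y$; this immediately implies $\tau \wole \sigma_{\max}$ for every linear extension $\tau$ of $\less$. To prove the claim, let $i$ be the step placing $x$: by greediness $y$ is not available at step $i$, so some $\less$-predecessor of $y$ is unplaced at step $i$; iterating (any unplaced $\less$-predecessor of such a predecessor remains a $\less$-predecessor of $y$ by transitivity, and the process terminates) produces such a $u$ which is itself available at step $i$. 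Greediness forces $u \le x$. If $u = x$ then $x \less y$ holds directly. If $u < x$, the hypothesis applied to the triple $u < x < y$ with $u \less y$ yields $u \less x$ or $x \less y$; the first is impossible, since $x$ being available at step $i$ requires all its $\less$-predecessors to be already placed, contradicting that $u$ is unplaced.

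The main obstacle is the forward implication: the contradiction requires constructing two distinct auxiliary linear extensions~$\sigma''$ and~$\sigma'''$ from the incomparability hypotheses, and then extracting from their interaction with~$\sigma_{\max}$ a forbidden inversion~$(c,a)$. The converse, by contrast, is essentially a direct verification once the greedy construction is set up, with the $\IWOIP$ hypothesis entering at one critical step in the triple-based case analysis.
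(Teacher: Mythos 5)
Your proof is correct. The direction ``$\IWOIP(n)$ implies the triple condition'' is essentially the paper's argument: the paper likewise exhibits two linear extensions (minimal extensions of ${\less} \cup \{(a,b),(c,b)\}$ and of ${\less} \cup \{(b,a),(b,c)\}$) whose inversions cannot live below a common upper bound; your one-relation-at-a-time variant and the explicit chain $c$ before $b$ before $a$ contradicting $a \less c$ is the same idea, spelled out somewhat more completely than the paper's terse conclusion. Where you genuinely diverge is the converse: the paper defines ${\maxle{\less}} \eqdef {\less} \cup \set{(b,a)}{a < b \text{ incomparable in } {\less}}$ and verifies directly (Claim~\ref{claim:maxlePoset}, a four-case transitivity check in the appendix) that this is a poset, hence a total order that dominates every linear extension; you instead run a greedy largest-available topological sort and prove that every version of its output lies in~$\Inc{\less}$. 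The two constructions yield the same permutation (both have version set exactly~$\Inc{\less}$), but your route trades the transitivity case analysis for the availability/predecessor argument, while the paper's closed-form description of~$\maxle{\less}$ is reused later (e.g.\ in the proof of Proposition~\ref{prop:characterizationIPIPDPIP}), which is why the authors prefer it. Your reduction of the $\DWOIP(n)$ statement to the $\IWOIP(n)$ one via the reverse antiautomorphism of Remark~\ref{remark:reverse} is the same symmetry the paper invokes.
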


\begin{proof}
\begin{figure}[b]
	\vspace{-.2cm}
	\centerline{
	\begin{tabular}{l@{\hspace{.3cm}}l@{\hspace{.3cm}}l@{\hspace{.3cm}}l}
    	${\less} = \raisebox{-.95cm}{\scalebox{0.8}{\input{relations/poset6}}} \begin{array}{l} \in \IWOIP(4) \\ \notin \DWOIP(4) \end{array}$ &
    	${\maxle{\less}} = \raisebox{-.95cm}{\scalebox{0.8}{\input{relations/poset1432}}}$ &
    	${\sim} = \raisebox{-.95cm}{\scalebox{0.8}{\input{relations/poset1423}}}$ &
    	${\backsim} = \raisebox{-.95cm}{\scalebox{0.8}{\input{relations/poset1342}}}$
		\\[-.3cm]
    	${\bless} = \raisebox{-.95cm}{\scalebox{0.8}{\input{relations/poset5}}} \begin{array}{l} \notin \IWOIP(4) \\ \in \DWOIP(4) \end{array}$ &
    	${\minle{\bless}} = \raisebox{-.95cm}{\scalebox{0.8}{\input{relations/poset1342}}}$ &
    	${\sim} = \raisebox{-.95cm}{\scalebox{0.8}{\input{relations/poset1432}}}$ &
    	${\backsim} = \raisebox{-.95cm}{\scalebox{0.8}{\input{relations/poset3142}}}$
		\\[-.3cm]
    	\multicolumn{4}{l}{${\dashv} = \raisebox{-.95cm}{\scalebox{0.8}{\input{relations/woip_1423_1432}}} \in \WOIP(4) \hspace{.9cm}
    	{\minle{\dashv}} = {\less_{1423}} = \raisebox{-.95cm}{\scalebox{.8}{\input{relations/poset1423}}} \hspace{.95cm}
    	{\maxle{\dashv}} = {\less_{1432}} = \raisebox{-.95cm}{\scalebox{.8}{\input{relations/poset1432}}}$}
	\end{tabular}
}
	\vspace{-.6cm}
	\caption{Examples and counterexamples of elements in $\IWOIP(4)$ and $\DWOIP(4)$.}
	\label{fig:IWOIP-DWOIP}
	\vspace{-.4cm}
\end{figure}

By symmetry, we only prove the characterization of~$\IWOIP(n)$.
Assume first that~${\less} \in \IPos(n)$ is such that~$a \less c \implies a \less b \text{ or } b \less c$ for all~$a < b < c$. Let
\[
{\maxle{\less}} \eqdef {\less} \cup \bigset{(b,a)}{a < b \text{ incomparable in } {\less}}
\]
denote the binary relation obtained from~$\less$ by adding a decreasing relation between any two incomparable elements in~$\less$ (see \fref{fig:IWOIP-DWOIP}). The following claim is proved in Appendix~\ref{subsec:appendixIWOIPDWOIP}.

\vspace{-.1cm}
\begin{claim}
\label{claim:maxlePoset}
$\maxle{\less}$ is a poset. 
\end{claim}
\vspace{-.1cm}

\noindent
Moreover~$\maxle{\less}$ is a total order (since any two elements are comparable in~$\maxle{\less}$ by definition) which is a linear extension of~$\less$ (since ${\less} \subseteq {\maxle{\less}}$ by definition). Finally, any other linear extension of~$\less$ is smaller than~$\maxle{\less}$ in weak order (since a linear extension of~$\less$ contains~$\less$ and~$\maxle{\less} \ssm {\less} \subseteq \rel[D]_n$). We conclude that~$\maxle{\less}$ is the maximal linear extension of~$\less$ in weak~order.

Reciprocally, assume now that there exists~$a < b < c$ such that~$a \less c$ while $a \not\less b$ and~$b \not\less c$. The transitivity of~$\less$ implies that~$b \not\less a$ and~$c \not\less b$. Let~${{\sim} \eqdef {\less} \cup \{(a,b),(c,b)\}}$ and~${{\backsim} \eqdef {\less} \cup \{(b,a),(b,c)\}}$. Note that $\sim$ and~$\backsim$ are still acyclic (but not necessary transitive). Indeed any cycle for example in~$\sim$ would involve either~$(a,b)$ or~$(c,b)$, but not both. If~$\sim$ has a cycle involving for example~$(a,b)$, then~$b \less a$ by transitivity of~$\less$, which gives a contradiction. Thus they admit linear extensions, and we consider minimal linear extensions~$\rho$ of~$\sim$ and~$\sigma$ of~$\backsim$. We conclude that~$\rho$ and~$\sigma$ are minimal linear extensions of~$\less$ incomparable in the weak order as illustrated on \fref{fig:IWOIP-DWOIP}. 
\end{proof}

\begin{remark}
\label{rem:coverEnough}
Note that it is enough to check the conditions of Proposition~\ref{prop:characterizationIWOIPDWOIP} only for all cover relations~$a \less c$ and~$a \more c$ of~$\less$. Indeed, consider~$a < b < c$ where~$a \less c$ is not a cover relation, so that there exists~$u \in [n]$ such that~$a \less u \less c$. Assume for example that~$b < u$, the case~$u < b$ being symmetric. Hence~$a < b < u$ and~$a \less u$ implies that either~$a \less b$ or~$b \less u$ (by induction on the length of the minimal chain between~$a$ and~$c$). If~$b \less u$, we obtain that~$b \less u \less c$ so that~$b \less c$.
\end{remark}

We have seen in Corollary~\ref{coro:WOIPLattice} that the weak order $(\WOIP(n), \wole)$ on interval posets forms a lattice. Using the characterization of Proposition~\ref{prop:characterizationIWOIPDWOIP}, we now show that the subposets~${(\IWOIP(n), \wole)}$ and~${(\DWOIP(n), \wole)}$ of the weak order~$(\IPos(n), \wole)$ form lattices --- although there are not sublattices of~${(\IPos(n), \wole, \meetT, \joinT)}$. We define the \defn{$\IWOIP$ increasing deletion}, the \defn{$\DWOIP$ decreasing deletion}, and the \defn{$\WOIP$ deletion}~by
\[
\begin{array}{l@{}l@{\;}l}
{\IWOIPid{\less}} & \eqdef {\less} \ssm \tc{(\rel[I]_n \ssm \Inc{\less})} & = {\less} \ssm \bigset{(a,c)}{\exists \; a < b_1 < \dots < b_k < c, \; a \not\less b_1 \not\less \dots \not\less b_k \not\less c}, \\[.1cm]
{\DWOIPdd{\less}} & \eqdef {\less} \ssm \tc{(\rel[D]_n \ssm \Dec{\less})} & = {\less} \ssm \bigset{(c,a)}{\exists \; a < b_1 < \dots < b_k < c, \; a \not\more b_1 \not\more \dots \not\more b_k \not\more c}, \\[.1cm]
{\WOIPd{\less}} & \eqdef {\IWOIPid{(\DWOIPdd{\less})}} & = {\DWOIPdd{(\IWOIPid{\less})}}.
\end{array}
\]
These operations are illustrated on \fref{fig:IWOIPid/DWOIPdd/WOIPd}.

\begin{figure}[t]
	\vspace{-.6cm}
	\centerline{
	\begin{tabular}{c@{\quad}c@{\quad}c}
		\multirow{ 2}{*}{${\less} = \raisebox{-1.55cm}{\scalebox{0.8}{\input{relations/poset7}}}$} &
		${\IWOIPid{\less}} = \raisebox{-1.55cm}{\scalebox{0.8}{\input{relations/poset8}}}$ &
		\multirow{ 2}{*}{${\WOIPd{\less}} = \raisebox{-1.55cm}{\scalebox{0.8}{\input{relations/posetWOIPlarge}}}$} \\
		& ${\DWOIPdd{\less}} = \raisebox{-1.55cm}{\scalebox{0.8}{\input{relations/poset9}}}$
	\end{tabular}
}
	\vspace{-.5cm}
	\caption{The $\IWOIP$ increasing deletion, the $\DWOIP$ decreasing deletion, and the $\WOIP$ deletion.}
	\label{fig:IWOIPid/DWOIPdd/WOIPd}
	\vspace{-.4cm}
\end{figure}

\begin{remark}
\label{rem:IWOIPidDWOIPdd}
Similar to Remark~\ref{rem:tdd}, the $\IWOIP$ increasing deletion (resp.~$\DWOIP$ decreasing deletion) deletes at once all increasing relations which prevent the poset to be in~$\IWOIP(n)$ (resp.~in~$\DWOIP(n)$). Deleting only the relations~$(a,c)$ (resp.~$(c,a)$) for which there exists~${a < b < c}$ such that~$a \not\less b \not\less c$ (resp.~$a \not\more b \not\more c$) would require several iterations. For example, we would need~$n$ iterations to obtain~$\IWOIPid{\set{(i,j)}{i,j \in [n], \; i+1 < j}} = \varnothing$.
\end{remark}

These functions satisfy the following properties.

\begin{lemma}
\label{lem:IWOIPidDWOIPdd1}
For any poset~${\less} \in \IPos(n)$, we have ${\IWOIPid{\less}} \in \IWOIP(n)$ and~${\DWOIPdd{\less}} \in \DWOIP(n)$. Moreover, ${\less} \in \DWOIP(n) \Longrightarrow {\IWOIPid{\less}} \in \WOIP(n)$ and~${\less} \in \IWOIP(n) \Longrightarrow {\DWOIPdd{\less}} \in \WOIP(n)$.
\end{lemma}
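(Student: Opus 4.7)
The plan is to concentrate on $\less' \eqdef \IWOIPid{\less}$, since the statement for $\DWOIPdd{\less}$ will follow from the symmetric argument (swap the roles of $\rel[I]_n$ with $\rel[D]_n$, and increasing with decreasing relations). Because $\tc{(\rel[I]_n \ssm \Inc{\less})} \subseteq \rel[I]_n$, only strictly increasing non-diagonal pairs will be removed; in particular $\Dec{\less'} = \Dec{\less}$ and the reflexive pairs will be untouched. The two ``moreover'' statements will then come for free: $\DWOIP$ membership depends only on the decreasing part (by Proposition~\ref{prop:characterizationIWOIPDWOIP}), so it transfers from $\less$ to $\less'$, and combined with $\IWOIP$ membership this gives $\WOIP$ membership; the dual is identical.

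Two things will then remain to verify: that $\less' \in \IPos(n)$ and that $\less'$ satisfies the $\IWOIP$ characterization. Antisymmetry will be immediate since we only delete pairs. The $\IWOIP$ characterization itself will also be straightforward: given $a < b < c$ with $a \less' c$, if neither $a \less' b$ nor $b \less' c$ held, then each failure would be witnessed by a ``bad chain'' --- either the length-one chain if the relation is absent in $\less$, or a longer chain coming from the $\IWOIPid$ deletion. Concatenating the two witnesses at $b$ would yield a bad chain from $a$ to $c$, putting $(a,c) \in \tc{(\rel[I]_n \ssm \Inc{\less})}$ and contradicting $a \less' c$.

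The main work will be verifying transitivity. Given $a \less' b$ and $b \less' c$, transitivity of $\less$ gives $a \less c$. If $c < a$ then $(a,c)$ is decreasing and untouched, so $a \less' c$. Otherwise $a < c$, and I will assume for contradiction a bad chain $a = x_0 < x_1 < \dots < x_k = c$ with each $x_{i-1} \not\less x_i$, then split according to the position of $b$ in natural order.

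The central and hardest case will be $a < b < c$. If $b = x_j$ for some $0 < j < k$, the prefix $a = x_0 < \dots < x_j = b$ is already a bad chain contradicting $a \less' b$. Otherwise $b$ lies strictly between consecutive chain members $x_j < b < x_{j+1}$, and the key observation is that $x_j \less b \less x_{j+1}$ would force $x_j \less x_{j+1}$ by transitivity of $\less$, contradicting $x_j \not\less x_{j+1}$; so either $x_j \not\less b$ (giving a bad chain $a = x_0 < \dots < x_j < b$ for $(a,b)$) or $b \not\less x_{j+1}$ (giving a bad chain $b < x_{j+1} < \dots < x_k = c$ for $(b,c)$), and either outcome yields a contradiction. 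The remaining two cases will be handled cleanly by antisymmetry of $\less$: if $a < c < b$, then $b \less c$ with $b \neq c$ forces $c \not\less b$, so appending $b$ to the original chain produces a bad chain for $(a,b)$; if $b < a < c$, then $a \less b$ with $a \neq b$ forces $b \not\less a$, so prepending $b$ gives a bad chain for $(b,c)$. All other orderings have $c < a$ and fall under the decreasing case above.
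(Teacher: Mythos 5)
Your proposal is correct and follows essentially the same route as the paper: the bulk of the work is showing that $\IWOIPid{\less}$ is transitive via a case analysis on the position of the middle element relative to the ``bad chain'' witnessing the deletion (your three cases match the paper's Claim~\ref{claim:IWOIPidPoset} almost verbatim), after which the $\IWOIP$ characterization of Proposition~\ref{prop:characterizationIWOIPDWOIP} and the fact that only increasing pairs are deleted give the rest. Your explicit concatenation-of-witnesses argument for the $\IWOIP$ condition is a step the paper leaves implicit, but it is the intended one.
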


\begin{proof}
We prove the result for~$\IWOIPid{\less}$, the proof for~$\DWOIPdd{\less}$ being symmetric. The details of the following claim are given in Appendix~\ref{subsec:appendixIWOIPDWOIP}.

\vspace{-.15cm}
\begin{claim}
\label{claim:IWOIPidPoset}
$\IWOIPid{\less}$ is a poset.
\end{claim}
\vspace{-.15cm}

\noindent
Thus the characterization of Proposition~\ref{prop:characterizationIWOIPDWOIP} implies that~$\IWOIPid{\less}$ is always in~$\IWOIP(n)$, and even in~$\WOIP(n)$ when~${\less} \in \DWOIP(n)$.
\end{proof}

\begin{lemma}
\label{lem:IWOIPidDWOIPdd2}
For any poset~${\less} \in \IPos(n)$, the poset~$\IWOIPid{\less}$ (resp.~$\DWOIPdd{\less}$) is the weak order minimal (resp.~maximal) poset in~$\IWOIP(n)$ bigger than~${\less}$ (resp.~in~$\DWOIP(n)$ smaller than~${\less}$).
\end{lemma}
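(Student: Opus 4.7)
The plan is to prove both statements in parallel since they are symmetric; I will focus on $\IWOIPid{\less}$ and note that $\DWOIPdd{\less}$ is handled identically with inequalities reversed. Two things must be checked: that $\IWOIPid{\less}$ lies above ${\less}$ in weak order, and that every poset in $\IWOIP(n)$ above ${\less}$ also lies above $\IWOIPid{\less}$. The first is immediate from the definition, since $\IWOIPid{\less}$ is obtained from ${\less}$ purely by removing increasing pairs, so $\Inc{(\IWOIPid{\less})} \subseteq \Inc{\less}$ while $\Dec{(\IWOIPid{\less})} = \Dec{\less}$, which gives ${\less} \wole \IWOIPid{\less}$ by Definition~\ref{def:weakOrder}. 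Combined with Lemma~\ref{lem:IWOIPidDWOIPdd1}, this already places $\IWOIPid{\less}$ in~$\IWOIP(n)$ above~${\less}$.

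For the minimality, I would take any ${\bless} \in \IWOIP(n)$ with ${\less} \wole {\bless}$ and show ${\IWOIPid{\less}} \wole {\bless}$. The decreasing side is free: $\Dec{(\IWOIPid{\less})} = \Dec{\less} \subseteq \Dec{\bless}$. On the increasing side, I need $\Inc{(\IWOIPid{\less})} \supseteq \Inc{\bless}$. Pick $(a,c) \in \Inc{\bless}$; then $(a,c) \in \Inc{\less}$ because ${\less} \wole {\bless}$ means $\Inc{\less} \supseteq \Inc{\bless}$, so it remains to check that $(a,c)$ is not deleted by the $\IWOIP$ increasing deletion, that is, no chain $a < b_1 < \dots < b_k < c$ with $a \not\less b_1 \not\less \cdots \not\less b_k \not\less c$ exists.

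The main (but short) step is ruling out such a chain, and it is exactly where the hypothesis ${\bless} \in \IWOIP(n)$ is used. If such a chain existed, then from $(a,b_i), (b_i,b_{i+1}), (b_k,c) \notin \Inc{\less}$ and $\Inc{\bless} \subseteq \Inc{\less}$ we would get $a \not\bless b_1$, $b_i \not\bless b_{i+1}$ for each $i$, and $b_k \not\bless c$. But $a \bless c$ together with the characterization in Proposition~\ref{prop:characterizationIWOIPDWOIP} applied to $a < b_1 < c$ forces $a \bless b_1$ or $b_1 \bless c$; the first is excluded, so $b_1 \bless c$. Iterating the same argument on $b_1 < b_2 < c$, then on $b_2 < b_3 < c$, and so on, yields $b_k \bless c$, contradicting $b_k \not\bless c$. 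Hence no such chain exists and $(a,c) \in \IWOIPid{\less}$, proving ${\IWOIPid{\less}} \wole {\bless}$.

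The statement for $\DWOIPdd{\less}$ is proved by the mirror argument: removing decreasing pairs places $\DWOIPdd{\less}$ weakly below~${\less}$, and given any ${\bless} \in \DWOIP(n)$ with ${\bless} \wole {\less}$, any decreasing pair $(c,a) \in \Dec{\bless}$ cannot have been deleted from ${\less}$, since otherwise a chain $a < b_1 < \dots < b_k < c$ witnessing the deletion would force, via the $\DWOIP$ characterization applied iteratively starting from $a \bmore c$, the conclusion $a \bmore b_1$, contradicting $(b_1,a) \notin \Dec{\bless}$. I expect no real obstacle beyond bookkeeping the increasing/decreasing sides correctly and invoking the iterated form of Proposition~\ref{prop:characterizationIWOIPDWOIP}, which is the substantive content already proved.
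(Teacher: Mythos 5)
Your proposal is correct and follows essentially the same route as the paper: show ${\less} \wole {\IWOIPid{\less}}$ trivially, then use the characterization of Proposition~\ref{prop:characterizationIWOIPDWOIP} to show that no pair of $\Inc{\bless}$ can be deleted. The only (cosmetic) difference is that you propagate the relation by iterating directly along the witness chain $b_1, \dots, b_k$, whereas the paper phrases the same propagation as a minimal-counterexample argument on $c-a$; both are valid and rest on the same key fact.
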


\begin{proof}
We prove the result for~$\IWOIPid{\less}$, the proof for~$\DWOIPdd{\less}$ being symmetric. Observe first that~${\less} \wole {\IWOIPid{\less}}$ since~$\IWOIPid{\less}$ is obtained from~$\less$ by deleting increasing relations. Consider now~${\bless} \in \IWOIP(n)$ such that~${\less} \wole {\bless}$. By definition, we have~${\Inc{\less}} \supseteq {\Inc{\bless}}$ and~${\Dec{\less}} \subseteq {\Dec{\bless}}$. Since~$\Dec{(\IWOIPid{\less})} = {\Dec{\less}}$, it just remains to show that for any~$(a,c) \in {\Inc{\bless}}$, there exist no $a < b_1 < \dots < b_k < c$ with~$a \not\less b_1 \not\less b_2 \not\less \dots \not\less b_k \not\less c$. Assume otherwise and choose such a pair~$(a,c)$ with~$c-a$ minimal. Since~${\bless} \in \IWOIP(n)$ and~$a < b_1 < c$ are such that~$a \bless c$ while~$a \not\bless b_1$ (because~$a \not\Inc{\less} b_1$ and~${\Inc{\bless}} \subset {\Inc{\less}}$), we have~$b_1 \bless c$. But this assertion contradicts the minimality of~$c-a$.
\end{proof}

\begin{proposition}
\label{prop:IWOIPDWOIPLattices}
The subposets of the weak order~$(\IPos(n), \wole)$ induced by~$\IWOIP(n)$ and $\DWOIP(n)$ are lattices whose meets and joins are given by
\[
\begin{array}{l@{\;=\;}l@{\qquad}l@{\;=\;}l}
{\less} \meetIWOIP {\bless} & {\less} \meetT {\bless} 						& {\less} \joinIWOIP {\bless} & \IWOIPid{\big( {\less} \joinT {\bless} \big)} \\
{\less} \meetDWOIP {\bless} & \DWOIPdd{\big( {\less} \meetT {\bless} \big)}	& {\less} \joinDWOIP {\bless} & {\less} \joinT {\bless} \; . \\
\end{array}
\]
\end{proposition}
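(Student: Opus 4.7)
The plan is to verify the four formulas directly: each one exhibits a bounding element with the required universal property, which establishes both the lattice structure and the formulas in one stroke. By the reverse map of Remark~\ref{remark:reverse}, which exchanges $\IWOIP$ with $\DWOIP$ and (as a short computation on the formulas of Proposition~\ref{prop:transitiveLattice} confirms) exchanges $\meetT$ with $\joinT$, it suffices to treat the two formulas for $\IWOIP$.

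The key step is to show that $\IWOIP(n)$ is stable under~$\meetT$. Let $\less, \bless \in \IWOIP(n)$ and put $\dashv \eqdef \less \meetT \bless$. Since the transitive decreasing deletion used in Proposition~\ref{prop:transitiveLattice} only removes decreasing relations, $\Inc{\dashv} = \tc{(\Inc\less \cup \Inc\bless)}$. To verify the characterization of Proposition~\ref{prop:characterizationIWOIPDWOIP}, pick $a < b < c$ with $a \dashv c$ and fix a witnessing path $a = v_0, v_1, \dots, v_k = c$ of minimal length with each consecutive pair in $\Inc\less \cup \Inc\bless$; in particular $v_0 < v_1 < \dots < v_k$. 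If some $v_j$ equals $b$ (necessarily with $0 < j < k$), splitting the path immediately gives $a \dashv b$. Otherwise there is an index $i$ with $v_i < b < v_{i+1}$; assume without loss of generality that $v_i \Inc\less v_{i+1}$, so that the $\IWOIP$ characterization applied to $\less$ forces $v_i \less b$ or $b \less v_{i+1}$. In the first case, concatenating $v_0 \to \dots \to v_i$ with $v_i \to b$ shows $(a,b) \in \tc{(\Inc\less \cup \Inc\bless)} = \Inc\dashv$, hence $a \dashv b$. The second case is symmetric and yields $b \dashv c$.

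Given this stability, the meet formula is immediate: $\less \meetT \bless \in \IWOIP(n)$ and is the meet of $\less, \bless$ in the larger lattice $(\IPos(n), \wole, \meetT, \joinT)$, so it is \emph{a fortiori} their meet in $(\IWOIP(n), \wole)$. For the join, the candidate $\IWOIPid(\less \joinT \bless)$ lies in $\IWOIP(n)$ by Lemma~\ref{lem:IWOIPidDWOIPdd1} and satisfies $\less, \bless \wole \less \joinT \bless \wole \IWOIPid(\less \joinT \bless)$, the last inequality being Lemma~\ref{lem:IWOIPidDWOIPdd2}. For any $\less' \in \IWOIP(n)$ above both $\less$ and $\bless$, the universal property of $\joinT$ gives $\less \joinT \bless \wole \less'$, and then the minimality statement in Lemma~\ref{lem:IWOIPidDWOIPdd2} gives $\IWOIPid(\less \joinT \bless) \wole \less'$. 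The $\DWOIP$ formulas follow by the reverse symmetry.

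The main obstacle is the stability step: it requires turning an abstract witness for $a \dashv c$, namely a chain in $\tc(\Inc\less \cup \Inc\bless)$ that a priori says nothing about $b$, into a split at the arbitrary intermediate point~$b$. This is exactly the sort of edge-by-edge localized control that the $\IWOIP$ characterization of Proposition~\ref{prop:characterizationIWOIPDWOIP} supplies.
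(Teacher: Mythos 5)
Your proof is correct and follows essentially the same route as the paper: show that $\IWOIP(n)$ is stable under $\meetT$ using the characterization of Proposition~\ref{prop:characterizationIWOIPDWOIP}, then invoke Lemma~\ref{lem:IWOIPidDWOIPdd2} for the join, and conclude by symmetry for $\DWOIP(n)$. The only cosmetic difference is in the stability step, where you split a minimal chain in $\tc{(\Inc{\less} \cup \Inc{\bless})}$ at the intermediate point~$b$, while the paper first reduces to cover relations of~$\dashv$ via Remark~\ref{rem:coverEnough}; the two localizations are equivalent.
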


\begin{proof}
We prove the result for~$\IWOIP(n)$, the proof for~$\DWOIP(n)$ being symmetric. Consider ${\less}, {\bless} \in \IWOIP(n)$. We first prove that~${\dashv} \eqdef {\less} \meetT {\bless} = \tdd{\big( \tc{(\Inc{\less} \cup \Inc{\bless})} \cup (\Dec{\less} \cap \Dec{\bless}) \big)}$ is also in~$\IWOIP(n)$ (see also Proposition~\ref{prop:STMCIDConflictFunction} and Example~\ref{exm:IWOIPDWOIPmeetsemisublattices} for a more systematic approach). For any cover relation~$a \dashv c$ and~$a < b < c$, we have~$a \Inc{\dashv} c$ so that~$a \Inc{\less} c$ or~$a \Inc{\bless} c$ (since we have a cover relation). Since~${\less}, {\bless} \in \IWOIP(n)$, we obtain that~$a \Inc{\less} b$, or~$b \Inc{\less} c$, or~$a \Inc{\bless} b$, or~$b \Inc{\bless} c$. Thus, $a \dashv b$ or~$b \dashv c$ for any cover relation~$a \dashv c$ and any~$a < b < c$. Using Remark~\ref{rem:coverEnough}, we conclude that~${\dashv} \in \IWOIP(n)$.

On the other hand, Lemma~\ref{lem:IWOIPidDWOIPdd2} asserts that~$\IWOIPid{\big( {\less} \joinT {\bless} \big)}$ is the weak order minimal poset in~$\IWOIP(n)$ bigger than~${{\less} \joinT {\bless}}$. Any poset in~$\IWOIP(n)$ bigger than~$\less$ and~$\bless$ is also bigger than~${{\less} \joinT {\bless}}$, and thus bigger than~${{\IWOIPid{\big( {\less} \joinT {\bless} \big)}}}$. We conclude that~$\IWOIPid{\big( {\less} \joinT {\bless} \big)}$ is indeed the join of~$\less$~and~$\bless$.
\end{proof}

We finally deduce from Proposition~\ref{prop:IWOIPDWOIPLattices} and Lemma~\ref{lem:IWOIPidDWOIPdd1} an alternative formula for the meet and join in the weak order~$(\WOIP(n), \wole)$. See also Corollary~\ref{coro:WOIPLattice}.

\begin{corollary}
\label{coro:WOIPMeetJoin}
The meet and join in the weak order on~$\WOIP(n)$ are given by
\[
{\less} \meetWOIP {\bless} = \DWOIPdd{\big( {\less} \meetT {\bless} \big)} \qquad\text{and}\qquad {\less} \joinWOIP {\bless} = \IWOIPid{\big( {\less} \joinT {\bless} \big)}.
\]
\end{corollary}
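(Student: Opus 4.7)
The plan is to derive both formulas from Proposition~\ref{prop:IWOIPDWOIPLattices} together with the stability property from Lemma~\ref{lem:IWOIPidDWOIPdd1}, exploiting the identity $\WOIP(n) = \IWOIP(n) \cap \DWOIP(n)$ that is immediate from the definitions (a poset has both a minimal and a maximal linear extension iff it has each separately). The two claimed identities are symmetric under poset reversal (see Remark~\ref{remark:reverse}), so I will concentrate on the join and only indicate the dual argument for the meet.

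First, I will show that the element $\IWOIPid{(\less \joinT \bless)}$ actually lies in $\WOIP(n)$. Starting from $\less, \bless \in \WOIP(n)$, both posets belong in particular to $\DWOIP(n)$. Proposition~\ref{prop:IWOIPDWOIPLattices} then identifies $\less \joinDWOIP \bless$ with $\less \joinT \bless$, so the transitive join $\less \joinT \bless$ is itself a member of $\DWOIP(n)$. Applying the second implication of Lemma~\ref{lem:IWOIPidDWOIPdd1} to this element now gives $\IWOIPid{(\less \joinT \bless)} \in \WOIP(n)$, as desired.

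Next, I will verify the universal property. By Proposition~\ref{prop:IWOIPDWOIPLattices}, $\IWOIPid{(\less \joinT \bless)}$ is equal to $\less \joinIWOIP \bless$, i.e.\ the weak order minimum among all elements of $\IWOIP(n)$ lying weakly above both $\less$ and $\bless$. Since $\WOIP(n) \subseteq \IWOIP(n)$, any $\dashv \in \WOIP(n)$ with $\less \wole \dashv$ and $\bless \wole \dashv$ is an upper bound for $\{\less, \bless\}$ in $\IWOIP(n)$, and therefore satisfies $\IWOIPid{(\less \joinT \bless)} \wole \dashv$. Combined with the previous paragraph, this shows that $\IWOIPid{(\less \joinT \bless)}$ is the join of $\less$ and $\bless$ in~$(\WOIP(n), \wole)$.

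The meet formula is obtained by the completely symmetric argument with the roles of $\IWOIP$ and $\DWOIP$ swapped: for $\less, \bless \in \WOIP(n) \subseteq \IWOIP(n)$, Proposition~\ref{prop:IWOIPDWOIPLattices} gives $\less \meetIWOIP \bless = \less \meetT \bless$, so the transitive meet lies in $\IWOIP(n)$; the first implication of Lemma~\ref{lem:IWOIPidDWOIPdd1} then places $\DWOIPdd{(\less \meetT \bless)}$ inside $\WOIP(n)$; and the universal property of $\less \meetDWOIP \bless$ identifies it as the greatest lower bound within $\WOIP(n)$. I anticipate no real obstacle here: the whole proof is a short bookkeeping chain combining two earlier results, with the only mildly subtle point being that the $\IWOIP$-join of two $\WOIP$-elements automatically lands back in $\WOIP$ (dually for the meet), which is precisely what lets the sublattice structure propagate so cleanly.
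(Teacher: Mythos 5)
Your proof is correct and follows exactly the route the paper intends: the paper states this corollary as a direct deduction from Proposition~\ref{prop:IWOIPDWOIPLattices} and Lemma~\ref{lem:IWOIPidDWOIPdd1} without spelling out the details, and your argument supplies precisely that bookkeeping (using $\WOIP(n) = \IWOIP(n) \cap \DWOIP(n)$, the stability $\less \joinT \bless \in \DWOIP(n)$, the implication ${\less} \in \DWOIP(n) \Rightarrow {\IWOIPid{\less}} \in \WOIP(n)$, and the universal property of $\joinIWOIP$, together with the dual chain for the meet). No gaps.
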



\section{From the associahedron}
\label{sec:relevantFamiliesAssociahedron}

Similarly to the previous section, we now briefly discuss some relevant families of posets corresponding to the elements, the intervals, and the faces of the associahedron.  Further similar families of posets arising from permutreehedra~\cite{PilaudPons} will be discussed in Section~\ref{sec:relevantFamiliesPermutreehedra}. This section should just be considered as a simplified prototype to the next section. We therefore omit the proofs which will appear in a more general context in Sections~\ref{sec:relevantFamiliesPermutreehedra} and~\ref{sec:sublattices}.

We denote by~$\fB(n)$ the set of \defn{planar rooted binary trees} with~$n$ nodes, that we simply call binary trees here for short. We label the vertices of a binary tree~$\tree \in \fB(n)$ through an \defn{inorder traversal}, \ie such that all vertices in the left (resp.~right) child of a vertex~$v$ of~$\tree$ receive a label smaller (resp.~larger) than the label of~$v$. From now on, we identify a vertex and its label.

There is a fundamental surjection from permutations to binary trees. Namely, a permutation~$\sigma \eqdef \sigma_1 \dots \sigma_n \in \fS(n)$ is mapped to the binary tree~$\bt(\sigma) \in \fB(n)$ obtained by successive insertions of~$\sigma_n, \dots, \sigma_1$ in a binary (search) tree. The fiber of a tree~$\tree$ is precisely the set of linear extensions of~$\tree$. It is an interval of the weak order whose minimal and maximal elements respectively avoid the patterns~$312$ and~$132$. Moreover, the fibers of~$\bt$ define a lattice congruence of the weak order. Thus, the set~$\fB(n)$ of binary trees is endowed with a lattice structure~$\wole$ defined~by
\[
\tree \wole \tree' \iff \exists \; \sigma, \sigma' \in \fS(n) \text{ such that } \bt(\sigma) = \tree, \; \bt(\sigma') = \tree' \text{ and } \sigma \wole \sigma' \\
\]
whose meet~$\meetTO$ and join~$\joinTO$ are given by
\[
\tree \meetTO \tree' = \bt(\sigma \meetWO \sigma')
\quad\text{and}\quad
\tree \joinTO \tree' = \bt(\sigma \joinWO \sigma')
\]
for any representatives~$\sigma, \sigma' \in \fS(n)$ such that~$\bt(\sigma) = \tree$ and~$\bt(\sigma') = \tree'$. Note that in particular, $\tree \wole \tree'$ if and only if~$\sigma \wole \sigma'$ where~$\sigma$ and~$\sigma'$ denote the minimal (resp.~maximal) linear extensions of~$\tree$ and~$\tree'$ respectively. For example, the minimal (resp.~maximal) tree is the left (resp.~right) comb whose unique linear extension is~$e \eqdef [1,2,\dots,n]$ (resp.~$\wo \eqdef [n,\dots,2,1]$). This lattice structure is the \defn{Tamari lattice} whose cover relations are given by \defn{right rotations} on binary trees. It was introduced by D.~Tamari~\cite{TamariFestschrift} on Dyck paths, our presentation is a more modern perspective~\cite{BjornerWachs, Reading-CambrianLattices}.


\subsection{Tamari Order Element Posets}
\label{subsec:TOEP}

We consider the tree~$\tree$ as a poset~$\less_{\tree}$, defined by~${i \less_{\tree} j}$ when~$i$ is a descendant of~$j$ in~$\tree$. In other words, the Hasse diagram of~$\less_{\tree}$ is the tree~$\tree$ oriented towards its root. An illustration is provided in \fref{fig:toep}. Note that the increasing (resp.~decreasing) subposet of~$\less_{\tree}$ is given by~$i \Inc{\less_{\tree}} j$ (resp.~$i \Dec{\less_{\tree}} j$) if and only if $i$ belongs to the left (resp.~right) subtree of~$j$ in~$\tree$.

\begin{figure}[h]
	\input{example_toep}
	\vspace{-.3cm}
	\caption{A Tamari Order Element Poset ($\TOEP$).}
	\label{fig:toep}
	\vspace{-.4cm}
\end{figure}

We say that~$\less_{\tree}$ is a \defn{Tamari order element poset}, and we denote~by
\[
\TOEP(n) \eqdef \bigset{{\less_{\tree}}}{\tree \in \fB(n)}
\]
the set of all Tamari order element posets on~$[n]$. We first characterize them (see Proposition~\ref{prop:characterizationPEP}).

\begin{proposition}
\label{prop:characterizationTOEP}
A poset~${\less} \in \IPos(n)$ is in~$\TOEP(n)$ if and only if
\begin{itemize}
\item $\forall \; a < b < c, \; a \less c \implies b \less c$ and $a \more c \implies a \more b$, 
\item for all~$a < c$ incomparable in~$\less$, there exists ${a < b < c}$ such that~$a \less b \more c$.
\end{itemize}
\end{proposition}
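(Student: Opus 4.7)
The plan is to prove the two implications separately, hinging on the basic structural fact about inorder-labeled binary trees: for any vertex $v$ of $\tree \in \fB(n)$, the set of descendants of $v$ is a contiguous integer interval $[l_v, r_v]$ containing $v$, with the left subtree carrying the labels $[l_v, v-1]$ and the right subtree carrying the labels $[v+1, r_v]$.

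For the forward direction, suppose ${\less} = {\less_\tree}$ for some $\tree \in \fB(n)$. If $a < b < c$ and $a \less c$, then $a$ is a descendant of $c$ with $a < c$, so $a \in [l_c, c-1]$; hence $b \in [l_c, c-1]$ as well, so $b \less c$. Symmetrically, $a \more c$ with $a < b < c$ puts $c$ in the right subtree of $a$, so $c \in [a+1, r_a]$, forcing $b \in [a+1, r_a]$ and thus $a \more b$. For the second condition, if $a < c$ are incomparable in $\less_\tree$ then neither lies below the other, so their lowest common ancestor $b$ in $\tree$ is distinct from both; by the inorder property $a$ sits in the left subtree of $b$ and $c$ in the right, giving $a < b < c$ and $a \less b \more c$.

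For the converse, let ${\less} \in \IPos(n)$ satisfy both conditions. I would first argue that $\less$ has a (unique) maximum element $r$. Pick any maximal element $r$; for any $v \ne r$ incomparable to $r$, the second condition applied to $v, r$ (in either order) produces an element $b$ strictly between them with $r \less b$, contradicting the maximality of $r$. Hence every element is $\less r$, and $r$ is the unique maximum. Next, I would show that any $a < r < c$ are incomparable in $\less$: both are $\less r$ by the previous step, and if $a \less c$ the first condition applied with $a < r < c$ would give $r \less c$, contradicting antisymmetry; similarly $a \more c$ combined with $a < r < c$ would give $a \more r$, again a contradiction.

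It follows that $\less$ splits as the disjoint union of its restrictions $\less_L$ and $\less_R$ to $\{1,\dots,r-1\}$ and $\{r+1,\dots,n\}$, together with the relations $v \less r$ for every $v \ne r$. Both restrictions visibly inherit the first condition; for the second, if $a < c$ lie in the same block and are incomparable, the witness $b$ provided by $\less$ satisfies $a < b < c$ and thus belongs to the same block. By induction on $n$, $\less_L$ and $\less_R$ are of the form $\less_{\tree_L}$ and $\less_{\tree_R}$ for binary trees $\tree_L, \tree_R$, and grafting them as the left and right subtrees of a root labeled $r$ produces a binary tree $\tree \in \fB(n)$ with the correct inorder labeling and with ${\less_\tree} = {\less}$. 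The main obstacle is the clean decomposition around the root: establishing both the existence of a unique maximum and the cross-incomparability of $\{1,\dots,r-1\}$ and $\{r+1,\dots,n\}$ requires using both hypotheses together, after which the recursion is routine.
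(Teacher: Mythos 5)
Your proof is correct, but it is not the route the paper takes: the paper gives no direct argument for this proposition and instead derives it as the special case $\orientation = (n,\varnothing,[n])$ of the general permutree characterization (Proposition~\ref{prop:characterizationPEP}), whose proof runs through the $\PIP$ conditions and the existence of $\orientation$-snakes, with Claims~\ref{claim:snake1} and~\ref{claim:snake2} showing that the Hasse diagram of such a poset is itself a permutree. For that orientation the snake condition degenerates to exactly your second bullet (a snake can have length at most one since $\orientation\positive=\varnothing$), and the $\IPIPm$/$\DPIPm$ conditions give the first bullet. Your argument is instead a self-contained induction on $n$ tailored to binary trees: the forward direction uses only the fact that descendant sets under inorder labeling are integer intervals, and the converse isolates a unique maximum $r$, proves cross-incomparability of $\{1,\dots,r-1\}$ and $\{r+1,\dots,n\}$ from the two hypotheses, and recurses on the two blocks (the minor relabeling of $\{r+1,\dots,n\}$ needed to invoke the induction hypothesis is routine since both conditions are translation-invariant). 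What your approach buys is an elementary, machinery-free proof of this specific case; what the paper's approach buys is uniformity, since the same snake argument simultaneously handles the weak order, Cambrian, boolean, and general permutree element posets.
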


Now we establish the relationship between the Tamari lattice on~$\fB(n)$ and the weak order on~$\TOEP(n)$ (see Proposition~\ref{prop:weakOrderPEP}).

\begin{proposition}
\label{prop:weakOrderTOEP}
For any binary trees~$\tree, \tree' \in \fB(n)$, we have~$\tree \wole \tree'$ in the Tamari lattice if and only if~${\less_{\tree}} \wole {\less_{\tree'}}$ in the weak order on posets.
\end{proposition}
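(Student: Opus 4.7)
The plan is to reduce the statement to Proposition~\ref{prop:weakOrderWOEP} via the fact that the Tamari lattice is a quotient of the weak order by the binary search tree insertion map~$\bt$. Two preliminary identifications are needed before the weak order translation becomes straightforward.

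First, I would recall the classical fact that the fiber $\bt^{-1}(\tree)$ of a binary tree~$\tree$ coincides with the set of linear extensions of the poset~$\less_\tree$: right-to-left insertion into a binary search tree places every descendant of a node after that node in the resulting permutation, which is precisely the defining condition to be a linear extension of~$\less_\tree$ (a short induction on~$n$ suffices). Since this fiber is also a weak order interval $[\minle{\less_\tree}, \maxle{\less_\tree}]$ whose minimum (resp.~maximum) element is the unique $312$-avoiding (resp.~$132$-avoiding) permutation in the fiber, Proposition~\ref{prop:definitionWOIP} combined with Proposition~\ref{prop:weakOrderWOEP} gives the key dictionary
\[
\Dec{\less_\tree} = \inv(\minle{\less_\tree}) \qquad \text{and} \qquad \Inc{\less_\tree} = \ver(\maxle{\less_\tree}).
\]

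For the forward direction, assume $\tree \wole \tree'$ in the Tamari lattice. As recalled in the paragraph preceding the proposition, the quotient description yields $\minle{\less_\tree} \wole \minle{\less_{\tree'}}$ and $\maxle{\less_\tree} \wole \maxle{\less_{\tree'}}$ in the weak order on~$\fS(n)$. By Proposition~\ref{prop:weakOrderWOEP}, this amounts to $\inv(\minle{\less_\tree}) \subseteq \inv(\minle{\less_{\tree'}})$ and $\ver(\maxle{\less_\tree}) \supseteq \ver(\maxle{\less_{\tree'}})$, which under the dictionary above translates exactly into $\Dec{\less_\tree} \subseteq \Dec{\less_{\tree'}}$ and $\Inc{\less_\tree} \supseteq \Inc{\less_{\tree'}}$, that is, ${\less_\tree} \wole {\less_{\tree'}}$.

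For the converse, assume ${\less_\tree} \wole {\less_{\tree'}}$ and choose the representatives $\sigma \eqdef \minle{\less_\tree}$ and $\sigma' \eqdef \minle{\less_{\tree'}}$ of~$\tree$ and~$\tree'$ respectively. The dictionary together with the hypothesis gives $\inv(\sigma) = \Dec{\less_\tree} \subseteq \Dec{\less_{\tree'}} = \inv(\sigma')$, whence $\sigma \wole \sigma'$ by Proposition~\ref{prop:weakOrderWOEP}, and therefore $\tree \wole \tree'$ by the definition of the Tamari order via the surjection~$\bt$. The main obstacle is the initial identification of the fibers of~$\bt$ with the linear extensions of~$\less_\tree$; once granted, everything is routine bookkeeping between the two lattice structures through the version/inversion dictionary and is subsumed by the more general permutree framework developed in Section~\ref{sec:relevantFamiliesPermutreehedra}.
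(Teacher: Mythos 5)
Your proof is correct and follows essentially the same route as the paper: the paper omits a proof in Section~\ref{subsec:TOEP} and instead proves the general permutree statement (Proposition~\ref{prop:weakOrderPEP}), whose argument is precisely yours specialized to the orientation~$(n,\varnothing,[n])$ --- namely, the fibers of~$\bt$ are weak order intervals of linear extensions, so~$\less_{\tree}$ lies in~$\WOIP(n)$ and the comparison reduces via Propositions~\ref{prop:definitionWOIP} and~\ref{prop:weakOrderWOIP} to comparing minimal and maximal linear extensions, which is equivalent to the Tamari order by the lattice congruence property. Your explicit unfolding of Proposition~\ref{prop:weakOrderWOIP} into the version/inversion dictionary is just a more hands-on phrasing of the same step.
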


It follows that the subposet of the weak order~$(\IPos, \wole)$ induced by the set~$\TOEP(n)$ is isomorphic to the Tamari lattice on~$\fB(n)$, and is thus a lattice. We conclude on $\TOEP(n)$ with the following stronger statement (see Theorem~\ref{thm:coveringOrientationElementsSublatticeIPos}).

\begin{proposition}
\label{prop:TOEPsublattice}
The set~$\TOEP(n)$ induces a sublattice of the weak order~$(\IPos(n), \wole, \meetT, \joinT)$.
\end{proposition}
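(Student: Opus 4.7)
The plan is to verify the characterization of Proposition~\ref{prop:characterizationTOEP} directly for ${\dashv} \eqdef {\less_\tree} \meetT {\less_{\tree'}}$, with the join handled dually by the same argument after swapping increasing and decreasing roles. Once ${\dashv} \in \TOEP(n)$ is established, the sublattice statement follows at once: any ${\less_{\tree''}} \in \TOEP(n)$ with ${\less_{\tree''}} \wole {\less_\tree}$ and ${\less_{\tree''}} \wole {\less_{\tree'}}$ is in particular an element of $\IPos(n)$ below both, hence below $\dashv$ in $(\IPos(n), \wole)$, and $\dashv$ itself lies in $\TOEP(n)$, so it realizes the meet within $\TOEP(n)$ as well.

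Unpacking Proposition~\ref{prop:transitiveLattice}, $\dashv = \tdd{\rel[M]}$ where $\rel[M] = {\less_\tree} \meetST {\less_{\tree'}} = \tc{(\Inc{\less_\tree} \cup \Inc{\less_{\tree'}})} \cup (\Dec{\less_\tree} \cap \Dec{\less_{\tree'}})$. Since the transitive decreasing deletion only touches decreasing pairs, $\Inc{\dashv} = \Inc{\rel[M]}$ is exactly the transitive closure, while $\Dec{\dashv} \subseteq \Dec{\less_\tree} \cap \Dec{\less_{\tree'}}$. The first condition of Proposition~\ref{prop:characterizationTOEP} then splits into two arguments. For $a < b < c$ with $a \dashv c$, expand $(a,c) \in \Inc{\dashv}$ as a chain $a = v_0 < v_1 < \dots < v_k = c$ with each consecutive pair in $\Inc{\less_\tree}$ or $\Inc{\less_{\tree'}}$; if $b$ coincides with some $v_i$ we are done, and otherwise, for the index $i$ with $v_i < b < v_{i+1}$, applying the first clause of the $\TOEP$ characterization to the witnessing tree yields $b \less v_{i+1}$, inserting $b$ into the chain before $c$. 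For $a < b < c$ with $a \more_\dashv c$, the first clause of the $\TOEP$ characterization applied to each of $\less_\tree$ and $\less_{\tree'}$ yields $(b,a) \in \Dec{\less_\tree} \cap \Dec{\less_{\tree'}} = \Dec{\rel[M]}$; the delicate point is showing that $(b,a)$ survives the transitive decreasing deletion.

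To handle this survival, I would assume witnesses $i \le b$ and $j \ge a$ with $i \rel[M] b \rel[M] a \rel[M] j$ and $i \not\rel[M] j$, then invoke Lemma~\ref{lem:simplifytdd} together with the fact that $(c,a) \in \dashv$ survives tdd: broadly, using transitivity of $\Inc{\rel[M]}$ one expects to promote $i$ from $b$ past $c$ (or adjust $j$), producing a forbidden tdd-witness for $(c,a)$ and contradicting its presence in $\dashv$. The TOEP conditions on both trees are needed to ensure that the intermediate elements inserted in the promotion stay in $\rel[M]$. The second condition of Proposition~\ref{prop:characterizationTOEP} is then handled by a case split on an incomparable pair $a < c$ in $\dashv$. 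If $a, c$ are incomparable in at least one tree, say $\less_\tree$, the second clause of the $\TOEP$ characterization for that tree supplies $a < b < c$ with $a \less_\tree b \more_\tree c$, and a short check shows that $a \dashv b \more_\dashv c$ (again with the decreasing half requiring tdd survival). If instead both $c \Dec{\less_\tree} a$ and $c \Dec{\less_{\tree'}} a$ hold but $(c,a)$ was deleted by tdd, then Lemma~\ref{lem:simplifytdd} directly supplies an intermediate $a < k < c$ with the explicit incomparability properties needed to serve as the sought separator.

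The main obstacle throughout is bookkeeping through the transitive decreasing deletion: both the decreasing half of condition~1 and part of condition~2 reduce to showing that certain decreasing relations survive tdd, which requires chasing potential deletion witnesses through chains in $\Inc{\rel[M]}$ while simultaneously invoking the TOEP conditions for both $\less_\tree$ and $\less_{\tree'}$. I would expect Lemma~\ref{lem:simplifytdd} to be the key lever in each of these survival arguments, reducing them to a finite case analysis on the relative positions of the witness indices with respect to $a$, $b$, $c$.
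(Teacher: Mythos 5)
Your high-level plan --- checking the characterization of Proposition~\ref{prop:characterizationTOEP} directly on ${\rel[M]} \eqdef \tc{(\Inc{\less_{\tree}} \cup \Inc{\less_{\tree'}})} \cup (\Dec{\less_{\tree}} \cap \Dec{\less_{\tree'}})$ and then on $\tdd{\rel[M]}$ --- is a legitimate route, and your treatment of the increasing half of the first condition is correct. But the two places you flag as ``delicate'' are exactly where the whole proof lives, and the mechanisms you propose there do not work as stated. For the decreasing half, you suppose $(b,a) \in \Dec{\rel[M]}$ is deleted by witnesses $i \le b$ and $j \ge a$ with $i \rel[M] b \rel[M] a \rel[M] j$ and $i \notrel[M] j$, and want to ``promote $i$ past $c$'' to manufacture a deletion witness for $(c,a)$. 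That requires $i \rel[M] c$, and nothing provides it: $i \rel[M] b$ gives an increasing chain from $i$ to $b$, but $b$ and $c$ need not be comparable in $\rel[M]$ at all (indeed $(c,b)$ may lie in $\Dec{\rel[M]}$), so the chain cannot be continued to $c$. The paper's corresponding argument (Claim~\ref{claim:tdddashvDPIP}) does not exhibit a witness against $(c,a)$; it takes a counterexample with $c-a$ minimal, then $b-a$ minimal, extracts an intermediate $k$ via Lemma~\ref{lem:simplifytdd}, and derives the contradiction $i \rel[M] j$ from the two minimality hypotheses --- a genuinely different, two-level induction. The incomparability condition has the same issue: if $a < c$ are incomparable in $\less_{\tree}$ and Proposition~\ref{prop:characterizationTOEP} gives $a \less_{\tree} b \more_{\tree} c$, you still need $(c,b) \in \Dec{\less_{\tree'}}$ to get $(c,b) \in \Dec{\rel[M]}$, and nothing guarantees this; the paper again argues by induction on $c-a$ (Claims~\ref{claim:dashvConflictFree} and~\ref{claim:tdddashvConflictFree}, via Lemma~\ref{lem:orientationIncompCharacterization}), exploiting that the increasing relation $a \less_{\tree} b$ at least does survive into $\rel[M]$.

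It is also worth noting that the paper never proves this proposition head-on. It factors it as ``$\TOEP(n)$ induces a sublattice of $(\TOIP(n), \wole, \meetTOIP, \joinTOIP)$'' --- a soft consequence of the componentwise description of meets and joins of intervals (Proposition~\ref{prop:elementsSublatticeIntervals}, Corollary~\ref{coro:TOIPLattice}) --- composed with ``$\TOIP(n)$ induces a sublattice of $(\IPos(n), \wole, \meetT, \joinT)$,'' which is Theorem~\ref{thm:coveringOrientationConflictFunctionsSublatticesIPos} applied to the covering orientation $(n, \varnothing, [n])$. All the bookkeeping through the transitive decreasing deletion is thereby confined to the interval-level statement, where only the conditions of Corollary~\ref{coro:characterizationTOIP} must be verified, and the element-level statement comes for free. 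If you want to keep a direct proof, replace both of your ``promotion'' steps by minimal-counterexample inductions modeled on Claims~\ref{claim:dashvDPIP}--\ref{claim:tdddashvDPIP} and~\ref{claim:dashvConflictFree}--\ref{claim:tdddashvConflictFree}.
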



\subsection{Tamari Order Interval Posets}
\label{subsec:TOIP}

For two binary trees~$\tree, \tree' \in \fB(n)$ with~$\tree \wole \tree'$, we denote by~$[\tree, \tree'] \eqdef \set{\tree[S] \in \fB(n)}{\tree \wole \tree[S] \wole \tree'}$ the Tamari order interval between~$\tree$ and~$\tree'$. We can see this interval as the poset
\[
{\less_{[\tree, \tree']}} \eqdef \bigcap_{\tree \wole \tree[S] \wole \tree'} {\less_{\tree}} = {\less_{\tree}} \cap {\less_{\tree'}} = {\Inc{\less_{\tree'}}} \cap {\Dec{\less_{\tree}}}.
\]
See \fref{fig:toip} for an example.
\begin{figure}[ht]
    \input{example_toip}
    \vspace{-.5cm}
    \caption{A Tamari Order Interval Poset ($\TOIP$).}
    \label{fig:toip}
	\vspace{-.4cm}
\end{figure}

This poset~${\less_{[\tree, \tree']}}$ was introduced in~\cite{ChatelPons} with the motivation that its linear extensions are precisely the linear extensions of all binary trees in the interval~$[\tree, \tree']$. We say that~$\less_{[\tree,\tree']}$ is a \defn{Tamari order interval poset}, and we denote by
\[
\TOIP(n) \eqdef \bigset{{\less_{[\tree,\tree']}}}{\tree, \tree' \in \fB(n), \tree \wole \tree'}
\]
the set of all Tamari order interval posets on~$[n]$. The following characterization of these posets (see Proposition~\ref{coro:characterizationPIP}) already appeared in~\cite[Thm.~2.8]{ChatelPons}.

\begin{corollary}[\protect{\cite[Thm.~2.8]{ChatelPons}}]
\label{coro:characterizationTOIP}
A poset~${\less} \in \IPos(n)$ is in~$\TOIP(n)$ if and only if $\forall \; a < b < c$, 
\[
a \less c \implies b \less c
\qquad\text{and}\qquad
a \more c \implies a \more b.
\]
\end{corollary}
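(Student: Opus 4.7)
The plan is to derive this corollary from Proposition~\ref{prop:characterizationTOEP} (the characterization of~$\TOEP(n)$) together with the definition~${\less_{[\tree,\tree']}} = \bigcap_{\tree \wole \tree[S] \wole \tree'} {\less_{\tree[S]}}$. The necessity is immediate: if ${\less} = {\less_{[\tree, \tree']}}$, then $\less$ is an intersection of TOEPs, each of which satisfies both implications $a \less c \implies b \less c$ and $a \more c \implies a \more b$ by Proposition~\ref{prop:characterizationTOEP}; both implications pass to the intersection.

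For sufficiency, given ${\less} \in \IPos(n)$ satisfying the two conditions, I first observe that they are strictly stronger than the WOIP conditions of Proposition~\ref{prop:characterizationWOIP}, so ${\less} \in \WOIP(n)$ and admits a weak-order minimum linear extension~$\sigma_{\min}$ and maximum linear extension~$\sigma_{\max}$. I would then set $\tree \eqdef \bt(\sigma_{\min})$ and $\tree' \eqdef \bt(\sigma_{\max})$; since $\bt$ is weak-order preserving, $\tree \wole \tree'$ in the Tamari lattice, so it remains to verify the equality ${\less} = {\less_{[\tree, \tree']}}$.

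The technical heart is showing that the set~$\linearExtensions(\less)$ of linear extensions of $\less$ is closed under the Sylvester congruence (whose classes are the fibers of $\bt$). Suppose $\sigma \in \linearExtensions(\less)$ has adjacent letters $\ldots ac \ldots$ with some $b$ satisfying $a < b < c$ appearing later in $\sigma$. Then $\sigma$ being a linear extension forces $c \not\less a$; moreover, if $a \less c$ held, the first TOIP condition applied to $a < b < c$ would give $b \less c$, contradicting the appearance of $c$ before $b$ in $\sigma$. Hence $a, c$ are incomparable in $\less$ and the swapped word $\sigma'$ (with $ca$ in place of $ac$) remains a linear extension of $\less$; the reverse Sylvester swap is handled symmetrically using the second TOIP condition. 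This shows that $\linearExtensions(\less)$ is saturated for the Sylvester congruence, and in particular that $\sigma_{\min}$ and $\sigma_{\max}$ are the minimum and maximum of their respective $\bt$-fibers.

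Since $\bt$ is a lattice congruence of the weak order, the preimage $\bt^{-1}([\tree, \tree'])$ then equals the weak-order interval $[\sigma_{\min}, \sigma_{\max}]$, which in turn equals $\linearExtensions(\less)$. The desired equality ${\less} = {\less_{[\tree, \tree']}} = \bigcap_{\sigma \in \bt^{-1}([\tree,\tree'])} {\less_\sigma}$ follows because any poset equals the intersection of its linear extensions. I expect the main obstacle to be verifying cleanly that $\sigma_{\min}$ and $\sigma_{\max}$ coincide with the pattern-avoiding extremes of their $\bt$-fibers; this requires combining Sylvester-closure with the classical description of fiber extrema, and one must argue that weak-order minimality of $\sigma_{\min}$ precludes any $312$-pattern inside it (and symmetrically for $\sigma_{\max}$ with $132$) precisely because any such pattern would allow a Sylvester swap producing a strictly smaller linear extension.
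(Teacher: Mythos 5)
Your proof is correct, but it takes a genuinely different route from the paper's. The paper gives no direct argument for this corollary: it is obtained as the special case $\orientation = (n,\varnothing,[n])$ of the characterization of permutree interval posets (Corollary~\ref{coro:characterizationPIP}), which is proved via Proposition~\ref{prop:characterizationIPIPDPIP}. There one takes from~\cite{PilaudPons} the fact that a poset lies in~$\PIP$ exactly when its minimal and maximal linear extensions avoid certain vincular patterns, and then checks that the explicit extension~$\maxle{\less}$ contains such a pattern if and only if~$\less$ violates one of the order conditions. You instead verify directly that the two conditions make~$\linearExtensions(\less)$ saturated under the elementary sylvester rewritings generating the fibers of~$\bt$ (your incomparability check for the swap of an adjacent pair~$ac$ with a later~$b$ is exactly right for the paper's insertion convention), and then use the lattice-congruence property to identify the saturated interval~$[\sigma_{\min},\sigma_{\max}]$ with~$\bt^{-1}([\tree,\tree'])$; necessity by intersecting the $\TOEP$ characterization over the interval is clean and valid. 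Your approach is more self-contained, needing only the congruence generators and the fact that a poset is the intersection of its linear extensions, while the paper's argument scales uniformly to every orientation~$\orientation$. One remark: the obstacle you flag at the end is not actually there. Once~$\linearExtensions(\less)$ is sylvester-saturated and equal to the weak order interval~$[\sigma_{\min},\sigma_{\max}]$, the fiber of~$\sigma_{\min}$ is a subset of that interval, so~$\sigma_{\min}$ is automatically the minimum of its fiber (and dually for~$\sigma_{\max}$); no appeal to $312$- or $132$-avoidance, nor to the pattern description of fiber extrema, is needed.
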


Now we describe the weak order on~$\TOIP(n)$ (see Proposition~\ref{prop:weakOrderPIP}, Corollary~\ref{coro:PIPLattice}).

\begin{proposition}
\label{prop:weakOrderTOIP}
For any~$\tree[S] \wole \tree[S]'$ and~$\tree \wole \tree'$, we have~${\less_{[\tree[S], \tree[S]']}} \wole {\less_{[\tree, \tree']}} \iff \tree[S] \wole \tree$ and~$\tree[S]' \wole \tree'$.
\end{proposition}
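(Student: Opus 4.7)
The plan is to mirror the proof strategy used for Proposition~\ref{prop:weakOrderWOIP} in the $\WOIP$ setting, since the structure of the argument carries over essentially verbatim once we have Proposition~\ref{prop:weakOrderTOEP} in hand.

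First I would extract from the description of~${\less_{[\tree,\tree']}}$ the precise formulas for its increasing and decreasing subrelations. Proposition~\ref{prop:weakOrderTOEP} gives~$\Inc{\less_{\tree}} \supseteq \Inc{\less_{\tree'}}$ and~$\Dec{\less_{\tree}} \subseteq \Dec{\less_{\tree'}}$ whenever~$\tree \wole \tree'$, so the intersection~${\less_{\tree}} \cap {\less_{\tree'}}$ decomposes as
\[
{\less_{[\tree,\tree']}} = \big( \Inc{\less_{\tree}} \cap \Inc{\less_{\tree'}} \big) \cup \big( \Dec{\less_{\tree}} \cap \Dec{\less_{\tree'}} \big) = \Inc{\less_{\tree'}} \cup \Dec{\less_{\tree}},
\]
the two pieces being disjoint outside the (ignored) diagonal. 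Reading off the increasing and decreasing parts then yields~${\Inc{\less_{[\tree,\tree']}}} = \Inc{\less_{\tree'}}$ and~${\Dec{\less_{[\tree,\tree']}}} = \Dec{\less_{\tree}}$.

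Next I would unfold Definition~\ref{def:weakOrder} applied to the two interval posets: ${\less_{[\tree[S],\tree[S]']}} \wole {\less_{[\tree,\tree']}}$ is equivalent to the two conditions ${\Inc{\less_{[\tree[S],\tree[S]']}}} \supseteq {\Inc{\less_{[\tree,\tree']}}}$ and~${\Dec{\less_{[\tree[S],\tree[S]']}}} \subseteq {\Dec{\less_{[\tree,\tree']}}}$. Substituting the identifications from the previous step converts these into~${\Inc{\less_{\tree[S]'}}} \supseteq {\Inc{\less_{\tree'}}}$ and~${\Dec{\less_{\tree[S]}}} \subseteq {\Dec{\less_{\tree}}}$, which are exactly the conditions ${\less_{\tree[S]'}} \wole {\less_{\tree'}}$ and~${\less_{\tree[S]}} \wole {\less_{\tree}}$ in the weak order on posets.

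Finally I would invoke Proposition~\ref{prop:weakOrderTOEP} to translate these two weak order comparisons on~$\TOEP(n)$ back into Tamari-lattice comparisons~$\tree[S]' \wole \tree'$ and~$\tree[S] \wole \tree$, completing the chain of equivalences. There is no real obstacle here, as the argument is formally identical to the $\WOIP$ case; the only point that warrants attention is the implicit use of the fact that~$\tree \wole \tree'$ forces the correct containment of increasing and decreasing subrelations, so that the intersection~${\less_{\tree}} \cap {\less_{\tree'}}$ really does split cleanly into~$\Inc{\less_{\tree'}} \cup \Dec{\less_{\tree}}$.
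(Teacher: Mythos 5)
Your proof is correct and follows essentially the same route as the paper, which omits a direct proof of Proposition~\ref{prop:weakOrderTOIP} and instead defers to Proposition~\ref{prop:weakOrderPIP}, whose proof is declared ``similar to that of Proposition~\ref{prop:weakOrderWOIP}'' --- precisely the computation you reproduce: read off ${\Inc{\less_{[\tree,\tree']}}} = {\Inc{\less_{\tree'}}}$ and ${\Dec{\less_{[\tree,\tree']}}} = {\Dec{\less_{\tree}}}$ (note the paper's displayed formula has a typo, writing $\cap$ where your $\cup$ is the intended decomposition), then translate the two containments back through Proposition~\ref{prop:weakOrderTOEP}. The only step you state slightly too quickly is that a \emph{single} containment such as ${\Inc{\less_{\tree[S]'}}} \supseteq {\Inc{\less_{\tree'}}}$ already forces ${\less_{\tree[S]'}} \wole {\less_{\tree'}}$ and hence $\tree[S]' \wole \tree'$; this is the Tamari analogue of Remark~\ref{rem:weakOrderPEP} and is exactly the implicit step the paper itself takes in the proof of Proposition~\ref{prop:weakOrderWOIP}, where it is justified by the fact that versions determine inversions.
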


\begin{corollary}
\label{coro:TOIPLattice}
The weak order~$(\TOIP(n), \wole)$ is a lattice whose meet and join are given by
\[
{\less_{[\tree[S],\tree[S]']}} \meetTOIP {\less_{[\tree,\tree']}} = {\less_{[\tree[S] \meetTO \tree, \tree[S]' \meetTO \tree']}}
\qquad\text{and}\qquad
{\less_{[\tree[S],\tree[S]']}} \joinTOIP {\less_{[\tree,\tree']}} = {\less_{[\tree[S] \joinTO \tree, \tree[S]' \joinTO \tree']}}.
\]
\end{corollary}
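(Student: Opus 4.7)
The plan is to mimic exactly the derivation of Corollary~\ref{coro:WOIPLattice} from Proposition~\ref{prop:weakOrderWOIP} in the $\WOIP$ case. That is, Proposition~\ref{prop:weakOrderTOIP} identifies $(\TOIP(n), \wole)$ with the subposet of the product lattice $(\fB(n), \wole_{\TO}) \times (\fB(n), \wole_{\TO})$ consisting of pairs $(\tree, \tree')$ with $\tree \wole \tree'$. Since the Tamari lattice is a lattice, it suffices to show that this collection of pairs is closed under componentwise meet and componentwise join, and then translate back through the bijection $(\tree,\tree') \mapsto {\less_{[\tree,\tree']}}$.

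First I would check that the proposed formulas actually define $\TOIP$ elements. Given $\tree \wole \tree'$ and $\tree[S] \wole \tree[S]'$ in $\fB(n)$, monotonicity of the meet in the Tamari lattice yields $\tree \meetTO \tree[S] \wole \tree \wole \tree'$ and $\tree \meetTO \tree[S] \wole \tree[S] \wole \tree[S]'$, hence $\tree \meetTO \tree[S] \wole \tree' \meetTO \tree[S]'$; symmetrically for join. Therefore the pairs $(\tree \meetTO \tree[S], \tree' \meetTO \tree[S]')$ and $(\tree \joinTO \tree[S], \tree' \joinTO \tree[S]')$ are legitimate Tamari order intervals, and the corresponding posets ${\less_{[\tree \meetTO \tree[S], \, \tree' \meetTO \tree[S]']}}$ and ${\less_{[\tree \joinTO \tree[S], \, \tree' \joinTO \tree[S]']}}$ lie in $\TOIP(n)$.

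Next I would verify the universal property of the meet using Proposition~\ref{prop:weakOrderTOIP}. The candidate meet ${\less_{[\tree \meetTO \tree[S], \, \tree' \meetTO \tree[S]']}}$ lies below both ${\less_{[\tree,\tree']}}$ and ${\less_{[\tree[S],\tree[S]']}}$, since $\tree \meetTO \tree[S] \wole \tree$, $\tree[S]$ and $\tree' \meetTO \tree[S]' \wole \tree'$, $\tree[S]'$. Conversely, if $[\tree[U], \tree[U]']$ is any Tamari interval with ${\less_{[\tree[U],\tree[U]']}} \wole {\less_{[\tree,\tree']}}$ and ${\less_{[\tree[U],\tree[U]']}} \wole {\less_{[\tree[S],\tree[S]']}}$, then Proposition~\ref{prop:weakOrderTOIP} gives $\tree[U] \wole \tree$, $\tree[U] \wole \tree[S]$, $\tree[U]' \wole \tree'$ and $\tree[U]' \wole \tree[S]'$, so by the universal property of meet in the Tamari lattice, $\tree[U] \wole \tree \meetTO \tree[S]$ and $\tree[U]' \wole \tree' \meetTO \tree[S]'$; applying Proposition~\ref{prop:weakOrderTOIP} in the other direction then gives ${\less_{[\tree[U],\tree[U]']}} \wole {\less_{[\tree \meetTO \tree[S], \, \tree' \meetTO \tree[S]']}}$. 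The argument for the join is entirely symmetric.

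There is no substantial obstacle here: the whole statement is essentially a formal consequence of Proposition~\ref{prop:weakOrderTOIP} together with the fact that $(\fB(n), \wole)$ is a lattice. The only minor subtlety is verifying that the componentwise meet and join preserve the condition $\tree \wole \tree'$ defining a valid interval, which is immediate from lattice monotonicity.
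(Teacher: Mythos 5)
Your argument is correct and is exactly the route the paper takes: it derives Corollary~\ref{coro:TOIPLattice} from Proposition~\ref{prop:weakOrderTOIP} by identifying $(\TOIP(n), \wole)$ with the subposet of the product of two Tamari lattices on pairs~$(\tree,\tree')$ with~$\tree \wole \tree'$, which inherits the componentwise meet and join (the paper phrases this as the ``lattice structure of a product,'' deferring details to the general permutree case in Corollary~\ref{coro:PIPLattice}). Your explicit verification that the componentwise operations preserve the interval condition and satisfy the universal property is precisely the content being left implicit there.
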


\begin{corollary}
The set~$\TOEP(n)$ induces a sublattice of the weak order~$(\TOIP, \wole, \meetT, \joinT)$.
\end{corollary}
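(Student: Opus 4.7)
The plan is to observe that this corollary follows immediately from two sublattice results already in hand. Proposition~\ref{prop:TOEPsublattice} asserts that $\TOEP(n)$ is closed under the operations $\meetT$ and $\joinT$ of the ambient lattice $(\IPos(n), \wole, \meetT, \joinT)$; Proposition~\ref{prop:TOIPSublattice} gives the same closure property for $\TOIP(n)$. Since $\TOEP(n) \subseteq \TOIP(n) \subseteq \IPos(n)$, any meet or join of two $\TOEP(n)$ elements computed inside $\TOIP(n)$ coincides with the one computed inside $\IPos(n)$, and the latter belongs to $\TOEP(n)$ by Proposition~\ref{prop:TOEPsublattice}. This directly yields the sublattice property.

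As a self-contained alternative, I would verify closure through Corollary~\ref{coro:TOIPLattice}. Any element $\less_\tree \in \TOEP(n)$ is the degenerate interval poset $\less_{[\tree,\tree]}$, so for $\less_\tree, \less_{\tree'} \in \TOEP(n)$ Corollary~\ref{coro:TOIPLattice} gives
\[
\less_\tree \meetTOIP \less_{\tree'} = \less_{[\tree \meetTO \tree', \, \tree \meetTO \tree']} = \less_{\tree \meetTO \tree'} \in \TOEP(n),
\]
and symmetrically $\less_\tree \joinTOIP \less_{\tree'} = \less_{\tree \joinTO \tree'} \in \TOEP(n)$. This shows $\TOEP(n)$ is closed under the lattice operations of $\TOIP(n)$, which is exactly what is required.

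There is no significant obstacle here: every ingredient has already been established, and this corollary is essentially a direct composition of the earlier sublattice statements, mirroring the analogous passage from $\WOEP(n) \subseteq \IPos(n)$ to $\WOEP(n) \subseteq \WOIP(n)$ in Section~\ref{subsec:WOIP}.
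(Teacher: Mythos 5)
Your second, self-contained argument is exactly the paper's proof: the paper establishes this (in the general permutree setting, Proposition~\ref{prop:elementsSublatticeIntervals}, of which this corollary is the $(n,\varnothing,[n])$ case) by writing~${\less_{\tree}} = {\less_{[\tree,\tree]}}$ and applying the product formula of Corollary~\ref{coro:TOIPLattice}, so that~${\less_{\tree}} \meetTOIP {\less_{\tree[{T'}]}} = {\less_{\tree \meetTO \tree[{T'}]}} \in \TOEP(n)$. One caution about your first route: while the implication ``$\TOEP(n)$ and $\TOIP(n)$ both sublattices of~$(\IPos(n), \wole, \meetT, \joinT)$ implies $\TOEP(n)$ sublattice of~$\TOIP(n)$'' is formally sound, it inverts the paper's logical order --- Proposition~\ref{prop:TOEPsublattice} is only proved later via Theorem~\ref{thm:coveringOrientationElementsSublatticeIPos}, whose proof itself invokes Proposition~\ref{prop:elementsSublatticeIntervals}, i.e.\ the general form of the present corollary --- so relying on it here would be circular. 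Stick with your second argument.
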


In fact, we will derive the following statement (see Corollary~\ref{coro:TOIPCOIPBOIPsublattices}).

\begin{proposition}
\label{prop:TOIPSublattice}
The set~$\TOIP(n)$ induces a sublattice of the weak order~$(\IPos(n), \wole, \meetT, \joinT)$.
\end{proposition}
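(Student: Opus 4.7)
The plan is to verify the characterization of~$\TOIP(n)$ from Corollary~\ref{coro:characterizationTOIP} directly for both the transitive meet and the transitive join of two elements~${\less}, {\bless} \in \TOIP(n)$. Set~${\dashv} \eqdef {\less} \meetT {\bless}$ and~${\rel[M]} \eqdef {\less} \meetST {\bless} = \tc{(\Inc{\less} \cup \Inc{\bless})} \cup (\Dec{\less} \cap \Dec{\bless})$, so that~${\dashv} = \tdd{\rel[M]}$. I will detail the meet case; the join case follows by a symmetric line of argument with the roles of~$\Inc$ and~$\Dec$ swapped.

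The increasing condition of~$\TOIP$ for~$\dashv$ is straightforward because the transitive decreasing deletion only removes decreasing arcs, so~$\Inc{\dashv} = \Inc{\rel[M]} = \tc{(\Inc{\less} \cup \Inc{\bless})}$. Given~$(a,c) \in \Inc{\dashv}$ and~$a < b < c$, choose a chain~$a = x_0 < x_1 < \cdots < x_k = c$ with each consecutive pair in~$\Inc{\less} \cup \Inc{\bless}$. If~$b = x_j$ for some~$j$, the sub-chain from~$x_j$ to~$x_k$ yields~$(b,c) \in \Inc{\dashv}$. Otherwise~$x_{j-1} < b < x_j$ for some~$j$ and the arc~$(x_{j-1}, x_j)$ lies in~$\Inc{R}$ for some~$R \in \{\less, \bless\}$; the~$\TOIP$ property of~$R$ yields~$(b, x_j) \in \Inc{R}$, and prepending this arc to the remainder of the chain shows~$(b, c) \in \Inc{\dashv}$.

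The decreasing condition is more delicate, and I will prove it by induction on~$b - a$. Fix~$(c,a) \in {\dashv}$ and~$a < b < c$. Since~$\Dec{\rel[M]} = \Dec{\less} \cap \Dec{\bless}$ and both~$\less$ and~$\bless$ satisfy the decreasing~$\TOIP$ condition, we get~$(b,a) \in \Dec{\rel[M]}$. It remains to show~$(b,a)$ survives~$\tdd$. In the base case~$b - a = 1$, there is no integer~$k$ with~$a < k < b$, so Lemma~\ref{lem:simplifytdd} rules out the deletion of~$(b,a)$. In the inductive step, if~$(b,a) \notin \tdd{\rel[M]}$, Lemma~\ref{lem:simplifytdd} (using that~$\less$ and~$\bless$ are antisymmetric) produces an index~$a < k < b$ with~$(k,a) \notin \tdd{\rel[M]}$. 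But then~$a < k < c$ with~$k - a < b - a$, so the inductive hypothesis applied to~$(c, a) \in {\dashv}$ forces~$(k,a) \in {\dashv} = \tdd{\rel[M]}$, a contradiction.

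The join case proceeds analogously: the decreasing condition on~${\less} \joinT {\bless}$ reduces to a direct chain argument inside~$\tc{(\Dec{\less} \cup \Dec{\bless})}$, while the increasing condition is proved by induction on~$c - b$ using the natural analogue of Lemma~\ref{lem:simplifytdd} for the transitive increasing deletion~$\tid$. The main obstacle is the inductive step for the hard condition (decreasing for the meet, increasing for the join): the transitive decreasing and increasing deletions can strip away arcs that the~$\TOIP$ property would otherwise demand, and Lemma~\ref{lem:simplifytdd} (together with its dual) is precisely the combinatorial tool that propagates non-deletion from the wider pair~$(c,a)$ down to the narrower pair~$(b,a)$ via an intermediate pair~$(k, a)$ with strictly smaller gap.
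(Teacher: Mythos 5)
Your proof is correct, and it takes a more direct route than the paper does. The paper obtains Proposition~\ref{prop:TOIPSublattice} as a special case of the general machinery of Section~\ref{sec:sublattices}: it is Corollary~\ref{coro:TOIPCOIPBOIPsublattices}, deduced from Theorem~\ref{thm:coveringOrientationConflictFunctionsSublatticesIPos} applied to the covering orientation~$(n,\varnothing,[n])$, whose proof runs through the conflict-function formalism (Proposition~\ref{prop:STMCIDConflictFunction} for the semi-sublattice half, then Claims~\ref{claim:dashvDPIP} and~\ref{claim:tdddashvDPIP} for the other half). You instead verify the two conditions of Corollary~\ref{coro:characterizationTOIP} by hand on~${\less}\meetT{\bless}$ and~${\less}\joinT{\bless}$. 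The underlying engine is the same — your chain argument for the increasing condition is exactly what Proposition~\ref{prop:STMCIDConflictFunction} abstracts (semitransitivity plus consistency of the conflict function), and your induction on~$b-a$ for the decreasing condition plays the role of the minimality argument in Claim~\ref{claim:tdddashvDPIP}, with Lemma~\ref{lem:simplifytdd} (including its antisymmetry clause, which is what lets you always extract~$(k,a)\notin\tdd{\rel[M]}$ rather than only~$(b,k)\notin\tdd{\rel[M]}$) as the key tool in both. Your version is noticeably cleaner because for the Tamari orientation every intermediate index lies in~$\orientation\negative$, so the covering-orientation case distinction of the paper's Claim~\ref{claim:tdddashvDPIP} collapses, and the fixed outer pair~$(c,a)$ makes a single induction on the gap~$b-a$ suffice; one small point worth making explicit is that the dual of Lemma~\ref{lem:simplifytdd} for~$\tid$ that you invoke for the join is legitimately obtained via the reverse antiautomorphism of Remark~\ref{remark:reverse}, even though~$\TOIP(n)$ itself is not stable under reversal, so the join case must indeed be argued separately as you do rather than by reversing the meet case. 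What the paper's longer route buys is that the same proof simultaneously covers all Cambrian lattices and the boolean lattice; what yours buys is a short, self-contained proof for the Tamari case.
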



\subsection{Tamari Order Face Posets}
\label{subsec:TOFP}

The binary trees of~$\fB(n)$ correspond to the vertices of the associahedron~$\Asso[n]$ constructed \eg by J.-L.~Loday in~\cite{Loday}. We now consider all the faces of the associahedron~$\Asso[n]$ which correspond to \defn{Schr\"oder trees}, \ie planar rooted trees where each node has either none or at least two children. Given a Schr\"oder tree~$\tree[S]$, we label the angles between consecutive children of the vertices of~$\tree[S]$ in inorder, meaning that each angle is labeled after the angles in its left child and before the angles in its right child.
Note that a binary tree~$\tree$ belongs to the face of the associahedron~$\Asso[n]$ corresponding to a Schr\"oder tree~$\tree[S]$ if and only if~$\tree[S]$ is obtained by edge contractions in~$\tree$. The set of such binary trees is an interval~$[\tree^{\min}(\tree[S]), \tree^{\max}(\tree[S])]$ in the Tamari lattice, where the minimal (resp.~maximal) tree~$\tree^{\min}(\tree[S])$ (resp.~$\tree^{\max}(\tree[S])$) is obtained by replacing the nodes of~$\tree[S]$ by left (resp.~right) combs as illustrated~in~\fref{fig:tofp}.
\begin{figure}[h]
	\vspace{-.2cm}
	\input{example_tofp}
	\vspace{-.3cm}
	\caption{A Tamari Order Face Poset ($\TOFP$).}
	\label{fig:tofp}
	\vspace{-.4cm}
\end{figure}

We associate to a Schr\"oder tree~$\tree[S]$ the poset~$\less_{\tree[S]} \eqdef \less_{[\tree^{\min}(\tree[S]), \tree^{\max}(\tree[S])]}$. Equivalently, $i \less_{\tree[S]} j$ if and only if the angle~$i$ belongs to the left or the right child of the angle~$j$. See \fref{fig:tofp}. Note~that
\begin{itemize}
\item a binary tree~$\tree$ belongs to the face of the associahedron~$\Asso[n]$ corresponding to a Schr\"oder tree~$\tree[S]$ if and only if~$\less_{\tree}$ is an extension of~$\less_{\tree[S]}$, and
\item the linear extensions of~$\less_{\tree[S]}$ are precisely the linear extensions of~$\less_{\tree}$ for all binary trees~$\tree$ which belong to the face of the associahedron~$\Asso[n]$ corresponding to~$\tree[S]$.
\end{itemize}
We say that~$\less_{\tree[S]}$ is a \defn{Tamari order face poset}, and we denote by
\[
\TOFP(n) \eqdef \bigset{{\less_{\tree[S]}}}{\tree[S] \text{ Schr\"oder tree on $[n]$}}
\]
the set of all Tamari order face posets. We first characterize these posets (see Proposition~\ref{prop:characterizationPFP}).

\begin{proposition}
\label{prop:characterizationTOFP}
A poset~${\less} \in \IPos(n)$ is in~$\TOFP(n)$ if and only if ${\less} \in \TOIP(n)$ (see characterization in Corollary~\ref{coro:characterizationTOIP}) and for all~$a < c$ incomparable in~$\less$, either there exists~$a < b < c$ such that~$a \not\more b \not\less c$, or for all~$a < b < c$ we have~$a \more b \less c$.
\end{proposition}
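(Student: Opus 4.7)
The plan is to prove the biconditional in two directions. The direct implication is a structural case analysis inside the Schröder tree, while the converse requires reconstructing a Schröder tree from the abstract poset and is the substantive part.

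For the direct implication, assume $\less = \less_{\tree[S]}$ for a Schröder tree $\tree[S]$. The identity $\less_{\tree[S]} = \less_{[\tree^{\min}(\tree[S]),\tree^{\max}(\tree[S])]}$ immediately places $\less$ in $\TOIP(n)$ via Corollary~\ref{coro:characterizationTOIP}. For the second clause I would pick incomparable labels $a < c$ and case split on how their angles sit in $\tree[S]$. If $a$ and $c$ are non-adjacent angles of a common node $v$, any intermediate angle of $v$ is incomparable to both and provides an escape. If $a$ and $c$ lie in distinct nodes, then an angle of their least common ancestor node located strictly between them in inorder (which is always forced to exist in this configuration, as can be seen by splitting into the subcases ``one node is an ancestor of the other'' and ``neither is'') is incomparable to both and again provides an escape. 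Finally, if $a$ and $c$ are adjacent angles of a common node $v$, every integer $b$ with $a < b < c$ lies in the subtree of the child of $v$ sitting between $a$ and $c$, which is simultaneously the right subtree of $a$ and the left subtree of $c$, so $a \more b \less c$, which yields the second clause.

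For the converse, assume $\less \in \TOIP(n)$ satisfies the additional condition. Corollary~\ref{coro:characterizationTOIP} gives $\less = \less_{[\tree,\tree']}$ for some binary trees $\tree \wole \tree'$, and the aim is to build a Schröder tree $\tree[S]$ with $\tree^{\min}(\tree[S]) = \tree$ and $\tree^{\max}(\tree[S]) = \tree'$, equivalently with $\less_{\tree[S]} = \less$. The construction I propose reads the angle-sets of the nodes of $\tree[S]$ off $\less$ via a bonded relation: declare $a \approx c$ (with $a < c$) when $a$ and $c$ are incomparable in $\less$ and the second clause of the condition applies, and let $\equiv$ be the equivalence relation generated by $\approx$. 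Guided by the case analysis above (where only adjacent same-node angles are bonded), the classes of $\equiv$ are intended to be the sorted lists of angles of the nodes of $\tree[S]$, and the subtrees between consecutive angles of a class are read off $\less$ by restriction to the labels below the corresponding pair of adjacent angles.

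The main obstacle will be this last step: showing that the $\equiv$-classes really do assemble into a Schröder tree on $[n]$ with the correct inorder labelling and that its associated poset coincides with $\less$. Concretely, one needs to verify that for each class $C = \{i_1 < \cdots < i_{k-1}\}$, the labels outside $C$ split coherently across the $k$ gaps of $C$ according to which angles of $C$ they lie below in $\less$, and that each induced subposet on such a gap again satisfies the hypotheses, enabling a recursion on $n$. Both facts should be derivable from the $\TOIP$ condition combined with the dichotomy between ``escape'' and ``bonded'' incomparable pairs established in the forward direction. If the direct argument proves overly technical, an alternative route is to derive this proposition as the Tamari specialisation of the more general Proposition~\ref{prop:characterizationPFP} on permutree face posets stated later in the paper.
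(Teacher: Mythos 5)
Your forward direction is essentially sound, and its three-way split (non-adjacent angles of one node, angles of distinct nodes, adjacent angles of one node) is the right one; note only that in the "distinct nodes" case the witness $b$ you produce need not be incomparable to both $a$ and $c$ --- when $a$ sits in a child subtree of the common ancestor node one typically has $a \less b$ --- but since the escape clause only requires $a \not\more b$ and $b \not\less c$, the argument survives this imprecision.

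The genuine gap is in the converse, which is the substantive half of the statement and which you leave as a programme rather than a proof. Declaring $a \approx c$ for bonded pairs and taking the generated equivalence $\equiv$ is a reasonable starting point, but all of the content is concentrated in the step you yourself flag as "the main obstacle": one must show that the $\equiv$-classes are non-crossing, that for a class $C = \{i_1 < \dots < i_{k-1}\}$ every $b$ with $i_j < b < i_{j+1}$ satisfies $i_j \more b \less i_{j+1}$ while nothing outside that gap does, that the restriction of $\less$ to each gap again satisfies the hypotheses so that the recursion closes, and finally that the assembled tree $\tree[S]$ satisfies ${\less_{\tree[S]}} = {\less}$ in both inclusions. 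Asserting that these facts "should be derivable" does not discharge them, and they are exactly where counterexamples to a naive construction would hide. For comparison, the paper does not prove this proposition directly at all: it is the specialization to the orientation~$(n, \varnothing, [n])$ of Proposition~\ref{prop:characterizationPFP}, whose converse uses a different reconstruction --- form the auxiliary poset ${\bless} = \tc{\big({\less} \cup \set{(a,c)}{a<c \text{ incomparable and not satisfying } (\spadesuit)}\big)}$, prove (Claims~\ref{claim:decontractionCompare} and~\ref{claim:decontraction}) that $\bless$ is again an interval poset all of whose incomparable pairs satisfy~$(\spadesuit)$, deduce from Claim~\ref{claim:PEP} that ${\bless} = {\less_{\tree}}$ for an honest tree~$\tree$, and obtain $\tree[S]$ by contracting the Hasse edges of $\bless$ absent from $\less$, with Claim~\ref{claim:contraction} verifying ${\less} = {\less_{\tree[S]}}$. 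Your closing sentence correctly identifies this route as the fallback; as written, however, your primary argument is incomplete precisely where the difficulty lies.
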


Consider now the weak order on~$\TOFP(n)$. It turns out (see Proposition~\ref{prop:weakOrderPFP}) that this order on Schr\"oder trees coincides with the \defn{facial weak order} on the associahedron~$\Asso[n]$ studied in~\cite{PalaciosRonco, NovelliThibon-trialgebras, DermenjianHohlwegPilaud}. This order is a quotient of the facial weak order on the permutahedron by the fibers of the Schr\"oder tree insertion~$\st$. In particular, the weak order on~$\TOFP(n)$ is a lattice.

\begin{remark}
\label{rem:TOFPnotSublattice}
The example of Remark~\ref{rem:WOFPnotSublattice} shows that~$(\TOFP(n), \wole, \meetTOFP, \joinTOFP)$ is not a sublattice of~$(\IPos(n), \wole, \meetT, \joinT)$, nor a sublattice of~$(\WOIP(n), \wole, \meetWOIP, \joinWOIP)$, nor a sublattice of~$(\TOIP(n), \wole, \meetTOIP, \joinTOIP)$.  
\end{remark}


\subsection{$\TOIP$ deletion}
\label{subsec:TOIPdeletion}

We finally define a projection from all posets of~$\IPos(n)$ to~$\TOIP(n)$. We call \defn{$\TOIP$ deletion} the map defined by
\[
{\TOIPd{\less}} \eqdef {\less} \ssm \big( \bigset{(a,c)}{\exists \; a < b < c, \; b \not\less c} \cup \bigset{(c,a)}{\exists \; a < b < c, \; a \not\more b} \big).
\]
Then~${\TOIPd{\less}} \in \TOIP(n)$ for any poset~${\less} \in \IPos(n)$ (see Lemma~\ref{lem:IPIPidDPIPdd1}). We compare this map with the binary search tree and Schr\"oder tree insertions described earlier (see Proposition~\ref{prop:PIPd/bst}, Corollary~\ref{coro:PIPd/bstIntervals} and Proposition~\ref{prop:PIPd/st}).

\begin{proposition}
\label{prop:TOIPd/bst}
For any permutation~$\sigma \in \fS(n)$, for any permutations~$\sigma, \sigma' \in \fS(n)$ with~$\sigma \wole \sigma'$, and for any ordered partition~$\pi$ of~$[n]$, we have
\[
{\TOIPd{(\less_\sigma)}} = {\less_{\bt(\sigma)}},
\qquad
{\TOIPd{(\less_{[\sigma,\sigma']})}} = {\less_{[\bt(\sigma), \bt(\sigma')]}}
\qquad\text{and}\qquad
{\TOIPd{(\less_\pi)}} = {\less_{\st(\pi)}}.
\]
\end{proposition}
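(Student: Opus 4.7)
The plan is to derive the three equalities from a single combinatorial description of $\less_{\bt(\sigma)}$ directly in terms of the positions of values in $\sigma$. I will rely on the classical Cartesian tree characterization: for any $\sigma \in \fS(n)$ and any distinct $i, j \in [n]$,
\[
i \less_{\bt(\sigma)} j \iff \sigma^{-1}(k) \le \sigma^{-1}(j) \text{ for all } k \in [\min(i,j), \max(i,j)],
\]
i.e., $j$ is an ancestor of $i$ in $\bt(\sigma)$ iff $j$ is the last value to appear in $\sigma$ among the integers in $[\min(i,j), \max(i,j)]$. This is proved by induction on $n$: the root of $\bt(\sigma)$ is $\sigma_n$, which is the argmax of $\sigma^{-1}$ on $[n]$, so the values smaller (resp.~larger) than $\sigma_n$ form its left (resp.~right) subtree (in the inherited relative order), and one applies the inductive hypothesis separately to each side. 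This step is the main obstacle of the proof; everything else is essentially bookkeeping.

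For the element case, fix $a < c$ and unfold the definition of $\TOIPd$: the pair $(a,c)$ lies in $\TOIPd{(\less_\sigma)}$ iff $a \less_\sigma c$ and $b \less_\sigma c$ for every $a < b < c$, equivalently iff $\sigma^{-1}(k) \le \sigma^{-1}(c)$ for all $k \in [a,c]$, which by the characterization above is exactly $(a,c) \in \less_{\bt(\sigma)}$. The symmetric calculation on the decreasing part gives $(c,a) \in \TOIPd{(\less_\sigma)} \iff (c,a) \in \less_{\bt(\sigma)}$, settling the first equality.

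For the interval case, I observe that $\TOIPd$ acts on the increasing and decreasing subrelations independently: the condition for keeping an increasing pair $(a,c)$ references only $\Inc{\less}$, and the condition for keeping a decreasing pair $(c,a)$ references only $\Dec{\less}$. Combined with $\less_{[\sigma,\sigma']} = \Inc{\less_{\sigma'}} \cup \Dec{\less_\sigma}$ (Proposition~\ref{prop:definitionWOIP}) and the analogous identity $\less_{[\bt(\sigma), \bt(\sigma')]} = \Inc{\less_{\bt(\sigma')}} \cup \Dec{\less_{\bt(\sigma)}}$, applying the element case to $\sigma'$ on the increasing side and to $\sigma$ on the decreasing side immediately gives $\TOIPd{(\less_{[\sigma,\sigma']})} = \less_{[\bt(\sigma), \bt(\sigma')]}$.

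For the face case, I reduce to the interval case via the standard dictionary between ordered partitions and Schr\"oder trees. Every ordered partition $\pi$ has weak-order minimum and maximum linear extensions $\sigma^{\min}(\pi)$ and $\sigma^{\max}(\pi)$, obtained by listing the parts of $\pi$ in order and sorting each part respectively increasingly or decreasingly, so that ${\less_\pi} = {\less_{[\sigma^{\min}(\pi), \sigma^{\max}(\pi)]}}$. Since $\bt$ is a monotone surjection onto the Tamari lattice (the sylvester projection) and $\st$ is its extension to faces, $\bt$ sends these extremal linear extensions to the Tamari-extremal trees of the face indexed by $\st(\pi)$, that is, $\bt(\sigma^{\min}(\pi)) = \tree^{\min}(\st(\pi))$ and $\bt(\sigma^{\max}(\pi)) = \tree^{\max}(\st(\pi))$; consequently ${\less_{\st(\pi)}} = {\less_{[\bt(\sigma^{\min}(\pi)), \bt(\sigma^{\max}(\pi))]}}$, and the third equality follows from the interval case applied to the pair $(\sigma^{\min}(\pi), \sigma^{\max}(\pi))$.
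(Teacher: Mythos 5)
Your argument is correct, but it takes a genuinely different route from the paper's. The paper does not prove this proposition in isolation: it obtains it as the specialization to the orientation~$(n,\varnothing,[n])$ of Proposition~\ref{prop:PIPd/bst}, Corollary~\ref{coro:PIPd/bstIntervals} and Proposition~\ref{prop:PIPd/st}, whose proofs are structural --- one shows that~$\PIPd{\less_\sigma}$ (resp.~$\PIPd{\less_\pi}$) satisfies the abstract characterization of~$\PEP$ via $\orientation$-snakes (resp.\ of~$\PFP$ via~$(\spadesuit)/(\clubsuit)$), and then identifies it as~$\less_{\surjection(\sigma)}$ (resp.~$\less_{\surjection(\pi)}$) by the uniqueness of the (Schr\"oder) permutree with the prescribed linear extensions; the interval case follows from the commutation of~$\PIPd{}$ with intersections. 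You instead compute everything explicitly from the Cartesian-tree description $i \less_{\bt(\sigma)} j \iff \sigma^{-1}(k) \le \sigma^{-1}(j)$ for all $k$ between $i$ and $j$, which makes the element case a one-line unfolding of the definition of~$\TOIPd{}$, and you replace the intersection argument by the (equally valid) observation that $\Inc{(\TOIPd{\less})}$ and $\Dec{(\TOIPd{\less})}$ depend only on~$\Inc{\less}$ and~$\Dec{\less}$ respectively, combined with ${\less_{[\sigma,\sigma']}} = {\Inc{\less_{\sigma'}}} \cup {\Dec{\less_\sigma}}$. Your approach is more elementary and self-contained for the Tamari case; what it gives up is uniformity --- the paper's argument works verbatim for every orientation~$\orientation$, whereas your explicit formula for~$\less_{\bt(\sigma)}$ has no equally clean analogue for general permutrees.

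One step deserves more care than you give it: in the face case you assert that $\bt(\sigma^{\min}(\pi)) = \tree^{\min}(\st(\pi))$ and $\bt(\sigma^{\max}(\pi)) = \tree^{\max}(\st(\pi))$ for an \emph{arbitrary} ordered partition~$\pi$, not just for the extremal ones in the fiber of~$\st$. Monotonicity of~$\bt$ only gives $\bt(\sigma^{\min}(\pi)) \wole \tree^{\min}(\st(\pi))$ in one direction; the reverse inequality needs the fact that~$\bt$ maps the permutations of the face of~$\Perm$ corresponding to~$\pi$ \emph{onto} the binary trees of the face of~$\Asso[n]$ corresponding to~$\st(\pi)$ (a consequence of the refinement of normal fans, and essentially the identification used in the paper's proof of Proposition~\ref{prop:weakOrderPFP}). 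This is true and standard, but as written it is the one link in your chain that is asserted rather than argued.
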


\begin{example}
Compare Figures~\ref{fig:woep} and~\ref{fig:toep}, Figures~\ref{fig:woip} and~\ref{fig:toip}, and Figures~\ref{fig:wofp} and~\ref{fig:tofp}.
\end{example}


\section{From permutreehedra}
\label{sec:relevantFamiliesPermutreehedra}

Extending Sections~\ref{sec:relevantFamiliesPermutahedron} and~\ref{sec:relevantFamiliesAssociahedron}, we describe further relevant families of posets corresponding to the elements, the faces, and the intervals in the permutreehedra introduced in~\cite{PilaudPons}. This provides a wider range of examples and uniform proofs, at the cost of increasing the technicalities.


\subsection{Permutree Element Posets}
\label{subsec:PEP}

We first recall from~\cite{PilaudPons} the definition of permutrees.

\begin{definition}[\cite{PilaudPons}]
\label{def:permutree}
A \defn{permutree} is a directed tree~$\tree$ with vertex set~$\ground$ endowed with a bijective vertex labeling $p : \ground \to [n]$ such that for each vertex~$v \in \ground$,
\begin{enumerate}[(i)]
\item $v$ has one or two parents (outgoing neighbors), and one or two children (incoming neighbors);
\item if $v$ has two parents (resp.~children), then all labels in the left ancestor (resp.~descendant) subtree of~$v$ are smaller than~$p(v)$ while all labels in the right ancestor (resp.~descendant) subtree of~$v$ are larger than~$p(v)$.
\end{enumerate}
The \defn{orientation} of a permutree~$\tree$ is~$\orientation(\tree) = (n, \orientation\positive, \orientation\negative)$ where~$\orientation\positive$ is the set of labels of the nodes with two parents while~$\orientation\negative$ is the set of labels of the nodes with two children. Note that there is a priori no conditions on these sets~$\orientation\positive$ and~$\orientation\negative$: they can be empty, they can intersect, etc. For a given orientation~$\orientation = (n, \orientation\positive, \orientation\negative)$, we denote by~$\Permutrees$ the set of permutrees with orientation~$\orientation$.
\end{definition}

\fref{fig:permutrees} gives five examples of permutrees. While the first is generic, the other four show that specific permutrees encode relevant combinatorial objects, depending on their orientations:

\medskip
\centerline{
\begin{tabular}{c||c|c|c|c}
orientation~$(n, \orientation\positive, \orientation\negative)$ & $(n, \varnothing, \varnothing)$ & $(n, \varnothing, [n])$ & $\orientation\positive \sqcup \orientation\negative = [n]$ & $(n, [n], [n])$ \\
\hline
combinatorial objects & permutations & binary trees & Cambrian trees~\cite{ChatelPilaud} & binary sequences
\end{tabular}
}

\medskip
\noindent
See~\cite{PilaudPons} for more details on the interpretation of these combinatorial objects as permutrees. We use drawing conventions from~\cite{PilaudPons}: nodes are ordered by their labels from left to right, edges are oriented from bottom to top, and we draw a red wall separating the two parents or the two children of a node. Condition~(ii) in Definition~\ref{def:permutree} says that no black edge can cross~a~red~wall.

\begin{figure}[h]
	\centerline{
	\begin{tabular}{c@{\!}cc@{\!}c@{\!}c}
		  $\orientation = (7, \{2,4,7\}, \{1,4,6\})$
		& $\orientation = (7, \varnothing, \varnothing)$
		& $\orientation = (7, \varnothing, [7])$
		& $\orientation = (7, \{3,6,7\}, \{1,2,4,5\})$
		& $\orientation = (7, [7], [7])$
		\\[.2cm]
		  \includegraphics[scale=.8]{permutreeGeneric}
		& \includegraphics[scale=.8]{permutreePermutation}
		& \includegraphics[scale=.8]{permutreeBinaryTree}
		& \includegraphics[scale=.8]{permutreeCambrianTree}
		& \includegraphics[scale=.8]{permutreeBinarySequence}
		\\[-.6cm]
		  \;\scalebox{.8}{\input{relations/posetPEP.tex}}
		& \;\scalebox{.8}{\input{relations/posetWOEP.tex}}
		& \;\scalebox{.8}{\input{relations/posetTOEP.tex}}
		& \;\scalebox{.8}{\input{relations/posetCOEP.tex}}
		& \;\scalebox{.8}{\input{relations/posetBOEP.tex}}
	\end{tabular}
}
	\vspace{-.8cm}
	\caption{Five examples of permutrees~$\tree$ (top) with their posets~$\less_{\tree}$~(bottom). While the first is generic, the last four illustrate specific orientations corresponding to permutations, binary trees, Cambrian trees, and binary sequences.}
	\label{fig:permutrees}
	\vspace{-.4cm}
\end{figure}

For a permutree~$\tree$, we denote by~$\less_{\tree}$ the transitive closure of~$\tree$. That is to say, $i \less_{\tree} j$ if and only if there is an oriented path from~$i$ to~$j$ in~$\tree$. See \fref{fig:permutrees} for illustrations. To visualize the orientation~$\orientation$ in the poset~$\less_{\tree}$, we overline (resp.~underline) the elements of~$\orientation\positive$ (resp.~of~$\orientation\negative$).

We say that~$\less_{\tree}$ is a \defn{permutree element poset} and we denote by
\[
\PEP \eqdef \bigset{{\less_{\tree}}}{\tree \in \Permutrees}
\]
the set of all permutree element posets for a given orientation~$\orientation$. These posets will be characterized in Proposition~\ref{prop:characterizationPEP}. For the moment, we need the following properties from~\cite{PilaudPons}.

\begin{proposition}[\cite{PilaudPons}]
\label{prop:permutrees}
Fix an orientation~$\orientation = (n, \orientation\positive, \orientation\negative)$ of~$[n]$.
\\[-.3cm]
\begin{enumerate}
\item For a permutree~$\tree \in \Permutrees$, the set of linear extensions~$\linearExtensions(\tree)$ of~$\less_{\tree}$ is an interval in the weak order on~$\fS(n)$ whose minimal element avoids the pattern~$ca-b$ with $a < b < c$ and~$b \in \orientation\negative$ (denoted~$31\down{2}$) and the pattern~$b-ca$ with~$a < b < c$ and~$b \in \orientation\positive$ (denoted~$\up{2}31$), and whose maximal element avoids the pattern~$ac-b$ with $a < b < c$ and~$b \in \orientation\negative$ (denoted~$13\down{2}$) and the pattern~$b-ac$ with~$a < b < c$ and~$b \in \orientation\positive$ (denoted~$\up{2}31$) .
\\[-.2cm]
\item The collection of sets~$\linearExtensions(\tree) \eqdef \{\text{linear extensions of } {\less_{\tree}}\}$ for all permutrees~$\tree \in \Permutrees$ forms a partition of~$\fS(n)$. This defines a surjection~$\surjection$ from~$\fS(n)$ to~$\Permutrees$, which sends a permutation~$\sigma \in \fS(n)$ to the unique permutree~$\tree \in \Permutrees$ such that~$\sigma \in \linearExtensions(\tree)$. This surjection can be described directly as an insertion algorithm (we skip this description and refer the interested reader to~\cite{PilaudPons} as it is not needed for the purposes of this paper).
\\[-.2cm]
\item This partition defines a lattice congruence of the weak order (see~\cite{Reading-latticeCongruences, Reading-CambrianLattices, PilaudPons} for details). Therefore, the set of permutrees~$\Permutrees$ is endowed with a lattice structure~$\wole$, called \defn{permutree lattice}, defined by
\[
\tree \wole \tree' \iff \exists \; \sigma, \sigma' \in \fS(n) \text{ such that } \surjection(\sigma) = \tree, \; \surjection(\sigma') = \tree' \text{ and } \sigma \wole \sigma' \\
\]
whose meet~$\meetO$ and join~$\joinO$ are given by
\[
\tree \meetO \tree' = \surjection(\sigma \meetWO \sigma')
\quad\text{and}\quad
\tree \joinO \tree' = \surjection(\sigma \joinWO \sigma')
\]
for any representatives~$\sigma, \sigma' \in \fS(n)$ such that~$\surjection(\sigma) = \tree$ and~$\surjection(\sigma') = \tree'$. In particular, $\tree \wole \tree'$ if and only if~$\sigma \wole \sigma'$ where~$\sigma$ and~$\sigma'$ denote the minimal (resp.~maximal) linear extensions of~$\tree$ and~$\tree'$ respectively.
\\[-.2cm]
\item This lattice structure can equivalently be described as the transitive closure of right rotations in permutrees as described in~\cite{PilaudPons}.
\\[-.2cm]
\item The minimal (resp.~maximal) permutree in the permutree lattice is a left (resp.~right) \mbox{$\orientation$-comb}: it is a chain where each vertex in~$\orientation\positive$ has an additional empty left (resp.~right) parent, while each vertex in~$\orientation\negative$ has an additional empty right (resp.~left) child.
\end{enumerate}
\end{proposition}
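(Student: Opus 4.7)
The plan is to construct the surjection~$\surjection$ explicitly as an insertion algorithm, and then derive all five statements from its local combinatorial properties, extending the template used for binary search tree insertion~\cite{Loday} and for Cambrian insertion~\cite{Reading-CambrianLattices, ChatelPilaud}. First I would define $\surjection(\sigma)$ by reading~$\sigma$ letter by letter and inserting each letter into the tree being built; the insertion rule at a node~$v$ depends on whether~$p(v) \in \orientation\positive$, $\orientation\negative$, both, or neither, which dictates whether the incoming label is routed through the left or right subtree or splits a fork. A direct verification shows that the output satisfies the permutree axioms of Definition~\ref{def:permutree} and has orientation exactly~$\orientation$. The key observation is then that $\surjection(\sigma) = \tree$ iff $\sigma$ is a linear extension of~$\less_{\tree}$, which gives statement~(2) immediately, since linear extension sets of distinct posets are disjoint.

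For part~(1), I would characterize~$\linearExtensions(\tree)$ as a weak order interval by exhibiting its minimum and maximum through pattern avoidance. If $\sigma \in \linearExtensions(\tree)$ contains a pattern~$ca$-$b$ with $b \in \orientation\negative$ (the pattern~$31\down{2}$) or~$b$-$ca$ with $b \in \orientation\positive$ (the pattern~$\up{2}31$), then swapping the adjacent pair realising~$ca$ yields a smaller permutation still in~$\linearExtensions(\tree)$, because the decorated fork at~$b$ absorbs the swap. Conversely, if~$\sigma$ avoids both patterns, every weak order descent applied to~$\sigma$ leaves~$\linearExtensions(\tree)$, forcing~$\sigma$ to be the minimum. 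A symmetric argument with the patterns~$13\down{2}$ and~$\up{2}13$ identifies the maximum, and order-convexity of linear extension sets in the weak order then gives the interval structure.

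For part~(3), I would invoke N.~Reading's lattice congruence criterion~\cite{Reading-latticeCongruences}: a partition of~$\fS(n)$ into weak order intervals is a lattice congruence iff the projections sending each permutation to the minimum (respectively maximum) of its class are order preserving along cover relations. This reduces to a finite case analysis on adjacent transpositions~$\sigma \lessdot \sigma \cdot s_i$ according to the decorations of~$\sigma_i$ and~$\sigma_{i+1}$, paralleling the arguments for the Tamari and Cambrian lattices. The induced lattice on~$\Permutrees$ inherits its meet and join from~$\fS(n)$ by the displayed formulas, establishing~(3). Point~(4) follows by identifying the cover relations of the quotient with right rotations on permutrees, \ie local edge flips generalising those on binary trees and Cambrian trees. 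Finally, for~(5), applying~$\surjection$ to~$e = [1,2,\dots,n]$ yields a left-leaning chain in which each~$v \in \orientation\positive$ acquires an empty left parent and each~$v \in \orientation\negative$ an empty right child, producing the left $\orientation$-comb; the symmetric computation on~$\wo = [n,\dots,2,1]$ gives the right $\orientation$-comb. The main obstacle is the congruence verification in~(3), where one must carefully track interactions among all four decoration possibilities, but the case analysis decomposes along exactly the lines of the classical arguments for permutations, binary trees, Cambrian trees, and binary sequences viewed as special orientations.
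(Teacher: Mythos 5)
This proposition is stated with the citation \cite{PilaudPons} and the paper gives no proof of it at all --- it is imported wholesale from that reference, with the text explicitly declining even to describe the insertion algorithm. So there is no internal proof to compare against; your sketch has to be judged on its own, and what it does is reconstruct the standard route of the cited paper (insertion algorithm, pattern-avoidance description of the fiber extremes, Reading's interval-plus-order-preserving-projections criterion for lattice congruences, rotations as quotient covers, combs as the images of $e$ and $\wo$). That architecture is the right one and parallels the binary-tree and Cambrian cases as you say.

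Two points deserve flagging. First, in part~(2) the justification ``since linear extension sets of distinct posets are disjoint'' is false as a general statement: distinct posets routinely share linear extensions (the empty poset on~$[2]$ and the chain $1<2$ both admit $12$). Disjointness of the sets~$\linearExtensions(\tree)$ for $\tree \in \Permutrees$ is a genuine theorem about permutrees with a \emph{fixed} orientation, and in your setup it must come from the preceding claim that $\surjection(\sigma) = \tree$ if and only if $\sigma \in \linearExtensions(\tree)$ --- i.e.\ from the fact that $\surjection$ is a well-defined function whose fibers are exactly these sets. That equivalence is the hardest single step of the whole argument and your proposal asserts it as ``the key observation'' without indicating how the insertion algorithm's output is shown to be independent of nothing but the fiber. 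Second, the proposal is throughout a roadmap rather than a proof: the insertion rule is not actually specified, the verification that a $31\down{2}$ or $\up{2}31$ occurrence can be descended within the fiber (``the decorated fork at~$b$ absorbs the swap'') is not carried out, and the congruence case analysis in~(3) is deferred. None of these steps would fail --- they are exactly what \cite{PilaudPons} does --- but as written the proposal defers all of the content that makes the proposition nontrivial.
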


For example, we obtain well-known lattice structures for specific orientations:

\medskip
\centerline{
\begin{tabular}{c||c|c|c|c}
orientation~$(n, \orientation\positive, \orientation\negative)$ & $(n, \varnothing, \varnothing)$ & $(n, \varnothing, [n])$ & $\orientation\positive \sqcup \orientation\negative = [n]$ & $(n, [n], [n])$ \\
\hline
\multirow{2}{*}{permutree lattice} & classical & Tamari & Cambrian & boolean \\
& weak order & lattice~\cite{TamariFestschrift} & lattices~\cite{Reading-CambrianLattices} & lattice
\end{tabular}
}

\bigskip
Now we establish the relationship between the permutree lattice on~$\Permutrees$ and the weak order on~$\PEP$.

\begin{proposition}
\label{prop:weakOrderPEP}
For any permutrees~$\tree, \tree' \in \Permutrees$, we have~$\tree \wole \tree'$ in the permutree lattice if and only if~${\less_{\tree}} \wole {\less_{\tree'}}$ in the weak order on posets.
\end{proposition}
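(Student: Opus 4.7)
The plan is to reduce the statement about posets back to the known characterization of the permutree lattice in terms of min/max linear extensions given in Proposition~\ref{prop:permutrees}(3). The key reduction comes from identifying the increasing and decreasing parts of~$\less_{\tree}$ with the versions and inversions of the extremal linear extensions.

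First, I would observe that for any permutree~$\tree \in \Permutrees$, the poset~$\less_{\tree}$ equals the intersection of all its linear extensions (this holds for every finite poset). By Proposition~\ref{prop:permutrees}(1), the set~$\linearExtensions(\tree)$ is a weak order interval~$[\sigma_{\min}(\tree), \sigma_{\max}(\tree)]$ on~$\fS(n)$. Running exactly the argument of Proposition~\ref{prop:definitionWOIP} (restriction to increasing and decreasing parts, then using the weak order characterization of Proposition~\ref{prop:weakOrderWOEP}) then yields
\[
{\less_{\tree}} = \bigcap_{\sigma \in \linearExtensions(\tree)} {\less_{\sigma}} = {\less_{\sigma_{\min}(\tree)}} \cap {\less_{\sigma_{\max}(\tree)}},
\]
and in particular
\[
\Inc{\less_{\tree}} = \Inc{\less_{\sigma_{\max}(\tree)}} = \ver(\sigma_{\max}(\tree))
\qquad\text{and}\qquad
\Dec{\less_{\tree}} = \Dec{\less_{\sigma_{\min}(\tree)}} = \inv(\sigma_{\min}(\tree)).
\]

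With this identification in hand, I would unfold both sides of the claimed equivalence. By Definition~\ref{def:weakOrder} and the formula above,
\[
{\less_{\tree}} \wole {\less_{\tree'}} \iff \ver(\sigma_{\max}(\tree)) \supseteq \ver(\sigma_{\max}(\tree')) \text{ and } \inv(\sigma_{\min}(\tree)) \subseteq \inv(\sigma_{\min}(\tree')),
\]
which by Proposition~\ref{prop:weakOrderWOEP} is equivalent to $\sigma_{\max}(\tree) \wole \sigma_{\max}(\tree')$ and $\sigma_{\min}(\tree) \wole \sigma_{\min}(\tree')$ in~$\fS(n)$. On the other hand, Proposition~\ref{prop:permutrees}(3) states that $\tree \wole \tree'$ in the permutree lattice iff $\sigma_{\min}(\tree) \wole \sigma_{\min}(\tree')$, and iff $\sigma_{\max}(\tree) \wole \sigma_{\max}(\tree')$, since the permutree congruence identifies an interval to a single element and the congruence classes are compatible with the weak order.

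The main obstacle, such as it is, is the first step: justifying that the intersection~${\less_{\sigma_{\min}(\tree)}} \cap {\less_{\sigma_{\max}(\tree)}}$ already captures $\less_{\tree}$ completely, and not merely contains it. Since the argument is formally identical to the proof of Proposition~\ref{prop:definitionWOIP}, it suffices to invoke the fact from Proposition~\ref{prop:permutrees}(1) that $\linearExtensions(\tree)$ is a weak order interval and use that every poset is the intersection of its linear extensions. Once this is established, the two directions of the equivalence follow immediately: the forward direction by picking the extremal linear extensions as witnesses for the congruence relation, and the reverse direction by applying $\sigma \wole \sigma' \implies {\less_{\sigma}} \wole {\less_{\sigma'}}$ from Proposition~\ref{prop:weakOrderWOEP} to the pair of min (or max) linear extensions.
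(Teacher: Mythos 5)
Your proposal is correct and follows essentially the same route as the paper: both reduce the comparison of~${\less_{\tree}}$ and~${\less_{\tree'}}$ to the comparison of their minimal and maximal linear extensions (using that~$\linearExtensions(\tree)$ is a weak order interval, so~$\PEP \subseteq \WOIP(n)$) and then invoke Proposition~\ref{prop:permutrees}(3). The only cosmetic difference is that the paper cites Proposition~\ref{prop:weakOrderWOIP} directly where you re-derive the same equivalence from Propositions~\ref{prop:definitionWOIP} and~\ref{prop:weakOrderWOEP}.
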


\begin{proof}
By Proposition~\ref{prop:permutrees}\,(2), a permutree admits both a minimal and a maximal linear extensions. It follows that~$\PEP \subseteq \WOIP(n)$ and the weak order on~$\PEP$ is therefore given~by
\[
{\less_{\tree}} \wole {\less_{\tree'}} \iff {\minle{\less_{\tree}}} \wole {\minle{\less_{\tree'}}} \text{ and } {\maxle{\less_{\tree}}} \wole {\maxle{\less_{\tree'}}}
\]
according to Proposition~\ref{prop:weakOrderWOIP}. However, we have already mentioned in Proposition~\ref{prop:permutrees}\,(3) that the two conditions on the right are both equivalent to~$\tree \wole \tree'$ in the permutree lattice.
\end{proof}

\begin{remark}
\label{rem:weakOrderPEP}
In fact, we have that $\tree \wole \tree' \iff {\less_{\tree}} \wole {\less_{\tree'}} \iff {\Inc{\less_{\tree}}} \supseteq {\Inc{\less_{\tree'}}} \iff {\Dec{\less_{\tree}}} \subseteq {\Dec{\less_{\tree'}}}.$
\end{remark}

\begin{remark}
Proposition~\ref{prop:weakOrderPEP} affirms that the subposet of the weak order~$(\IPos, \wole)$ induced by the set~$\PEP$ is isomorphic to the permutree lattice on~$\Permutrees$, and is thus a lattice. We will see in Remark~\ref{rem:PEPnotSublatticePos} that the set~$\PEP$ does not always induce a sublattice of~${(\IPos(n), \wole, \meetT, \joinT)}$. Theorem~\ref{thm:coveringOrientationElementsSublatticeIPos} will provide a sufficient condition on the orientation~$\orientation$ for this property. In contrast, we will see in Theorem~\ref{thm:nonCoveringOrientationElementsSublatticeWOIP} that~$\PEP$ always induces a sublattice of~$(\PIP, \wole, \meetPIP, \joinPIP)$ and thus of~$(\WOIP(n), \wole, \meetWOIP, \joinWOIP)$.
\end{remark}


\subsection{Permutree Interval Posets}
\label{subsec:PIP}

For two permutrees~$\tree, \tree' \in \Permutrees$ with~$\tree \wole \tree'$, we denote by~$[\tree, \tree'] \eqdef \set{\tree[S] \in \Permutrees}{\tree \wole \tree[S] \wole \tree'}$ the permutree lattice interval between~$\tree$ and~$\tree'$. As in Proposition~\ref{prop:definitionWOIP}, we can see this interval as the poset
\[
{\less_{[\tree, \tree']}} \eqdef \bigcap_{\tree \wole \tree[S] \wole \tree'} {\less_{\tree}} = {\less_{\tree}} \cap {\less_{\tree'}} = {\Inc{\less_{\tree'}}} \cap {\Dec{\less_{\tree}}}.
\]

We say that~$\less_{[\tree,\tree']}$ is a \defn{permutree interval poset}, and we denote by
\[
\PIP \eqdef \bigset{{\less_{[\tree,\tree']}}}{\tree, \tree' \in \Permutrees, \tree \wole \tree'}
\]
the set of all permutree interval posets for a given orientation~$\orientation$.

We first aim at a concrete characterization of the posets of~$\PIP$. Note that a poset is in~$\PIP$ if and only if it admits a weak order minimal linear extension avoiding the patterns $31\down{2}$ and $\up{2}31$, and a weak order maximal linear extension avoiding the patterns $13\down{2}$ and $\up{2}13$. Similar to our study of~$\IWOIP(n)$ and~$\DWOIP(n)$ in Section~\ref{subsec:IWOIPDWOIP}, it is practical to consider these conditions separately. We thus define the set~$\IPIP$ (resp.~$\DPIP$) of posets which admit a maximal (resp.~minimal) linear extension that avoids the patterns~$\up{2}13$ and~$13\down{2}$ (resp.~$\up{2}31$ and~$31\down{2}$). In order to characterize these posets, we define
\begin{align*}
\IPIPp & \eqdef \set{{\less} \in \IPos(n)}{\forall \; a < b < c \text{ with } b \in \orientation\positive, \; a \less c \implies a \less b}, \\
\IPIPm & \eqdef \set{{\less} \in \IPos(n)}{\forall \; a < b < c \text{ with } b \in \orientation\negative, \; a \less c \implies b \less c}, \\
\IPIPpm & \eqdef \IPIPp \cap \IPIPm,
\end{align*}
and similarly
\begin{align*}
\DPIPp & \eqdef \set{{\less} \in \IPos(n)}{\forall \; a < b < c \text{ with } b \in \orientation\positive, \; a \more c \implies b \more c}, \\
\DPIPm & \eqdef \set{{\less} \in \IPos(n)}{\forall \; a < b < c \text{ with } b \in \orientation\negative, \; a \more c \implies a \more b}, \\
\DPIPpm & \eqdef \IPIPp \cap \IPIPm.
\end{align*}

\begin{proposition}
\label{prop:characterizationIPIPDPIP}
For any orientation~$\orientation$ of~$[n]$, we have
\[
\IPIP = \IWOIP(n) \cap \IPIPpm
\qquad\text{and}\qquad
\DPIP = \DWOIP(n) \cap \DPIPpm.
\]
\end{proposition}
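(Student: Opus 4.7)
The strategy is to pass through the unique maximal linear extension $\maxle{\less}$, which exists precisely when $\less \in \IWOIP(n)$. From the proof of Proposition~\ref{prop:characterizationIWOIPDWOIP}, $\less$ admits a weak order maximal linear extension if and only if $\less \in \IWOIP(n)$, in which case this extension is unique and equals $\maxle{\less} = {\less} \cup \set{(b,a)}{a<b \text{ incomparable in } \less}$. Consequently $\less \in \IPIP$ if and only if $\less \in \IWOIP(n)$ and $\maxle{\less}$ avoids both patterns $\up{2}13$ and $13\down{2}$, so it suffices to show that, given $\less \in \IWOIP(n)$, this avoidance condition is equivalent to $\less \in \IPIPpm$.

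For the forward direction, assume $\maxle{\less}$ avoids both patterns and fix $a < b < c$ with $b \in \orientation\positive$ and $a \less c$. Since $\maxle{\less}$ extends $\less$, $a$ appears before $c$ in $\maxle{\less}$. Placing $b$ before $a$ would then create the subsequence $b,a,c$, which is the pattern $\up{2}13$; hence $a$ precedes $b$ in $\maxle{\less}$. As $a<b$ and the only pairs added to $\less$ to build $\maxle{\less}$ are decreasing, this forces $a \less b$, establishing $\less \in \IPIPp$. The symmetric argument using $13\down{2}$ on $a<b<c$ with $b \in \orientation\negative$ forces $b \less c$ whenever $a \less c$, establishing $\less \in \IPIPm$.

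For the backward direction, assume $\less \in \IWOIP(n) \cap \IPIPpm$ and suppose by contradiction that $\maxle{\less}$ contains a $\up{2}13$ pattern on $a<b<c$ with $b \in \orientation\positive$, namely the subsequence $b,a,c$. Then $a$ appears before $c$ in $\maxle{\less}$, and as before the fact that $\maxle{\less} \ssm {\less}$ consists only of decreasing pairs forces $a \less c$. Applying $\IPIPp$ yields $a \less b$, so $a$ precedes $b$ in $\maxle{\less}$, contradicting the pattern. Ruling out $13\down{2}$ is identical, using $\IPIPm$ to obtain $b \less c$.

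Finally, the statement for $\DPIP$ follows by the symmetric argument applied to the minimal linear extension $\minle{\less}$ (available precisely when $\less \in \DWOIP(n)$), using the patterns $\up{2}31$ and $31\down{2}$ in place of $\up{2}13$ and $13\down{2}$. No step is genuinely delicate; the one observation worth isolating is the translation between a pattern avoidance on the permutation $\maxle{\less}$ and an order-theoretic implication on $\less$, which rests entirely on the fact that the edges added when forming $\maxle{\less}$ from $\less$ are exactly the decreasing ones.
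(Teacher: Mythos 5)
Your proof is correct and follows essentially the same route as the paper: both reduce the statement to the explicit weak order maximal linear extension $\maxle{\less} = {\less} \cup \set{(b,a)}{a<b \text{ incomparable in } \less}$ constructed in the proof of Proposition~\ref{prop:characterizationIWOIPDWOIP}, and then translate avoidance of $\up{2}13$ (resp.~$13\down{2}$) in $\maxle{\less}$ into the $\IPIPp$ (resp.~$\IPIPm$) condition via the observation that all relations added to form $\maxle{\less}$ are decreasing. The paper phrases the two implications as a direct construction of the forbidden pattern from a violation of $\IPIPpm$ and conversely, but the content is identical to yours.
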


\begin{proof}
Consider~${\less} \in \IWOIP$ and let~$\maxle{\less} = {\less} \cup \set{(b,a)}{a < b \text{ incomparable in } {\less}}$ be its maximal linear extension (see the proof of Proposition~\ref{prop:characterizationIWOIPDWOIP}). Assume first that there is~$a < b < c$ with~$b \in \orientation\positive$ such that~$a \less c$ while~$a \not\less b$. Then we obtain~$b \maxle{\less} a \maxle{\less} c$ which is a~$\up{2}13$-pattern in~$\maxle{\less}$. Reciprocally, if~$\maxle{\less}$ contains a~$\up{2}13$-pattern~$b \maxle{\less} a \maxle{\less} c$ with~$a < b < c$ and~$b \in \orientation\positive$, then $a \less c$ while~$a \not\less b$ by definition of~$\maxle{\less}$. We conclude that~$\maxle{\less}$ avoids the pattern~$\up{2}13$ if and only if~${\less} \in \IPIPp$. The proof for the other patterns is~similar. 
\end{proof}

\begin{corollary}
\label{coro:characterizationPIP}
A poset~$\less$ is in~$\PIP$ if and only if it is in~$\WOIP(n)$ (see characterization in Proposition~\ref{prop:characterizationWOIP}) and satisfies the conditions of $\IPIPp$, $\IPIPm$, $\DPIPp$ and~$\DPIPm$.
\end{corollary}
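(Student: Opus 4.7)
\medskip
\noindent\textbf{Proof plan.}
My plan is to reduce the corollary to Proposition~\ref{prop:characterizationIPIPDPIP} by first establishing the set-theoretic identity $\PIP = \IPIP \cap \DPIP$. Once that is in hand, the corollary follows mechanically: intersecting Proposition~\ref{prop:characterizationIPIPDPIP}'s two descriptions yields
\[
\PIP = (\IWOIP(n) \cap \IPIPpm) \cap (\DWOIP(n) \cap \DPIPpm) = \WOIP(n) \cap \IPIPpm \cap \DPIPpm,
\]
where the second equality uses the observation (recorded at the start of Section~\ref{subsec:IWOIPDWOIP}) that $\WOIP(n) = \IWOIP(n) \cap \DWOIP(n)$ since an order-convex set of linear extensions admits a weak-order min and max if and only if it admits each separately. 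Unpacking $\IPIPpm = \IPIPp \cap \IPIPm$ and $\DPIPpm = \DPIPp \cap \DPIPm$ then gives exactly the claim.

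The first inclusion $\PIP \subseteq \IPIP \cap \DPIP$ is immediate. Given $\less = \less_{[\tree,\tree']} \in \PIP$, the formula ${\less_{[\tree,\tree']}} = {\less_{\tree}} \cap {\less_{\tree'}}$ (the permutree analogue of Proposition~\ref{prop:definitionWOIP}, stated just before Section~\ref{subsec:PIP}) shows that the set of linear extensions of $\less$ is the weak order interval from the minimal linear extension of $\less_{\tree}$ to the maximal linear extension of $\less_{\tree'}$. By Proposition~\ref{prop:permutrees}(1), the former avoids $\up{2}31$ and $31\down{2}$ while the latter avoids $\up{2}13$ and $13\down{2}$; hence $\less \in \IPIP$ and $\less \in \DPIP$.

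For the reverse inclusion $\IPIP \cap \DPIP \subseteq \PIP$, I would take $\less \in \IPIP \cap \DPIP$, and use that $\less \in \WOIP(n)$ (which follows from $\IPIP \subseteq \IWOIP(n)$ and $\DPIP \subseteq \DWOIP(n)$ via Proposition~\ref{prop:characterizationIPIPDPIP}) so that $\less$ admits both $\sigma \eqdef \minle{\less}$ and $\sigma' \eqdef \maxle{\less}$. By hypothesis, $\sigma$ avoids $\up{2}31, 31\down{2}$ and $\sigma'$ avoids $\up{2}13, 13\down{2}$. Proposition~\ref{prop:permutrees}(1),(2) then forces $\sigma$ to be the minimum of its fiber $\linearExtensions(\tree)$ for $\tree \eqdef \surjection(\sigma)$, and symmetrically $\sigma' = \maxle{\less_{\tree'}}$ for $\tree' \eqdef \surjection(\sigma')$. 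From $\sigma \wole \sigma'$ and Proposition~\ref{prop:permutrees}(3) we conclude $\tree \wole \tree'$ in the permutree lattice. It remains to check $\less = \less_{[\tree,\tree']}$: both are $\WOIP$-posets, and a $\WOIP$-poset is recovered as $\minle{\less} \cap \maxle{\less}$ from its extremal linear extensions, which coincide on the two sides.

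The one step that requires a little care is the last identification $\less = \less_{[\tree,\tree']}$: I need the fact that a weak order interval poset is determined by its weak order minimal and maximal linear extensions, which I will invoke from the explicit formula ${\Inc{\less}} = \Inc{(\minle{\less})} \cap \Inc{(\maxle{\less})}$ and ${\Dec{\less}} = \Dec{(\minle{\less})} \cap \Dec{(\maxle{\less})}$ used implicitly in Proposition~\ref{prop:definitionWOIP}. Apart from this bookkeeping, the argument is a combination of Proposition~\ref{prop:permutrees} (which characterizes fibers of $\surjection$ by pattern avoidance) and Proposition~\ref{prop:characterizationIPIPDPIP}.
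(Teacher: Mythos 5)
Your proof is correct and follows the paper's route exactly: the paper states Corollary~\ref{coro:characterizationPIP} without a proof, treating it as the immediate combination of the remark that $\PIP = \IPIP \cap \DPIP$ (the unproved ``Note that'' sentence opening Section~\ref{subsec:PIP}) with Proposition~\ref{prop:characterizationIPIPDPIP} and the identity $\WOIP(n) = \IWOIP(n) \cap \DWOIP(n)$. Your proposal simply supplies the details behind that remark via Proposition~\ref{prop:permutrees}, which is precisely the justification the paper intends.
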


\begin{remark}
Similarly to Remark~\ref{rem:coverEnough}, note that it suffices to check these conditions only for all cover relations~$a \less c$ and~$a \more c$ in~$\less$.
\end{remark}

Some illustrations are given in \fref{fig:pip}. The leftmost poset is not in~$\PIP$: $\{1,2,3\}$ does not satisfy $\IPIPm$, $\{2,3,5\}$ does not satisfy $\IPIPp$, $\{3,4,6\}$ does not satisfy $\DWOIP(6)$, and $\{3,5,6\}$ does not satisfy $\DPIPm$. The other two posets of \fref{fig:pip} are both in~$\PIP$.

\begin{figure}[ht]
	\centerline{
    \begin{tabular}{c@{\qquad}c@{\qquad}c}
        \scalebox{0.8}{\input{relations/pip_counter_example.tex}}
        &
        \scalebox{0.8}{\input{relations/pip_example.tex}}
        &
        \scalebox{0.8}{\input{relations/pep_example.tex}}
        \\[-.5cm]
        $\notin \PIP$
        &
        $\in \PIP \ssm \PEP$
        &
    $\in \PEP$
    \end{tabular}
}
	\caption{Examples and counterexamples of elements in $\PIP$ and $\PEP$, where~$\orientation = (6, \{2,3\}, \{2,5\})$.}
\label{fig:pip}
	\vspace{-.4cm}
\end{figure}

Now we describe the weak order on~$\PIP$.

\begin{proposition}
\label{prop:weakOrderPIP}
For any~$\tree[S] \wole \tree[S]'$ and~$\tree \wole \tree'$, we have~${\less_{[\tree[S], \tree[S]']}} \wole {\less_{[\tree, \tree']}} \iff \tree[S] \wole \tree$ and~$\tree[S]' \wole \tree'$.
\end{proposition}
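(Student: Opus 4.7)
The plan is to follow exactly the same strategy as in the proof of Proposition~\ref{prop:weakOrderWOIP}, using the formula for the interval poset together with Remark~\ref{rem:weakOrderPEP} which characterizes the permutree order through inclusions of increasing (or decreasing) subrelations.

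First I would compute the increasing and decreasing parts of~${\less_{[\tree, \tree']}}$. From~${\less_{[\tree,\tree']}} = {\less_{\tree}} \cap {\less_{\tree'}}$, splitting into the increasing and decreasing parts of~$[n]^2$ gives
\[
\Inc{\less_{[\tree,\tree']}} = \Inc{\less_{\tree}} \cap \Inc{\less_{\tree'}} \quad\text{and}\quad \Dec{\less_{[\tree,\tree']}} = \Dec{\less_{\tree}} \cap \Dec{\less_{\tree'}}.
\]
Since~$\tree \wole \tree'$ in the permutree lattice, Remark~\ref{rem:weakOrderPEP} asserts that~$\Inc{\less_{\tree}} \supseteq \Inc{\less_{\tree'}}$ and~$\Dec{\less_{\tree}} \subseteq \Dec{\less_{\tree'}}$. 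The above intersections thus simplify to~$\Inc{\less_{[\tree,\tree']}} = \Inc{\less_{\tree'}}$ and~$\Dec{\less_{[\tree,\tree']}} = \Dec{\less_{\tree}}$, and similarly for~$[\tree[S], \tree[S]']$.

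Then I would unfold the definition of the weak order on~$\IPos(n)$ and substitute these identities:
\[
{\less_{[\tree[S],\tree[S]']}} \wole {\less_{[\tree,\tree']}} \iff \Inc{\less_{[\tree[S],\tree[S]']}} \supseteq \Inc{\less_{[\tree,\tree']}} \text{ and } \Dec{\less_{[\tree[S],\tree[S]']}} \subseteq \Dec{\less_{[\tree,\tree']}},
\]
which becomes~$\Inc{\less_{\tree[S]'}} \supseteq \Inc{\less_{\tree'}}$ and~$\Dec{\less_{\tree[S]}} \subseteq \Dec{\less_{\tree}}$. Invoking Remark~\ref{rem:weakOrderPEP} once more in the opposite direction, the first condition is equivalent to~$\tree[S]' \wole \tree'$ and the second to~$\tree[S] \wole \tree$, yielding the claim.

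There is no real obstacle: the statement is the exact analogue of Proposition~\ref{prop:weakOrderWOIP} and the entire argument is a two-line substitution once Remark~\ref{rem:weakOrderPEP} is available. The only subtle point worth emphasizing is that Remark~\ref{rem:weakOrderPEP} is used in both directions: to simplify the formula for the interval poset (from~$\tree \wole \tree'$ to the identifications of~$\Inc{\cdot}$ and~$\Dec{\cdot}$), and then to conclude from containment of~$\Inc{\cdot}$ (resp.~$\Dec{\cdot}$) alone that~$\tree[S]' \wole \tree'$ (resp.~$\tree[S] \wole \tree$) holds in the permutree lattice.
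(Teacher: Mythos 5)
Your proposal is correct and is exactly the argument the paper intends: its proof of Proposition~\ref{prop:weakOrderPIP} consists of the single sentence ``The proof is similar to that of Proposition~\ref{prop:weakOrderWOIP}'', and you have fleshed out precisely that computation, correctly identifying Remark~\ref{rem:weakOrderPEP} as the permutree analogue of Proposition~\ref{prop:weakOrderWOEP} needed to pass between the one-sided containments of~$\Inc{\cdot}$ and~$\Dec{\cdot}$ and the permutree lattice order. No gaps.
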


\begin{proof}
The proof is similar to that of Proposition~\ref{prop:weakOrderWOIP}.
\end{proof}

We immediately derive that~$(\PIP, \wole)$ has the lattice structure of a product.

\begin{corollary}
\label{coro:PIPLattice}
The weak order~$(\PIP, \wole)$ is a lattice whose meet and join are given by
\[
{\less_{[\tree[S],\tree[S]']}} \meetPIP {\less_{[\tree,\tree']}} = {\less_{[\tree[S] \meetO \tree, \, \tree[S]' \meetO \tree']}}
\qquad\text{and}\qquad
{\less_{[\tree[S],\tree[S]']}} \joinPIP {\less_{[\tree,\tree']}} = {\less_{[\tree[S] \joinO \tree, \, \tree[S]' \joinO \tree']}}.
\]
\end{corollary}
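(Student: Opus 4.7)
The plan is to reduce the statement to the fact that the permutree lattice~$(\Permutrees, \wole, \meetO, \joinO)$ is a lattice, as recalled in Proposition~\ref{prop:permutrees}\,(3), exactly in the same way that Corollary~\ref{coro:WOIPLattice} was derived from the lattice structure of the weak order on~$\fS(n)$ via Proposition~\ref{prop:weakOrderWOIP}.

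First I would observe that the assignment~$[\tree, \tree'] \mapsto (\tree, \tree')$ is a well-defined bijection from~$\PIP$ to the set~$\set{(\tree, \tree') \in \Permutrees^2}{\tree \wole \tree'}$. Indeed, the endpoints~$\tree$ and~$\tree'$ of the permutree interval are recovered from~${\less_{[\tree,\tree']}}$ as the unique minimal (resp.~maximal) linear extension of~$\less_{[\tree,\tree']}$ lying in the fiber of~$\surjection$ of the appropriate extremal type, so that distinct pairs give distinct posets. By Proposition~\ref{prop:weakOrderPIP}, this bijection is an order isomorphism between~$(\PIP, \wole)$ and the subposet of the product~$(\Permutrees \times \Permutrees, \wole \times \wole)$ obtained by restricting to comparable pairs.

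Next I would verify that this subposet is a sublattice of the product. Given two comparable pairs~$(\tree[S], \tree[S]')$ and~$(\tree, \tree')$ with~$\tree[S] \wole \tree[S]'$ and~$\tree \wole \tree'$, the componentwise meet~$(\tree[S] \meetO \tree, \tree[S]' \meetO \tree')$ satisfies~$\tree[S] \meetO \tree \wole \tree[S] \wole \tree[S]'$ and~$\tree[S] \meetO \tree \wole \tree \wole \tree'$, hence~$\tree[S] \meetO \tree \wole \tree[S]' \meetO \tree'$ by definition of the meet. Symmetrically for the join. Hence the formulas announced in the statement define an element of~$\PIP$, and they give the meet and join in the product, so also in the isomorphic poset~$(\PIP, \wole)$.

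The proof is essentially bookkeeping; there is no real obstacle once Propositions~\ref{prop:permutrees} and~\ref{prop:weakOrderPIP} are in hand. The only mild point worth double-checking is the well-definedness of the bijection~$[\tree,\tree'] \mapsto (\tree, \tree')$, \ie that the poset~$\less_{[\tree,\tree']}$ really does determine the pair~$(\tree, \tree')$. This follows from Proposition~\ref{prop:permutrees}\,(1)--(2): the sets of linear extensions of the permutrees in~$[\tree, \tree']$ partition a weak-order interval of~$\fS(n)$, whose minimum (resp.~maximum) lies in the fiber of~$\tree$ (resp.~$\tree'$), and these linear extensions coincide with the linear extensions of~$\less_{[\tree,\tree']}$.
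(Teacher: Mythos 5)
Your proposal is correct and takes essentially the same route as the paper, which derives Corollary~\ref{coro:PIPLattice} directly from Proposition~\ref{prop:weakOrderPIP} by observing that~$(\PIP, \wole)$ inherits the lattice structure of the product~$\Permutrees \times \Permutrees$ restricted to comparable pairs. You merely spell out the routine details (well-definedness of~$[\tree,\tree'] \mapsto (\tree,\tree')$ and closure of comparable pairs under componentwise~$\meetO$ and~$\joinO$) that the paper leaves implicit.
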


\begin{remark}
\label{rem:WOIPnotSublattice}
As illustrated by~$\WOIP(n)$, the set~$\PIP$ does not always induce a sublattice of~${(\IPos(n), \wole, \meetT, \joinT)}$. Theorem~\ref{thm:coveringOrientationConflictFunctionsSublatticesIPos} will provide a sufficient condition on the orientation~$\orientation$ for this property. In contrast, we will see in Theorem~\ref{thm:orientationConflictFunctionsSublatticesWOIP} that~$\PIP$ always induces a sublattice of~$(\WOIP(n), \wole, \meetWOIP, \joinWOIP)$.
\end{remark}


\subsection{Characterization of~$\PEP$}
\label{subsec:characterizationPEP}

We are now ready to give a characterization of the posets of~$\PEP$ left open in Section~\ref{subsec:PEP}. We need one additional definition. For an orientation~$\orientation$ of~$[n]$, an \defn{$\orientation$-snake} in a poset~$\less$ is a sequence~$x_0 < x_1 < \dots < x_k < x_{k+1}$ such that
\begin{itemize}
\item either~$x_0 \less x_1 \more x_2 \less x_3 \more \cdots$ with~$\set{x_i}{i \in [k] \text{ odd}} \subseteq \orientation\negative$ and $\set{x_i}{i \in [k] \text{ even}} \subseteq \orientation\positive$, 
\item or \quad\; $x_0 \more x_1 \less x_2 \more x_3 \less \cdots$ with~$\set{x_i}{i \in [k] \text{ odd}} \subseteq \orientation\positive$ and $\set{x_i}{i \in [k] \text{ even}} \subseteq \orientation\negative$,
\end{itemize}
as illustrated in \fref{fig:snakes}.

\begin{figure}[h]
	\centerline{\scalebox{1}{\input{relations/snake1.tex}} \hspace{.6cm} \scalebox{1}{\input{relations/snake2.tex}}}
	\vspace{-.3cm}
	\caption{Two $\orientation$-snakes joining~$x_0$ to~$x_5$. The set~$\orientation\positive$ (resp.~$\orientation\negative$) must contain at least the overlined (resp.~underlined) integers.}
	\label{fig:snakes}
	\vspace{-.4cm}
\end{figure}

\noindent
We say that the $\orientation$-snake~$x_0 < x_1 < \dots < x_k < x_{k+1}$ joins~$x_0$ to~$x_{k+1}$ and has length~$k$. Note that, by definition, we consider the relations~$x \less y$ or~$x \more y$ themselves as (degenerate, length~$0$) $\orientation$-snakes between~$x$ and~$y$.

\begin{proposition}
\label{prop:characterizationPEP}
A poset~${\less} \in \IPos(n)$ is in~$\PEP$ if and only if it is in~$\PIP$ (see characterization in Corollary~\ref{coro:characterizationPIP}) and it admits an $\orientation$-snake between any two values of~$[n]$.
\end{proposition}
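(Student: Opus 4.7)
The inclusion $\PEP \subseteq \PIP$ is immediate, since the singleton interval $[\tree,\tree]$ satisfies ${\less_{[\tree,\tree]}} = {\less_{\tree}}$, so any poset in $\PEP$ automatically belongs to $\PIP$. The substance of the proof therefore lies in two parts: (a) extracting an $\orientation$-snake between any two labels of ${\less_{\tree}}$ from the geometry of a permutree $\tree$, and conversely (b) showing that the snake condition, added to membership in $\PIP$, collapses the interval $[\tree,\tree']$ representing the poset to a singleton.

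\textbf{Forward direction.} Fix $\tree \in \Permutrees$ and two labels $u < v$. Since the underlying undirected graph of $\tree$ is a tree, $u$ and $v$ are joined by a unique undirected path $P$. I would classify each interior vertex $y$ of $P$ by the orientations of its two $P$-incident edges: pass-through (one incoming and one outgoing along $P$), peak (both edges outgoing to children of $y$, which by Definition~\ref{def:permutree}\,(i) forces $y \in \orientation\negative$), or valley (both incoming from parents of $y$, forcing $y \in \orientation\positive$). Definition~\ref{def:permutree}\,(ii) then ensures that at every peak or valley $y$ the two $P$-neighbors of $y$ lie in opposite label-subtrees of $y$, and hence on opposite sides of $y$ in the natural order. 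An induction on $|P|$ shows that after discarding the pass-through vertices the surviving sequence $u = x_0 < x_1 < \cdots < x_k < x_{k+1} = v$ is strictly label-increasing, alternates between $\less$ and $\more$ in ${\less_{\tree}}$, and respects the $\orientation\positive/\orientation\negative$ signage, so it is the required $\orientation$-snake.

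\textbf{Backward direction.} Conversely, suppose ${\less} \in \PIP$ admits $\orientation$-snakes between all pairs, and write ${\less} = {\less_{[\tree,\tree']}}$ with $\tree \wole \tree'$. If $\tree \neq \tree'$, use Remark~\ref{rem:weakOrderPEP} to pick $a < b$ witnessing a strict difference between ${\less_{\tree}}$ and ${\less_{\tree'}}$, so that $a$ and $b$ are incomparable in ${\less} = {\less_{\tree}} \cap {\less_{\tree'}}$. By hypothesis there is an $\orientation$-snake $a = x_0 < x_1 < \cdots < x_{k+1} = b$ of minimal length $k \geq 1$ in ${\less}$, and this snake lives in both ${\less_{\tree}}$ and ${\less_{\tree'}}$. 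I would then apply the forward direction in reverse: the pivot analysis together with Definition~\ref{def:permutree}\,(ii) forces $a$ and $b$ to occupy matching positions in the left/right descendant or ancestor subtrees of the pivot $x_1$ in both $\tree$ and $\tree'$ simultaneously, and an induction on $k$ propagates this rigidity all the way down to a common subtree in the two permutrees, contradicting the original choice of $a, b$.

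\textbf{Main obstacle.} The delicate step is the backward direction: while the snake in ${\less}$ transports easily into both $\tree$ and $\tree'$, one must track carefully how the sequence of pivots $x_1, \dots, x_k$ aligns the descendant and ancestor structure of $a$ and $b$ in the two trees, so that the difference that produced $a,b$ from $\tree \ne \tree'$ is forced to vanish. The forward direction, by contrast, is essentially bookkeeping on a finite tree once the peak/valley/pass-through trichotomy is set up.
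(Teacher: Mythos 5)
Your forward direction is essentially the paper's: the authors also obtain the $\orientation$-snake by recording the local optima (your peaks and valleys) along the unique undirected path of~$\tree$ between the two labels, and your peak/valley/pass-through bookkeeping is a reasonable way to make that precise. The inclusion $\PEP \subseteq \PIP$ is handled identically.

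The backward direction is where you diverge from the paper and where there is a genuine gap. The paper never returns to the interval representation: it shows directly that the Hasse diagram of~$\less$ is a permutree, using the snakes for connectedness and two auxiliary claims about consecutive cover relations (if $u \less v \more w$ are covers then $u < v < w$ and $v \in \orientation\negative$; and a walk in the Hasse diagram leaving a node of~$\orientation\negative$ downward stays on one side of it) to obtain acyclicity and conditions (i)--(ii) of Definition~\ref{def:permutree}. Your route --- write ${\less} = {\less_{[\tree,\tree']}}$ and force $\tree = \tree'$ --- can be made to work, but the step you describe as ``apply the forward direction in reverse'' is precisely the missing content. What you actually need is a lemma of the form: \emph{in a permutree poset~$\less_{\tree}$, a non-degenerate $\orientation$-snake joining $a$ to $b$ forces $a$ and $b$ to be incomparable in~$\less_{\tree}$} (each pivot $x_i$ disconnects the tree, and Definition~\ref{def:permutree}(ii) forces the unique tree path from $a$ to $b$ to traverse some pivot through two child edges or two parent edges, so it cannot be oriented). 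With that lemma the contradiction is clean and lives entirely inside the single tree~$\tree$ in which $a \less_{\tree} b$ holds; your phrasing about ``matching positions in both trees simultaneously'' and ``propagating rigidity to a common subtree'' does not identify this contradiction and is not a proof. Establishing the separation lemma requires essentially the same tree combinatorics as the paper's two claims (tracking which ancestor or descendant subtree of each pivot contains $a$ and $b$, and using the label constraints of Definition~\ref{def:permutree}(ii)), so as written your backward direction asserts, rather than proves, its only nontrivial step.
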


\fref{fig:pip} illustrates this proposition: the middle poset is not in~$\PEP$, since there is no $\orientation$-snake between~$1$ and~$4$ nor between~$1$ and~$6$. In contrast, the rightmost poset is in~$\PEP$, as $1 \less \down{2} \more 4$ is an $\orientation$-snake between~$1$ and~$4$ and $1 \less \down{5} \more 6$ is an $\orientation$-snake between~$1$ and~$6$.

\begin{proof}[Proof or Proposition~\ref{prop:characterizationPEP}]
Assume that~${\less} \in \PEP$, and let~$\tree \in \Permutrees$ be the permutree such that~${\less} = {\less_{\tree}}$. Then $\less$ is certainly in~$\PIP$. Now any two values~$x,y \in [n]$ are connected by a (non-oriented) path in~$\tree$, and recalling the local optima along this path provides an $\orientation$-snake joining~$x$ and~$y$.

Reciprocally, consider ${\less} \in \PIP$ such that there is an $\orientation$-snake between any two values of~$[n]$. We need the following two intermediate claims, proved in detail in Appendix~\ref{subsec:appendixCharacterizationPEP}.

\vspace{-.1cm}
\begin{claim}
\label{claim:snake1}
For any~$u,v,w \in [n]$ with~$u < w$,
\begin{itemize}
\item if~$u \less v$ and~$v \more w$ are cover relations of~$\less$, then~$u < v < w$ and~$v \in \orientation\negative$;
\item if~$u \more v$ and~$v \less w$ are cover relations of~$\less$, then~$u < v < w$ and~$v \in \orientation\positive$.
\end{itemize} 
\end{claim}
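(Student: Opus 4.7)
The plan is to combine the defining conditions of $\PIP$ (Corollary~\ref{coro:characterizationPIP}) with the existence of the $\orientation$-snakes in order to pin down both the natural-order position and the orientation-type of $v$. I will focus on the first bullet; the second follows by the dual argument, swapping $\less \leftrightarrow \more$ and $\orientation\positive \leftrightarrow \orientation\negative$ (compare Remark~\ref{remark:reverse}).

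First I would observe that $u$ and $w$ must be $\less$-incomparable: if $u \less w$, then $u \less w \less v$ places $w \ne u,v$ as a strict intermediate for the cover $u \less v$, a contradiction; symmetrically $w \less u$ would place $u$ as a strict intermediate for the cover $v \more w$. Consequently the hypothesized $\orientation$-snake between $u$ and $w$ must have positive length (a degenerate snake requires comparability); write it $u = x_0 < x_1 < \dots < x_{k+1} = w$, and recall that $x_1 \in \orientation\negative$ if the snake begins with $u \less x_1$, while $x_1 \in \orientation\positive$ if it begins with $u \more x_1$.

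Next I would rule out the positions $v < u$ and $v > w$. For $v < u < w$ with a $\less$-starting snake, the condition $\IPIPm$ read via $\DPIPm$ on the triple $v < x_1 < w$ (using $x_1 \in \orientation\negative$ and $v \more w$) forces $x_1 \less v$, so that $u \less x_1 \less v$ breaks the cover $u \less v$. For a $\more$-starting snake in the same case, $\DPIPp$ forces $w \less x_1$, and combined with $x_1 \less u$ from the first snake step this yields $w \less u$ by transitivity, against incomparability. The mirror case $u < w < v$ is treated by the same two-by-two argument, applying $\IPIPm$ and $\IPIPp$ to the triple $u < x_1 < v$ and the relation $u \less v$.

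In the remaining case $u < v < w$, the analogous arguments first exclude a $\more$-starting snake: $\IPIPp$ applied to $u \less v$ when $x_1 \le v$ forces $u \less x_1$ (contradicting $x_1 \less u$ by antisymmetry), while $\DPIPp$ applied to $v \more w$ when $x_1 > v$ forces $w \less x_1$ and again produces $w \less u$, against incomparability. Under a $\less$-starting snake, the sub-positions $x_1 < v$ and $x_1 > v$ both yield $x_1 \less v$ (via $\IPIPm$ applied to $u \less v$ and $\DPIPm$ applied to $v \more w$ respectively), creating a cover-breaking intermediate for $u \less v$; hence $v = x_1$, which forces $v \in \orientation\negative$. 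The main obstacle is organising the case analysis cleanly; the key observation that makes it go through is that $x_1$ always lies in the natural-order interior of either $[u,v]$ or $[v,w]$, so that exactly one of $\IPIPp, \IPIPm, \DPIPp, \DPIPm$ applies in each sub-case and its conclusion combines with a single snake step to either break a cover by transitivity or contradict the incomparability of $u$ and $w$.
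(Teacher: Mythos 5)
Your proof is correct and takes essentially the same route as the paper's: both establish that $u$ and $w$ are incomparable, take a non-degenerate $\orientation$-snake $u = x_0 < x_1 < \dots < x_{k+1} = w$, and apply the conditions of $\IPIPp$, $\IPIPm$, $\DPIPp$, $\DPIPm$ to triples involving $x_1$ and the two cover relations so as to force $v = x_1 \in \orientation\negative$. The only difference is bookkeeping: the paper skips your trichotomy on the position of $v$ by directly proving $v \le x_1$ (from the cover $u \less v$ and the first snake step) and $x_k \le v$ (from the cover $v \more w$ and the last snake step), which squeeze to $x_1 = v = x_k$.
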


\vspace{-.3cm}
\begin{claim}
\label{claim:snake2}
Let~$x_0, \dots, x_p \in [n]$ be a path in the Hasse diagram of~$\less$ (\ie~$x_{i-1} \less x_i$ or~$x_{i-1} \more x_i$ are cover relations in~$\less$ for any~$i \in [p]$). Assume moreover that~$x_0 \in \orientation\negative$ and~$x_0 \more x_1$, or that~$x_0 \in \orientation\positive$ and~$x_0 \less x_1$. Then all $x_i$ are on the same side of~$x_0$, \ie~$x_0 < x_1 \iff x_0 < x_i$ for all~$i \in [p]$.
\end{claim}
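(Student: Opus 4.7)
The approach is a strong induction on~$i$. The base case~$i=1$ holds by assumption. For the inductive step at $i \geq 2$, we assume the conclusion for all smaller indices, and by the symmetry between $\orientation\positive$ and~$\orientation\negative$ we may restrict to the case $x_0 \in \orientation\negative$ with $x_0 \more x_1$ and $x_0 < x_1$. The inductive hypothesis then gives $x_0 < x_j$ for every $j \in \{1, \dots, i-1\}$. Assuming for contradiction that $x_i < x_0$, we have $x_i < x_0 < x_{i-1}$, and we work toward a contradiction.

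The first step is to apply the $\PIP$ characterization from Corollary~\ref{coro:characterizationPIP} with $b = x_0 \in \orientation\negative$ to the triple $(a,b,c) = (x_i, x_0, x_{i-1})$ together with the cover relation between $x_{i-1}$ and $x_i$: if $x_{i-1} \less x_i$, then $\DPIPm$ gives $x_0 \less x_i$; if $x_{i-1} \more x_i$, then $\IPIPm$ gives $x_0 \less x_{i-1}$. In either case we obtain a new poset relation emanating from $x_0$, which by transitivity with $x_1 \less x_0$ (the translation of $x_0 \more x_1$) yields a relation from $x_1$ as well.

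The second step is to invoke the $\orientation$-snake hypothesis, which guarantees an $\orientation$-snake $x_0 = y_0 < y_1 < \dots < y_{k+1} = x_{i-1}$, and to combine its relations with the one derived above. When the snake has length zero and its direction is opposite to the derived relation, either antisymmetry of~$\less$ or a single transitivity step immediately produces a direct contradiction or a strict intermediate element for the cover relation between $x_{i-1}$ and~$x_i$. For longer snakes (or for length-zero snakes of matching direction, where we instead use an auxiliary snake between another pair such as $x_i$ and~$x_1$), we propagate relations along the snake by repeatedly applying $\IPIPm, \DPIPm, \IPIPp, \DPIPp$ at each intermediate~$y_j$, which lies in $\orientation\positive \cup \orientation\negative$ by definition of a snake; iterating in the spirit of the argument of Claim~\ref{claim:snake1}, we ultimately produce some~$y_j$ that lies strictly between $x_{i-1}$ and~$x_i$ in~$\less$, contradicting the cover relation. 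The main difficulty will be organizing this case analysis uniformly over snake types and lengths, and in particular verifying that, in the ``matching-type'' length-zero case, a second snake can always be chosen to close the argument.
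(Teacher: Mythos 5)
Your first step is sound: with $x_i < x_0 < x_{i-1}$ and $x_0 \in \orientation\negative$, the $\IPIPm$ and $\DPIPm$ conditions do yield exactly the relations you state. But the argument then stalls, because an induction on the index $i$ that only records the \emph{values} $x_0 < x_j$ for $j < i$ gives you no control over the \emph{poset} relation between $x_0$ and $x_{i-1}$, so the relation $x_0 \less x_{i-1}$ (or $x_0 \less x_i$) you derive is not in conflict with anything you actually know. Your second step is supposed to supply that conflict via snakes, but as written it is a programme rather than a proof: you yourself flag that the case analysis over snake types and lengths is not organized, and that the ``matching-type'' length-zero case needs an auxiliary snake whose existence and usefulness you have not verified. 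That is precisely where the content of the claim lives, so the proposal has a genuine gap.

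The paper closes this gap with a structural idea that is absent from your sketch: it inducts on the length $p$ of the path and decomposes the path at its first turning point, i.e.\ the first index $j$ with $x_{j-1} \more x_j \less x_{j+1}$. Up to $j$ the path is poset-monotone ($x_0 \more x_1 \more \dots \more x_j$), so transitivity pins down $x_0 \more x_{i-1}$ for $i \le j$, and this is exactly what contradicts the $\IPIPm$-derived $x_0 \less x_{i-1}$ if some $x_i \le x_0$ occurred before the turn. At the turning point, Claim~\ref{claim:snake1} gives $x_j \in \orientation\positive$ and places $x_j$ between $x_{j-1}$ and $x_{j+1}$ in value; a $\DPIPp$ argument (using $x_0 < x_j$ and $x_0 \more x_{j-1}$) excludes $x_{j+1} < x_j < x_{j-1}$, so $x_j \in \orientation\positive$, $x_j \less x_{j+1}$ and $x_j < x_{j+1}$ --- which is the hypothesis of the claim for the strictly shorter tail path $x_j, \dots, x_p$, and the induction hypothesis finishes since $x_0 < x_j$. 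If you want to keep your induction on $i$, you must carry this turning-point bookkeeping (or an equivalent strengthening of the inductive hypothesis recording the poset relation to $x_0$ along the monotone stretch); the snake propagation as sketched does not substitute for it.
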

\vspace{-.1cm}

\noindent
Claims~\ref{claim:snake1} and~\ref{claim:snake2} show that the Hasse diagram of~$\less$ is~a~permutree:
\begin{itemize}
\item it is connected since any two values are connected by a snake,
\item it cannot contain a cycle (otherwise, since this cycle cannot be oriented, there exist three distinct vertices~$u,v,w$ in this cycle with~$u \less v \more w$. Claim~\ref{claim:snake1} ensures that~$u < v < w$ and~$v \in \orientation\negative$. Since there is a path~$v = x_0, w = x_1, x_2, \dots, x_p = u$ in the Hasse diagram of~$\less$ with~$v \in \orientation\negative$ and~$v \more w$, Claim~\ref{claim:snake2} affirms that~$u$ and~$w$ are on the same side of~$v$, a contradiction), and 
\item it fulfills the local conditions of Definition~\ref{def:permutree} to be a permutree (Claim~\ref{claim:snake1} shows Condition~(i) of Definition~\ref{def:permutree}, and Claim~\ref{claim:snake2} shows Condition~(ii) of Definition~\ref{def:permutree}). \qedhere
\end{itemize}
\end{proof}

For further purposes, we will need the following lemma to check the existence of $\orientation$-snakes.

\begin{lemma}
\label{lem:orientationIncompCharacterization}
Let~${\less} \in \IPos(n)$ and~$a < c$ be incomparable in~$\less$. The following are equivalent:
\begin{enumerate}[(i)]
\item There is an $\orientation$-snake between~$a$ and~$c$,
\item $\exists \; a < b < c$ such that there is an $\orientation$-snake between~$a$ and~$b$, and either~$b \in \orientation\negative$ and $b \more c$, or $b \in \orientation\positive$~and~$b \less c$,
\item $\exists \; a < b < c$ such that there is an $\orientation$-snake between~$b$ and~$c$, and either~$b \in \orientation\negative$ and $a \less b$, or $b \in \orientation\positive$~and~$a \more b$.
\end{enumerate}
\end{lemma}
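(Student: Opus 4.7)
The plan is to prove (i) $\iff$ (ii) directly from the combinatorial definition of an $\orientation$-snake, and then to invoke a symmetric argument for (i) $\iff$ (iii). The proof is essentially a bookkeeping exercise on the alternating structure of snakes, with no substantial combinatorial content beyond parity tracking.

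For (i) $\Rightarrow$ (ii), I would take an $\orientation$-snake $a = x_0 < x_1 < \dots < x_k < x_{k+1} = c$ between $a$ and $c$. Since $a$ and $c$ are incomparable in $\less$, the snake cannot be degenerate (length~$0$), so $k \ge 1$ and at least one interior vertex exists. Setting $b \eqdef x_k$, so that $a < b < c$, the truncation $a = x_0 < x_1 < \dots < x_k = b$ is still an $\orientation$-snake between $a$ and $b$ of the same type, since the interior vertices $x_1, \dots, x_{k-1}$ and their interleaved relations are inherited from the original snake. To verify the condition on $b$ in (ii), I would just read off the parity. In a type~1 snake ($x_0 \less x_1 \more x_2 \less \cdots$), the last edge between $x_k$ and $x_{k+1}$ is $\less$ when $k$ is even and $\more$ when $k$ is odd, and simultaneously $x_k \in \orientation\positive$ when $k$ is even and $x_k \in \orientation\negative$ when $k$ is odd. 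Thus $x_k \in \orientation\negative$ pairs with $x_k \more c$ and $x_k \in \orientation\positive$ pairs with $x_k \less c$, exactly as required. The type~2 case is symmetric.

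For (ii) $\Rightarrow$ (i), I would take an $\orientation$-snake $a = z_0 < z_1 < \dots < z_\ell < z_{\ell+1} = b$ between $a$ and $b$ and extend it by appending $c$. The same parity tracking shows that either clause in the hypothesis of (ii) (that is, $b \in \orientation\negative$ with $b \more c$, or $b \in \orientation\positive$ with $b \less c$) is exactly what is needed for the extended sequence $a = z_0 < \dots < z_{\ell+1} = b < c$ to be a valid $\orientation$-snake of the same type: the alternation forces the new edge between $b$ and $c$ to be opposite to the last edge of the original snake, and the constraint on the new interior vertex $b$ is governed by the parity of its new position, which matches the hypothesis in both cases.

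The equivalence (i) $\iff$ (iii) follows by the mirror argument: for (i) $\Rightarrow$ (iii) take $b \eqdef x_1$ (the first interior vertex) and truncate from the left, and for (iii) $\Rightarrow$ (i) prepend $a$ to a given snake between $b$ and $c$. The only subtlety to watch is keeping the two snake types (starting with $\less$ versus starting with $\more$) and the parity of interior indices straight; the incomparability hypothesis on $a,c$ is used only to guarantee that an interior vertex exists, so that the choice of $b$ in the two truncation directions is actually available.
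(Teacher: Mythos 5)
Your direction (i)~$\Rightarrow$~(ii) is correct and is exactly the paper's argument (truncate at $b = x_k$; the parity bookkeeping you describe is right, and incomparability of $a,c$ guarantees $k \ge 1$). The gap is in (ii)~$\Rightarrow$~(i). You assert that ``the alternation forces the new edge between $b$ and $c$ to be opposite to the last edge of the original snake,'' but nothing forces this: hypothesis~(ii) prescribes the orientation class of $b$ and the direction of the relation between $b$ and $c$, while the snake from $a$ to $b$ is an arbitrary one whose last edge may point either way. When the two directions agree, appending $c$ produces a non-alternating sequence and your construction breaks down. Concretely, take $\orientation\negative = \{2,3\}$, $\orientation\positive = \varnothing$, and the poset on $[4]$ generated by $4 \less 3 \less 2$ and $1 \less 2$: then $a=1$ and $c=4$ are incomparable, $b=3$ satisfies $b \in \orientation\negative$ and $b \more c$, and the snake $1 \less 2 \more 3$ from $a$ to $b$ ends with a $\more$ edge, so the appended sequence $1 \less 2 \more 3 \more 4$ is not a snake.

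The missing idea, which is how the paper handles this case, is to use transitivity of $\less$ to shortcut $b$ out of the sequence instead of appending to it: if the snake ends $x_k \more b$ and $b \more c$, then $x_k \more c$, so $a < x_1 < \dots < x_k < c$ is already an $\orientation$-snake from $a$ to $c$ (and symmetrically for $x_k \less b \less c$). Only in the ``opposite direction'' case ($x_k \less b \more c$ or $x_k \more b \less c$) does one append $c$, and there the hypothesis on the orientation class of $b$ is exactly what makes $b$ a legal new interior vertex. The same repair is needed in your (iii)~$\Rightarrow$~(i) prepending step. With this two-case analysis added, the proof goes through and coincides with the paper's.
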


\begin{proof}
The implication~(i) $\Rightarrow$ (ii) is immediate, considering~$b = x_k$. Assume now that~$\less$ and~$\{a,c\}$ satisfy~(ii). Let~$b$ be given by~(ii) and let~$a < x_1 < \dots < x_k < b$ be an $\orientation$-snake between~$a$ and~$b$. If~$x_k \less b \more c$ (or similarly if~$x_k \more b \less c$), then~$a < x_1 < \dots < b < c$ is a $\orientation$-snake between~$a$ and~$c$. In contrast, if~$x_k \less b \less c$ (or similarly if~$x_k \more b \more c$), then~$x_k \less c$ (resp.~$x_k \more c$) by transitivity of~$\less$, so that~$a < x_1 < \dots < x_k < c$ is an $\orientation$-snake between~$a$ and~$c$. Therefore, (i) $\iff$ (ii). The proof of (i) $\iff$ (iii) is identical.
\end{proof}


\subsection{Permutree Face Posets}
\label{subsec:PFP}

The permutrees of~$\Permutrees$ correspond to the vertices of the \defn{\mbox{$\orientation$-permutreehedron}}~$\PT$ constructed in~\cite{PilaudPons}. The precise definition of these polytopes is not needed here. Following~\fref{fig:permutrees}, we illustrate in \fref{fig:permutreehedra} classical polytopes that arise as permutreehedra for specific orientations:

\medskip
\centerline{
\begin{tabular}{c||c|c|c|c}
orientation~$(n, \orientation\positive, \orientation\negative)$ & $(n, \varnothing, \varnothing)$ & $(n, \varnothing, [n])$ & $\orientation\positive \sqcup \orientation\negative = [n]$ & $(n, [n], [n])$ \\
\hline
\multirow{2}{*}{permutreehedron} & \multirow{2}{*}{permutahedron} & Loday's & Hohlweg-Lange's & \multirow{2}{*}{parallelepiped} \\
& & associahedron~\cite{Loday} & associahedra~\cite{HohlwegLange, LangePilaud} &
\end{tabular}
}

\begin{figure}[h]
	\centerline{\includegraphics[width=1.1\textwidth]{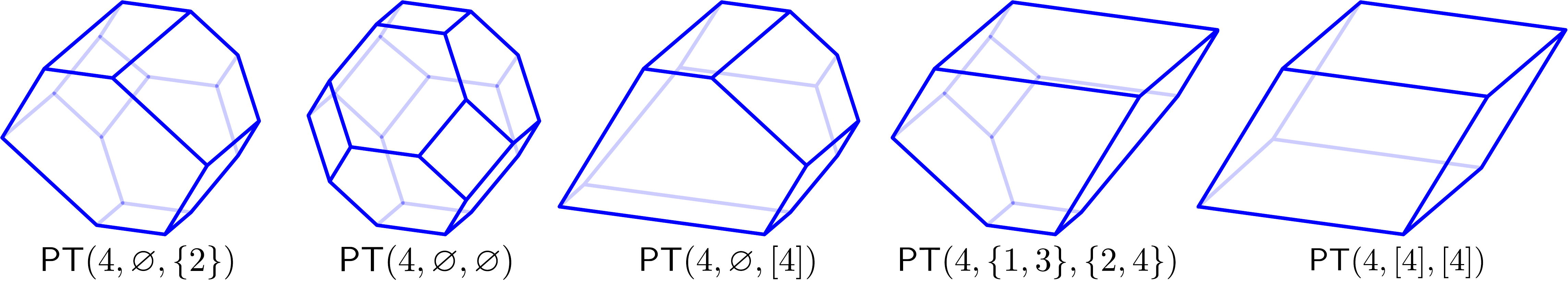}}
	\vspace{-.1cm}
	\caption{Five examples of permutreehedra. While the first is generic, the last four are a permutahedron, some associahedra~\cite{Loday, HohlwegLange, LangePilaud}, and a parallelepiped.}
	\label{fig:permutreehedra}
	\vspace{-.4cm}
\end{figure}

We now consider all the faces of the $\orientation$-permutreehedron. As shown in~\cite{PilaudPons}, they correspond to \defn{Schr\"oder $\orientation$-permutrees}, defined as follows.

\begin{definition}[\cite{PilaudPons}]
\label{def:SchroderPermutree}
For an orientation~$\orientation$ on~$[n]$ and a subset~$X \subseteq [n]$, we let~$X\negative \eqdef X \cap \orientation\negative$ and~$X\positive \eqdef X \cap \orientation\positive$. A \defn{Schr\"oder $\orientation$-permutree} is a directed tree~$\tree[S]$ with vertex set~$\ground$ endowed with a vertex labeling~$p : \ground \to 2^{[n]} \ssm \varnothing$ such that
\begin{enumerate}[(i)]
\item the labels of~$\tree[S]$ partition~$[n]$, \ie $v \ne w \in \ground \Longrightarrow p(v) \cap p(w) = \varnothing$ and~$\bigcup_{v \in \ground} p(v) = [n]$;
\item each vertex~$v \in \ground$ has one incoming (resp.~outgoing) subtree~$\tree[S]_v^I$ (resp.~$\tree[S]^v_I$) for each interval~$I$ of~$[n] \ssm p(v)\negative$ (resp.~of~$[n] \ssm p(v)\positive$) and all labels of~$\tree[S]_v^I$ (resp.~of~$\tree[S]^v_I$) are subsets of~$I$.
\end{enumerate}
We denote by~$\SchroderPermutrees$ the set of Schr\"oder $\orientation$-permutrees.
\end{definition}

For example, in the leftmost Schr\"oder permutree of \fref{fig:SchroderPermutrees}, the vertices are labeled by the sets~$\{1,2\}$, $\{3\}$, $\{4,6\}$, $\{5\}$, and $\{6, 7\}$. The vertex~$v$ labeled by $p(v) = \{4,6\}$ has $3$ incoming subtrees included in the $3$ intervals of~$[n] \ssm p(v)\negative = [n] \ssm \{4,6\} = \{1,2,3\} \sqcup \{5\} \sqcup \{7\}$ and $2$ (empty) outgoing subtrees included in the $2$ intervals of~$[n] \ssm p(v)\positive = [n] \ssm \{4\} = \{1,2,3\} \sqcup \{5,6,7\}$.

\begin{figure}[t]
	\centerline{
	\begin{tabular}{c@{\!}cc@{\!}c@{\!}c}
		  $\orientation = (7, \{2,4,7\}, \{1,4,6\})$
		& $\orientation = (7, \varnothing, \varnothing)$
		& $\orientation = (7, \varnothing, [7])$
		& $\orientation = (7, \{3,6,7\}, \{1,2,4,5\})$
		& $\orientation = (7, [7], [7])$
		\\[.2cm]
		  \includegraphics[scale=.8]{SchroderPermutreeGeneric}
		& \includegraphics[scale=.8]{SchroderPermutreeOrderedPartition}
		& \includegraphics[scale=.8]{SchroderPermutreeSchroderTree}
		& \includegraphics[scale=.8]{SchroderPermutreeCambrianTree}
		& \includegraphics[scale=.8]{SchroderPermutreeTernarySequence}
		\\[-.2cm]
		  \;\scalebox{.8}{\input{relations/posetPFP.tex}}
		& \;\scalebox{.8}{\input{relations/posetWOFP.tex}}
		& \;\scalebox{.8}{\input{relations/posetTOFP.tex}}
		& \;\scalebox{.8}{\input{relations/posetCOFP.tex}}
		& \;\scalebox{.8}{\input{relations/posetBOFP.tex}}
	\end{tabular}
}
	\vspace{-1.3cm}
	\caption{Five examples of Schr\"oder permutrees~$\tree[S]$ (top) with their posets~$\less_{\tree[S]}$~(bottom). While the first is generic, the last four illustrate specific orientations corresponding to ordered partitions, Schr\"oder trees, Schr\"oder Cambrian trees, and ternary sequences.}
	\label{fig:SchroderPermutrees}
	\vspace{-.4cm}
\end{figure}

Following~\fref{fig:permutrees}, we have represented in \fref{fig:SchroderPermutrees} five Schr\"oder permutrees, where the last four encode relevant combinatoiral objects obtained for specific orientations:

\medskip
\centerline{
\begin{tabular}{c||c|c|c|c}
orientation~$(n, \orientation\positive, \orientation\negative)$ & $(n, \varnothing, \varnothing)$ & $(n, \varnothing, [n])$ & $\orientation\positive \sqcup \orientation\negative = [n]$ & $(n, [n], [n])$ \\
\hline
\multirow{2}{*}{combinatorial objects} & \multirow{2}{*}{ordered partitions} & \multirow{2}{*}{Schr\"oder trees} & Schr\"oder Cambrian & ternary \\
& & & trees~\cite{ChatelPilaud} & sequences
\end{tabular}
}

\medskip
\noindent
We refer again to~\cite{PilaudPons} for more details on the interpretation of these combinatorial objects as permutrees, and we still use the drawing conventions of~\cite{PilaudPons}.

An $\orientation$-permutree~$\tree$ belongs to a face of the permutreehedron~$\PT$ corresponding to a Schr\"oder $\orientation$-permutree~$\tree[S]$ if and only if~$\tree[S]$ is obtained by edge contractions in~$\tree$. The set of such $\orientation$-permutrees is the interval~$[\tree^{\min}(\tree[S]), \tree^{\max}(\tree[S])]$ of the $\orientation$-permutree lattice, where the minimal (resp.~maximal) tree~$\tree^{\min}(\tree[S])$ (resp.~$\tree^{\max}(\tree[S])$) is obtained by replacing the nodes of~$\tree[S]$ by left (resp.~right) combs as illustrated in \fref{fig:pfp}. To be more precise, we need additional notations. For an interval~$I = [i,j]$ of integers, define~$\bar I \eqdef [i-1, j+1]$. For each edge~$v \to w$ in~$\tree[S]$, we let~$I_v^w$ (resp.~$J_v^w$) denote the interval of~$[n] \ssm p(v)\positive$ (resp.~of~$[n] \ssm p(w)\negative$) such that $p(w) \subseteq I_v^w$ (resp.~$p(v) \subseteq J_v^w$). The minimal and maximal permutrees in the face corresponding to the Schr\"oder permutree~$\tree[S]$ are then described as follows:
\begin{enumerate}[(i)]
\item $\tree^{\min}(\tree[S])$ is the $\orientation$-permutree obtained from the left combs on the subsets~$p(v)$ for~$v$ vertex of~$\tree[S]$ by adding the edges connecting~$\max(p(v) \cap \bar I_v^w)$ with~$\min(p(w) \cap \bar J_v^w)$ for all edges~$v \to w$~in~$\tree[S]$.
\item $\tree^{\max}(\tree[S])$ is the $\orientation$-permutree obtained from the right combs on the subsets~$p(v)$ for~$v$ \mbox{vertex~of~$\tree[S]$} by adding the edges connecting~$\min(p(v) \cap \bar I_v^w)$ with~$\max(p(w) \cap \bar J_v^w)$ for all edges~${v \to w}$~in~$\tree[S]$.
\end{enumerate}

For example, consider the edge $v = \{5\} \to \{4, 6\} = w$ in the Schr\"oder permutree of \fref{fig:pfp}. We have $I_v^w = [n]$ and~$J_v^w = \{5\}$. In $\tree^{\min}(\tree[S])$, we create the left comb $4 \to 6$ and we add the edge $5 = \max(\{5\} \cap [n]) \to \min(\{4, 6\} \cap \{4,5,6\}) = 4$. Similarly, in $\tree^{\max}(\tree[S])$, we create the right comb $6 \to 4$ and we add the edge $5 = \min(\{5\} \cap [n]) \to \max(\{4, 6\} \cap \{4,5,6\}) = 6$.

\begin{figure}[h]
	\vspace{-.2cm}
	\input{example_pfp}
	\vspace{-.5cm}
	\caption{A Permutree Face Poset ($\PFP$).}
	\label{fig:pfp}
	\vspace{-.4cm}
\end{figure}

For a Schr\"oder permutree~$\tree[S]$, we define~${\less_{\tree[S]}} = {\less_{[\tree^{\min}(\tree[S]), \tree^{\max}(\tree[S])]}}$. Examples are given in \fref{fig:SchroderPermutrees}. One easily checks that $\less_{\tree[S]}$ could also be defined as the transitive closure of all relations~$i \less_{\tree[S]} j$ for all~$i \in p(v) \cap \bar I_v^w$ and~$j \in p(w) \cap \bar J_v^w$ for all edges~$v \to w$ in~$\tree[S]$. For edge $\{5\} \to \{4,6\}$ in the Schr\"oder permutree of \fref{fig:pfp}, this corresponds to the relations $4 \more 5 \less 6$ of the poset. Note that
\begin{itemize}
\item an $\orientation$-permutree~$\tree$ belongs to the face of the permutreehedron~$\PT$ corresponding to a Schr\"oder $\orientation$-permutree~$\tree[S]$ if and only if~$\less_{\tree}$ is an extension of~$\less_{\tree[S]}$, and
\item the linear extensions of~$\less_{\tree[S]}$ are precisely the linear extensions of~$\less_{\tree}$ for all $\orientation$-permutrees~$\tree$ which belong to the face of the permutreehedron~$\PT$ corresponding to~$\tree[S]$.
\end{itemize}

We say that~$\less_{\tree[S]}$ is a \defn{permutree face poset}, and we denote by
\[
\PFP \eqdef \bigset{{\less_{\tree[S]}}}{\tree[S] \in \SchroderPermutrees}
\]
the set of all permutree face posets. We now characterize these posets.

\begin{proposition}
\label{prop:characterizationPFP}
A poset~${\less} \in \IPos(n)$ is in~$\PFP$ if and only if it is in~$\PIP$ and for any~$a < c$ incomparable in~$\less$,
\[
\begin{array}{cccccc@{\qquad}c}
& \exists \; a < b < c & \text{such that} & b \in \orientation\positive \text{ and } a \not\less b \not\more c & \text{or} & b \in \orientation\negative \text{ and } a \not\more b \not\less c, \qquad & (\spadesuit) \\
\text{ or} \qquad 
& \forall \; a < b < c & \text{we have} & a \less b \iff b \more c & \text{and} & a \more b \iff b \less c. & (\clubsuit)
\end{array}
\]
\end{proposition}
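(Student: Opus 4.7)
The inclusion $\PFP \subseteq \PIP$ is immediate from the definition $\less_{\tree[S]} \eqdef \less_{[\tree^{\min}(\tree[S]), \tree^{\max}(\tree[S])]}$, so the work is to identify, within $\PIP$, the precise extra constraint characterizing membership in $\PFP$, namely the dichotomy $\spadesuit \vee \clubsuit$ for every incomparable pair~$a < c$.

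For the forward direction, I would fix $\less = \less_{\tree[S]}$ and let $a < c$ be incomparable in $\less$. The argument splits on whether $a$ and $c$ lie in a common vertex label of $\tree[S]$. If $a, c \in p(v)$ for some vertex $v$, I verify $\clubsuit$: for any $a < b < c$, either $b \in p(v)$ as well (and then $a,b,c$ are pairwise incomparable, so the four equivalences of $\clubsuit$ hold vacuously), or $b$ lies in a subtree of $\tree[S]$ attached at~$v$, in which case the symmetry of $a$ and $c$ inside $p(v)$ together with Condition~(ii) of Definition~\ref{def:SchroderPermutree} forces $b$ to relate identically to $a$ and to $c$. If instead $a \in p(v)$ and $c \in p(w)$ with $v \ne w$, the undirected path in $\tree[S]$ from $v$ to $w$ cannot be oriented monotonically (otherwise $a$ and $c$ would be comparable in $\less$), so it contains a local extremum vertex~$u$: at $u$, either both incident path-edges are outgoing, forcing some $b \in p(u) \cap \orientation\positive$ with $a < b < c$ and $a \not\less b \not\more c$, or both are incoming, forcing some $b \in p(u) \cap \orientation\negative$ with $a < b < c$ and $a \not\more b \not\less c$. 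Either way, $\spadesuit$ holds.

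For the reverse direction, assume ${\less} \in \PIP$ satisfies the dichotomy. I define a relation $\equiv$ on $[n]$ by $a \equiv c$ iff $a = c$, or $a, c$ are incomparable in $\less$ and $\clubsuit$ holds for them. The first step is to show $\equiv$ is an equivalence relation; reflexivity and symmetry are immediate, while transitivity is the subtle point and relies on combining the equivalences of $\clubsuit$ with the conditions defining $\IPIPp, \IPIPm, \DPIPp, \DPIPm$ from Corollary~\ref{coro:characterizationPIP}. Its classes $V_1, \dots, V_k$ then serve as the vertex labels of the candidate $\tree[S]$. I place an edge $V_i \to V_j$ whenever there is a $\less$-cover relation from an element of $V_i$ to an element of $V_j$, and finally verify that this construction yields a bona fide Schr\"oder $\orientation$-permutree $\tree[S] \in \SchroderPermutrees$ with $\less_{\tree[S]} = \less$.

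The main obstacle lies in the reverse direction: checking Condition~(ii) of Definition~\ref{def:SchroderPermutree}, namely that the incoming (resp.~outgoing) subtrees at each vertex~$v$ of the candidate $\tree[S]$ correspond to the intervals of $[n] \ssm p(v)\negative$ (resp.~of $[n] \ssm p(v)\positive$). This reduces to a case analysis using the $\IPIPpm$ and $\DPIPpm$ conditions to control how relations between vertex classes respect the signed structure at~$v$, combined with snake-style arguments analogous to those in the proof of Proposition~\ref{prop:characterizationPEP}. Following the paper's convention, the three most technical points — transitivity of $\equiv$, the verification of Definition~\ref{def:SchroderPermutree}(ii), and the identity $\less_{\tree[S]} = \less$ — would each be isolated as separate claims and deferred to Appendix~\ref{sec:missingClaims}.
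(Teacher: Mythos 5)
Your plan rests on the assumption that the dichotomy $(\spadesuit)/(\clubsuit)$ aligns with ``different vertex / same vertex'' of the Schr\"oder permutree, and that assumption is false; it breaks both directions of your argument. Concretely, take $n=5$, $\orientation = (5,\varnothing,\{3\})$, and the Schr\"oder permutree~$\tree[S]$ with root labeled $p(v)=\{1,3,5\}$ and two incoming subtrees labeled $\{2\}$ and $\{4\}$ (legal since the intervals of $[5]\ssm p(v)\negative$ are $\{1,2\}$ and $\{4,5\}$). Then ${\less_{\tree[S]}}$ consists of $2\less 1$, $2\less 3$, $4\less 3$, $4\less 5$. The pair $1<5$ lies in a single vertex, yet $(\clubsuit)$ fails for it: with $b=2$ one has $1\more 2$ but $2\not\less 5$. (The pair instead satisfies $(\spadesuit)$ via $b=3\in\orientation\negative$.) So in your forward direction, the claim that a descendant $b$ attached at~$v$ ``relates identically to $a$ and to $c$'' is wrong whenever $p(v)$ contains a signed element strictly between $a$ and $c$: the subtrees at~$v$ are indexed by intervals of $[n]\ssm p(v)\negative$ (resp.\ $[n]\ssm p(v)\positive$), so $a$ and $c$ need not be symmetric with respect to them. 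The paper's proof of this direction therefore splits the same-vertex case further, sending pairs separated by a signed element of $p(v)$ to~$(\spadesuit)$ and only the remaining pairs to~$(\clubsuit)$.

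The same example kills the reverse direction as you set it up: in ${\less_{\tree[S]}}$ one checks $1\equiv 3$ and $3\equiv 5$ (both satisfy $(\clubsuit)$), but $1\not\equiv 5$, so your relation~$\equiv$ is \emph{not} transitive even on a bona fide $\PFP$ poset satisfying all your hypotheses. Hence the ``subtle point'' you defer cannot be proved, and the vertex classes of your candidate tree are not well defined. Passing to the transitive closure of~$\equiv$ would repair the partition here, but then the subsequent verification of Definition~\ref{def:SchroderPermutree}(ii) and of ${\less}={\less_{\tree[S]}}$ no longer follows from the sketch you give. The paper avoids this entirely by a different route: it first shows that if \emph{every} incomparable pair satisfies $(\spadesuit)$ then ${\less}\in\PEP$; otherwise it adds the increasing relations $(a,c)$ for all incomparable pairs failing $(\spadesuit)$, takes the transitive closure to obtain a poset~$\bless$ which it proves is a permutree poset, and then contracts the Hasse edges of~$\bless$ absent from~$\less$ to produce the Schr\"oder permutree. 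I would recommend adopting that two-step ``saturate, then contract'' strategy rather than trying to read the vertices off directly from~$(\clubsuit)$.
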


This property is illustrated on the poset of \fref{fig:pfp}. For example, $1$~and~$2$ are neighbors and thus satisfy~$(\clubsuit)$, $1$~and~$3$ satisfy~$(\spadesuit)$ with~$b = 2$, $4$~and~$6$ satisfy~$(\clubsuit)$, etc.

\begin{proof}[Proof of Proposition~\ref{prop:characterizationPFP}]
Assume that~${\less} \in \PFP$, and consider the Schr\"oder permutree~$\tree[S]$ such that~${\less} = {\less_{\tree[S]}}$. Then~${{\less} = {{\less_{\tree[S]}} = {\less_{[\tree^{\min}(\tree[S]), \tree^{\max}(\tree[S])]}}}}$ belongs to~$\PIP$. Moreover, any~$a < c$ incomparable in~$\less_{\tree[S]}$
\begin{enumerate}[(i)]
\item satisfy~$(\spadesuit)$ when either~$a, c$ belong to distinct vertices of~$\tree[S]$ separated by a wall, or~$a, c$ belong to the same vertex~$v$ of~$\tree[S]$ but there is another~$b$ in~$v$ with~$b \in \orientation\positive \cup \orientation\negative$ and~$a < b < c$,
\item satisfy~$(\clubsuit)$ when $a, c$ belong to the same vertex~$v$ of~$\tree[S]$ and~$b \notin \orientation\positive \cup \orientation\negative$ for any~${a < b < c}$~in~$v$.
\end{enumerate}
This shows one implication of the statement. Before proving the reciprocal implication, let us comment a little more to give some useful intuition. Note that two consecutive elements~$a < c$ in a vertex~$v$ of~$\tree[S]$ satisfy~$(\clubsuit)$ and not~$(\spadesuit)$. In particular, if all~$a < c$ incomparable in~$\less_{\tree[S]}$ satisfy~$(\spadesuit)$, then~$\tree[S]$ is just a permutree. In general, the posets~$\less_{\tree^{\min}(\tree[S])}$ and~$\less_{\tree^{\min}(\tree[S])}$ corresponding to the minimal and maximal $\orientation$-permutrees in the face corresponding to~$\tree[S]$ are given by
\begin{align*}
{\less_{\tree^{\min}(\tree[S])}} & = \tc{\big( {\less} \cup \set{(a,c)}{a < c \text{ incomparable in~$\less_{\tree[S]}$ and not satisfying~$(\spadesuit)$}} \big)}
\\ \text{and}\qquad
{\less_{\tree^{\max}(\tree[S])}} & = \tc{\big( {\less} \cup \set{(c,a)}{a < c \text{ incomparable in~$\less_{\tree[S]}$ and not satisfying~$(\spadesuit)$}} \big)}.
\end{align*}

\medskip
Consider now an arbitrary poset~${\less} \in \PIP$ such that any~$a < c$ incomparable in~$\less$ satisfy~$(\spadesuit)$ or~$(\clubsuit)$. The previous observation motivates the following claim (see Appendix~\ref{subsec:appendixCharacterizationPFP} for the proof).

\vspace{-.05cm}
\begin{claim}
\label{claim:PEP}
If any~$a < c$ incomparable in~$\less$ satisfy~$(\spadesuit)$, then~${\less} \in \PEP \subset \PFP$.
\end{claim}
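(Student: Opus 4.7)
The goal is to prove that $\less$ admits an $\orientation$-snake between any two values of $[n]$: combined with $\less \in \PIP$, Proposition~\ref{prop:characterizationPEP} will then give $\less \in \PEP$. I proceed by strong induction on $c - a$, where $a < c$ ranges over incomparable pairs in $\less$. In the base case $c - a = 2$, the hypothesis $(\spadesuit)$ forces the unique intermediate vertex $b = a + 1$ to lie in $\orientation\positive \cup \orientation\negative$; since the sub-pairs $(a, b)$ and $(b, c)$ are too short to admit a $(\spadesuit)$-witness, they must be comparable, and the sign constraints of $(\spadesuit)$ pin them down uniquely, yielding a length-one snake such as $a \more b \less c$.

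For the inductive step, let $b$ be a witness of $(\spadesuit)$ for $(a, c)$. By the symmetry swapping $\orientation\positive$ and $\orientation\negative$, assume $b \in \orientation\positive$ with $a \not\less b$ and $b \not\more c$. If $a \more b$, then either $b \less c$ (and $a \more b \less c$ is a snake) or $b, c$ are incomparable, in which case induction yields a snake between $b$ and $c$; Lemma~\ref{lem:orientationIncompCharacterization}(iii) with pivot $b$ then produces a snake between $a$ and $c$. The symmetric sub-case $b \less c$ is handled via Lemma~\ref{lem:orientationIncompCharacterization}(ii) with pivot $b$.

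The remaining hard sub-case is when both $(a, b)$ and $(b, c)$ are incomparable. Induction then yields snakes $S_1 : a \to b$ and $S_2 : b \to c$, and since $a, b$ are incomparable $S_1$ has at least one internal vertex. Let $x_p$ be the last one. A routine analysis of the snake alternation shows that $x_p \in \orientation\positive$ when the final edge $x_p \to b$ of $S_1$ is $\less$, and $x_p \in \orientation\negative$ when it is $\more$. Applying the $\DPIPp$ condition to the triple $x_p < b < c$ (with $b \in \orientation\positive$ and $b \not\more c$) yields $x_p \not\more c$; when $x_p \more b$, the $\WOIP$ characterization (with $b \not\less c$) also rules out $x_p \less c$. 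Therefore, either $x_p \less c$, which forces $x_p \in \orientation\positive$, and Lemma~\ref{lem:orientationIncompCharacterization}(ii) applied to the prefix of $S_1$ with pivot $x_p$ yields the snake $a \to c$; or $(x_p, c)$ is incomparable and induction provides a snake $S_3 : x_p \to c$ that must be concatenated with the prefix of $S_1$.

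The main technical obstacle lies in this concatenation. The edges of the prefix of $S_1$ and of $S_3$ incident to $x_p$ either have opposite directions, in which case $x_p$ is kept as an internal vertex of the combined sequence with its membership in $\orientation\positive$ or $\orientation\negative$ matching exactly the required pattern, or the same direction, in which case $x_p$ is deleted by transitivity and the shorter concatenated sequence still alternates correctly. A careful tracking of the parities of indices and of the types of the two snakes confirms that in either case the result is a valid $\orientation$-snake from $a$ to $c$. The symmetric Case where $(\spadesuit)$ produces $b \in \orientation\negative$ is handled identically.
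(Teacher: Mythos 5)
Your overall strategy---induct on $c-a$ over incomparable pairs and reduce everything to Lemma~\ref{lem:orientationIncompCharacterization}---is sound, and your base case and your two ``easy'' cases (where $a \more b$, respectively where $b \less c$) are correct. The divergence from the paper, and the weak point, is the sub-case where both $(a,b)$ and $(b,c)$ are incomparable. There you splice the prefix of the snake $S_1 \colon a \to b$ ending at its \emph{last} internal vertex $x_p$ onto a second snake $S_3 \colon x_p \to c$, and the validity of this snake--snake concatenation is precisely the point you leave as ``a careful tracking of the parities \dots{} confirms''. That verification is the crux of the only hard case: one must check that the two edges incident to $x_p$ either alternate (and then the sign of $x_p$ inherited from $S_1$, namely $x_p \in \orientation\positive$ when $x_p \less b$ and $x_p \in \orientation\negative$ when $x_p \more b$, matches the sign forced by the new local pattern) or point the same way (and then $x_p$ must be suppressed by transitivity, after which one has to re-check the alternation at $x_{p-1}$ and at the first internal vertex of $S_3$). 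Your two-case sketch is pointing in the right direction and the statement is true, but as written the proof of the decisive step is deferred rather than given; the preceding $\DPIPp$/$\IWOIP$ analysis of how $x_p$ relates to $c$ is correct but only sets up this unproved gluing.

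The paper sidesteps all of this by looking at the \emph{first} internal vertex $x_1$ of the snake from $a$ to the witness $b$ (and by phrasing the argument as a minimal counterexample in $c-a$, which is equivalent to your induction). By the very definition of an $\orientation$-snake, $a$ and $x_1$ are \emph{comparable} with exactly the sign pattern required by condition~(iii) of Lemma~\ref{lem:orientationIncompCharacterization}: either $x_1 \in \orientation\negative$ and $a \less x_1$, or $x_1 \in \orientation\positive$ and $a \more x_1$, and this persists even in the degenerate case $x_1 = b$ thanks to the incomparabilities in the $(\spadesuit)$-witness. Minimality of $c-a$ then provides a snake from $x_1$ to $c$, and Lemma~\ref{lem:orientationIncompCharacterization}\,(iii) performs the gluing of a single edge with a snake---the only concatenation the lemma actually proves. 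I recommend replacing your hard sub-case by this one-step reduction; it makes the analysis of $x_p$ and the snake--snake concatenation unnecessary. If you prefer to keep your route, you must write out the concatenation argument in full, as it is where the content of the claim lies.
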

\vspace{-.1cm}

\noindent
Suppose now that some~$a < c$ incomparable in~$\less$ do not satisfy~$(\spadesuit)$. The idea of our proof is to return to the previous claim by considering the auxiliary poset 
\[
{\bless} \eqdef \tc{\big( {\less} \cup \set{(a,c)}{a < c \text{ incomparable in~$\less$ and not satisfying~$(\spadesuit)$}}\big)}.
\]

\vspace{-.15cm}
\begin{claim}
\label{claim:decontractionCompare}
We have~${\Inc{\less}} \subseteq {\Inc{\bless}}$ and~${\Dec{\less}} = {\Dec{\bless}}$.
\end{claim}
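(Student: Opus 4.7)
The inclusion ${\Inc{\less}} \subseteq {\Inc{\bless}}$ and the containment ${\Dec{\less}} \subseteq {\Dec{\bless}}$ are immediate from $\less \subseteq \bless$, so the real content is ${\Dec{\bless}} \subseteq {\Dec{\less}}$: taking the transitive closure after adjoining the new relations must not create any new decreasing pair. Let $A \eqdef \set{(a,c)}{a < c \text{ incomparable in } \less \text{ and not satisfying } (\spadesuit)}$, so that $\bless = \tc{(\less \cup A)}$; every pair in $A$ is increasing, is incomparable in $\less$, and, failing $(\spadesuit)$, satisfies $(\clubsuit)$ by the standing assumption on $\less$.

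My plan is to compare $\bless$ with the auxiliary relation
\[
\bless^{*} \eqdef {\less} \cup \tc{(\Inc{\less} \cup A)}.
\]
Since $\Inc{\less} \cup A$ is entirely increasing, so is its transitive closure, and one reads off that ${\Dec{\bless^{*}}} = {\Dec{\less}}$ and ${\Inc{\bless^{*}}} \supseteq {\Inc{\less}}$. Because $\less \cup A \subseteq \bless^{*} \subseteq \bless$ and $\bless$ is the smallest transitive relation containing $\less \cup A$, the equality $\bless = \bless^{*}$ will follow as soon as $\bless^{*}$ is itself shown to be transitive, and will then imply the claim directly.

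To verify transitivity of $\bless^{*}$, I would take two relations $(u,v),(v,w) \in \bless^{*}$ and split into cases according to whether each belongs to $\less$ or to $\tc{(\Inc{\less} \cup A)}$. The pure cases are immediate from transitivity of the respective piece. The substantive mixed case is when one of the two is a decreasing element of $\less$ and the other is a nontrivial element of $\tc{(\Inc{\less} \cup A)}$, which may be written as an increasing chain $z_0 < z_1 < \dots < z_m$ in $\Inc{\less} \cup A$. I would then propagate the decreasing $\less$-relation step by step along this chain: at a step with $(z_i, z_{i+1}) \in \Inc{\less}$ one uses transitivity of $\less$; at a step with $(z_i, z_{i+1}) \in A$ one uses the $\WOIP$ axiom of Proposition~\ref{prop:characterizationWOIP} combined with the incomparability in $\less$ of the endpoints of any $A$-edge; and at the single step where the chain index first passes the ``target'' coordinate, the condition $(\clubsuit)$ converts the accumulated decreasing relation into an increasing one, after which the remaining steps of the chain are absorbed directly inside $\tc{(\Inc{\less} \cup A)}$. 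The main obstacle will be carrying out this propagation cleanly, ensuring that at every intermediate step the accumulated relation stays inside $\less \cup \tc{(\Inc{\less} \cup A)} = \bless^{*}$, and that the symmetric mixed case (decreasing $\less$-relation composed after, rather than before, an element of $\tc{(\Inc{\less} \cup A)}$) is handled by the analogous backward propagation.
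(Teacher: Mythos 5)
Your proof is correct, but it takes a genuinely different route from the paper's, and it proves slightly more. The paper works directly and only with the decreasing part: it assumes a pair $x<y$ with $x \bmore y$ but $x \not\more y$, takes a minimal-length path $y=z_0,\dots,z_k=x$ in ${\less}\cup A$, finds an index with $z_{i+1}<z_{i-1}$, and in each of the three possible positions of $z_i$ produces the shortcut $z_{i+1}\more z_{i-1}$ using only transitivity of $\less$ and the $\DWOIP(n)$ condition, the incomparability of the endpoints of an $A$-edge being the only property of $A$ that is used --- $(\clubsuit)$ is never invoked. You instead exhibit the explicit candidate ${\less}\cup\tc{({\Inc{\less}}\cup A)}$ and verify its transitivity by propagating a decreasing $\less$-relation along an increasing $({\Inc{\less}}\cup A)$-chain; I checked the propagation in all sub-cases and it goes through (at an $A$-step the target lies either beyond $z_{i+1}$, where $\DWOIP(n)$ applies, or strictly between $z_i$ and $z_{i+1}$, where $(\clubsuit)$ turns the accumulated decreasing relation into an increasing one; it cannot coincide with $z_{i+1}$ by incomparability, and the symmetric case is analogous). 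What your approach buys is the sharper structural conclusion ${\bless}={\less}\cup\tc{({\Inc{\less}}\cup A)}$, hence the exact description ${\Inc{\bless}}=\tc{({\Inc{\less}}\cup A)}$ of the increasing part, which the claim does not require; what the paper's minimal-path argument buys is brevity and the fact that it needs only the $\DWOIP(n)$ half of the hypotheses on $\less$, not the condition $(\clubsuit)$ satisfied by the pairs of $A$.
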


\vspace{-.25cm}
\begin{claim}
\label{claim:decontraction}
If~${\less} \in \PIP$ and any~$a < c$ incomparable in~$\less$ satisfy~$(\spadesuit)$ or~$(\clubsuit)$, then~${\bless} \in \PIP$ and any~$a < c$ incomparable in~$\bless$ satisfy~$(\spadesuit)$.
\end{claim}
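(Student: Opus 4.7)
The plan is to split the claim into its two assertions, leaning on Claim~\ref{claim:decontractionCompare} (which guarantees $\Dec{\bless} = \Dec{\less}$ and $\Inc{\bless} \supseteq \Inc{\less}$) to handle the decreasing side for free.

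For the first assertion, $\bless \in \PIP$, I would use the characterization of Corollary~\ref{coro:characterizationPIP} and check each of the conditions $\WOIP(n)$, $\IPIPp$, $\IPIPm$, $\DPIPp$, $\DPIPm$. Since $\Dec{\bless} = \Dec{\less}$, the three \emph{decreasing} conditions ($\DPIPp$, $\DPIPm$, and the decreasing half of $\WOIP(n)$) are inherited verbatim from $\less \in \PIP$. For the three \emph{increasing} conditions, I would take a pair $a < b < c$ with $a \bless c$ and distinguish the cases $a \less c$ (handled directly by $\less \in \PIP$) and $a \not\less c$. In the latter case, $(a,c)$ arises from transitive closure of $\less \cup X$, where $X$ denotes the set of pairs $(a',c')$ added to $\less$ to form $\bless$. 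Using Remark~\ref{rem:coverEnough} I can reduce to cover relations, and then along a witnessing chain every added pair $(a',c') \in X$ is incomparable in $\less$ and satisfies $(\clubsuit)$, which forces a very rigid pattern on the relations between $a'$, any intermediate $b'$, and $c'$. A case analysis on whether the intermediate $b$ lies inside one of these added intervals $[a',c']$ then yields each increasing condition.

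For the second assertion, let $a < c$ be incomparable in $\bless$. Since $\less \subseteq \bless$, they are incomparable in $\less$; and since $(a,c) \in X$ would imply $a \bless c$, we have $(a,c) \notin X$, so by hypothesis $(a,c)$ satisfies $(\spadesuit)$ in $\less$. Fix a witness $b$, say $b \in \orientation\positive$ with $a \not\less b$ and $b \not\more c$ (the case $b \in \orientation\negative$ is symmetric). By Claim~\ref{claim:decontractionCompare}, $b \not\bmore c$ is automatic. The nontrivial point is $a \not\bless b$: assuming for contradiction a chain from $a$ to $b$ in $\less \cup X$, and combining with the incomparability of $a$ and $c$ in $\bless$, I would propagate along the chain using the $(\clubsuit)$ property of pairs in $X$ to derive either a chain from $a$ to $c$ or a direct relation contradicting incomparability.

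The main obstacle in both parts is dealing with chains in $\less \cup X$ that mix original and added relations in arbitrary ways. The strategy is to use the rigidity of $(\clubsuit)$ along added pairs, together with the transitivity and $\PIP$-structure of $\less$, to reduce to a short chain or directly derive the needed relation.
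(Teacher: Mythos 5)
Your skeleton matches the paper's (decreasing conditions inherited from Claim~\ref{claim:decontractionCompare}; increasing conditions checked on a step of the transitive-closure chain straddling the intermediate element), but both halves have a genuine gap. For the first assertion, the rigidity of~$(\clubsuit)$ on an added pair~$(a',c')$ is not enough: $(\clubsuit)$ also allows the intermediate~$b$ to be incomparable in~$\less$ to \emph{both}~$a'$ and~$c'$, in which case it yields no relation at all and the $\WOIP$ condition for~$\bless$ is not obtained. The paper closes exactly this case by a contradiction argument: if~$a' \not\bless b \not\bless c'$, then the pairs~$(a',b)$ and~$(b,c')$ are incomparable in~$\bless$, hence incomparable in~$\less$ and \emph{not} among the added pairs, hence they satisfy~$(\spadesuit)$ in~$\less$; the $(\spadesuit)$-witness~$d$ is then played against~${\less} \in \IPIP \cap \IWOIP(n) \cap \DWOIP(n)$ and against the failure of~$(\spadesuit)$ for~$(a',c')$. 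Your sketch never invokes~$(\spadesuit)$ in this part, so this case is unaccounted for.

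For the second assertion, the step ``the nontrivial point is $a \not\bless b$'' is not merely nontrivial --- it is false for an arbitrarily fixed $(\spadesuit)$-witness~$b$. Take~$\orientation = (5, \{3\}, \{4\})$ and~${\less} = \{(5,4)\}$ (plus reflexivity): the pair~$(1,5)$ is incomparable in~$\bless$ and has the $(\spadesuit)$-witness~$b = 3 \in \orientation\positive$ in~$\less$, yet the added pairs~$(1,2), (2,3), (1,3), (3,4)$ force~$1 \bless 3$, so this witness dies in~$\bless$; the conclusion survives only because the \emph{other} witness~$4 \in \orientation\negative$ still works. Hence no contradiction can be propagated from a chain~$a \bless b$, and your plan stalls. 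The paper instead allows~$a \bless b$ to happen and runs a double-extremality argument: take a counterexample~$(a,c)$ with~$c-a$ minimal, then a witness~$b$ with~$a \bless b \not\bmore c$ maximal, deduce that~$(b,c)$ is incomparable in~$\bless$ and satisfies~$(\spadesuit)$ by minimality, and derive a contradiction from its witness. This mechanism --- switching to a new witness rather than preserving the old one --- is the heart of the proof and is absent from your proposal.
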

\vspace{-.1cm}

\noindent
Combining Claims~\ref{claim:PEP} and~\ref{claim:decontraction}, we obtain that there exists a permutree~$\tree$ such that~${\bless} = {\less_{\tree}}$. Intuitively, $\tree$ is the minimal permutree in the face that will correspond to~$\less$. To find the Schr\"oder permutree of this face, we thus just need to contract some edges in~$\tree$. We therefore consider the Schr\"oder permutree~$\tree[S]$ obtained from~$\tree$ by contracting all edges that appear in the Hasse diagram of~$\bless$ but are not in~$\less$.

\vspace{-.1cm}
\begin{claim}
\label{claim:contraction}
We have ${\less} = {\less_{\tree[S]}}$, so that~${\less} \in \PFP$.
\end{claim}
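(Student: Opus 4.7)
The plan is to verify that $\tree[S]$ is a well-defined Schr\"oder $\orientation$-permutree and then to establish the two inclusions ${\less} \subseteq {\less_{\tree[S]}}$ and ${\less_{\tree[S]}} \subseteq {\less}$ separately. The construction together with Claims~\ref{claim:PEP}, \ref{claim:decontractionCompare} and~\ref{claim:decontraction} already tells us that $\bless = \less_{\tree}$ for the permutree $\tree$ and that the contracted edges are exactly those edges~$(u,v)$ of $\tree$ with $u \less_{\tree} v$ but $u \not\less v$; by Claim~\ref{claim:decontractionCompare} these are all increasing edges, and they encode the $\bless \setminus \less$ relations coming from $\less$-incomparable pairs that fail~$(\spadesuit)$ and hence satisfy~$(\clubsuit)$.

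First I would check that $\tree[S]$ is a valid Schr\"oder permutree in the sense of Definition~\ref{def:SchroderPermutree}. Each super-vertex of $\tree[S]$ is obtained by merging the endpoints of a connected set of contracted edges of $\tree$. The elements of such a block~$X$ are pairwise $\less$-incomparable and pairwise related by $(\clubsuit)$; the alternation condition $(\clubsuit)$ forces that no element of $\orientation\positive \cup \orientation\negative$ lies strictly inside the interval spanned by~$X$ in a way that would create a forbidden wall inside the merged vertex, which is precisely what condition~(ii) of Definition~\ref{def:SchroderPermutree} demands for the incoming and outgoing subtrees at the super-vertex~$p(v) = X$. This is the technical heart of the argument and the step I expect to be the most delicate: one must carefully translate $(\clubsuit)$ into the statement that the subtrees of $\tree$ hanging off a contracted block assemble, after contraction, into intervals of $[n] \ssm X\negative$ (for incoming) and of $[n] \ssm X\positive$ (for outgoing).

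For ${\less} \subseteq {\less_{\tree[S]}}$, I would argue as follows. If $u \less v$, then $u \bless v$, so there is an oriented path in $\tree$ from $u$ to $v$. Projecting this path through the contraction yields an oriented path in $\tree[S]$ from the super-vertex containing $u$ to the super-vertex containing $v$; these super-vertices are distinct since two elements in the same contracted block are $\less$-incomparable. Inspecting the explicit description of $\less_{\tree[S]}$ given before the proposition (transitive closure of the relations produced by each edge $v \to w$ together with the intervals $\bar I_v^w$ and $\bar J_v^w$), one then checks that $u$ and $v$ land in the correct segments to produce $u \less_{\tree[S]} v$.

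For the reverse inclusion ${\less_{\tree[S]}} \subseteq {\less}$, by the generating description of $\less_{\tree[S]}$ it suffices to show that for every edge $v \to w$ of $\tree[S]$ and every $i \in p(v) \cap \bar I_v^w$, $j \in p(w) \cap \bar J_v^w$, we already have $i \less j$. The edge $v \to w$ of $\tree[S]$ lifts to an un-contracted edge of $\tree$, hence to a relation already present in $\less$; to pass from that particular relation to arbitrary~$i \in p(v) \cap \bar I_v^w$ and $j \in p(w) \cap \bar J_v^w$ one uses the $(\clubsuit)$-uniformity established in Step~1, namely that all elements of a super-vertex $p(v)$ relate identically in $\less$ to any element lying outside the relevant wall. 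Combining the three steps gives ${\less} = {\less_{\tree[S]}}$ and therefore ${\less} \in \PFP$.
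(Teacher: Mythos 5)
Your two-inclusion skeleton matches the paper's overall structure, but at both of the points where the actual work lies you assert rather than prove, and at least one of the asserted intermediate facts is not available. For the inclusion ${\less} \subseteq {\less_{\tree[S]}}$, the danger is precisely that a relation $u \less_{\tree} v$ can be destroyed by the contraction: the paper's proof starts from the explicit observation that $\less_{\tree[S]}$ is obtained from $\less_{\tree}$ by deleting every pair $(a,d)$ for which some contracted edge $(b,c)$ with $a \le b < c \le d$, $b \in \{a\} \cup \orientation\negative$, $c \in \{d\} \cup \orientation\positive$ lies on the oriented path from $a$ to $d$ (plus the symmetric decreasing version). Saying that the projected path ``lands in the correct segments'' is exactly the statement to be proved; the paper proves it by showing that deleting a pair of $\less$ in this way would yield $a \less d$ while $b \not\less d$ (using $b \not\less c$ for the contracted edge together with the $\IPIPp$ and $\IPIPm$ conditions), contradicting ${\less} \in \PIP$. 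Nothing in your sketch supplies this step.

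For the reverse inclusion, your reduction to the generating relations of $\less_{\tree[S]}$ is reasonable in principle, but the ``$(\clubsuit)$-uniformity'' you invoke --- that all elements of a super-vertex relate identically in $\less$ to any element outside the relevant wall --- is established nowhere: your Step~1 only discusses well-definedness of $\tree[S]$ (which the paper gets for free from the \cite{PilaudPons} correspondence between faces and edge contractions, so it needs no argument here), and the uniformity is not a formal consequence of $(\clubsuit)$, since $(\clubsuit)$ only constrains the elements lying strictly \emph{between} the two incomparable values, not their common relations to elements outside that interval. In fact this uniformity is essentially equivalent to the claim you are trying to prove. The paper instead runs a minimal-counterexample argument on a pair $a < c$ with $a \less_{\tree[S]} c$ but $a \not\less c$, first noting ${\Dec{\less_{\tree[S]}}} \subseteq {\Dec{\less}}$, then splitting on whether $a,c$ satisfy $(\spadesuit)$ or $(\clubsuit)$ and, in the latter case, choosing an intermediate $b$ with $a \bless b \bless c$ and exploiting ${\less} \in \PIP$, the $\DWOIP$ condition, transitivity of $\bless$ and a maximality choice. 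Some such case analysis is unavoidable and is entirely missing from your proposal, so as it stands the argument has a genuine gap in both directions.
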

\vspace{-.1cm}

\noindent
The detailed proofs of Claims~\ref{claim:PEP} to~\ref{claim:contraction} are given in Appendix~\ref{subsec:appendixCharacterizationPFP}. This concludes the proof of Proposition~\ref{prop:characterizationPFP}.
\end{proof}

We now consider the weak order on~$\PFP$. Let us first recall from~\cite{PilaudPons} the definition of the Schr\"oder permutree lattice.

\begin{proposition}[\cite{PilaudPons}]
\label{prop:SchroderPermutrees}
Fix an orientation~$\orientation = (n, \orientation\positive, \orientation\negative)$ of~$[n]$.
\\[-.3cm]
\begin{enumerate}
\item Each Schr\"oder $\orientation$-permutree corresponds to a face of the permutreehedron~$\PT$, and thus to a cone of its normal fan. Moreover, the normal fan of the permutahedron~$\Perm$ refines that of the permutreehedron~$\PT$. This defines a surjection~$\surjection$ from the set of ordered partitions of~$[n]$ to the set of Schr\"oder permutrees of~$\SchroderPermutrees$, which sends an ordered partition~$\pi$ to the unique Schr\"oder permutree~$\tree[S]$ satisfying that the interior of the normal cone of the face of~$\PT$ corresponding to~$\tree[S]$ contains the interior of the normal cone of the face of~$\Perm$ corresponding~to~$\pi$.
\\[-.2cm]
\item The fibers of this surjection~$\surjection$ define a lattice congruence of the facial weak order discussed in Section~\ref{subsec:WOFP} (see~\cite{PilaudPons} for details). Therefore, the set of Schr\"oder permutrees~$\SchroderPermutrees$ is endowed with a lattice structure~$\wole$, called \defn{Schr\"oder permutree lattice}, defined by
\[
\tree[S] \wole \tree[S]' \iff \exists \; \pi, \pi' \text{ such that } \surjection(\pi) = \tree[S], \; \surjection(\pi') = \tree[S]' \text{ and } \pi \wole \pi' \\
\]
\item The contraction of an edge~$e = v \to w$ in a Schr\"oder permutree $\tree[S]$ is called \defn{increasing} if~${\max(p(v)) < \min(p(w))}$ and \defn{decreasing} if~$\max(p(w)) < \min(p(v))$. The Schr\"oder permutree lattice is the transitive closure of the relations~$\tree[S] \prec \tree[S]/e$ (resp.~$\tree[S]/e \prec \tree[S]$) for any Schr\"oder permutree~$\tree[S]$ and edge~$e \in \tree[S]$ defining an increasing (resp.~decreasing) contraction.
\end{enumerate}
\end{proposition}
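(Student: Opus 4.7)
The plan is to verify the three parts separately, leveraging the explicit polytopal construction of the permutreehedron~$\PT$ from~\cite{PilaudPons}.

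For Part~(1), the starting point is that~$\PT$ is realized in~\cite{PilaudPons} so that its normal fan is a coarsening of the braid fan (the normal fan of~$\Perm$), obtained by merging adjacent maximal cones across the walls of the braid arrangement according to the orientation~$\orientation$. The combinatorial recipe for which walls survive is exactly encoded by Schröder $\orientation$-permutrees (as witnessed by the labeling by subsets and the incoming/outgoing subtree structure of Definition~\ref{def:SchroderPermutree}), so the face lattice of~$\PT$ is naturally indexed by~$\SchroderPermutrees$. The refinement of the normal fan of~$\PT$ by that of~$\Perm$ is immediate from this construction, and it forces the existence of the surjection~$\surjection$ sending an ordered partition~$\pi$ to the unique~$\tree[S] \in \SchroderPermutrees$ whose face's normal cone contains the interior of the normal cone of the face of~$\Perm$ corresponding to~$\pi$. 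One can moreover describe~$\surjection$ combinatorially by an insertion algorithm extending the permutree insertion~$\surjection : \fS(n) \to \Permutrees$ of Proposition~\ref{prop:permutrees} to ordered partitions.

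For Part~(2), I would verify the defining properties of a lattice congruence for the fibers of~$\surjection$ inside the facial weak order discussed in Section~\ref{subsec:WOFP}. First, each fiber of~$\surjection$ consists of those ordered partitions whose faces in~$\Perm$ lie inside a single face of~$\PT$; thanks to the polytopal geometry, such sets are order-convex in the facial weak order. Second, the projections sending an ordered partition to the minimum (resp.\ maximum) of its fiber are order-preserving: this reduces to analyzing the cover relations recalled in Section~\ref{subsec:WOFP}, and checking that each such cover either stays within a fiber or descends to a well-defined cover in the quotient. These two properties imply that~$\SchroderPermutrees$ inherits a quotient lattice structure, namely the Schröder permutree lattice, whose order is exactly~${\tree[S] \wole \tree[S]' \iff \exists\, \pi \wole \pi' \text{ with } \surjection(\pi) = \tree[S],\, \surjection(\pi') = \tree[S]'}$.

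For Part~(3), I would translate facial weak order cover relations into edge contractions in Schröder permutrees. A cover relation~$\pi \prec \pi'$ in the facial weak order on ordered partitions corresponds (see~\cite{KrobLatapyNovelliPhanSchwer, DermenjianHohlwegPilaud} and Section~\ref{subsec:WOFP}) to merging two consecutive blocks~$\lambda_i, \lambda_{i+1}$ with~$\max(\lambda_i) < \min(\lambda_{i+1})$, or to the reverse split. Under the surjection~$\surjection$, merging such blocks becomes the contraction of the edge of the Schröder permutree joining the vertices indexed by~$\lambda_i$ and~$\lambda_{i+1}$; the condition~$\max(\lambda_i) < \min(\lambda_{i+1})$ is precisely the definition of an increasing contraction, and the opposite case yields a decreasing contraction. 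The main obstacle is showing that no cover relations are missed and none are spurious: one must prove that every cover relation of the Schröder permutree lattice actually descends from an edge contraction of this type, which amounts to matching the polytopal face-inclusion relations of~$\PT$ with increasing and decreasing edge contractions, and to ruling out other contractions via the fiber description of~$\surjection$.
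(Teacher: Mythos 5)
This proposition is stated in the paper with the citation~\cite{PilaudPons} and is simply \emph{recalled} there: the paper contains no proof of it, so there is no internal argument to compare yours against. Judged on its own, your proposal is a reasonable reconstruction of the strategy used in the cited reference (realize~$\PT$ so that its normal fan coarsens the braid fan, read off the surjection~$\surjection$ geometrically, check the lattice-congruence axioms for its fibers, and translate covers into edge contractions), but as written it is a plan with genuine gaps rather than a proof.

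Two gaps stand out. First, in Part~(1) you assert that the face lattice of~$\PT$ is ``naturally indexed'' by Schr\"oder $\orientation$-permutrees and that the refinement of normal fans is ``immediate from this construction''; that is precisely the content of the statement, so without actually exhibiting the facet description of~$\PT$ and verifying which braid cones merge, the argument is circular. Second, and more seriously, in Part~(2) you claim that order-convexity of the fibers of~$\surjection$ in the facial weak order follows ``thanks to the polytopal geometry.'' The facial weak order is \emph{not} the face-inclusion order of~$\Perm$ (it is the pseudo-permutahedron order of Section~\ref{subsec:WOFP}, defined via the minimal and maximal permutations of each face), so containment of normal cones of~$\Perm$ in a single cone of~$\PT$ does not by itself make a fiber an interval of that order; one genuinely needs the congruence machinery (descending the weak-order congruence of Proposition~\ref{prop:permutrees}\,(3) to the facial weak order) to establish both the interval property and the order-preservation of the up and down projections. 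You correctly flag the remaining obstacle in Part~(3) (that no cover relations of the quotient are missed or spurious) but do not resolve it. So the route is the right one, but each of the three parts still requires the substantive verification that is carried out in~\cite{PilaudPons}.
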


Now we establish the relationship between the permutree order on~$\Permutrees$ and the weak order on~$\PEP$.

\begin{proposition}
\label{prop:weakOrderPFP}
For any Schr\"oder permutrees~$\tree[S], \tree[S]' \in \SchroderPermutrees$, we have~$\tree[S] \wole \tree[S]'$ in the Schr\"oder permutree lattice if and only if~${\less_{\tree[S]}} \wole {\less_{\tree[S]'}}$ in the weak order on posets.
\end{proposition}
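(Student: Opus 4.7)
The plan is to reduce to Proposition~\ref{prop:weakOrderPIP} via the inclusion $\PFP \subseteq \PIP$. By construction, ${\less_{\tree[S]}} = {\less_{[\tree^{\min}(\tree[S]), \tree^{\max}(\tree[S])]}}$ for every Schr\"oder permutree~$\tree[S]$, so every face poset is a permutree interval poset. Applying Proposition~\ref{prop:weakOrderPIP} directly yields
\[
{\less_{\tree[S]}} \wole {\less_{\tree[S]'}} \iff \tree^{\min}(\tree[S]) \wole \tree^{\min}(\tree[S]') \text{ and } \tree^{\max}(\tree[S]) \wole \tree^{\max}(\tree[S]'),
\]
where both inequalities on the right are taken in the $\orientation$-permutree lattice. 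It therefore suffices to show that this componentwise condition on the two bounds of the associated faces of~$\PT$ coincides with the defining relation of the Schr\"oder permutree lattice on~$\SchroderPermutrees$.

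For the forward implication, if $\tree[S] \wole \tree[S]'$ in the Schr\"oder permutree lattice, Proposition~\ref{prop:SchroderPermutrees}(2) provides ordered partitions~$\pi, \pi'$ with $\surjection(\pi) = \tree[S]$, $\surjection(\pi') = \tree[S]'$, and~$\pi \wole \pi'$ in the facial weak order on~$\WOFP(n)$. Because $\WOFP(n) \subseteq \WOIP(n)$, the characterization recalled in Section~\ref{subsec:WOFP} gives ${\minle{\less_\pi}} \wole {\minle{\less_{\pi'}}}$ and ${\maxle{\less_\pi}} \wole {\maxle{\less_{\pi'}}}$ as permutations in~$\fS(n)$. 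Applying the permutation-level surjection of Proposition~\ref{prop:permutrees}(2), which is order-preserving by Proposition~\ref{prop:permutrees}(3), and invoking the compatibility identifications that send $\minle{\less_\pi}$ to $\tree^{\min}(\tree[S])$ and $\maxle{\less_\pi}$ to $\tree^{\max}(\tree[S])$ (and analogously for~$\pi'$), I obtain the required componentwise inequalities in the $\orientation$-permutree lattice. Conversely, given the componentwise inequalities, I use Proposition~\ref{prop:permutrees}(3) to lift them to permutations $\mu_{\min} \wole \mu'_{\min}$ and $\mu_{\max} \wole \mu'_{\max}$ lying in the appropriate permutree fibers, and then build ordered partitions~$\pi, \pi'$ whose minimal and maximal linear extensions are exactly these representatives and such that $\surjection(\pi) = \tree[S]$, $\surjection(\pi') = \tree[S]'$: the blocks of~$\pi$ are just the vertex labels of~$\tree[S]$ (which are subsets of~$[n]$) ordered compatibly with~$\less_{\tree[S]}$. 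The componentwise conditions then give $\pi \wole \pi'$ in the facial weak order on~$\WOFP(n)$, and projecting via~$\surjection$ yields $\tree[S] \wole \tree[S]'$.

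The main obstacle is the compatibility identification between the two surjections~$\surjection$ at the levels of permutations and of ordered partitions: namely, applying the permutation-level~$\surjection$ to the minimum (resp.~maximum) linear extension of an ordered partition~$\pi$ must return~$\tree^{\min}(\surjection(\pi))$ (resp.~$\tree^{\max}(\surjection(\pi))$), where this outer~$\surjection$ is the partition-level one. Geometrically this reflects the fact that the normal fan of~$\Perm$ refines that of~$\PT$, so extremal vertices of a face of~$\Perm$ project to the extremal vertices of the corresponding face of~$\PT$. Concretely it is verified by matching the left- and right-$\orientation$-comb description of~$\tree^{\min}(\tree[S])$ and~$\tree^{\max}(\tree[S])$ given before Proposition~\ref{prop:characterizationPFP} with the $\orientation$-comb description of the minimum and maximum permutrees from Proposition~\ref{prop:permutrees}(5), combined with the insertion algorithm for~$\surjection$ detailed in~\cite{PilaudPons}.
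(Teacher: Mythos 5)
Your opening reduction is sound and coincides with the last link of the paper's chain: since ${\less_{\tree[S]}} = {\less_{[\tree^{\min}(\tree[S]), \tree^{\max}(\tree[S])]}}$, Proposition~\ref{prop:weakOrderPIP} converts ${\less_{\tree[S]}} \wole {\less_{\tree[S]'}}$ into the componentwise comparison of $\tree^{\min}$ and $\tree^{\max}$. The gap is in connecting that condition to the Schr\"oder permutree lattice. In the forward direction everything rests on the identity $\surjection(\minle{\less_\pi}) = \tree^{\min}(\surjection(\pi))$ and $\surjection(\maxle{\less_\pi}) = \tree^{\max}(\surjection(\pi))$ for an \emph{arbitrary} ordered partition $\pi$ in the fiber of~$\tree[S]$; you correctly identify this as ``the main obstacle'' but then only gesture at comb descriptions and the insertion algorithm. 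This identity is the actual mathematical content of the proposition, and it is not routine: for a general $\pi$ in the fiber, $\maxle{\less_\pi}$ is \emph{not} the maximal linear extension $\sigma^{\max}(\tree[S])$ of~$\less_{\tree[S]}$ (already for $n=4$ and the Tamari orientation one finds fibers $\{1|3|24,\, 13|24,\, 3|1|24\}$ where $\maxle{\less_{3|1|24}} = [3,1,4,2]$ while $\sigma^{\max}(\tree[S]) = [3,4,1,2]$), so one must genuinely prove that it nevertheless lies in the permutree fiber of $\tree^{\max}(\tree[S])$. The paper avoids quantifying over arbitrary representatives by pinning the three incarnations of $\tree[S]$ (as an interval of permutrees, of ordered partitions, and of permutations) to the two extremal permutations $\sigma^{\min}(\tree[S])$ and $\sigma^{\max}(\tree[S])$, so that all the componentwise conditions become the same pair of inequalities in~$\fS(n)$.

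The backward direction is where the argument actually breaks. You propose to ``build ordered partitions $\pi,\pi'$ whose minimal and maximal linear extensions are exactly'' the lifts $\mu_{\min}, \mu_{\max}$ supplied by Proposition~\ref{prop:permutrees}(3). But those two permutations already determine the only candidate poset, namely ${\less_{\mu_{\min}}} \cap {\less_{\mu_{\max}}}$, and for arbitrary representatives of the fibers of $\tree^{\min}(\tree[S])$ and $\tree^{\max}(\tree[S])$ this need not be a $\WOFP(n)$ poset at all, let alone one with blocks equal to the vertex labels of~$\tree[S]$ and with $\surjection(\pi) = \tree[S]$; you cannot prescribe both extremal linear extensions independently of the block construction. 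If instead you fix $\pi$ to be the vertex labels of $\tree[S]$ in some compatible total order (a choice that must itself be made and justified when the block poset of $\tree[S]$ is not a chain, and which does not exhaust the fiber, as $13|24$ above shows), the needed comparison $\pi \wole \pi'$ in the facial weak order still does not follow: by Proposition~\ref{prop:weakOrderWOIP} it requires $\minle{\less_\pi} \wole \minle{\less_{\pi'}}$ and $\maxle{\less_\pi} \wole \maxle{\less_{\pi'}}$ as permutations, whereas the permutree inequalities $\tree \wole \tree'$ only give comparability of the \emph{extremal} linear extensions of the trees, not of arbitrary elements of their fibers. To repair this you would need exactly the unproved compatibility statement from the first paragraph, applied to the extremal ordered partitions $\pi^{\min}(\tree[S])$ and $\pi^{\max}(\tree[S])$ of the congruence class $\surjection^{-1}(\tree[S])$, which is in substance what the paper's proof does.
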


\begin{proof}
We can identify the Schr\"oder $\orientation$-permutree~$\tree[S]$ with:
\begin{enumerate}[(i)]
\item the interval~$[\tree^{\min}(\tree[S]), \tree^{\max}(\tree[S])]$ of $\orientation$-permutrees that belong to the face of~$\PT$ given by~$\tree[S]$,
\item the interval~$[\pi^{\min}(\tree[S]), \pi^{\max}(\tree[S])]$ of ordered partitions~$\pi$ such that the interior of the normal cone of~$\Perm$ corresponding to~$\pi$ is included in the interior of the normal cone of~$\PT$ corresponding~to~$\tree[S]$,
\item the interval~$[\sigma^{\min}(\tree[S]), \sigma^{\max}(\tree[S])]$ of~$\fS(n)$ between the minimal and maximal extensions of~$\less_{\tree[S]}$.
\end{enumerate}
It is immediate to check that~$\sigma^{\min}(\tree[S])$ is the minimal linear extension of~$\tree^{\min}(\tree[S])$ and of~$\pi^{\min}(\tree[S])$ and that~$\sigma^{\max}(\tree[S])$ is the maximal linear extension of~$\tree^{\max}(\tree[S])$ and of~$\pi^{\max}(\tree[S])$. We conclude that
\begin{align*}
\tree[S] \wole \tree[S]'
& \iff \pi^{\min}(\tree[S]) \wole \pi^{\min}(\tree[S]') \text{ and } \pi^{\max}(\tree[S]) \wole \pi^{\max}(\tree[S]') \\
& \iff \sigma^{\min}(\tree[S]) \wole \sigma^{\min}(\tree[S]') \text{ and } \sigma^{\max}(\tree[S]) \wole \sigma^{\max}(\tree[S]') \\
& \iff \tree^{\min}(\tree[S]) \wole \tree^{\min}(\tree[S]') \text{ and } \tree^{\max}(\tree[S]) \wole \tree^{\max}(\tree[S]')
\iff {\less_{\tree[S]}} \wole {\less_{\tree[S]'}}.
\end{align*}
The first line holds by definition of the Schr\"oder permutree lattice (as~$\surjection^{-1}(\tree[S]) = [\pi^{\min}(\tree[S]), \pi^{\max}(\tree[S])]$), while the last holds by definition of the weak order on~$\PFP$ (as~${\less_{\tree[S]}} = {\less_{[\tree^{\min}(\tree[S]), \tree^{\max}(\tree[S])]}}$).
\end{proof}

\begin{remark}
\label{rem:PFPnotSublattice}
Although the weak order on~$\PFP$ is a lattice, the example of Remark~\ref{rem:WOFPnotSublattice} shows that it is not a sublattice of~$(\IPos(n), \wole, \meetT, \joinT)$, nor a sublattice of~$(\WOIP(n), \wole, \meetWOIP, \joinWOIP)$, nor a sublattice of~$(\PIP, \wole, \meetPIP, \joinPIP)$. We will discuss an alternative description of the meet and join in~$\PFP$ in Section~\ref{subsec:facesSublattices}.
\end{remark}


\subsection{$\PIP$ deletion}
\label{subsec:PIPdeletion}

Similar to the projection maps of Sections~\ref{subsec:IWOIPDWOIP} and~\ref{subsec:TOIPdeletion}, we define the \defn{$\IPIPp$} (resp.~\defn{$\IPIPm$}, \defn{$\IPIPpm$}, \defn{$\IPIP$}) \defn{increasing deletion} by
\[
\begin{array}{l@{\;}c@{\;}l}
{\IPIPpid{\less}} & \eqdef & {\less} \ssm \set{(a,c)}{\exists \; a < b < c, \; b \in \orientation\positive \text{ and } a \not\less b}, \\[.1cm]
{\IPIPmid{\less}} & \eqdef & {\less} \ssm \set{(a,c)}{\exists \; a < b < c, \; b \in \orientation\negative \text{ and } b \not\less c}, \\[.1cm]
{\IPIPpmid{\less}} & \eqdef & {\less} \ssm \set{(a,c)}{\exists \; a \le n < p \le c \text{ with } n \in \{a\} \cup \orientation\negative \text{ while } p \in \{c\} \cup \orientation\positive \text{ and } n \not\less p}, \\[.1cm]
{\IPIPid{\less}} & \eqdef & {\IPIPpmid{(\IWOIPid{\less})}},
\end{array}
\]
and similarly the \defn{$\DPIPp$} (resp.~\defn{$\DPIPm$}, \defn{$\DPIPpm$}, \defn{$\DPIP$}) \defn{decreasing deletion} by
\[
\begin{array}{l@{\;}c@{\;}l}
{\DPIPpdd{\less}} & \eqdef & {\less} \ssm \set{(c,a)}{\exists \; a < b < c, \; b \in \orientation\positive \text{ and } b \not\more c}, \\[.1cm]
{\DPIPmdd{\less}} & \eqdef & {\less} \ssm \set{(c,a)}{\exists \; a < b < c, \; b \in \orientation\negative \text{ and } a \not\more b}, \\[.1cm]
{\DPIPpmdd{\less}} & \eqdef & {\less} \ssm \set{(c,a)}{\exists \; a \le p < n \le c \text{ with } p \in \{a\} \cup \orientation\positive \text{ while } n \in \{c\} \cup \orientation\negative \text{ and } p \not\more n}, \\[.1cm]
{\DPIPdd{\less}} & \eqdef & {\DPIPpmdd{(\DWOIPdd{\less})}}.
\end{array}
\]
These operations are illustrated on \fref{fig:IPIPpmid/DPIPpmdd/PIPd}.

\begin{figure}[h]
	\vspace{-.5cm}
	\centerline{
	\begin{tabular}{c@{\quad}c@{\quad}c}
		\multirow{ 2}{*}{${\less} = \raisebox{-1.55cm}{\scalebox{0.8}{\input{relations/poset10}}}$} &
		${\IPIPpmid{\less}} = \raisebox{-1.55cm}{\scalebox{0.8}{\input{relations/poset11}}}$ &
		\multirow{ 2}{*}{${\PIPd{\less}} = \raisebox{-1.55cm}{\scalebox{0.8}{\input{relations/posetPIPlarge}}}$} \\
		& ${\DPIPpmdd{\less}} = \raisebox{-1.55cm}{\scalebox{0.8}{\input{relations/poset12}}}$
	\end{tabular}
}
	\vspace{-.5cm}
	\caption{The $\IPIPpm$ increasing deletion, the $\DPIPpm$ decreasing deletion, and the $\PIP$ deletion.}
	\label{fig:IPIPpmid/DPIPpmdd/PIPd}
	\vspace{-.4cm}
\end{figure}

\begin{remark}
\label{rem:IPIPplidDPIPpmdd}
Similar to Remarks~\ref{rem:tdd} and~\ref{rem:IWOIPidDWOIPdd}, for any~$\varepsilon \in \{\varnothing, -, +, \pm\}$, the $\IPIPe$ increasing deletion (resp.~$\DPIPe$ decreasing deletion) deletes at once all increasing relations which prevent the poset to be in~$\IPIPe$ (resp.~in~$\DPIPe$). Note that we have
\[
{\IPIPpid{\less}} = {\IPIPpmid{\less}[\orientation\positive, \varnothing]} \subseteq {\IPIPpmid{\less}}
\qquad\text{and}\qquad
{\IPIPmid{\less}} = {\IPIPpmid{\less}[\varnothing, \orientation\negative]} \subseteq {\IPIPpmid{\less}}.
\]
However, we do not necessarily have~${\IPIPpmid{\less}} = {\IPIPpid{\less}} \cap {\IPIPmid{\less}}$. Consider for example the poset~${\less} \eqdef \{(1,3), (2,4), (1,4)\}$ and the orientation~$(4, \{3\},\{2\})$. Then~${{\IPIPpid{\less}} = \{(2,4), (1,4)\}}$, ${\IPIPmid{\less}} = \{(1,3), (1,4)\}$ so that~${\IPIPpid{\less}} \cap {\IPIPmid{\less}} = \{(1,4)\} \ne \varnothing = {\IPIPpmid{\less}}$. In other words, we might have to iterate several times the maps~${{\less} \mapsto {\IPIPpid{\less}}}$ and~${{\less} \mapsto {\IPIPmid{\less}}}$ to obtain the map~${\less} \mapsto {\IPIPpmid{\less}}$. This explains the slightly more intricate definition of the map~${{\less} \mapsto {\IPIPpmid{\less}}}$. The same remark holds for the map~${\less} \mapsto {\DPIPpmdd{\less}}$.
\end{remark}

\begin{lemma}
\label{lem:IPIPidDPIPdd1}
For any poset~${\less} \in \IPos(n)$ and any~$\varepsilon \in \{\varnothing, -, +, \pm\}$, we have~${\IPIPeid{\less}} \in \IPIPe$ and ${\DPIPedd{\less}} \in \DPIPe$.
\end{lemma}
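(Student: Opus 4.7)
The plan is to verify the statement by a direct analysis of the deletion rules, after noting that the $\DPIPedd$ statement follows from the $\IPIPeid$ statement by applying reversal (Remark~\ref{remark:reverse}, which exchanges $\orientation\positive$ and $\orientation\negative$ and swaps increasing with decreasing parts). For each $\varepsilon \in \{+, -, \pm, \varnothing\}$ one must check two things: that $\IPIPeid{\less}$ is still a poset (transitivity is the only non-trivial axiom since we are deleting increasing relations), and that it satisfies the defining condition of $\IPIPe$. The transitivity check in each case follows the pattern of Claim~\ref{claim:IWOIPidPoset}: if $u \IPIPeid{\less} v \IPIPeid{\less} w$, then $u \less w$ by transitivity of $\less$; assuming $(u,w)$ were deleted, one case-analyzes the position of a hypothetical witness relative to $u, v, w$ and transports it into a witness for the removal of either $(u,v)$ or $(v,w)$.

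For $\varepsilon = +$, the key observation is that any witness $a < b' < b$ with $b' \in \orientation\positive$ and $a \not\less b'$ that would remove $(a,b)$ is automatically a witness for removing $(a,c)$, since $a < b' < b < c$. So if $(a,c)$ survives the deletion, the deletion rule applied to $(a,c)$ first supplies $a \less b$ for every $a<b<c$ with $b \in \orientation\positive$, and then the preceding observation forces $(a,b)$ also to survive, giving $a \IPIPpid{\less} b$ as required. The case $\varepsilon = -$ is handled symmetrically, using $(b,c)$ instead of $(a,b)$ and $\orientation\negative$ instead of $\orientation\positive$.

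For $\varepsilon = \pm$, the same scheme applies with the joint deletion rule: if $(a,c) \in \IPIPpmid{\less}$ and $a < b < c$ with $b \in \orientation\positive$, then taking $(n,p) = (a,b)$ in the deletion rule for $(a,c)$ yields $a \less b$, and any witness $(n', p')$ for removing $(a,b)$ must have $p' \in \{b\} \cup \orientation\positive = \orientation\positive$ (since $b \in \orientation\positive$), so that $a \le n' < p' \le b < c$ with $n' \in \{a\} \cup \orientation\negative$ and $p' \in \{c\} \cup \orientation\positive$ witnesses removal of $(a,c)$, a contradiction. The symmetric case $b \in \orientation\negative$ uses $(b,c)$ and the fact that $n' \in \{b\} \cup \orientation\negative = \orientation\negative$.

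For $\varepsilon = \varnothing$, we unfold the definition $\IPIPid{\less} = \IPIPpmid{(\IWOIPid{\less})}$. Lemma~\ref{lem:IWOIPidDWOIPdd1} gives $\IWOIPid{\less} \in \IWOIP(n)$, and the $\pm$ case just established then shows $\IPIPpmid{(\IWOIPid{\less})} \in \IPIPpm$. In view of Proposition~\ref{prop:characterizationIPIPDPIP}, what remains to be checked is that $\IPIPpmid$ preserves membership in $\IWOIP(n)$, i.e.\ that for $\less' \in \IWOIP(n)$, $(a,c) \in \IPIPpmid{\less'}$, and $a < b < c$, at least one of $(a,b), (b,c)$ survives the $\IPIPpmid$ deletion. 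This is the main obstacle and will be handled by the following argument. By $\IWOIP$ of $\less'$, without loss of generality $a \less' b$. If any witness $(n', p')$ for removing $(a,b)$ has $p' \in \orientation\positive$, then $a \le n' < p' \le b < c$ directly witnesses removal of $(a,c)$, a contradiction; otherwise $p' = b \notin \orientation\positive$, forcing $n' \in \orientation\negative$ (else $n' = a$ contradicts $a \less' b$) with $n' \not\less' b$. Then $n' \less' c$ (else $(n',c)$ witnesses removal of $(a,c)$), which combined with $\IWOIP$ of $\less'$ applied to $(n',c)$ with intermediate $b$ yields $b \less' c$. One finally shows by the same transport argument that no witness $(n'', p'')$ for removing $(b,c)$ can exist: either $n'' \in \orientation\negative$ and $(n'',p'')$ directly witnesses removal of $(a,c)$, or $n''=b$ and $p''=c$ (contradicting $b \less' c$), or $p'' \in \orientation\positive$ with $p'' < c$, in which case $n' \not\less' p''$ (else $\IWOIP$ on $\less'$ contradicts $n' \not\less' b$ and $b \not\less' p''$), so that $(n', p'')$ witnesses removal of $(a,c)$, the desired contradiction.
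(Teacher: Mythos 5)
Your proposal is correct and takes essentially the same route as the paper's proof (Claims~\ref{claim:IPIPpmidPoset}, \ref{claim:IPIPpmidIPIPpm} and~\ref{claim:IPIPidIPIP} in Appendix~\ref{subsec:appendixPIPdeletion}): reduce the~$\DPIPedd{\less}$ half to the~$\IPIPeid{\less}$ half by symmetry, prove posetness by transporting a deletion witness for~$(u,w)$ onto~$(u,v)$ or~$(v,w)$, prove the~$\IPIPe$ condition by transporting a witness for~$(a,b)$ onto~$(a,c)$, and for~$\varepsilon = \varnothing$ compose with~$\IWOIPid{\less}$ and check that~${\less} \mapsto {\IPIPpmid{\less}}$ preserves~$\IWOIP(n)$. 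The only differences are cosmetic: the paper obtains~$\varepsilon \in \{+,-\}$ as the specializations~${\IPIPpmid{\less}[\orientation\positive, \varnothing]}$ and~${\IPIPpmid{\less}[\varnothing, \orientation\negative]}$ of the~$\pm$ case rather than redoing them, and in the~$\IWOIP(n)$-preservation step it argues contrapositively by merging the two witnesses for~$(a,b)$ and~$(b,c)$ (reducing to~$p = b = n$ and chaining~$m \not\bless b \not\bless q$ through the~$\IWOIP$ condition) where you reach the same contradiction through a longer but valid case analysis.
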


\begin{proof}
We split the proof into three technical claims whose proofs are given in Appendix~\ref{subsec:appendixPIPdeletion}.

\vspace{-.1cm}
\begin{claim}
\label{claim:IPIPpmidPoset}
$\IPIPpmid{\less}$ is a poset.
\end{claim}

\vspace{-.2cm}
\begin{claim}
\label{claim:IPIPpmidIPIPpm}
$\IPIPpmid{\less}$ is in~$\IPIPpm$.
\end{claim}
\vspace{-.1cm}

\noindent
This proves the result for~$\IPIPpmid{\less}$. Note that it already contains the result for~$\IPIPpid{\less}$, since~${\IPIPpid{\less}} = {\IPIPpmid{\less}[\orientation\positive, \varnothing]} \in \IPIPpm[\orientation\positive, \varnothing] = \IPIPp$, and similarly for~$\IPIPmid{\less}$. 

\vspace{-.05cm}
\begin{claim}
\label{claim:IPIPidIPIP}
${\IPIPid{\less}}$ is in~$\IPIP$.
\end{claim}
\vspace{-.05cm}

\noindent
Finally, the result for~$\DPIPedd{\less}$ with~$\varepsilon \in \{\varnothing, -, +, \pm\}$ follows by symmetry.
\end{proof}

\begin{lemma}
\label{lem:IPIPidDPIPdd2}
For any poset~${\less} \in \IPos(n)$ and any~$\varepsilon \in \{\varnothing, -, +, \pm\}$, the poset~$\IPIPeid{\less}$ (resp.~$\DPIPedd{\less}$) is the weak order minimal (resp.~maximal) poset in~$\IPIPe$ bigger than~$\less$ (resp.~in~$\DPIPe$ smaller than~$\less$).
\end{lemma}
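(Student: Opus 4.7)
The plan is to adapt the strategy used for Lemma~\ref{lem:IWOIPidDWOIPdd2}. By the reversal symmetry of Remark~\ref{remark:reverse} (which swaps $\orientation\positive$ and $\orientation\negative$, increasing and decreasing subrelations, and the weak order direction), I may restrict attention to the increasing case $\IPIPeid{\less}$ for $\varepsilon \in \{+, -, \pm, \varnothing\}$; the statement for $\DPIPedd{\less}$ will then follow by applying the increasing result to $\rev{\less}$. The inequality $\less \wole \IPIPeid{\less}$ is immediate, since $\IPIPeid{\cdot}$ only removes increasing relations, so $\Inc{(\IPIPeid{\less})} \subseteq \Inc{\less}$ while $\Dec{(\IPIPeid{\less})} = \Dec{\less}$.

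For the minimality, I fix $\bless \in \IPIPe$ with $\less \wole \bless$, so that $\Inc{\bless} \subseteq \Inc{\less}$ and $\Dec{\bless} \supseteq \Dec{\less} = \Dec{(\IPIPeid{\less})}$. The decreasing parts are already aligned, so to conclude $\IPIPeid{\less} \wole \bless$ it suffices to prove the inclusion $\Inc{\bless} \subseteq \Inc{(\IPIPeid{\less})}$. The unified idea is: pick $(a,c) \in \Inc{\bless}$; if it had been removed by $\IPIPeid{\cdot}$, extract the deletion witness, apply the $\IPIPe$ axiom to $\bless$ to produce a pair $(n,p)$ with $a \le n < p \le c$ and $n \bless p$, and then conclude $(n,p) \in \Inc{\less}$ from $\Inc{\bless} \subseteq \Inc{\less}$, contradicting the very fact that $n \not\less p$ was the reason for the deletion.

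Concretely, for $\varepsilon = +$ the witness is $a < b < c$ with $b \in \orientation\positive$ and $a \not\less b$; from $a \bless c$ and $\bless \in \IPIPp$ I obtain $a \bless b$, whence $a \less b$, a contradiction. The case $\varepsilon = -$ is symmetric. For $\varepsilon = \pm$, the witness is $a \le n < p \le c$ with $n \in \{a\} \cup \orientation\negative$, $p \in \{c\} \cup \orientation\positive$, and $n \not\less p$; a short four-case split on whether $n = a$ and whether $p = c$ uses at most one application of the $\IPIPm$ axiom on $\bless$ (to pass from $a \bless c$ to $n \bless c$) and at most one application of the $\IPIPp$ axiom (to pass from $n \bless c$ to $n \bless p$), yielding $n \bless p$ and the same contradiction. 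Finally, for $\varepsilon = \varnothing$ I combine the above with Lemma~\ref{lem:IWOIPidDWOIPdd2}: since $\bless \in \IPIP = \IWOIP(n) \cap \IPIPpm$ with $\less \wole \bless$, that lemma gives $\IWOIPid{\less} \wole \bless$, and then the $\pm$ case applied to $\IWOIPid{\less} \wole \bless$ (legitimate because $\bless \in \IPIPpm$) yields $\IPIPid{\less} = \IPIPpmid{(\IWOIPid{\less})} \wole \bless$.

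The key point that makes the argument work, and the main subtlety I expect, is that the composite definition of $\IPIPpmid{\cdot}$ via a single pair $(n,p)$ built from $\{a\} \cup \orientation\negative$ and $\{c\} \cup \orientation\positive$ is tailored precisely so that \emph{one} application of each of $\IPIPm$ and $\IPIPp$ to $\bless$ is enough. This is what lets the $\pm$ case go through in a single pass, bypassing the need to iterate single-step deletions or to run a minimality argument on $c - a$ --- which is exactly the iteration issue flagged in Remark~\ref{rem:IPIPplidDPIPpmdd} and the reason the definition of $\IPIPpmid{\cdot}$ was set up the way it is.
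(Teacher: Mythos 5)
Your proposal is correct and follows essentially the same route as the paper: both establish $\less \wole \IPIPeid{\less}$ trivially, reduce minimality to the inclusion $\Inc{\bless} \subseteq \Inc{(\IPIPeid{\less})}$, handle $\varepsilon \in \{+,-\}$ directly, chain one application each of the $\IPIPm$ and $\IPIPp$ axioms on $\bless$ through the witness pair $(n,p)$ for $\varepsilon = \pm$ (you phrase it as a contradiction, the paper as a contrapositive), and obtain $\varepsilon = \varnothing$ by composing with Lemma~\ref{lem:IWOIPidDWOIPdd2}. No gaps.
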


\begin{proof}
We prove the result for~$\IPIPeid{\less}$, the proof for~$\DPIPedd{\less}$ being symmetric. Observe that~${\less} \wole {\IPIPeid{\less}}$ since~$\IPIPeid{\less}$ is obtained from~$\less$ by deleting increasing relations. Consider now~${\bless} \in \IPIPe$ such that~${\less} \wole {\bless}$. The following claim is proved in Appendix~\ref{subsec:appendixPIPdeletion}.

\begin{claim}
\label{claim:IPIPidDPIPdd2}
${\Inc{\bless}} \subseteq {\Inc{(\IPIPeid{\less})}}$.
\end{claim}
\vspace{-.1cm}

\noindent
This conclude the proof since~${\Inc{(\IPIPeid{\less})}} \supseteq {\Inc{\bless}}$ and~${{\Dec{\bless}} \subseteq {\Dec{\less}} = {\Dec{(\IPIPeid{\less})}}}$ implies that~${\IPIPeid{\less}} \wole {\bless}$.
\end{proof}

Consider now the \defn{$\PIP$ deletion} defined by
\[
{\PIPd{\less}} \eqdef {\IPIPid{(\DPIPdd{\less})}} = {\DPIPdd{(\IPIPid{\less})}}.
\]
See \fref{fig:IPIPpmid/DPIPpmdd/PIPd}.
It follows from Lemma~\ref{lem:IPIPidDPIPdd1} that~${\PIPd{\less}} \in \PIP$ for any poset~${\less} \in \IPos(n)$. We now compare this map with the permutree insertion~$\surjection$ defined in Proposition~\ref{prop:permutrees}.

\begin{proposition}
\label{prop:PIPd/bst}
For any permutation~$\sigma \in \fS(n)$, we have~${\PIPd{\less_\sigma}} = {\less_{\surjection(\sigma)}}$.
\end{proposition}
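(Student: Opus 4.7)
The plan is to set $\tree \eqdef \surjection(\sigma)$, $\tau^{\min} \eqdef \sigma^{\min}(\tree)$, and $\tau^{\max} \eqdef \sigma^{\max}(\tree)$. By Proposition~\ref{prop:permutrees}, we have $\tau^{\min} \wole \sigma \wole \tau^{\max}$, and both $\tau^{\min}$ and $\tau^{\max}$ are linear extensions of~$\less_\tree$; the pattern avoidance statements combined with Proposition~\ref{prop:characterizationIPIPDPIP} give $\less_{\tau^{\min}} \in \DPIP$ and $\less_{\tau^{\max}} \in \IPIP$. Intersecting over all linear extensions as in Proposition~\ref{prop:definitionWOIP} yields $\less_\tree = \Inc{\less_{\tau^{\max}}} \cup \Dec{\less_{\tau^{\min}}}$. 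Since $\DPIPdd{\less_\sigma}$ preserves $\Inc{\less_\sigma}$ and $\IPIPid$ preserves the $\Dec$ of its input, it suffices to establish the two equalities
\[
\Dec{\DPIPdd{\less_\sigma}} = \Dec{\less_{\tau^{\min}}}
\qquad\text{and}\qquad
\Inc{\IPIPid{(\DPIPdd{\less_\sigma})}} = \Inc{\less_{\tau^{\max}}},
\]
which combine to give $\PIPd{\less_\sigma} = \Inc{\less_{\tau^{\max}}} \cup \Dec{\less_{\tau^{\min}}} = \less_\tree = \less_{\surjection(\sigma)}$.

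For the decreasing equality, the inclusion $\Dec{\less_{\tau^{\min}}} \subseteq \Dec{\DPIPdd{\less_\sigma}}$ follows from Lemma~\ref{lem:IPIPidDPIPdd2}: $\less_{\tau^{\min}} \in \DPIP$ and $\less_{\tau^{\min}} \wole \less_\sigma$ imply $\less_{\tau^{\min}} \wole \DPIPdd{\less_\sigma}$. For the reverse inclusion, pick $(c,a) \in \Dec{\less_\sigma} \ssm \Dec{\less_{\tau^{\min}}}$. Since $\sigma$ and $\tau^{\min}$ are both linear extensions of $\less_\tree$ which order $a, c$ oppositely, $a$ and $c$ are incomparable in~$\less_\tree$; Proposition~\ref{prop:characterizationPEP} then supplies an $\orientation$-snake $a = x_0 < x_1 < \dots < x_{k+1} = c$ in~$\less_\tree$ with $k \ge 1$. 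In Case~A of the snake ($a \less_\tree x_1 \more_\tree x_2 \cdots$ with $x_1 \in \orientation\negative$), set $(p,n) \eqdef (a, x_1)$; in Case~B ($a \more_\tree x_1 \less_\tree x_2 \cdots$ with $x_1 \in \orientation\positive$), set $(p,n) \eqdef (x_1, x_2)$. In either case $a \le p < n \le c$, $p \in \{a\} \cup \orientation\positive$, $n \in \{c\} \cup \orientation\negative$, and $p \less_\tree n$, so $p$ precedes~$n$ in any linear extension of~$\less_\tree$, in particular in~$\sigma$. This is precisely the condition under which $\DPIPpmdd$ deletes $(c,a)$. Moreover $\DWOIPdd$ acts trivially on the total order~$\less_\sigma$ (any chain $a < b_1 < \dots < b_k < c$ satisfying its hypotheses would force $a$ before $c$ in~$\sigma$, contradicting $(c,a) \in \Dec{\less_\sigma}$), so $\DPIPdd{\less_\sigma} = \DPIPpmdd{\less_\sigma}$ and the reverse inclusion follows.

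The increasing equality is proved by the exact symmetric argument. The inequality $\DPIPdd{\less_\sigma} \wole \less_{\tau^{\max}}$ (a consequence of $\sigma \wole \tau^{\max}$) together with $\less_{\tau^{\max}} \in \IPIP$ and Lemma~\ref{lem:IPIPidDPIPdd2} give $\IPIPid{(\DPIPdd{\less_\sigma})} \wole \less_{\tau^{\max}}$, hence $\Inc{\less_{\tau^{\max}}} \subseteq \Inc{\IPIPid{(\DPIPdd{\less_\sigma})}}$. For the reverse inclusion, any $(a,c) \in \Inc{\less_\sigma} \ssm \Inc{\less_{\tau^{\max}}}$ again forces $a$ and $c$ incomparable in~$\less_\tree$, and the corresponding $\orientation$-snake provides the witness pair for the $\IPIPpmid$ deletion condition using the ``descending'' edge: $(n, p) \eqdef (x_1, x_2)$ in Case~A and $(n, p) \eqdef (a, x_1)$ in Case~B. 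The triviality of $\IWOIPid$ on the input $\DPIPdd{\less_\sigma}$ is checked exactly as $\DWOIPdd$'s was.

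The main obstacle is the combinatorial bookkeeping on the $\orientation$-snake: one must simultaneously exploit the alternation of $\less_\tree$ and $\more_\tree$ along the snake, the parity-dictated memberships $x_i \in \orientation\negative \cup \orientation\positive$, and the special role of the endpoints $x_0 = a$ and $x_{k+1} = c$ which match the singleton clauses ``$\{a\}$'' and ``$\{c\}$'' in the definitions of $\DPIPpmdd$ and $\IPIPpmid$. Once the right consecutive snake pair has been selected in each of Cases~A and~B, the rest of the verification only invokes the fact that a linear extension of~$\less_\tree$ respects all its relations.
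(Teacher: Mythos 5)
Your proof is correct, but it takes a genuinely different route from the paper's. The paper works entirely on the output poset: it sets ${\bless} \eqdef {\PIPd{\less_\sigma}}$, notes ${\bless} \in \PIP$ by Lemma~\ref{lem:IPIPidDPIPdd1}, and then proves directly (Claim~\ref{claim:blessSnakes}, by induction on $c-a$) that $\bless$ contains an $\orientation$-snake between any two values, so that ${\bless} \in \PEP$ by Proposition~\ref{prop:characterizationPEP}; since $\less_\sigma$ is a linear extension of $\bless$ and the fibers of $\surjection$ partition $\fS(n)$ by linear-extension sets, ${\bless} = {\less_{\surjection(\sigma)}}$. You instead identify the target $\less_{\surjection(\sigma)}$ in advance through its extremal linear extensions $\tau^{\min}, \tau^{\max}$, reduce the statement to the two equalities $\Dec{(\DPIPdd{\less_\sigma})} = \Dec{\less_{\tau^{\min}}}$ and $\Inc{(\IPIPid{(\DPIPdd{\less_\sigma})})} = \Inc{\less_{\tau^{\max}}}$, get one inclusion in each from the extremality statement of Lemma~\ref{lem:IPIPidDPIPdd2}, and get the other by extracting deletion witnesses for $\DPIPpmdd{(\cdot)}$ and $\IPIPpmid{(\cdot)}$ from an $\orientation$-snake of $\less_{\surjection(\sigma)}$ between the incomparable endpoints. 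So both arguments hinge on the snake characterization of $\PEP$, but in opposite directions: the paper \emph{builds} snakes in the unknown output, while you \emph{consume} snakes of the known permutree poset. I checked the delicate points: your witness pairs $(p,n)$ and $(n,p)$ do satisfy the endpoint clauses $\{a\}$, $\{c\}$ and the parity constraints in both snake cases (including $k=1$), the triviality of $\DWOIPdd{}$ and $\IWOIPid{}$ on the relevant inputs is right, and the appeal to Propositions~\ref{prop:permutrees} and~\ref{prop:definitionWOIP} for ${\less_{\surjection(\sigma)}} = {\Inc{\less_{\tau^{\max}}}} \cup {\Dec{\less_{\tau^{\min}}}}$ is legitimate. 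The trade-off: your argument leans more heavily on the structural input from \cite{PilaudPons} (pattern avoidance of $\tau^{\min},\tau^{\max}$ and the interval property of $\linearExtensions(\tree)$), whereas the paper's is more self-contained on that front but requires the separate inductive Claim~\ref{claim:blessSnakes}; your version also yields as a byproduct the explicit description of the two halves of $\PIPd{\less_\sigma}$ as the versions of $\tau^{\max}$ and the inversions of $\tau^{\min}$.
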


\begin{proof}
Let~$\sigma$ be a permutation of~$\fS(n)$ and let~${\bless} \eqdef {\PIPd{\less_\sigma}}$. We already know that~${\bless} \in \PIP$. The following claim is proved in Appendix~\ref{subsec:appendixPIPdeletion}.

\vspace{-.1cm}
\begin{claim}
\label{claim:blessSnakes}
${\bless}$ has an $\orientation$-snake between any two values of~$[n]$.
\end{claim}
\vspace{-.1cm}

\noindent
By Proposition~\ref{prop:characterizationPEP}, we thus obtain that~${\bless} \in \PEP$. Since moreover~$\less_\sigma$ is a linear extension of~$\bless$, we conclude that~${\bless} = {\less_{\surjection(\sigma)}}$.
\end{proof}

To obtain a similar statement for~$\WOIP(n)$, we first need to observe that the map~${\less} \mapsto {\PIPd{\less}}$ commutes with intersections. This straightforward proof is left to the reader.

\begin{proposition}
\label{prop:PIPdIntersection}
For any posets~${\less}, {\bless} \in \IPos(n)$, we have~$\PIPd{({\less} \cap {\bless})} = {\PIPd{\less}} \cap {\PIPd{\bless}}$.
\end{proposition}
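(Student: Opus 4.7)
The plan is to unfold $\PIPd = \IPIPid \circ \DPIPdd = \DPIPdd \circ \IPIPid$ into its four atomic deletion maps $\IWOIPid, \IPIPpmid, \DWOIPdd, \DPIPpmdd$ and verify that each of them commutes with intersection; since the composition of maps commuting with intersection again commutes with intersection, this delivers the statement. Concretely, I will focus first on $\IWOIPid$, for which the transitive-closure reformulation ${\IWOIPid{\less}} = {\less} \ssm \tc{(\rel[I]_n \ssm \Inc{\less})}$ is the most convenient, and argue the analogous statement for $\DWOIPdd, \IPIPpmid, \DPIPpmdd$ identically, then finally remark that the composition preserves the commutation.

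The central set-theoretic identity I would exploit is
\[
\rel[I]_n \ssm \Inc{(\less \cap \bless)} = (\rel[I]_n \ssm \Inc{\less}) \cup (\rel[I]_n \ssm \Inc{\bless}),
\]
together with its counterpart for $\Dec{\cdot}$. Writing $A_{\less} \eqdef \rel[I]_n \ssm \Inc{\less}$ and $A_{\bless} \eqdef \rel[I]_n \ssm \Inc{\bless}$, the commutation $\IWOIPid({\less \cap \bless}) = \IWOIPid(\less) \cap \IWOIPid(\bless)$ becomes the assertion that, for any pair $(x,y)$ already lying in ${\less \cap \bless}$, we have $(x,y) \in \tc(A_{\less} \cup A_{\bless})$ if and only if $(x,y) \in \tc(A_{\less})$ or $(x,y) \in \tc(A_{\bless})$.

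One direction of this equivalence is immediate from $\tc(A_{\less}) \cup \tc(A_{\bless}) \subseteq \tc(A_{\less} \cup A_{\bless})$ and provides $\PIPd({\less \cap \bless}) \subseteq \PIPd(\less) \cap \PIPd(\bless)$: any witnessing chain in $A_{\less}$ (resp.\ $A_{\bless}$) is still a witnessing chain in $A_{\less} \cup A_{\bless}$, so a pair not deleted from ${\less \cap \bless}$ cannot be deleted from $\less$ or from $\bless$ either. The reverse inclusion is the heart of the proof: given a chain $a = b_0 < b_1 < \cdots < b_{k+1} = c$ with each $(b_i, b_{i+1}) \in A_{\less} \cup A_{\bless}$, I would partition the steps according to whether they lie in $A_{\less}$ or in $A_{\bless}$, and use the transitivity of $\less$ and of $\bless$ (each $b_i \less b_j$ whenever the segment between them is already pure in $\less$, and symmetrically for $\bless$) to telescope consecutive like-colored steps and reduce to the case of a single color.

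The main obstacle will be precisely this pure-chain extraction in the reverse direction: a chain that alternates between missing-in-$\less$ and missing-in-$\bless$ steps need not a priori be straightenable, and it is here that the transitivity of the input posets $\less$ and $\bless$ must be used essentially, together with the fact that, along such a chain, whenever $(b_i, b_{i+1}) \in A_{\less}$ is adjacent to $(b_{i+1}, b_{i+2}) \in A_{\bless}$, one of the two posets forces $(b_i, b_{i+2})$ into its own $A$-set, allowing a shorter chain. Once this step is in place, the same argument applied verbatim to $\DWOIPdd$ (with $\rel[D]_n$ and $\Dec{\cdot}$) and adapted straightforwardly to the bounded-search variants $\IPIPpmid$ and $\DPIPpmdd$ completes the proof.
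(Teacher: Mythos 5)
Your reduction to the four atomic deletion maps cannot succeed, because the step you yourself identify as the heart of the proof --- extracting a monochromatic chain from a chain of steps in $A_{\less} \cup A_{\bless}$ --- is false for arbitrary posets, and consequently none of the atomic maps commutes with intersection in general. Take $n = 3$, ${\less} = \{(1,2),(1,3)\}$ and ${\bless} = \{(2,3),(1,3)\}$ (reflexive pairs omitted); both are posets. Then $A_{\less} = \rel[I]_3 \ssm {\Inc{\less}} = \{(2,3)\}$ and $A_{\bless} = \{(1,2)\}$, so the chain $1 < 2 < 3$ puts $(1,3)$ into $\tc{(A_{\less} \cup A_{\bless})}$, yet $(1,3)$ lies in neither $\tc{A_{\less}}$ nor $\tc{A_{\bless}}$. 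Your contraction claim fails exactly here: the adjacent steps $(1,2) \in A_{\bless}$ and $(2,3) \in A_{\less}$ cannot be telescoped because $(1,3)$ is an increasing relation of both ${\less}$ and ${\bless}$ and therefore belongs to neither $A$-set; transitivity of the inputs gives nothing here (one would need an $\IWOIP$-type hypothesis, which arbitrary posets do not satisfy). Hence ${\IWOIPid{({\less} \cap {\bless})}} = \varnothing$ while ${\IWOIPid{\less}} \cap {\IWOIPid{\bless}} = \{(1,3)\}$.

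Moreover, this is not merely a gap in your argument. For the orientation $\orientation = (3, \varnothing, \varnothing)$ the maps ${\less} \mapsto {\IPIPpmid{\less}}$ and ${\less} \mapsto {\DPIPpmdd{\less}}$ are the identity, and the example above has no decreasing relations, so ${\PIPd{\less}}$ reduces to ${\IWOIPid{\less}}$ and the same pair (or even the two total orders $\less_{213}$ and $\less_{132}$, whose intersection $\{(1,3)\}$ is sent to $\varnothing$ while each chain is fixed) refutes the stated equality ${\PIPd{({\less} \cap {\bless})}} = {\PIPd{\less}} \cap {\PIPd{\bless}}$. So no argument valid for all of $\IPos(n)$ can exist, and the paper in fact offers none (the proof is ``left to the reader''). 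The statement is only ever applied to ${\less_\sigma}$ and ${\less_{\sigma'}}$ with $\sigma \wole \sigma'$ (Corollary~\ref{coro:PIPd/bstIntervals}), where ${\Inc{({\less_\sigma} \cap {\less_{\sigma'}})}} = {\Inc{\less_{\sigma'}}}$ and ${\Dec{({\less_\sigma} \cap {\less_{\sigma'}})}} = {\Dec{\less_\sigma}}$, so that $A_{\less_\sigma} \cup A_{\less_{\sigma'}}$ collapses to $A_{\less_{\sigma'}}$ and the obstruction above disappears at the first step. A correct proof has to exploit this nesting of the increasing and decreasing parts (and track it through the composition of deletions), rather than rely on set-theoretic identities valid for all posets.
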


\begin{corollary}
\label{coro:PIPd/bstIntervals}
For any permutations~$\sigma \wole \sigma'$, we have~${\PIPd{\less_{[\sigma,\sigma']}}} = {\less_{[\surjection(\sigma), \surjection(\sigma')]}}$.
\end{corollary}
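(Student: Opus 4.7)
The proof will be a short chain of equalities that simply threads the facts already established in this subsection. The plan is to reduce the statement about intervals to the statement about elements, using that $\PIPd$ commutes with intersections.

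First I would unfold the weak order interval poset as an intersection: by Proposition~\ref{prop:definitionWOIP},
\[
{\less_{[\sigma,\sigma']}} \;=\; {\less_\sigma} \cap {\less_{\sigma'}}.
\]
Next, I would apply Proposition~\ref{prop:PIPdIntersection} to distribute the $\PIP$ deletion over this intersection, and then Proposition~\ref{prop:PIPd/bst} to identify each factor:
\[
{\PIPd{\less_{[\sigma,\sigma']}}} \;=\; {\PIPd{\less_\sigma}} \cap {\PIPd{\less_{\sigma'}}} \;=\; {\less_{\surjection(\sigma)}} \cap {\less_{\surjection(\sigma')}}.
\]

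To conclude, I would invoke the fact that the fibers of~$\surjection$ form a lattice congruence of the weak order on~$\fS(n)$ (Proposition~\ref{prop:permutrees}\,(3)), so $\sigma \wole \sigma'$ implies $\surjection(\sigma) \wole \surjection(\sigma')$ in the $\orientation$-permutree lattice. Hence the pair~$(\surjection(\sigma),\surjection(\sigma'))$ defines a genuine permutree interval, and the definition of permutree interval posets from the beginning of Section~\ref{subsec:PIP} gives
\[
{\less_{\surjection(\sigma)}} \cap {\less_{\surjection(\sigma')}} \;=\; {\less_{[\surjection(\sigma),\surjection(\sigma')]}}.
\]
Chaining these three equalities yields the corollary.

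No part of this is really an obstacle: all the substantial work sits in Proposition~\ref{prop:PIPd/bst} (individual permutations) and Proposition~\ref{prop:PIPdIntersection} (compatibility of $\PIPd$ with intersections). If anything required care, it would be verifying that the expression on the right-hand side is a well-defined permutree interval poset, but this is exactly what the congruence property of~$\surjection$ guarantees.
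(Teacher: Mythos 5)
Your proof is correct and is essentially identical to the paper's: the authors give exactly the chain ${\PIPd{\less_{[\sigma,\sigma']}}} = {\PIPd{({\less_\sigma} \cap {\less_{\sigma'}})}} = {\PIPd{\less_\sigma}} \cap {\PIPd{\less_{\sigma'}}} = {\less_{\surjection(\sigma)}} \cap {\less_{\surjection(\sigma')}} = {\less_{[\surjection(\sigma), \surjection(\sigma')]}}$ via Propositions~\ref{prop:PIPd/bst} and~\ref{prop:PIPdIntersection}. Your added remark that the congruence property guarantees $\surjection(\sigma) \wole \surjection(\sigma')$, so that the last expression is a well-defined permutree interval poset, is a detail the paper leaves implicit but is a correct and worthwhile check.
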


\begin{proof}
Applying Propositions~\ref{prop:PIPd/bst} and~\ref{prop:PIPdIntersection}, we obtain
\[
{\PIPd{\less_{[\sigma,\sigma']}}} = {\PIPd{({\less_\sigma} \cap {\less_{\sigma'}})}} = {\PIPd{\less_\sigma}} \cap {\PIPd{\less_{\sigma'}}} = {\less_{\surjection(\sigma)}} \cap {\less_{\surjection(\sigma')}} = {\less_{[\surjection(\sigma), \surjection(\sigma')]}}.
\qedhere
\]
\end{proof}

Finally, we compare the $\PIP$ deletion with the Schr\"oder permutree insertion defined in Proposition~\ref{prop:SchroderPermutrees}.

\begin{proposition}
\label{prop:PIPd/st}
For any ordered partition~$\pi$ of~$[n]$, we have~${\PIPd{\less_\pi}} = {\less_{\surjection(\pi)}}$.
\end{proposition}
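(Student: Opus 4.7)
The plan is to reduce Proposition~\ref{prop:PIPd/st} to the already-established Proposition~\ref{prop:PIPd/bst} via the intersection formula of Proposition~\ref{prop:PIPdIntersection}, in exact analogy with how Corollary~\ref{coro:PIPd/bstIntervals} was deduced. The key structural fact we will rely on is that an ordered partition, a Schröder permutree, and a face of the permutreehedron can each be identified with an interval of linear extensions (respectively in $\fS(n)$, in $\Permutrees$), and that the permutree insertion~$\surjection$ is compatible with taking minima and maxima of these~intervals.

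First I would let $\sigma^{\min}(\pi)$ and $\sigma^{\max}(\pi)$ denote respectively the weak order minimal and maximal linear extensions of $\pi$ (they exist since $\less_\pi \in \WOIP(n)$, as $\WOFP(n) \subseteq \WOIP(n)$ by Proposition~\ref{prop:characterizationWOFP}). Proposition~\ref{prop:definitionWOIP} (applied to the interval $[\sigma^{\min}(\pi), \sigma^{\max}(\pi)]$, whose set of permutations is precisely the set of linear extensions of $\pi$) then gives
\[
{\less_\pi} = {\less_{\sigma^{\min}(\pi)}} \cap {\less_{\sigma^{\max}(\pi)}}.
\]
Combining Propositions~\ref{prop:PIPdIntersection} and~\ref{prop:PIPd/bst}, as in the proof of Corollary~\ref{coro:PIPd/bstIntervals}, yields
\[
{\PIPd{\less_\pi}} = {\PIPd{\less_{\sigma^{\min}(\pi)}}} \cap {\PIPd{\less_{\sigma^{\max}(\pi)}}} = {\less_{\surjection(\sigma^{\min}(\pi))}} \cap {\less_{\surjection(\sigma^{\max}(\pi))}}.
\]

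Next I would identify these two permutrees with $\tree^{\min}(\surjection(\pi))$ and $\tree^{\max}(\surjection(\pi))$. By definition of the surjection on ordered partitions in Proposition~\ref{prop:SchroderPermutrees}\,(1), the face of $\PT$ corresponding to $\surjection(\pi)$ contains the face of $\Perm$ corresponding to $\pi$; equivalently, every linear extension of $\less_\pi$ is a linear extension of $\less_{\surjection(\pi)}$, so $\surjection$ (the permutation-level insertion from Proposition~\ref{prop:permutrees}\,(2)) sends the linear extensions of $\pi$ into the $\orientation$-permutrees belonging to the face of $\surjection(\pi)$, i.e. into $[\tree^{\min}(\surjection(\pi)), \tree^{\max}(\surjection(\pi))]$. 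Since $\surjection$ is a lattice homomorphism (Proposition~\ref{prop:permutrees}\,(3)), it maps the minimal (resp.~maximal) linear extension of $\pi$ to the minimal (resp.~maximal) element of this image, and the explicit description of $\tree^{\min}(\surjection(\pi))$ and $\tree^{\max}(\surjection(\pi))$ as the permutrees whose unique linear extensions are $\sigma^{\min}(\surjection(\pi))$ and $\sigma^{\max}(\surjection(\pi))$ (as noted in the proof of Proposition~\ref{prop:weakOrderPFP}) shows that these extremal elements are exactly $\tree^{\min}(\surjection(\pi))$ and $\tree^{\max}(\surjection(\pi))$.

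Finally, the definition of $\less_{\tree[S]}$ for a Schröder permutree~$\tree[S]$, namely ${\less_{\tree[S]}} = {\less_{[\tree^{\min}(\tree[S]), \tree^{\max}(\tree[S])]}}$, together with the formula ${\less_{[\tree, \tree']}} = {\less_{\tree}} \cap {\less_{\tree'}}$ from Section~\ref{subsec:PIP} applied to $\tree[S] = \surjection(\pi)$, gives
\[
{\less_{\surjection(\pi)}} = {\less_{\tree^{\min}(\surjection(\pi))}} \cap {\less_{\tree^{\max}(\surjection(\pi))}} = {\less_{\surjection(\sigma^{\min}(\pi))}} \cap {\less_{\surjection(\sigma^{\max}(\pi))}} = {\PIPd{\less_\pi}},
\]
proving the claim. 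The only genuine subtlety, which I expect to be the main obstacle, is the identification $\surjection(\sigma^{\min}(\pi)) = \tree^{\min}(\surjection(\pi))$ (and the analogous maximum), since one has to verify carefully that the normal-fan refinement underlying the Schröder permutree surjection is compatible with the extremal vertices of each face; this is however a direct consequence of Propositions~\ref{prop:permutrees}\,(3) and~\ref{prop:SchroderPermutrees}, as sketched above.
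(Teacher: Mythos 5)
Your overall strategy --- writing ${\less_\pi} = {\less_{\sigma^{\min}(\pi)}} \cap {\less_{\sigma^{\max}(\pi)}}$ and pushing the $\PIP$ deletion through the intersection via Propositions~\ref{prop:PIPdIntersection} and~\ref{prop:PIPd/bst} --- is genuinely different from the paper's proof, which instead verifies directly that ${\PIPd{\less_\pi}}$ satisfies the $(\spadesuit)$/$(\clubsuit)$ characterization of~$\PFP$ (Claim~\ref{claim:blessSpadeClub}) and then identifies it with~$\less_{\surjection(\pi)}$ by comparing linear extensions. Your reduction is attractive because it reuses the permutation case, but it trades the paper's combinatorial case analysis for a different nontrivial fact, and that fact is exactly where your argument has a gap.

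The gap is the identification $\surjection(\sigma^{\min}(\pi)) = \tree^{\min}(\surjection(\pi))$ (and its maximal counterpart). What you actually establish is that $\surjection$ sends the linear extensions of~$\pi$ into the interval $[\tree^{\min}(\surjection(\pi)), \tree^{\max}(\surjection(\pi))]$ and, being order preserving, sends $\sigma^{\min}(\pi)$ to the minimum \emph{of that image}. To conclude that this minimum equals $\tree^{\min}(\surjection(\pi))$ you must show that $\tree^{\min}(\surjection(\pi))$ actually lies in the image, \ie that some linear extension of~$\pi$ inserts to~$\tree^{\min}(\surjection(\pi))$ --- and this is precisely the content of the claim, not a formal consequence of order preservation. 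The statement you invoke from the proof of Proposition~\ref{prop:weakOrderPFP} does not supply it: it says that $\sigma^{\min}(\tree[S])$, the minimal linear extension of the Schr\"oder permutree poset $\less_{\tree[S]}$ itself, is the minimal linear extension of $\tree^{\min}(\tree[S])$; it says nothing about the minimal linear extension of an arbitrary~$\pi$ in the fiber $\surjection^{-1}(\tree[S])$, which is in general strictly above $\sigma^{\min}(\tree[S])$ in the weak order (and your phrase ``permutrees whose unique linear extensions are $\sigma^{\min}(\surjection(\pi))$'' is not a correct reading of that statement --- permutrees have many linear extensions). The missing fact is true, but proving it requires an actual argument, \eg the normal fan refinement: a point in the relative interior of the normal cone of the face of~$\Perm$ given by~$\pi$ lies in the relative interior of the normal cone of the face of~$\PT$ given by~$\surjection(\pi)$, hence is a limit of interior points of the maximal cone of~$\tree^{\min}(\surjection(\pi))$, and since the star of the cone of~$\pi$ in the braid fan is a neighborhood of that relative interior, some maximal braid cone indexed by a linear extension of~$\pi$ must be contained in the cone of~$\tree^{\min}(\surjection(\pi))$. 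With that lemma supplied, the rest of your chain of equalities is correct; without it, the proof is incomplete at its acknowledged crux.
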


\begin{proof}
Let~$\pi$ be an ordered partition and let~${\bless} \eqdef {\PIPd{\less_\pi}}$. We already know that~${\bless} \in \PIP$. The following claim is proved in Appendix~\ref{subsec:appendixPIPdeletion}.

\vspace{-.1cm}
\begin{claim}
\label{claim:blessSpadeClub}
Any~$a < c$ incomparable in~${\bless}$ satisfy at least one of the conditions~$(\spadesuit)$ and~$(\clubsuit)$ of Proposition~\ref{prop:characterizationPFP}.
\end{claim}
\vspace{-.1cm}

\noindent
By Proposition~\ref{prop:characterizationPFP}, we thus obtain that~${\bless} \in \PFP$. Since moreover any linear extension of~$\bless$ extends~$\less_\pi$, we conclude that~${\bless} = {\less_{\surjection(\pi)}}$.
\end{proof}


\section{Sublattices}
\label{sec:sublattices}

The previous sections were dedicated to the characterization of various specific families of posets coming from permutreehedra and to the description of the weak order induced by these families. In this final section, we investigate which of these families induce sublattices of the weak order on posets~$(\IPos(n), \wole, \meetT, \joinT)$. We first introduce some additional notations based on conflict functions which will simplify later the presentation.


\subsection{Conflict functions}
\label{subsec:conflictFunctions}

A \defn{conflict function} is a function~$\conflicts$ which maps a poset~${{\less} \in \IPos(n)}$ to a conflict set~$\conflicts(\less) \subseteq \binom{[n]}{2}$. A poset~$\less$ is \defn{$\conflicts$-free} if~$\conflicts(\less) = \varnothing$, and we denote the set of $\conflicts$-free posets on~$[n]$ by $\free(\conflicts, n) \eqdef \set{{\less} \in \IPos(n)}{\conflicts(\less) = \varnothing}$. Intuitively, the set~$\conflicts(\less)$ gathers the \defn{conflicting pairs} that prevent~$\less$ to be a poset in the family~$\free(\conflicts,n)$.

\begin{example}
\label{exm:conflictFunctionsIWOIPDWOIP}
The characterizations of the families of posets discussed in Sections~\ref{sec:relevantFamiliesPermutahedron}, \ref{sec:relevantFamiliesAssociahedron} and~\ref{sec:relevantFamiliesPermutreehedra} naturally translate to conflict functions. For example, the posets in~$\IWOIP(n)$ and in~$\DWOIP(n)$ are the conflict-free posets for the conflict functions respectively given by
\[
\begin{array}{l@{\,=\,}l}
\conflicts_{\IWOIP}(\less) & \bigset{\{a,c\}}{a \less c \text{ and } \exists \; a < b < c, \;\; a \not\less b \not\less c}, \\
\conflicts_{\DWOIP}(\less) & \bigset{\{a,c\}}{a \more c \text{ and } \exists \; a < b < c, \;\; a \not\more b \not\more c}.
\end{array}
\]
The reader can derive from the characterizations of the previous sections other relevant conflict functions. In general, we denote by~$\conflicts_{\X}$ the conflict function defining a family~$\X$, \ie such that~$\free(\conflicts_{\X}, n) = \X(n)$.
\end{example}

For a poset~$\less$, we denote by~$[\less] \eqdef \bigset{\{i,j\}}{i \less j} \subseteq \binom{[n]}{2}$ the support of~$\less$, \ie the set of pairs of comparable elements in~$\less$. We say that a conflict function~$\conflicts$ is:
\begin{enumerate}[(i)]
\item \defn{local} if~$\{a,b\} \in \conflicts(\less) \iff \{a,b\} \in \conflicts(\less \cap \; [a,b]^2)$ for any~$a < b$ and any poset~$\less$, \ie a conflict~$\{a,b\}$ only depends on the relations in the interval~$[a,b]$,
\item \defn{increasing} if~$\conflicts(\less) \subseteq [\Inc{\less}]$ for any poset~$\less$, \ie only increasing relations are conflicting, \\ 
	  \defn{decreasing} if~$\conflicts(\less) \subseteq [\Dec{\less}]$ for any poset~$\less$, \ie only decreasing relations are conflicting, \\
	  \defn{incomparable} if~$\conflicts(\less) \subseteq \binom{[n]}{2} \ssm [\less]$ for any poset~$\less$, \ie only incomparable pairs are conflicting,
\item \defn{consistent} if~$\conflicts(\less) \cap [\Inc{\less}] = \conflicts(\Inc{\less})$ and~$\conflicts(\less) \cap [\Dec{\less}] = \conflicts(\Dec{\less})$ for any poset~$\less$, \ie increasing (resp.~decreasing) conflicts only depends on increasing (resp.~decreasing) relations,
\item \defn{monotone} if~${\less} \subseteq {\bless} \; \implies \; {\less} \ssm \, \conflicts(\less) \subseteq {\bless} \ssm \, \conflicts(\bless)$,
\item \defn{semitransitive} if~$\less \ssm \, \conflicts(\less)$ is semitransitive, \ie both increasing and decreasing subrelations of~$\less \ssm \, \conflicts(\less)$ are transitive. In other words, if~$a < b < c$ are such that the relations~$a \less b \less c$ are not conflicts for~$\conflicts$, then the relation~$a \less c$ is not a conflict for~$\conflicts$ (and similarly for~$\more$).
\end{enumerate}

\begin{example}
\label{exm:propertiesConflictFunctionsIWOIPDWOIP}
The conflict functions~$\conflicts_\IWOIP$ and~$\conflicts_\DWOIP$ are both local, consistent, monotone and semitransitive. Moreover, $\conflicts_\IWOIP$ is increasing while $\conflicts_\DWOIP$ is decreasing. Indeed, all these properties but the semitransitivity follow directly from the definitions. For the semitransitivity, consider~$a < b < c$ with~$ a \less b \less c$ and~$\{a,c\} \in \conflicts_\IWOIP(\less)$. Then there is~$a < d < c$ such that~$a \not\less d \not\less c$. Assume for example that~$a < d < b$. By transitivity of~$\less$, we have~$d \not\less b$, and thus~$\{a,b\} \in \conflicts_\IWOIP(\less)$.
\end{example}

\begin{remark}
\label{rem:unionConflictFunctions}
If~$\conflicts$ and~$\conflicts'$ are two conflict functions, then $\conflicts \cup \conflicts'$ is as well a conflict function with~$\free(\conflicts \cup \conflicts', n) = \free(\conflicts, n) \cap \free(\conflicts', n)$. For example, $\conflicts_\WOIP = \conflicts_\IWOIP \cup \conflicts_\DWOIP$ is the conflict function for~$\WOIP = \IWOIP \cap \DWOIP$. Note that all the above conditions are stable by union.
\end{remark}

The above conditions suffices to guaranty that $\conflicts$-free posets induce semi-sublattices of~${(\IPos(n), \wole)}$.

\begin{proposition}
\label{prop:STMCIDConflictFunction}
For any consistent monotone semitransitive increasing (resp.~decreasing) conflict function~$\conflicts$, the set of $\conflicts$-free posets induces a meet-semi-sublattice of~$(\IPos(n), \wole, \meetT)$ (resp.~a join-semi-sublattice of~$(\IPos(n), \wole, \joinT)$).
\end{proposition}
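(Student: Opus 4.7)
By symmetry, I only treat the meet-semi-sublattice claim for a consistent monotone semitransitive \emph{increasing} conflict function~$\conflicts$; the join case for decreasing conflict functions is proved by the obvious dual argument, replacing~$\tdd$ by~$\tid$ and exchanging the roles of~$\Inc{\cdot}$ and~$\Dec{\cdot}$. Take~${\less}, {\bless} \in \free(\conflicts, n)$ and set~${\dashv} \eqdef {\less} \meetT {\bless}$. Since~$\meetT$ is already the meet in the ambient lattice~$(\IPos(n), \wole, \meetT)$, once I know that~${\dashv}$ is itself $\conflicts$-free, it will automatically be the greatest lower bound of~$\less$ and~$\bless$ in the subposet induced by~$\free(\conflicts, n)$. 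So the whole proof reduces to showing ${\dashv} \in \free(\conflicts, n)$.

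The first step is to simplify what needs to be checked. By Proposition~\ref{prop:transitiveLattice}, ${\dashv} = \tdd{\big( \tc{(\Inc{\less} \cup \Inc{\bless})} \cup (\Dec{\less} \cap \Dec{\bless}) \big)}$, and since~$\tdd$ only deletes decreasing pairs while the transitive closure of an increasing relation stays increasing, we get ${\Inc{\dashv}} = \tc{(\Inc{\less} \cup \Inc{\bless})}$. Write~${\rel} \eqdef {\Inc{\dashv}}$ for this transitive closure, which is an increasing, transitive and (trivially) antisymmetric reflexive relation, hence a poset in~$\IPos(n)$. Because~$\conflicts$ is increasing, $\conflicts(\dashv) \subseteq [\Inc{\dashv}]$, and by consistency $\conflicts(\dashv) \cap [\Inc{\dashv}] = \conflicts(\Inc{\dashv}) = \conflicts(\rel)$. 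Thus it suffices to prove~$\conflicts(\rel) = \varnothing$.

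The core of the argument combines consistency, monotonicity and semitransitivity. Since~$\less$ is $\conflicts$-free, consistency gives~$\conflicts(\Inc{\less}) = \conflicts(\less) \cap [\Inc{\less}] = \varnothing$; likewise~$\conflicts(\Inc{\bless}) = \varnothing$. Applying monotonicity to the inclusions~$\Inc{\less} \subseteq \rel$ and~$\Inc{\bless} \subseteq \rel$ then yields
\[
\Inc{\less} = \Inc{\less} \ssm \conflicts(\Inc{\less}) \subseteq \rel \ssm \conflicts(\rel)
\quad\text{and}\quad
\Inc{\bless} = \Inc{\bless} \ssm \conflicts(\Inc{\bless}) \subseteq \rel \ssm \conflicts(\rel),
\]
so that $\Inc{\less} \cup \Inc{\bless} \subseteq \rel \ssm \conflicts(\rel)$. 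By semitransitivity, $\rel \ssm \conflicts(\rel)$ is semitransitive, and being increasing it is in fact transitive. As it contains~$\Inc{\less} \cup \Inc{\bless}$, it must contain the transitive closure~$\rel = \tc{(\Inc{\less} \cup \Inc{\bless})}$ itself. Therefore $\conflicts(\rel) = \varnothing$, which, by the reduction in the previous paragraph, gives $\conflicts(\dashv) = \varnothing$, \ie ${\dashv} \in \free(\conflicts, n)$, concluding the proof.

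The only real obstacle is the final sandwich step that forces $\rel \ssm \conflicts(\rel) = \rel$: the trick is to observe that \emph{both} hypotheses (monotonicity, to push the conflict-freeness of~$\Inc{\less}$ and~$\Inc{\bless}$ upward into~$\rel$, and semitransitivity, to close the resulting conflict-free subset under transitivity) are needed, and that the universal property of the transitive closure then does the rest.
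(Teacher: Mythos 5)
Your proof is correct, and it uses the three hypotheses in exactly the same roles as the paper's proof --- consistency to localize conflicts to the increasing subrelations, monotonicity to transport the conflict-freeness of~$\Inc{\less}$ and~$\Inc{\bless}$ upward into the meet, and semitransitivity to close the conflict-free part under transitivity --- but the execution is genuinely different. The paper argues by contradiction on a conflict~$\{a,c\}$ with~$c-a$ minimal: either~$(a,c)$ lies in~$\Inc{\less} \cup \Inc{\bless}$, in which case consistency and monotonicity yield a contradiction, or it is created by the transitive closure, in which case minimality together with semitransitivity yields one; it then passes from~$\meetST$ to~$\meetT$ only at the very end. You instead reduce at the outset to the purely increasing poset~${\rel} = {\Inc{\dashv}} = \tc{(\Inc{\less} \cup \Inc{\bless})}$ and run a direct sandwich~$\Inc{\less} \cup \Inc{\bless} \subseteq {\rel} \ssm \conflicts(\rel) \subseteq {\rel}$, concluding by the universal property of the transitive closure (for an increasing relation, semitransitive is the same as transitive). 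This removes the induction on~$c-a$ entirely, and it has the small technical advantage that every relation you feed to~$\conflicts$ (namely~$\Inc{\less}$, $\Inc{\bless}$, $\rel$ and~${\less} \meetT {\bless}$) is honestly a poset, whereas the paper applies~$\conflicts$ and the semitransitivity condition to the semitransitive meet~${\less} \meetST {\bless}$, which need not be antisymmetric or transitive. The one step you leave implicit --- that~${\rel} \ssm \conflicts(\rel) = {\rel}$ forces~$\conflicts(\rel) = \varnothing$ --- uses the increasing hypothesis once more, so that all conflicts of~$\rel$ are supported on comparable pairs of~$\rel$; it is immediate, but worth a half-sentence.
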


\begin{proof}
We prove the result for increasing conflict functions, the proof being symmetric for decreasing ones.
Let~$\less, \bless$ be two $\conflicts$-free posets and~${{\dashv} \eqdef {\less} \meetST {\bless} = \tc{({\Inc{\less}} \cup {\Inc{\bless}})} \cup ({\Dec{\less}} \cap {\Dec{\bless}})}$, so that~${\less} \meetT {\bless} = {\tdd{\dashv}}$. We want to prove that~$\tdd{\dashv}$ is also $\conflicts$-free.
Assume first that~$\dashv$ is not $\conflicts$-free, and let~${\{a,c\} \in \conflicts(\dashv)}$ with~$a < c$ and~$c-a$ minimal. Since~$\conflicts$ is increasing, we have ${(a,c) \in {\Inc{\dashv}} = \tc{({\Inc{\less}} \cup {\Inc{\bless}})}}$. If~$(a,c) \notin ({\Inc{\less}} \cup {\Inc{\bless}})$, then there exists~${a = b_1 < b_2 < \dots < b_k = c}$ such that~$a = b_1 \dashv b_2 \dashv \dots \dashv b_k = c$. By minimality of~$c-a$, all~$(b_i, b_{i+1})$ are in~${\dashv} \ssm \, \conflicts(\dashv)$ while~$(a,c)$ is not, which contradicts the semitransitivity of~$\conflicts$. Therefore, $(a,c) \in ({\Inc{\less}} \cup {\Inc{\bless}})$ and we can assume without loss of generality that~$(a,c) \in {\Inc{\less}}$. Since~$\less$ is $\conflicts$-free and $\conflicts$ is consistent, we have~${(a,c) \in {\Inc{\less}} \ssm \, \conflicts(\Inc{\less})}$. Thus, since $\conflicts$ is monotone and~${\Inc{\less}} \subseteq {\dashv}$, we obtain that~${(a,c) \in {\dashv} \ssm \, \conflicts(\dashv)}$ which contradicts our assumption that~$\{a,c\} \in \conflicts(\dashv)$. We therefore obtained that~$\dashv$ is $\conflicts$-free.
Finally, since~$\conflicts$ is monotone, consistent, and increasing, and since~${{\Inc{\dashv}} = \Inc{(\tdd{\dashv})}}$, we conclude that~$\tdd{\dashv}$ is $\conflicts$-free.
\end{proof}

\begin{example}
\label{exm:IWOIPDWOIPmeetsemisublattices}
Applying Example~\ref{exm:propertiesConflictFunctionsIWOIPDWOIP} and Proposition~\ref{prop:STMCIDConflictFunction}, we obtain that the subposet of the weak order induced by~$\IWOIP(n)$ (resp.~by~$\DWOIP(n)$) is a meet-semi-sublattice of~$(\IPos(n), \wole, \meetT)$ (resp.~a join-semi-sublattice of~$(\IPos(n), \wole, \joinT)$), as already proved in Proposition~\ref{prop:IWOIPDWOIPLattices}.
\end{example}


\subsection{Intervals}
\label{subsec:intervalsSublattices}

We now consider lattice properties of the weak order on permutree interval posets~$\PIP$. This section has two main goals:
\begin{enumerate}[(i)]
\item provide a sufficient condition on~$\orientation$ for~$\PIP$ to induce a sublattice of~$(\IPos(n), \wole, \meetT, \joinT)$,
\item show that~$\PIP$ induces a sublattice of~$(\WOIP(n), \wole, \meetWOIP, \joinWOIP)$ for any orientation~$\orientation$.
\end{enumerate}
Using the notations introduced in Section~\ref{subsec:conflictFunctions}, we consider the conflict functions
\[
\begin{array}{l@{\,}l}
\conflicts_{\IPIPp}(\less) & \eqdef \, \bigset{\{a,c\}}{a \less c \text{ and } \exists \; a < b < c, \;\; b \in \orientation\positive \text{ and } a \not\less b}, \\
\conflicts_{\IPIPm}(\less) & \eqdef \, \bigset{\{a,c\}}{a \less c \text{ and } \exists \; a < b < c, \;\; b \in \orientation\negative \text{ and } b \not\less c}, \\
\conflicts_{\IPIPpm}(\less) & \eqdef \, \conflicts_{\IPIPp}(\less) \, \cup \, \conflicts_{\IPIPm}(\less), \\
\conflicts_{\IPIP}(\less) & \eqdef \, \conflicts_{\IPIPpm}(\less) \, \cup \, \conflicts_{\IWOIP}(\less), \\[.25cm]
\conflicts_{\DPIPp}(\less) & \eqdef \, \bigset{\{a,c\}}{a \more c \text{ and } \exists \; a < b < c, \;\; b \in \orientation\positive \text{ and } b \not\more c}, \\
\conflicts_{\DPIPm}(\less) & \eqdef \, \bigset{\{a,c\}}{a \more c \text{ and } \exists \; a < b < c, \;\; b \in \orientation\negative \text{ and } a \not\more b}, \\
\conflicts_{\DPIPpm}(\less) & \eqdef \, \conflicts_{\DPIPp}(\less) \, \cup \, \conflicts_{\DPIPm}(\less), \\
\conflicts_{\DPIP}(\less) & \eqdef \, \conflicts_{\DPIPpm}(\less) \, \cup \, \conflicts_{\DWOIP}(\less),
\end{array}
\]
and finally
\[
\conflicts_{\PIP}(\less) \eqdef \conflicts_{\IPIP}(\less) \cup \conflicts_{\DPIP}(\less)
\]
corresponding to the families studied in Section~\ref{subsec:PIP}. As seen in Proposition~\ref{coro:characterizationPIP}, the $\conflicts_{\PIP}$-free posets are precisely that of~$\PIP$.


\para{Covering orientations}
In the next statements, we provide a sufficient condition on the orientation~$\orientation$ for~$\PIP$ to induce a sublattice of~$(\IPos(n), \wole, \meetT, \joinT)$. We first check the conditions of Proposition~\ref{prop:STMCIDConflictFunction} to get semi-sublattices.

\begin{lemma}
\label{lem:orientationConflictFunctionProperties}
For any orientation~$\orientation$ and any~$\varepsilon \in \{\varnothing, -, +, \pm\}$, the conflict functions~$\conflicts_{\IPIPe}$ and~$\conflicts_{\DPIPe}$ are local, consistent, monotone, and semitransitive. Moreover, $\conflicts_{\IPIPe}$ is increasing while $\conflicts_{\DPIPe}$ is decreasing.
\end{lemma}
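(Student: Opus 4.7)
The plan is to verify the six properties first for the four atomic conflict functions $\conflicts_{\IPIPp}$, $\conflicts_{\IPIPm}$ and (analogously) $\conflicts_{\DPIPp}$, $\conflicts_{\DPIPm}$, and then upgrade to $\conflicts_{\IPIPpm} = \conflicts_{\IPIPp} \cup \conflicts_{\IPIPm}$ and $\conflicts_{\IPIP} = \conflicts_{\IPIPpm} \cup \conflicts_{\IWOIP}$ (and their decreasing counterparts) via stability of each property under unions of conflict functions. The decreasing side mirrors the increasing side, so I would concentrate on the latter.

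For $\conflicts_{\IPIPp}$, locality, consistency and the increasing property are immediate from unpacking the definition: a conflict $\{a,c\}$ requires the increasing relation $a \less c$ and is witnessed by some $a < b < c$ with $b \in \orientation\positive$ and $a \not\less b$, which only involves increasing relations among values in $[a,c]$. Monotonicity is equally direct: if $\less \subseteq \bless$ and $(a,c) \in \less \ssm \conflicts_{\IPIPp}(\less)$, then $a \less b$ holds for every $a < b < c$ with $b \in \orientation\positive$, which forces $a \bless b$ for the same $b$'s, so that $(a,c) \in \bless \ssm \conflicts_{\IPIPp}(\bless)$. The same reasoning applies to $\conflicts_{\IPIPm}$ with the role of $a \not\less b$ replaced by $b \not\less c$ and $\orientation\positive$ by $\orientation\negative$.

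The main technical point is semitransitivity. Since $\conflicts_{\IPIPp}$ is increasing, the decreasing part of $\less \ssm \conflicts_{\IPIPp}(\less)$ equals $\Dec{\less}$ and is automatically transitive, so only the increasing part requires attention. Take $a < b < c$ with $a \less b$ and $b \less c$ both non-conflicts; transitivity of $\less$ already gives $a \less c$, and I must exclude any witness $a < d < c$ with $d \in \orientation\positive$ and $a \not\less d$. Splitting on the position of $d$: if $a < d < b$ the hypothesis that $\{a,b\}$ is not a conflict is contradicted; if $d = b$, then $a \not\less b$ contradicts $a \less b$ directly; if $b < d < c$, the hypothesis that $\{b,c\}$ is not a conflict yields $b \less d$, whereupon $a \less b \less d$ and transitivity of $\less$ give $a \less d$, again a contradiction. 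The argument for $\conflicts_{\IPIPm}$ is obtained by propagating $d \less b \less c$ into $d \less c$ by transitivity instead.

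Finally, I would observe that each of the six properties passes through unions of conflict functions --- these are pointwise inclusions or implications that distribute over the union $(\conflicts_1 \cup \conflicts_2)(\less) = \conflicts_1(\less) \cup \conflicts_2(\less)$, as noted in Remark~\ref{rem:unionConflictFunctions}. Combined with Example~\ref{exm:propertiesConflictFunctionsIWOIPDWOIP} (which records the same list of properties for $\conflicts_{\IWOIP}$ and $\conflicts_{\DWOIP}$), this yields the lemma for $\conflicts_{\IPIPpm}$, $\conflicts_{\IPIP}$ and their decreasing analogues. I expect the only non-formal step in the argument to be the three-case semitransitivity analysis above; everything else is bookkeeping.
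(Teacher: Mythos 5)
Your proposal is correct and follows essentially the same route as the paper: reduce to the four atomic conflict functions via union-stability (Remark~\ref{rem:unionConflictFunctions}) and Example~\ref{exm:propertiesConflictFunctionsIWOIPDWOIP}, treat all properties except semitransitivity as immediate from the definitions, and establish semitransitivity of $\conflicts_{\IPIPp}$ by a case analysis on the position of the witness $d$ relative to $b$. The paper phrases that case analysis contrapositively (a conflict $\{a,c\}$ forces a conflict $\{a,b\}$ or $\{b,c\}$), but the content is identical to your three cases.
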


\begin{proof}
Since they are stable by union (Remark~\ref{rem:unionConflictFunctions}), and since they hold for the conflict functions~$\conflicts_\IWOIP$ and~$\conflicts_\DWOIP$ (Example~\ref{exm:propertiesConflictFunctionsIWOIPDWOIP}), it suffices to show these properties for the conflict functions~$\conflicts_{\IPIPp}$, $\conflicts_{\IPIPm}$, $\conflicts_{\DPIPp}$ and~$\conflicts_{\DPIPm}$. By symmetry, we only consider~$\conflicts_{\IPIPp}$. 
We just need to prove the semitransitivity, the other properties being immediate from the definitions. Consider~$a < b < c$ such that~$a \less b \less c$ and~$\{a,c\} \in \conflicts_{\IPIPp}(\less)$. Then there exists~$a < d < c$ such that~$d \in \orientation\positive$ and~$a \not\less d$. If~$d < b$, then $\{a,b\} \in \conflicts_{\IPIPp}(\less)$. Otherwise, $b < d$ and the transitivity of~$\less$ ensures that~$b \not\less d$, so that~$\{b,c\} \in \conflicts_{\IPIP}(\less)$. We conclude that~$\Inc{(\less \ssm \, \conflicts_{\IPIPp}(\less))}$ is transitive. Since~$\Dec{(\less \ssm \, \conflicts_{\IPIPp}(\less))} = {\Dec{\less}}$ is also transitive, we obtained that~$\conflicts_{\IPIPp}$ is semitransitive.
\end{proof}

\begin{corollary}
\label{coro:orientationConflictFunctionsSemiSublattices}
For any orientation~$\orientation$ and any~$\varepsilon \in \{\varnothing, -, +, \pm\}$, the set~$\IPIPe$ (resp.~$\DPIPe$) induces a meet-semi-sublattice of~$(\IPos(n), \wole, \meetT)$ (resp.~a join-semi-sublattice of~$(\IPos(n), \wole, \joinT)$).
\end{corollary}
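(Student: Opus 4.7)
The statement is essentially a direct combination of the two preceding results, so my plan is very short. First I would observe that, by the characterization in Proposition~\ref{prop:characterizationIPIPDPIP} and the definitions of the conflict functions just above, the set $\IPIPe$ coincides with $\free(\conflicts_{\IPIPe}, n)$ and the set $\DPIPe$ coincides with $\free(\conflicts_{\DPIPe}, n)$, for each $\varepsilon \in \{\varnothing, -, +, \pm\}$. (For $\varepsilon = \varnothing$, the definition of $\conflicts_{\IPIP}$ as $\conflicts_{\IPIPpm} \cup \conflicts_{\IWOIP}$ exactly matches the decomposition $\IPIP = \IWOIP(n) \cap \IPIPpm$, and similarly on the decreasing side.)

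Then I would invoke Lemma~\ref{lem:orientationConflictFunctionProperties}, which tells us that $\conflicts_{\IPIPe}$ is a consistent, monotone, semitransitive, and increasing conflict function, and that $\conflicts_{\DPIPe}$ is a consistent, monotone, semitransitive, and decreasing conflict function. These are precisely the hypotheses of Proposition~\ref{prop:STMCIDConflictFunction}.

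Applying Proposition~\ref{prop:STMCIDConflictFunction} to $\conflicts_{\IPIPe}$ yields that the set of $\conflicts_{\IPIPe}$-free posets, namely $\IPIPe$, induces a meet-semi-sublattice of $(\IPos(n), \wole, \meetT)$. Applying it to $\conflicts_{\DPIPe}$ yields that $\DPIPe$ induces a join-semi-sublattice of $(\IPos(n), \wole, \joinT)$. This completes the argument. Since the entire content has been packaged into the preceding lemma and proposition, there is no real obstacle here beyond verifying that the conflict-function formalism indeed captures the families $\IPIPe$ and $\DPIPe$, which is immediate from the definitions in Section~\ref{subsec:PIP}.
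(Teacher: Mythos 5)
Your proposal is correct and matches the paper's proof, which is exactly the one-line combination of Lemma~\ref{lem:orientationConflictFunctionProperties} and Proposition~\ref{prop:STMCIDConflictFunction}; your extra remarks identifying $\IPIPe$ and $\DPIPe$ with the corresponding conflict-free families are just the implicit bookkeeping made explicit.
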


\begin{proof}
Direct application of Lemma~\ref{lem:orientationConflictFunctionProperties} and Proposition~\ref{prop:STMCIDConflictFunction}.
\end{proof}

To obtain sublattices, we need an additional condition on~$\orientation$. Namely, we say that an orientation~$\orientation = (n, \orientation\positive, \orientation\negative)$ is \defn{covering} if~$\{2, \dots, n-1\} \subseteq \orientation\positive \cup \orientation\negative$. Note that we do not require a priori that~$\orientation\positive \cap \orientation\negative = \varnothing$ nor that~$\{1,n\} \subseteq \orientation\positive \cup \orientation\negative$. Observe also that when~$\orientation$ is covering, we have~$\IPIPpm = \IPIP$ and~$\DPIPpm = \DPIP$.

\begin{theorem}
\label{thm:coveringOrientationConflictFunctionsSublatticesIPos}
For any covering orientation~$\orientation$, the sets~$\IPIP$,~$\DPIP$ and~$\PIP$ all induce sublattices of~$(\IPos(n), \wole, \meetT, \joinT)$.
\end{theorem}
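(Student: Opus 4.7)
My plan is to reduce the theorem to one main closure property, which I tackle by a minimality argument on the $\tid$ deletion.

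First, I will use the covering hypothesis to simplify the sets in play. I claim that under covering, $\IPIPpm \subseteq \IWOIP$: if $\less \in \IPIPpm$ and $a < b < c$ satisfy $a \less c$, then $b \in \{2, \dots, n-1\} \subseteq \orientation\positive \cup \orientation\negative$, so the $\IPIPp$ condition (if $b \in \orientation\positive$) yields $a \less b$ while the $\IPIPm$ condition (if $b \in \orientation\negative$) yields $b \less c$, in either case giving the $\IWOIP$ condition. Hence under covering, $\IPIP = \IPIPpm$, and dually $\DPIP = \DPIPpm$, so $\PIP = \IPIPpm \cap \DPIPpm$.

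Second, Corollary~\ref{coro:orientationConflictFunctionsSemiSublattices} already gives that $\IPIP$ is a meet-semi-sublattice and $\DPIP$ is a join-semi-sublattice of $(\IPos(n), \wole, \meetT, \joinT)$. It therefore suffices to establish the complementary closures: $\IPIP$ under $\joinT$ and $\DPIP$ under $\meetT$. The statement for $\PIP$ will then follow immediately from $\PIP = \IPIP \cap \DPIP$.

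Third, to show $\IPIP$ is closed under $\joinT$, I will fix $\less, \bless \in \IPIP$ and set ${\dashv} \eqdef {\less} \joinT {\bless} = {\tid{\rel[J]}}$ with $\rel[J] = ({\Inc{\less}} \cap {\Inc{\bless}}) \cup \tc{({\Dec{\less}} \cup {\Dec{\bless}})}$. For any $a < b < c$ with $b \in \orientation\positive$ and $a \dashv c$ (the case $b \in \orientation\negative$ being symmetric), the inclusion ${\Inc{\dashv}} \subseteq {\Inc{\rel[J]}} = {\Inc{\less}} \cap {\Inc{\bless}}$ yields $a \less c$ and $a \bless c$, and the $\IPIPp$ property of $\less$ and $\bless$ then gives $a \less b$ and $a \bless b$, so $(a,b) \in \Inc{\rel[J]}$. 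The crux of the proof is verifying that $(a,b)$ survives the $\tid$ deletion. The plan is to argue by contradiction, selecting such a counterexample $(a,b)$ with $b - a$ minimal: if $(a,b)$ were removed then there would exist $i \ge a$ and $j \le b$ with $i \rel[J] a \rel[J] b \rel[J] j$ while $i \notrel[J] j$, and a simplification analogous to Lemma~\ref{lem:simplifytdd} extracts an intermediate index $k$ with $a < k < b$ arising from the $\rel[J]$-chains; the covering condition places $k \in \orientation\positive \cup \orientation\negative$, and the $\IPIPp$ or $\IPIPm$ property of $\less$ or $\bless$ applied at~$k$ produces a strictly smaller counterexample, contradicting minimality.

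The closure of $\DPIP$ under $\meetT$ then follows by the symmetric argument, using $\tdd$ in place of $\tid$ and the decreasing variant of the $\IPIPpm$ conditions (or by invoking the reverse anti-automorphism of Remark~\ref{remark:reverse}, noting that reversal preserves the covering property). The main obstacle is the survival check for $(a,b)$ under $\tid$ in the third step: the decreasing chains in $\tc{({\Dec{\less}} \cup {\Dec{\bless}})}$ can freely mix edges from $\less$ and $\bless$, so the contradiction must be localized using both the minimality of $b - a$ and the fact that covering forces every intermediate position to carry a meaningful $\orientation$-constraint.
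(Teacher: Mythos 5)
Your proposal is correct and follows essentially the same route as the paper, up to symmetry: the paper proves the $\meetT$-closure of~$\DPIP$ and deduces the rest, while you prove the $\joinT$-closure of~$\IPIP$, but the ingredients are identical (covering gives $\IPIP = \IPIPpm$ and $\DPIP = \DPIPpm$, Corollary~\ref{coro:orientationConflictFunctionsSemiSublattices} supplies one closure, the orientation constraints pass to the semitransitive join, and survival under the transitive deletion is forced by Lemma~\ref{lem:simplifytdd}, covering, and minimality). Be aware only that your one-line minimality argument for the survival step is thinner than what is actually needed: in one of the two cases of Lemma~\ref{lem:simplifytdd} the paper must first verify that the induced relation between the intermediate index~$k$ and the outer endpoint~$c$ itself survives the deletion (a short semitransitivity argument), and the contradiction there invokes minimality of the conflicting pair~$\{a,c\}$ rather than of the witness~$b$, so minimizing $b-a$ alone does not immediately close the argument.
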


\begin{proof}
We only prove the result for~$\DPIP$. It then follows by symmetry for~$\IPIP$, which in turn implies the result for~$\PIP$ since~$\PIP = \IPIP \cap \DPIP$. We already know from Corollary~\ref{coro:orientationConflictFunctionsSemiSublattices} that~$\DPIP$ is stable by~$\joinT$ and it remains to show that it is stable by~$\meetT$.
We thus consider two posets~${\less,\bless} \in \DPIP$ and let~${{\dashv} \eqdef {\less} \meetST {\bless} = \tc{({\Inc{\less}} \cup {\Inc{\bless}})} \cup ({\Dec{\less}} \cap {\Dec{\bless}})}$, so that~${\less} \meetT {\bless} = {\tdd{\dashv}}$. We decompose the proof in two steps, whose detailed proofs are given in Appendix~\ref{subsec:appendixCoveringOrientationConflictFunctionsSublatticesIPos}.

\vspace{-.1cm}
\begin{claim}
\label{claim:dashvDPIP}
${\dashv}$ is in~$\DPIP$.
\end{claim}

\vspace{-.3cm}
\begin{claim}
\label{claim:tdddashvDPIP}
${\tdd{\dashv}}$ is in~$\DPIP$. \qedhere
\end{claim}
\end{proof}

\begin{corollary}
\label{coro:TOIPCOIPBOIPsublattices}
The weak order on interval posets in the Tamari lattice, in any type~$A_n$ Cambrian lattice, and in the boolean lattice are all sublattices of~$(\IPos(n), \wole, \meetT, \joinT)$.
\end{corollary}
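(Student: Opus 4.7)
The plan is simply to identify each of the three lattices as the permutree lattice~$\PIP$ for a specific orientation~$\orientation$ and verify that this orientation is covering, so that Theorem~\ref{thm:coveringOrientationConflictFunctionsSublatticesIPos} applies directly.

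Recall from the table and from the discussion in Sections~\ref{sec:relevantFamiliesAssociahedron} and~\ref{sec:relevantFamiliesPermutreehedra} that the Tamari lattice on~$\fB(n)$ is the permutree lattice for the orientation~$\orientation = (n, \varnothing, [n])$, that a type~$A_n$ Cambrian lattice corresponds to an orientation~$\orientation$ with~${\orientation\positive \sqcup \orientation\negative = [n]}$, and that the boolean lattice corresponds to~$\orientation = (n, [n], [n])$. For each of these orientations, Proposition~\ref{prop:weakOrderPIP} and Corollary~\ref{coro:PIPLattice} identify the weak order on the corresponding set~$\PIP$ of permutree interval posets (\ie $\TOIP(n)$, $\COIP$ and $\BOIP(n)$ respectively) with the product of the lattice with itself, which is the lattice of intervals.

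The key observation is that in all three cases we have~$[n] \subseteq \orientation\positive \cup \orientation\negative$, and in particular~$\{2, \dots, n-1\} \subseteq \orientation\positive \cup \orientation\negative$, so that~$\orientation$ is a covering orientation in the sense defined just before Theorem~\ref{thm:coveringOrientationConflictFunctionsSublatticesIPos}. Applying that theorem to each of these three covering orientations yields immediately that~$\TOIP(n)$, $\COIP$ and~$\BOIP(n)$ all induce sublattices of~$(\IPos(n), \wole, \meetT, \joinT)$. There is no obstacle here: the entire content of the corollary is the instantiation of the general theorem on the three specific covering orientations corresponding to these classical lattices.
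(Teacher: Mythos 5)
Your proof is correct and follows exactly the paper's own argument: identify the Tamari, Cambrian, and boolean interval lattices as $\TOIP(n)$, $\COIP$, $\BOIP(n)$, i.e.\ as $\PIP$ for the orientations $(n,\varnothing,[n])$, $\orientation\positive \sqcup \orientation\negative = [n]$, and $(n,[n],[n])$ respectively, note that each is covering, and apply Theorem~\ref{thm:coveringOrientationConflictFunctionsSublatticesIPos}. No issues.
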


\begin{proof}
Apply Theorem~\ref{thm:coveringOrientationConflictFunctionsSublatticesIPos} to the orientations illustrated in~\fref{fig:permutrees}: the Tamari lattice is the lattice~$\PIP[\protect{\varnothing, [n]}]$, the Cambrian lattices are the lattices~$\PIP[\orientation\positive, \orientation\negative]$ for all partitions~${\orientation\positive \sqcup \orientation\negative = [n]}$, and the boolean lattice is the lattice~$\PIP[\protect{[n], [n]}]$.
\end{proof}

\begin{remark}
\label{rem:notCoveringNotSublattice}
The covering condition is essential to the proof of Theorem~\ref{thm:coveringOrientationConflictFunctionsSublatticesIPos}. For example, Remark~\ref{rem:WOIPnotSublattice} shows that~$\WOIP(n) = \PIP[\varnothing, \varnothing]$ does not induce a sublattice of~$(\IPos(n), \wole, \meetT, \joinT)$.
\end{remark}


\para{$\PIP$ induces a sublattice of~$\WOIP(n)$}
We now consider an arbitrary orientation~$\orientation$, not necessarily covering. Although~$\PIP$ does not always induce a sublattice of~${(\IPos(n), \wole, \meetT, \joinT)}$, we show that it always induces a sublattice of~$(\WOIP(n), \wole, \meetWOIP, \joinWOIP)$.

\begin{theorem}
\label{thm:orientationConflictFunctionsSublatticesWOIP}
For any orientation~$\orientation$ and any~$\varepsilon \in \{\varnothing, -, +, \pm\}$, the set~$\IPIPe$ (resp.~$\DPIPe$) induces a sublattice of~$(\WOIP(n), \wole, \meetWOIP, \joinWOIP)$.
\end{theorem}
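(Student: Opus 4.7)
The plan is to handle the $\IPIPe$ statement, since the $\DPIPe$ statement follows by applying the reversal antiautomorphism of Remark~\ref{remark:reverse} (which swaps $\IPIPe$ with $\DPIPe$ and $\meetWOIP$ with $\joinWOIP$). By Corollary~\ref{coro:WOIPMeetJoin}, this reduces to showing that, for $\less, \bless \in \IPIPe$, both $\DWOIPdd{(\less \meetT \bless)}$ and $\IWOIPid{(\less \joinT \bless)}$ remain in $\IPIPe$.

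The meet closure is quick. Corollary~\ref{coro:orientationConflictFunctionsSemiSublattices} gives $\less \meetT \bless \in \IPIPe$. By Lemma~\ref{lem:orientationConflictFunctionProperties} the conflict function $\conflicts_{\IPIPe}$ is increasing and consistent, so its value depends only on the increasing part of a relation; since $\DWOIPdd$ erases only decreasing relations, no new $\IPIPe$-conflict is created, giving $\DWOIPdd{(\less \meetT \bless)} \in \IPIPe$.

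The join closure is the main obstacle. Setting $\rel \eqdef \less \joinT \bless$ and $\rel' \eqdef \IWOIPid{\rel}$, I will argue by contradiction. Assume $\rel' \notin \IPIPe$ and pick a conflict $(a, c) \in \conflicts_{\IPIPe}(\rel')$ with $c - a$ minimal, together with a witness $a < b < c$ (lying in $\orientation\positive$ for $\varepsilon = +$, in $\orientation\negative$ for $\varepsilon = -$, and similarly in the $\pm$ and $\varnothing$ cases) for which the required intermediate relation is missing from $\rel'$. Since $(a, c) \in \rel' \subseteq \rel$ is increasing and $\Inc{\rel} \subseteq \Inc{\less} \cap \Inc{\bless}$ by the formula for $\joinT$, applying the $\IPIPe$-condition to $\less$ and to $\bless$ separately yields $(a, b) \in \Inc{\less} \cap \Inc{\bless}$. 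Thus $(a, b)$ appears in the input $(\Inc{\less} \cap \Inc{\bless}) \cup \tc{(\Dec{\less} \cup \Dec{\bless})}$ of the $\tid$ inside $\joinT$, but is absent from $\rel'$, so it must have been deleted either by that $\tid$ or by the subsequent $\IWOIPid$.

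In the $\IWOIPid$ branch I extract the witnessing chain $a < d_1 < \dots < d_k < b$ with $(a, d_1), (d_1, d_2), \dots, (d_k, b) \notin \rel$; either $(b, c) \notin \rel$ and concatenating $c$ at the end kills $(a, c)$ in $\rel'$ (contradiction), or $(b, c) \in \rel$, in which case the contrapositive of $\IPIPe$ applied to $\less$ and $\bless$ propagates $(d_k, b) \notin \Inc{\less} \cap \Inc{\bless}$ to $(d_k, c) \notin \Inc{\less} \cap \Inc{\bless}$, and the chain $a < d_1 < \dots < d_k < c$ still kills $(a, c)$. In the $\tid$ branch, a pair $(i, j)$ with $i \geq a$ and $j \leq b$ witnessing the $\tid$-deletion of $(a, b)$ is shown to also witness the $\tid$-deletion of $(a, c)$, using $a \less c$ and $a \bless c$ together with transitivity of the posets $\less$ and $\bless$ to obtain $(c, j) \in \tc{(\Dec{\less} \cup \Dec{\bless})}$, contradicting $(a, c) \in \rel$. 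The hardest step will be executing this chain-and-transitivity propagation uniformly across all four values of $\varepsilon$, which I expect to isolate in an auxiliary sub-lemma describing how $\IPIPe$-conflicts interact with the $\tid$-deletions inside $\joinT$.
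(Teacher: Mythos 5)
Your skeleton is the mirror image of the paper's own proof (which, by the same symmetry you invoke, reduces to~$\DPIPm$ and treats the \emph{meet} as the hard case): reduce via Corollary~\ref{coro:WOIPMeetJoin}, dispose of the operation whose direction matches that of the conflict function using Corollary~\ref{coro:orientationConflictFunctionsSemiSublattices} together with the fact that the remaining deletion only removes relations of the opposite direction, and attack the other operation through a minimal conflict~$\{a,c\}$ whose witness relation~$(a,b)$ lies in~$\Inc{\less} \cap \Inc{\bless}$ and must therefore have been erased either by the transitive increasing deletion inside~$\joinT$ or by the subsequent $\IWOIP$ increasing deletion. Up to and including that observation your proposal agrees with the paper. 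The gap is in the endgame: you try to show that~$(a,c)$ itself would have been deleted, and both branches of that argument fail as written. In the $\IWOIP$-deletion branch you pass from~$(d_k,b) \notin \rel$ to~$(d_k,b) \notin \Inc{\less} \cap \Inc{\bless}$, but~$\Inc{\rel}$ is only \emph{contained} in~$\Inc{\less} \cap \Inc{\bless}$: the pair~$(d_k,b)$ may lie in both posets and have been removed by the~$\tid{(\cdot)}$ step inside~$\joinT$, in which case the contrapositive of the $\IPIPe$ condition tells you nothing about~$(d_k,c)$. In the $\tid{(\cdot)}$ branch you need~$(c,j) \in \tc{(\Dec{\less} \cup \Dec{\bless})}$ to reuse the witness~$(i,j)$, but all you have is~$(b,j)$ in that set together with the \emph{increasing} relations~$a \less c$ and~$a \bless c$; there is no decreasing relation from~$c$ down to~$b$ to transit through, and transitivity of~$\less$ and~$\bless$ does not produce one. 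Tellingly, you fix a conflict with~$c-a$ minimal but never use the minimality.

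The paper closes both branches by a different mechanism, which is the idea your proposal is missing: instead of killing~$(a,c)$, it \emph{transfers the conflict to a strictly smaller pair}. In each branch one extracts some~$a < k < b$ with both~$(a,k)$ and~$(k,b)$ absent from~$\rel'$ --- in the~$\tid{(\cdot)}$ branch this is exactly the content of the increasing analogue of Lemma~\ref{lem:simplifytdd}, which you never invoke, and in the $\IWOIP$-deletion branch one may take~$k = d_1$, the first element of the witnessing chain (the tail~$d_1 < \dots < d_k < b$ then forces~$(d_1,b) \notin \rel'$ even when~$(d_1,b) \in \rel$). Since~$\rel' = {\less} \joinWOIP {\bless}$ lies in~$\WOIP(n) \subseteq \IWOIP(n)$, the relation~$(a,c) \in \rel'$ together with~$(a,k) \notin \rel'$ forces~$(k,c) \in \rel'$ by Proposition~\ref{prop:characterizationIWOIPDWOIP}; combined with~$(k,b) \notin \rel'$ and~$k < b < c$ this exhibits~$\{k,c\}$ as an~$\IPIPe$ conflict of~$\rel'$ with~$c - k < c - a$, contradicting minimality. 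This is precisely the ``auxiliary sub-lemma'' you defer to at the end; as it stands, your proposal sets up the proof correctly but does not carry out its decisive step, and the propagation sketched in its place is not valid.
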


\begin{proof}
By symmetry, it suffices to prove the result for~$\DPIPm$. Let~${\less}, {\bless} \in \DPIPm$. We already know from Corollary~\ref{coro:orientationConflictFunctionsSemiSublattices} that~${\less} \joinT {\bless} \in \DPIPm$. Since $\conflicts_{\DPIPm}$ is a decreasing conflict function and since the $\IWOIP$ increasing deletion only deletes increasing relations, we thus obtain that
\[
{\less} \joinWOIP {\bless} = \IWOIPid{({\less} \joinT {\bless})} \in \DPIPm.
\]
It remains to prove that
\[
{\less} \meetWOIP {\bless} = \DWOIPdd{({\less} \meetT {\bless})} \in \DPIPm.
\]
For this, let us denote~${\dashv} \eqdef {\less} \meetST {\bless} = \tc{({\Inc{\less}} \cup {\Inc{\bless}})} \cup ({\Dec{\less}} \cap {\Dec{\bless}})$ and~${\dashV} \eqdef {\less} \meetWOIP {\bless}$ so that~${\dashV} = {\DWOIPdd{(\tdd{\dashv})}}$. As in the proof of Theorem~\ref{thm:coveringOrientationConflictFunctionsSublatticesIPos}, we know that~${\dashv} \in \DPIPm$. Assume now that~${\dashV} \notin\DPIPm$. Consider~$\{a,c\} \in \conflicts_{\DPIP}(\dashV)$ with~$a < c$ and~$c-a$ minimal. We therefore have~$a \Vdash c$ while there exists~$a < b < c$ with~$b \in \orientation\negative$ and~$a \not\Vdash b$. Note that since~${\dashv} \in \DPIPm$, we have~$a \vdash b$. We now distinguish two cases:
\begin{itemize}
\item If~$a \not\tdd{\vdash} b$, then there exists~$i \le b$ and~$j \ge a$ such that~$i \dashv b \dashv a \dashv j$ but~$i \not\dashv j$. From Lemma~\ref{lem:simplifytdd}, we know that there exists~$a < k < b$ such that~$a \not\tdd{\vdash} k \not\tdd{\vdash} b$.
\item If~$a \tdd{\vdash} b$, then there exists~$a < k_1 < \dots < k_\ell < b$ such that~$a \not\tdd{\vdash} k_1 \not\tdd{\vdash} \dots \not\tdd{\vdash} k_\ell \not\tdd{\vdash} b$.
\end{itemize}
In both cases, there exists~$a < k < b$ such that~$a \not\Vdash k \not\Vdash b$. Since~$\dashV \in \IWOIP$ and~$a \Vdash c$ while~$a \not\Vdash k$, we must have~$k \Vdash c$. But since~$k \not\Vdash b$, we then have~$\{k,c\} \in \conflicts_{\DPIP}(\dashV)$ contradicting the minimality of~$c-a$ in our choice of~$\{a,c\}$.
\end{proof}

\begin{corollary}
\label{coro:orientationConflictFunctionsSublatticesWOIP}
For any orientation~$\orientation$, $\PIP$ induces a sublattice of~$(\WOIP(n), \wole, \meetWOIP, \joinWOIP)$.
\end{corollary}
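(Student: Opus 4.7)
The plan is very short: this corollary should follow immediately from the preceding theorem by intersecting two sublattices. First I would recall from Corollary~\ref{coro:characterizationPIP} (together with Proposition~\ref{prop:characterizationIPIPDPIP}) that the characterization of $\PIP$ decomposes as $\PIP = \IPIP \cap \DPIP$, where both conditions include the corresponding $\WOIP$ condition and the $\IPIPpm$ (resp.~$\DPIPpm$) condition. Hence a poset belongs to $\PIP$ if and only if it belongs both to $\IPIP$ and to $\DPIP$.

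Next I would invoke Theorem~\ref{thm:orientationConflictFunctionsSublatticesWOIP} for $\varepsilon = \varnothing$: it asserts that $\IPIP$ and $\DPIP$ each induce a sublattice of $(\WOIP(n), \wole, \meetWOIP, \joinWOIP)$. In other words, for any ${\less},{\bless} \in \IPIP$, both ${\less}\meetWOIP{\bless}$ and ${\less}\joinWOIP{\bless}$ stay in $\IPIP$, and likewise for $\DPIP$.

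Finally I would use the (elementary) fact that the intersection of two sublattices of a lattice is again a sublattice: if ${\less},{\bless} \in \IPIP \cap \DPIP = \PIP$, then ${\less}\meetWOIP{\bless}$ lies in $\IPIP$ (by stability of $\IPIP$ under $\meetWOIP$) and in $\DPIP$ (by stability of $\DPIP$ under $\meetWOIP$), hence in $\PIP$; the same argument applies to $\joinWOIP$. Thus $\PIP$ is closed under $\meetWOIP$ and $\joinWOIP$ in $\WOIP(n)$, which is precisely the claim.

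There is essentially no obstacle here: the technical work has been carried out in Theorem~\ref{thm:orientationConflictFunctionsSublatticesWOIP}, and the present statement is a direct corollary. The only thing to double-check is that the decomposition $\PIP = \IPIP \cap \DPIP$ is indeed the one read off from Corollary~\ref{coro:characterizationPIP}, so that the two applications of Theorem~\ref{thm:orientationConflictFunctionsSublatticesWOIP} cover all the conflict functions appearing in $\conflicts_{\PIP} = \conflicts_{\IPIP} \cup \conflicts_{\DPIP}$.
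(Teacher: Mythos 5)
Your proposal is correct and matches the paper's own argument exactly: the paper also deduces the corollary as an immediate consequence of Theorem~\ref{thm:orientationConflictFunctionsSublatticesWOIP} together with the decomposition $\PIP = \IPIP \cap \DPIP$, the intersection of two sublattices being a sublattice. Nothing further is needed.
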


\begin{proof}
Immediate consequence of Theorem~\ref{thm:orientationConflictFunctionsSublatticesWOIP} as~$\PIP = \IPIP \cap \DPIP$.
\end{proof}


\subsection{Elements}
\label{subsec:elementsSublattices}

We now consider lattice properties of the weak order on permutree element posets~$\PEP$.
Similarly to the previous section, the present section has two main goals:
\begin{enumerate}[(i)]
\item provide a sufficient condition on~$\orientation$ for~$\PEP$ to induce a sublattice of~$(\IPos(n), \wole, \meetT, \joinT)$,
\item show that~$\PEP$ induces a sublattice of $(\WOIP(n), \wole, \meetWOIP, \joinWOIP)$ for any orientation~$\orientation$.
\end{enumerate}
We start with a simple observation.

\begin{proposition}
\label{prop:elementsSublatticeIntervals}
The set~$\PEP$ induces a sublattice of $(\PIP, \wole, \meetPIP, \joinPIP)$ for any orientation~$\orientation$.
\end{proposition}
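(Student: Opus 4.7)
The plan is to exploit the fact that every element of~$\PEP$ sits inside~$\PIP$ as a degenerate interval. Indeed, for any permutree~$\tree \in \Permutrees$, the trivial interval~$[\tree, \tree]$ has poset~${\less_{[\tree, \tree]}} = {\less_\tree}$, so~$\PEP$ embeds naturally into~$\PIP$ via~${\less_\tree} = {\less_{[\tree, \tree]}}$.

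Once this identification is in place, closure under~$\meetPIP$ and~$\joinPIP$ is an immediate consequence of the product formula of Corollary~\ref{coro:PIPLattice}. Namely, for any two permutrees~$\tree, \tree' \in \Permutrees$,
\[
{\less_\tree} \meetPIP {\less_{\tree'}} = {\less_{[\tree, \tree]}} \meetPIP {\less_{[\tree', \tree']}} = {\less_{[\tree \meetO \tree', \, \tree \meetO \tree']}} = {\less_{\tree \meetO \tree'}} \in \PEP,
\]
and symmetrically
\[
{\less_\tree} \joinPIP {\less_{\tree'}} = {\less_{[\tree \joinO \tree', \, \tree \joinO \tree']}} = {\less_{\tree \joinO \tree'}} \in \PEP.
\]
Thus the subset~$\PEP \subseteq \PIP$ is stable under both lattice operations of~$(\PIP, \wole, \meetPIP, \joinPIP)$, which is exactly the sublattice condition.

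There is essentially no obstacle: the whole content has already been packaged into Corollary~\ref{coro:PIPLattice}, whose product formula applied to intervals with coinciding endpoints collapses back to single permutrees. The only thing to check is that the induced order on~$\PEP$ inside~$(\PIP, \wole)$ matches the one given in Proposition~\ref{prop:weakOrderPEP}, but this is built into the definition~${\less_\tree} = {\less_{[\tree, \tree]}}$ together with Proposition~\ref{prop:weakOrderPIP}.
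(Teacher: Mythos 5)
Your argument is correct and is essentially identical to the paper's own proof: both identify each permutree element poset with the degenerate interval~${\less_{\tree}} = {\less_{[\tree,\tree]}}$ and then apply the product formula of Corollary~\ref{coro:PIPLattice}, under which intervals with coinciding endpoints remain intervals with coinciding endpoints. Nothing further is needed.
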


\begin{proof}
We have seen in Corollary~\label{coro:PIPLattice} that the meet and join in~$\PIP$ are given by
\[
{\less_{[\tree[S],\tree[S]']}} \meetPIP {\less_{[\tree,\tree']}} = {\less_{[\tree[S] \meetO \tree, \, \tree[S]' \meetO \tree']}}
\qquad\text{and}\qquad
{\less_{[\tree[S],\tree[S]']}} \joinPIP {\less_{[\tree,\tree']}} = {\less_{[\tree[S] \joinO \tree, \, \tree[S]' \joinO \tree']}}.
\]
Therefore, for any~$\tree[S], \tree[T] \in \PEP$, we have
\[
\less_{\tree[S]} \meetPIP \less_{\tree[T]} = \less_{[\tree[S], \tree[S]]} \meetPIP \less_{[\tree[T], \tree[T]]} = \less_{[\tree[S] \meetO \tree[T], \tree[S] \meetO \tree[T]]} = \less_{\tree[S] \meetO \tree[T]}.
\qedhere
\]
\end{proof}

Proposition~\ref{prop:elementsSublatticeIntervals} enables to show Theorems~\ref{thm:coveringOrientationElementsSublatticeIPos} and~\ref{thm:nonCoveringOrientationElementsSublatticeWOIP} below.

\begin{theorem}
\label{thm:coveringOrientationElementsSublatticeIPos}
For any covering orientation~$\orientation$, $\PEP$ induces a sublattice of $(\IPos(n), \wole, \meetT, \joinT)$.
\end{theorem}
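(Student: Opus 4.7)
The plan is to deduce this theorem as an immediate combination of two earlier results, since it is exactly the intersection of two already-established sublattice properties. Concretely, we have $\PEP \subseteq \PIP$ (via $\tree \mapsto [\tree,\tree]$), Proposition~\ref{prop:elementsSublatticeIntervals} tells us that $\PEP$ induces a sublattice of $(\PIP, \wole, \meetPIP, \joinPIP)$ for any orientation, and Theorem~\ref{thm:coveringOrientationConflictFunctionsSublatticesIPos} tells us that when $\orientation$ is covering, $\PIP$ induces a sublattice of $(\IPos(n), \wole, \meetT, \joinT)$. Chaining these two sublattice inclusions immediately yields the claim.

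More precisely, I would take two permutrees $\tree, \tree' \in \Permutrees$ and consider the corresponding element posets $\less_{\tree}, \less_{\tree'} \in \PEP \subseteq \PIP$. From Theorem~\ref{thm:coveringOrientationConflictFunctionsSublatticesIPos}, applied under the covering hypothesis on $\orientation$, the meet $\less_{\tree} \meetT \less_{\tree'}$ and the join $\less_{\tree} \joinT \less_{\tree'}$ computed in $\IPos(n)$ both lie in $\PIP$, and moreover they coincide with $\less_{\tree} \meetPIP \less_{\tree'}$ and $\less_{\tree} \joinPIP \less_{\tree'}$ respectively. By Proposition~\ref{prop:elementsSublatticeIntervals}, these last two posets are $\less_{\tree \meetO \tree'}$ and $\less_{\tree \joinO \tree'}$, which belong to $\PEP$. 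Thus $\PEP$ is stable under both $\meetT$ and $\joinT$, which is exactly the sublattice statement.

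I do not expect any genuine obstacle here: the content of the theorem is packed into the covering hypothesis (needed to invoke Theorem~\ref{thm:coveringOrientationConflictFunctionsSublatticesIPos}) and into Proposition~\ref{prop:elementsSublatticeIntervals}, whose proof already exhibits the explicit formulas $\less_{\tree} \meetPIP \less_{\tree'} = \less_{\tree \meetO \tree'}$ and $\less_{\tree} \joinPIP \less_{\tree'} = \less_{\tree \joinO \tree'}$. The one point worth being careful about is making sure one is applying the right sublattice statement in the right direction: Proposition~\ref{prop:elementsSublatticeIntervals} is what guarantees stability of $\PEP$ under the $\PIP$ operations, while Theorem~\ref{thm:coveringOrientationConflictFunctionsSublatticesIPos} is what guarantees that the $\PIP$ operations are restrictions of the $\IPos(n)$ operations. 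Neither step by itself suffices, and the covering hypothesis is essential only for the second one, matching the discussion in Remark~\ref{rem:notCoveringNotSublattice}.
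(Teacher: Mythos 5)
Your proposal is correct and is essentially the paper's own proof: the paper derives the theorem in one line by chaining Proposition~\ref{prop:elementsSublatticeIntervals} (which gives that $\PEP$ induces a sublattice of $(\PIP, \wole, \meetPIP, \joinPIP)$) with Theorem~\ref{thm:coveringOrientationConflictFunctionsSublatticesIPos} (which, under the covering hypothesis, gives that $\PIP$ induces a sublattice of $(\IPos(n), \wole, \meetT, \joinT)$). Your added remark that closure of $\PIP$ under $\meetT$ and $\joinT$ forces $\meetPIP$ and $\joinPIP$ to coincide with the restrictions of $\meetT$ and $\joinT$ is the correct justification for why the two sublattice statements compose.
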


\begin{proof}
$\PEP$ induces a sublattice of $(\PIP, \wole, \meetPIP, \joinPIP)$ (by Proposition~\ref{prop:elementsSublatticeIntervals}), which in turn is a sublattice of $(\IPos(n), \wole, \meetT, \joinT)$ when~$\orientation$ is covering (by Theorem~\ref{thm:coveringOrientationConflictFunctionsSublatticesIPos}).
\end{proof}

\begin{corollary}
\label{coro:TOEPCOEPBOEPsublattices}
The Tamari lattice, any type~$A_n$ Cambrian lattice, and the boolean lattice are all sublattices of~$(\IPos(n), \wole, \meetT, \joinT)$.
\end{corollary}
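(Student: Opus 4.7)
The plan is to recognize this as a direct specialization of the immediately preceding theorem. First I would recall from Figure~\ref{fig:permutrees} and the accompanying table that each of the three lattices arises as the permutree lattice~$\PEP$ for a specific orientation: the Tamari lattice corresponds to~$\orientation = (n, \varnothing, [n])$, any type~$A_n$ Cambrian lattice corresponds to an orientation~$\orientation = (n, \orientation\positive, \orientation\negative)$ with~$\orientation\positive \sqcup \orientation\negative = [n]$, and the boolean lattice corresponds to~$\orientation = (n, [n], [n])$. By Proposition~\ref{prop:weakOrderPEP}, the bijection~$\tree \mapsto {\less_{\tree}}$ is an isomorphism from each of these lattices onto the corresponding set~$\TOEP(n)$, $\COEP$, or~$\BOEP(n)$ viewed as subposets of~$(\IPos(n), \wole)$.

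Next I would verify that each of these three orientations is covering in the sense of Section~\ref{subsec:intervalsSublattices}, namely that~$\{2, \dots, n-1\} \subseteq \orientation\positive \cup \orientation\negative$. This is immediate in all three cases since the union~$\orientation\positive \cup \orientation\negative$ is exactly~$[n]$.

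Finally, I would invoke Theorem~\ref{thm:coveringOrientationElementsSublatticeIPos}, which asserts that for any covering orientation~$\orientation$ the set~$\PEP$ induces a sublattice of~$(\IPos(n), \wole, \meetT, \joinT)$. Applied to the three orientations above, this yields the conclusion for the Tamari lattice, every type~$A_n$ Cambrian lattice, and the boolean lattice simultaneously. There is no genuine obstacle here: the entire content of the corollary is packaged into Theorem~\ref{thm:coveringOrientationElementsSublatticeIPos}, and the only thing to check is the (trivial) covering condition for each of the listed orientations.
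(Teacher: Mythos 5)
Your proposal is correct and matches the paper's proof exactly: both identify the Tamari, Cambrian, and boolean lattices as permutree lattices for the orientations $(n,\varnothing,[n])$, $\orientation\positive \sqcup \orientation\negative = [n]$, and $(n,[n],[n])$ respectively, note that these are covering, and apply Theorem~\ref{thm:coveringOrientationElementsSublatticeIPos}. Your explicit verification of the covering condition is a small but welcome addition that the paper leaves implicit.
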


\begin{proof}
Apply Theorem~\ref{thm:coveringOrientationElementsSublatticeIPos} to the orientations illustrated in~\fref{fig:permutrees}: the Tamari lattice is the lattice~$\PIP[\protect{\varnothing, [n]}]$, the Cambrian lattices are the lattices~$\PIP[\orientation\positive, \orientation\negative]$ for all partitions~${\orientation\positive \sqcup \orientation\negative = [n]}$, and the boolean lattice is the lattice~$\PIP[\protect{[n], [n]}]$.
\end{proof}

\begin{remark}
\label{rem:PEPnotSublatticePos}
Note that the covering condition in Theorem~\ref{thm:coveringOrientationElementsSublatticeIPos} is necessary in general. For example, for the orientation~$\orientation = (5, \{2\}, \{4\})$ on~$[5]$, the lattice $(\PEP, \wole, \meetPEP, \joinPEP)$ is not a sublattice of~$(\IPos(5), \wole, \meetT, \joinT)$. For example, for
\vspace{-.8cm}
\begin{align*}
{\less} = \raisebox{-1.22cm}{\scalebox{.8}{\input{relations/pep_meet1}}}
\qquad \text{and} \qquad
{\bless} = \raisebox{-1.22cm}{\scalebox{.8}{\input{relations/pep_meet2}}}
\end{align*}

\vspace{-.5cm}
\noindent
we have
\vspace{-.5cm}
\begin{align*}
{\less} \meetT {\bless} = \raisebox{-1.22cm}{\scalebox{.8}{\input{relations/pep_transitive_meet}}}
\qquad \text{while} \qquad
{\less} \meetPEP {\bless} = \raisebox{-1.22cm}{\scalebox{.8}{\input{relations/pep_woip_meet}}} .
\end{align*}
\end{remark}

\vspace{-.2cm}
However, for arbitrary orientation, we can still obtain the following weaker statement.

\begin{theorem}
\label{thm:nonCoveringOrientationElementsSublatticeWOIP}
For any orientation~$\orientation$, $\PEP$ induces a sublattice of~$(\WOIP(n), \wole, \meetWOIP, \joinWOIP)$.
\end{theorem}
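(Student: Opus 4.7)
The plan is to obtain the theorem as an immediate consequence of two earlier results: Proposition~\ref{prop:elementsSublatticeIntervals}, which asserts that $\PEP$ induces a sublattice of~$(\PIP, \wole, \meetPIP, \joinPIP)$, and Corollary~\ref{coro:orientationConflictFunctionsSublatticesWOIP}, which asserts that $\PIP$ induces a sublattice of~$(\WOIP(n), \wole, \meetWOIP, \joinWOIP)$. Both of these hold for arbitrary orientations~$\orientation$, with no covering hypothesis, so combining them will yield the desired conclusion without any additional restriction on~$\orientation$.

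The argument will then reduce to a simple transitivity observation: since $\PEP \subseteq \PIP \subseteq \WOIP(n)$, for any ${\less}, {\bless} \in \PEP$ I will first apply Corollary~\ref{coro:orientationConflictFunctionsSublatticesWOIP} to the elements ${\less}, {\bless} \in \PIP$ to conclude that
\[
{\less} \meetWOIP {\bless} = {\less} \meetPIP {\bless} \qquad \text{and} \qquad {\less} \joinWOIP {\bless} = {\less} \joinPIP {\bless},
\]
and then apply Proposition~\ref{prop:elementsSublatticeIntervals} to ${\less}, {\bless} \in \PEP$ to conclude that these coincide in turn with ${\less} \meetPEP {\bless}$ and ${\less} \joinPEP {\bless}$, both of which lie in~$\PEP$. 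Consequently $\PEP$ is stable under the operations $\meetWOIP$ and $\joinWOIP$ of the ambient lattice $(\WOIP(n), \wole, \meetWOIP, \joinWOIP)$, which is exactly the statement of the theorem.

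There is essentially no obstacle here beyond checking that the two cited results are indeed available for arbitrary orientations (which they are, since both are stated without the covering hypothesis used in Theorems~\ref{thm:coveringOrientationConflictFunctionsSublatticesIPos} and~\ref{thm:coveringOrientationElementsSublatticeIPos}). The example in Remark~\ref{rem:PEPnotSublatticePos} shows that the analogous statement fails for $(\IPos(n), \wole, \meetT, \joinT)$ when $\orientation$ is not covering; the content of the present theorem is precisely that passing to the sublattice $\WOIP(n) \subseteq \IPos(n)$ repairs this defect uniformly across all orientations.
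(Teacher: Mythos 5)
Your proposal is correct and is essentially identical to the paper's own proof, which also deduces the theorem by composing Proposition~\ref{prop:elementsSublatticeIntervals} with Corollary~\ref{coro:orientationConflictFunctionsSublatticesWOIP}; you merely spell out the transitivity of the sublattice relation that the paper leaves implicit.
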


\begin{proof}
$\PEP$ induces a sublattice of $(\PIP, \wole, \meetPIP, \joinPIP)$ (by Proposition~\ref{prop:elementsSublatticeIntervals}), which in turn induces a sublattice of $(\WOIP(n), \wole, \meetWOIP, \joinWOIP)$ (by Corollary~\ref{coro:orientationConflictFunctionsSublatticesWOIP}).
\end{proof}

Finally, let us give a poset proof of Proposition~\ref{prop:elementsSublatticeIntervals}.
Recall from Section~\ref{subsec:characterizationPEP} and \fref{fig:snakes} that, for an orientation~$\orientation$ of~$[n]$, an \defn{$\orientation$-snake} in a poset~$\less$ is a sequence~$x_0 < x_1 < \dots < x_k < x_{k+1}$ such that
\begin{itemize}
\item either~$x_0 \less x_1 \more x_2 \less x_3 \more \cdots$ with~$\set{x_i}{i \in [k] \text{ odd}} \subseteq \orientation\negative$ and $\set{x_i}{i \in [k] \text{ even}} \subseteq \orientation\positive$, 
\item or \quad\; $x_0 \more x_1 \less x_2 \more x_3 \less \cdots$ with~$\set{x_i}{i \in [k] \text{ odd}} \subseteq \orientation\positive$ and $\set{x_i}{i \in [k] \text{ even}} \subseteq \orientation\negative$.
\end{itemize}
Using the notations introduced in Section~\ref{subsec:conflictFunctions}, we consider the two conflict functions
\begin{gather*}
\conflicts_{\incompE}(\less) \eqdef \bigset{\{a,c\}}{\text{there is no $\orientation$-snake joining $a$ to $c$}}, \\
\conflicts_{\PEP}(\less) \eqdef \conflicts_{\PIP}(\less) \cup \conflicts_{\incompE}(\less).
\end{gather*}
As seen in Proposition~\ref{prop:characterizationPEP}, $\conflicts_{\incompE}$ corresponds to the condition characterizing~$\PEP$ in~$\PIP$, so that the $\conflicts_{\PEP}$-free posets are precisely that of~$\PEP$. We now prove that the conflict function~$\conflicts_{\incompE}$ alone induces a sublattice of the weak order on posets.

\begin{proposition}
\label{prop:orientationIncompSublatticesIPos}
For any orientation~$\orientation$ on~$[n]$, the set of $\conflicts_{\incompE}$-free posets induces a sublattice of~$(\IPos(n), \wole, \meetT, \joinT)$.
\end{proposition}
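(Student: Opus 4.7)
My plan is to show closure of the snake-connected posets (that is, the $\conflicts_{\incompE}$-free ones) under~$\meetT$; closure under~$\joinT$ then follows from Remark~\ref{remark:reverse}, since the reverse map $\less \mapsto \rev{\less}$ is an anti-automorphism of~$(\IPos(n), \wole, \meetT, \joinT)$ that converts each $\orientation$-snake in~$\less$ into a snake in $\rev{\less}$ for the reversed orientation~$(n, \orientation\negative, \orientation\positive)$. Fix $\less, \bless \in \free(\conflicts_{\incompE}, n)$ and set $\dashv \eqdef \less \meetT \bless$. I proceed by strong induction on $c-a$, showing that an $\orientation$-snake exists between~$a$ and~$c$ in~$\dashv$ for every pair $a<c$ in $[n]$.

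The base case $c-a=1$ is immediate: no intermediate exists, so any snake in~$\less$ or~$\bless$ between $a$ and $c$ is a direct relation. If either contributes $a \less c$, then $(a,c) \in \Inc{\less} \cup \Inc{\bless} \subseteq \Inc{\dashv}$; otherwise both contribute $a \more c$, so $(c,a) \in \Dec{\less} \cap \Dec{\bless} = \Dec{(\less \meetST \bless)}$, and Lemma~\ref{lem:simplifytdd} forbids its deletion by the transitive decreasing deletion because no witness $k \in (a,c)$ exists. For the inductive step with $c-a \ge 2$, if $a,c$ are comparable in $\dashv$ the snake is trivial; otherwise Lemma~\ref{lem:orientationIncompCharacterization} reduces the problem to producing an intermediate $b \in (a,c)$ satisfying one of its four color-and-relation conditions \emph{in~$\dashv$}, since the induction hypothesis supplies the requisite snake for the resulting smaller pair. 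Applying the same lemma to the $\less$-snake yields such a witness $b$ whose condition holds in~$\less$: when the condition involves an increasing relation ($a \less b$ or $b \less c$), it transfers to~$\dashv$ via $\Inc{\dashv} \supseteq \Inc{\less} \cup \Inc{\bless}$, and we are done. The parallel analysis with the $\bless$-snake dispatches every case except when both the $\less$-snake and the $\bless$-snake are direction~$2$ of even length, so all available witnesses arrive with decreasing relations that may fail in the meet.

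The hard case, which is the main obstacle, treats this residual configuration. I argue by contradiction, supposing no witness from either snake satisfies Lemma~\ref{lem:orientationIncompCharacterization} in~$\dashv$. Focusing first on the $\less$-snake's first intermediate $x_1 \in \orientation\positive$ and second intermediate $x_2 \in \orientation\negative$ (when its length is at least~$2$), the failed valley witness forces $(x_1,a) \notin \Dec{\dashv}$, hence either $(x_1,a) \notin \Dec{\bless}$ or the pair was removed by the transitive decreasing deletion. If $a \less x_1$ in~$\bless$, then the chain $a \less x_1 \less x_2$ in~$\dashv$ yields $a \less x_2$, contradicting the failed peak witness at~$x_2$; therefore $a$ and $x_1$ must be incomparable in~$\bless$, and the $\bless$-snake between them together with induction produces a new intermediate to examine. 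In the deletion alternative, Lemma~\ref{lem:simplifytdd} provides an intermediate $j \in (a, x_1)$ with $(j, x_1) \in \Inc{\less} \cup \Inc{\bless}$, and the chain $j \less x_1 \less x_2$ in~$\dashv$ again tightens the constraints. Symmetric arguments apply at the $\less$-snake's last intermediate and at both endpoints of the $\bless$-snake; when a snake is itself of length~$0$, Lemma~\ref{lem:simplifytdd} applied to $(c,a) \in \Dec{(\less \meetST \bless)}$ already produces an intermediate $k \in (a,c)$ with $k \less c$ in~$\dashv$ and $k \not\dashv a$, to which a further application of Lemma~\ref{lem:orientationIncompCharacterization} on the inductively-available snake between~$a$ and~$k$ yields an intermediate of one of the forbidden types, the non-forbidden alternatives being precluded by the transitivity of~$\dashv$. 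Organizing these constraints into an inconsistent system is the delicate bookkeeping step; the strict-minimality hypothesis on~$c-a$ ensures termination, since every new intermediate produced lies strictly inside~$(a,c)$.
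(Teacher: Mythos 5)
Your overall skeleton is the same as the paper's: induct on $c-a$ (equivalently, take a minimal conflicting pair), use Lemma~\ref{lem:orientationIncompCharacterization} to reduce the existence of a snake between $a$ and $c$ to the existence of a single witness $b \in (a,c)$ with the right color and relation, and produce that witness by examining the first steps of the snakes in $\less$ and $\bless$, invoking Lemma~\ref{lem:simplifytdd} when a decreasing relation is destroyed by the transitive decreasing deletion. The base case and the reduction of the join to the meet via the reverse map (with $\orientation\positive$ and $\orientation\negative$ swapped) are both correct.

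However, there is a genuine gap exactly where you locate the difficulty. In the residual configuration where both snakes begin with a decreasing step, you never actually produce the witness: you derive that $a$ and $x_1$ are either related the wrong way in $\bless$, or incomparable (``produces a new intermediate to examine''), or that the pair was deleted by $\tdd{}$ (``again tightens the constraints''), and then assert that ``organizing these constraints into an inconsistent system is the delicate bookkeeping step'' --- but that bookkeeping \emph{is} the proof, and it is not done. Your termination claim (``every new intermediate lies strictly inside $(a,c)$'') does not by itself close the argument, because the recursion spawns new incomparable pairs such as $\{a,x_1\}$ whose witnesses must in turn be shown to carry relations that survive in $\meetT$, not merely in $\less\meetST\bless$; you never verify this survival for any of the decreasing witnesses you produce. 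The paper resolves precisely this tangle by splitting the statement in two: first show that the \emph{semitransitive} meet $\less \meetST \bless$ is $\conflicts[sn]$-free (where increasing witnesses transfer freely and the three-way case analysis on how $a$ and $x_1$ compare in $\bless$ terminates cleanly by minimality), and only then show that $\tdd{}$ preserves $\conflicts[sn]$-freeness, using Lemma~\ref{lem:simplifytdd} together with minimality to replace any deleted decreasing witness by a new one strictly inside the interval. By collapsing both stages into a single induction on $\meetT$ directly, you lose the intermediate object whose increasing part is a full transitive closure and whose decreasing part is an honest intersection, and the case analysis you would need becomes the unresolved ``inconsistent system'' you mention. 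To repair the proof, adopt the two-stage decomposition (Claims~\ref{claim:dashvConflictFree} and~\ref{claim:tdddashvConflictFree}) or otherwise carry out the deferred case analysis in full, including the verification that each decreasing witness either survives $\tdd{}$ or can be replaced, via minimality and Lemma~\ref{lem:orientationIncompCharacterization}\,(iii), by a witness that does.
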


\begin{proof}
Consider two $\conflicts_{\incompE}$-free posets~$\less,\bless$ and let~${{\dashv} \eqdef {\less} \meetST {\bless} = \tc{({\Inc{\less}} \cup {\Inc{\bless}})} \cup ({\Dec{\less}} \cap {\Dec{\bless}})}$, so that~${\less} \meetT {\bless} = {\tdd{\dashv}}$. As in the proof of Theorem~\ref{thm:coveringOrientationConflictFunctionsSublatticesIPos}, we decompose the proof in two steps, whose detailed proofs are given in Appendix~\ref{subsec:appendixOrientationIncompSublatticesIPos}.

\vspace{-.1cm}
\begin{claim}
\label{claim:dashvConflictFree}
$\dashv$ is $\conflicts_{\incompE}$-free.
\end{claim}

\vspace{-.3cm}
\begin{claim}
\label{claim:tdddashvConflictFree}
$\tdd{\dashv}$ is $\conflicts_{\incompE}$-free. \qedhere
\end{claim}
\end{proof}

Note that Proposition~\ref{prop:orientationIncompSublatticesIPos} provides a proof of Proposition~\ref{prop:elementsSublatticeIntervals} on posets.
It also enables us to obtain further results for the specific orientation~$(n, \varnothing, \varnothing)$.
Indeed, Proposition~\ref{prop:characterizationWOEP} ensures that the~$\conflicts_{\incompE}$-free posets are precisely the posets of~$\WOEP(n)$. Proposition~\ref{prop:orientationIncompSublatticesIPos} therefore specializes to the following statement.

\begin{corollary}
\label{coro:WOEPsublattice}
The set~$\WOEP(n)$ induces a sublattice of the weak order~$(\IPos(n), \wole, \meetT, \joinT)$.
\end{corollary}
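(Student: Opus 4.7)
The plan is to specialize Proposition~\ref{prop:orientationIncompSublatticesIPos} to the trivial orientation $\orientation_0 \eqdef (n, \varnothing, \varnothing)$ and to check that, for this orientation, the $\conflicts_{\incompE}$-free posets are exactly the posets of $\WOEP(n)$. Once this identification is made, the sublattice conclusion is immediate from Proposition~\ref{prop:orientationIncompSublatticesIPos}, which is proved earlier and does not depend on~$\orientation$.

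The first step is to unpack the $\orientation_0$-snake definition. Recall that an $\orientation$-snake $x_0 < x_1 < \dots < x_k < x_{k+1}$ requires each interior vertex $x_i$ (for $1 \le i \le k$) to belong to $\orientation\positive \cup \orientation\negative$, with the parity condition on which side of $\{\orientation\positive, \orientation\negative\}$ it lies in. For $\orientation_0$, the set $\orientation_0\positive \cup \orientation_0\negative$ is empty, so no interior vertex is available; hence the only $\orientation_0$-snakes are the degenerate ones of length $0$, namely the direct relations $a \less b$ or $a \more b$ between a pair $a < b$.

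The second step is therefore a one-line equivalence: for any ${\less} \in \IPos(n)$, we have $\conflicts_{\incompE[\orientation_0]}(\less) = \varnothing$ if and only if, for every pair $a < c$, either $a \less c$ or $a \more c$ holds. By Proposition~\ref{prop:characterizationWOEP}, this is precisely the characterization of $\WOEP(n)$, so $\free(\conflicts_{\incompE[\orientation_0]}, n) = \WOEP(n)$.

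The third and final step is to invoke Proposition~\ref{prop:orientationIncompSublatticesIPos}, applied to the orientation $\orientation_0$, which asserts that $\free(\conflicts_{\incompE[\orientation_0]}, n)$ induces a sublattice of $(\IPos(n), \wole, \meetT, \joinT)$. Combined with the identification of the previous paragraph, this gives exactly the statement of the corollary. There is no substantive obstacle here: the heavy lifting (checking that $\dashv = {\less} \meetST {\bless}$ and then $\tdd{\dashv}$ both stay $\conflicts_{\incompE}$-free) is already done in the proof of Proposition~\ref{prop:orientationIncompSublatticesIPos}, so the only thing to verify is the purely definitional fact that, in the absence of any elements in $\orientation\positive \cup \orientation\negative$, snakes degenerate to comparabilities.
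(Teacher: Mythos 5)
Your proposal is correct and follows exactly the paper's own route: the paper derives this corollary by specializing Proposition~\ref{prop:orientationIncompSublatticesIPos} to the orientation~$(n,\varnothing,\varnothing)$, observing that $\orientation$-snakes then degenerate to direct comparabilities so that the $\conflicts_{\incompE}$-free posets are precisely those of~$\WOEP(n)$ by Proposition~\ref{prop:characterizationWOEP}. Your extra sentence spelling out why no interior snake vertices are available when $\orientation\positive \cup \orientation\negative = \varnothing$ is a welcome clarification but does not change the argument.
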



\subsection{Faces}
\label{subsec:facesSublattices}

In this section, we study the lattice properties of the weak order on permutree face posets~$\PFP$. We have seen in Propositions~\ref{prop:SchroderPermutrees} and~\ref{prop:weakOrderPFP} that the weak order on~$\PFP$ coincides with the Schr\"oder permutree lattice, but we have observed in Remark~\ref{rem:PFPnotSublattice} that it is not a sublattice of~$(\IPos(n), \wole, \meetT, \joinT)$, nor a sublattice of~$(\WOIP(n), \wole, \meetWOIP, \joinWOIP)$, nor a sublattice of~$(\PIP, \wole, \meetPIP, \joinPIP)$. For completeness, let us report on a method to compute the meet and join directly on the posets of~$\PFP$. For that, define the \defn{$\PFP$ increasing addition} and the \defn{$\PFP$ decreasing addition} by
\[
{\PFPia{\less}} =
\begin{cases}
{\less} & \text{if } {\less} \in \PFP \\
\PFPia{\big( \less \cup \set{(a,c)}{a < c \text{ not satisfying } (\spadesuit) \text{ nor } (\clubsuit)} \big)} & \text{otherwise}
\end{cases}
\]
and
\[
{\PFPda{\less}} =
\begin{cases}
{\less} & \text{if } {\less} \in \PFP \\
\PFPda{\big( \less \cup \set{(c,a)}{a < c \text{ not satisfying } (\spadesuit) \text{ nor } (\clubsuit)} \big)} & \text{otherwise.}
\end{cases}
\]
Experimental observations indicate that for~$\tree[S], \tree[S]' \in \PFP$,
\[
\tree[S] \meetPFP \tree[S]' = \PFPia{\big( \tree[S] \meetWOIP \tree[S]' \big)}
\qquad\text{and}\qquad
\tree[S] \joinPFP \tree[S]' = \PFPda{\big( \tree[S] \joinWOIP \tree[S]' \big)}.
\]
A complete proof of this observation would however be quite technical. It would in particular require a converging argument to prove that the $\PFP$ increasing and decreasing additions are well defined.


\section*{Acknowledgments}

\enlargethispage{.1cm}
The computation and tests needed along the research were done using the open-source mathematical software \texttt{Sage}~\cite{Sage} and its combinatorics features developed by the \texttt{Sage-combinat} community~\cite{SageCombinat}.


\bibliographystyle{alpha}
\bibliography{latticeIntegerPosets}
\label{sec:biblio}

\appendix 

\newpage
\section{Missing claims}
\label{sec:missingClaims}

This appendix gathers all missing proofs of the technical claims used in Part~\ref{part:relevantFamilies}.

\subsection{Proof of claims of Section~\ref{subsec:IWOIPDWOIP}}
\label{subsec:appendixIWOIPDWOIP}

\begin{proof}[Proof of Claim~\ref{claim:maxlePoset}]
First,~$\maxle{\less}$ is clearly antisymmetric since it is obtained from an antisymmetric relation by adding just decreasing relations between some incomparable elements. To prove that~$\maxle{\less}$ is transitive, consider~$u,v,w \in [n]$ be such that~$u \maxle{\less} v \maxle{\less} w$. We distinguish four cases:
\begin{enumerate}[(i)]
\item If~$u \not\less v \not\less w$, then we have~$w < v < u$ with~$w \not\less v \not\less u$. Our assumption thus ensures that~$w \not\less u$. Thus, either~$u \less w$ or~$u$ and~$w$ are incomparable. In both cases,~$u \maxle{\less} w$.
\item If~$u \not\less v \less w$, then we have~$v < u$ with~$v \not\less u$. We then have two cases:
	\begin{itemize}
	\item Assume that~$u < w$. Since~$v < u < w$ and~$v \not\less u$ while~$v \less w$, our assumption implies that~$u \less w$, so that~$u \maxle{\less} w$.
	\item Assume that~$w < u$. Since~$v \not\less u$ and~$v \less w$, the transitivity of~$\less$ impose that~$w \not\less u$. Thus either~$u \less w$ or~$u$ and~$w$ are incomparable. In both cases,~$u \maxle{\less} w$.
	\end{itemize}
\item If~$u \less v \not\less w$, then we have~$w < v$ with~$w \not\less v$. We then have two cases:
	\begin{itemize}
	\item Assume that~$u < w$. Since~$u < w < v$ and~$w \not\less v$ while~$u \less v$, our assumption implies that~$u \less w$ so that~$u \maxle{\less} w$.
	\item Assume that~$w < u$. Since~$w \not\less v$ and~$u \less v$, the transitivity of~$\less$ impose that~$w \not\less u$. Thus, either~$u \less w$ or~$u$ and~$w$ are incomparable. In both cases,~$u \maxle{\less} w$.
	\end{itemize}
\item If~$u \less v \less w$, then~$u \less w$ by transitivity of~$\less$ and thus~$u \maxle{\less} w$.
\end{enumerate}
We proved in all cases that~$u \maxle{\less} w$, so that~$\less$ is transitive. Since all our relations are reflexive, we conclude that~$\maxle{\less}$ is a poset.
\end{proof}

\begin{proof}[Proof of Claim~\ref{claim:IWOIPidPoset}]
First,~$\IWOIPid{\less}$ is clearly antisymmetric as it is contained in the antisymmetric relation~$\less$. To prove that it is transitive, consider~$u, v, w \in [n]$ such that~$u \IWOIPid{\less} v \IWOIPid{\less} w$. Since~${\IWOIPid{\less}} \subseteq {\less}$, we have~$u \less v \less w$, so that~$u \less w$ by transitivity of~$\less$. Assume by means of contradiction that~$u \not\IWOIPid{\less} w$. Thus, $u < w$ and there exists~$u < z_1 < \dots < z_k < w$ such that~$u \not\less z_1 \not\less \dots \not\less z_k \not\less w$. We now distinguish three cases:
\begin{enumerate}[(i)]
\item If~$v < u$, then~$v \not\IWOIPid{\less} w$ since~$v < u < z_1 < \dots < z_k < w$ and~$v \not\less u \not\less z_1 \not\less \dots \not\less z_k \not\less w$.
\item If~$u < v < w$, consider~$\ell \in [k]$ such that~$z_\ell \le v < z_{\ell + 1}$ (with~$\ell = 0$ if~$v < z_1$ and~$\ell = k$ if $z_k \le v$). Since~$z_\ell \not\less z_{\ell+1}$ and~$\less$ is transitive, we have either~$z_\ell \not\less v$ or~$v \not\less z_{\ell+1}$. In the former case, we have~$u \not\IWOIPid{\less} v$ since~$u < z_1 < \dots < z_\ell \le v$ and~$u \not\less z_1 \not\less \dots \not\less z_\ell \not\less v$. In the latter case, we have~$v \not\IWOIPid{\less} w$ since~$v < z_{\ell+1} < \dots < z_k < w$ and~$v \not\less z_{\ell+1} \not\less \dots \not\less z_k \not\less w$.
\item If~$w < v$, then~$u \not\IWOIPid{\less} v$ since~$u < z_1 < \dots < z_k < w < v$ and~$u \not\less z_1 \not\less \dots \not\less z_k \not\less w \not\less w$.
\end{enumerate}
As we obtained a contradiction in each case, we conclude that~$\IWOIPid{\less}$ is transitive. Since all our relations are reflexive, we conclude that~$\IWOIPid{\less}$ is a poset.
\end{proof}

\subsection{Proof of claims of Proposition~\ref{prop:characterizationPEP}}
\label{subsec:appendixCharacterizationPEP}

\begin{proof}[Proof of Claim~\ref{claim:snake1}]
By symmetry, we only need to prove the first statement. Note that~$u$ and~$w$ are incomparable, otherwise $u \less v$ and~$v \more w$ could not both be cover relations.
Therefore, there is a non-degenerate $\orientation$-snake~$u = x_0 < x_1 < \dots < x_k < x_{k+1} = w$ from~$u$ to~$w$. Assume first that~$x_1 < v$. If~$x_1 \in \orientation\positive$ and~$u \more x_1$, then~$u \less v$, $x_1 \in \orientation\positive$ and~${\less} \in \IPIPp$ implies that~$u \less x_1$, a contradiction. If~$x_1 \in \orientation\negative$ and~$u \less x_1$, then~$u \less v$, $x_1 \in \orientation\negative$ and~${\less} \in \IPIPp$ implies that~$x_1 \less v$ which together with~$u \less x_1$ would contradict that~$u \less v$ is a cover relation. As we reach a contradiction in both cases, we obtain that~$v \le x_1$, and by symmetry~$x_k \le v$. Therefore, we have~$x_1 = v = x_k$, so that~$u < v < w$ and~$v \in \orientation\negative$.
\end{proof}

\begin{proof}[Proof of Claim~\ref{claim:snake2}]
We work by induction on~$p$, the case~$p = 1$ being immediate. By symmetry, we can assume that~${x_0 \in \orientation\negative}$, $x_0 \more x_1$ and~$x_0 < x_1$. Let~$j$ be the first position in the path such that ${x_{j-1} \more x_j \less x_{j+1}}$ (by convention~$j = p$ if~$x_0 \more x_1 \more \dots \more x_p$). Assume that there is~$i \in [j]$ such that~$x_i \le x_0$, and assume that~$i$ is the first such index. Since~$x_i \le x_0 < x_{i-1}$, $x_i \less x_{i-1}$, $x_0 \in \orientation\negative$ and~${\less} \in \IPIPm$, we obtain~$x_0 \less x_{i-1}$, a contradiction. This shows that~$x_0 < x_i$ for~$i \in [j]$. If~$j = p$, the statement is proved. Otherwise, we consider~$x_{j-1}$, $x_j$ and~$x_{j+1}$. By Claim~\ref{claim:snake1}, we have~$x_j \in \orientation\positive$ and either~$x_{j-1} < x_j < x_{j+1}$ or~$x_{j+1} < x_j < x_{j-1}$. In the latter case, $x_0 < x_j < x_{j-1}$, $x_0 \more x_{j-1}$, $x_j \in \orientation\positive$ and~${\less} \in \DPIPp$ would imply $x_j \more x_{j-1}$, a contradiction. We thus obtain that~$x_j \in \orientation\positive$, $x_j \less x_{j+1}$ and~$x_j < x_{j+1}$. The induction hypothesis thus ensures that~$x_j < x_i$ for all~$j < i \le p$. This concludes since~$x_0 < x_j$.
\end{proof}

\subsection{Proof of claims of Proposition~\ref{prop:characterizationPFP}}
\label{subsec:appendixCharacterizationPFP}

\begin{proof}[Proof of Claim~\ref{claim:PEP}]
By Proposition~\ref{prop:characterizationPEP}, we just need prove that there is an $\orientation$-snake between any two values of~$[n]$. Otherwise, consider~$a < c$ with~$c-a$ minimal such that there is no $\orientation$-snake between~$a$ and~$c$. In particular, $a$ and~$c$ are incomparable. By~$(\spadesuit)$, we can assume for instance that there is~$a < b < c$ such that~$b \in \orientation\negative$ and~$a \not\more b \not\less c$. By minimality of~$c-a$, there is an $\orientation$-snake~$a = x_0 < x_1 < \dots < x_k < x_{k+1} = b$. Then we have either~$x_1 \in \orientation\negative$ and~$a \less x_1$, or~$x_1 \in \orientation\positive$ and~$a \more x_1$ (note that this holds even when~$x_1 = b$ since~$a \not\more b$ and~$b \in \orientation\negative$). Moreover, by minimality of~$c-a$, there is an $\orientation$-snake between $x_1$ and~$c$. Lemma~\ref{lem:orientationIncompCharacterization} thus ensures that there is as well an $\orientation$-snake between~$a$ and~$c$, contradicting our assumption.
\end{proof}

\begin{proof}[Proof of Claim~\ref{claim:decontractionCompare}]
By definition, we have~${\less} \subseteq {\bless}$. Assume now that~${\Dec{\less}} \ne {\Dec{\bless}}$, and let~$x < y$ be such that~$x \not\more y$ but~$x \bmore y$. By definition of~$\bless$, there exists a minimal path~$y = z_0, z_1, \dots, z_k = x$ such that for all~$i \in [k]$, either~$z_{i-1} \less z_i$, or~$z_{i-1} < z_i$ are incomparable in~$\less$ and do not satisfy~$(\spadesuit)$. Since~$x \not\more y$ and~$x < y$, we have~$k \ge 2$ and there exists~$i \in [k-1]$ such that~$z_{i+1} < z_{i-1}$. We distinguish three cases:
\begin{itemize}
\item If~${z_i < z_{i+1} < z_{i-1}}$, then~$z_i \more z_{i-1}$ and~$z_i \not\more z_{i+1}$, and thus~${z_{i+1} \more z_{i-1}}$ as~${{\less} \in \DWOIP(n)}$.
\item If~$z_{i+1} < z_i < z_{i-1}$, then~$z_{i+1} \more z_i \more z_{i-1}$ and thus~$z_{i+1} \more z_{i-1}$ by transitivity.
\item If~$z_{i+1} < z_{i-1} < z_i$, then~$z_{i+1} \more z_i$ and~$z_{i-1} \not\more z_i$, and thus~$z_{i+1} \more z_{i-1}$ as~${\less} \in \DWOIP(n)$.
\end{itemize}
In all cases, $z_{i+1} \more z_{i-1}$ contradicts the minimality of the path.
\end{proof}

\begin{proof}[Proof of Claim~\ref{claim:decontraction}]
We first show that~${\bless} \in \PIP$. Since~${\Dec{\less}} = {\Dec{\bless}}$ and~${\less} \in \DPIP$, we have~${\bless} \in \DPIP$ and we just need to show that~${\bless} \in \IPIP$. Consider thus~$a < b < c$ such that~$a \bless c$. By definition of~$\bless$, there exists~$a' < b < c'$ such that~$a \bless a'$, $c' \bless c$, and either~$a' \less c'$, or~$a'$ and~$c'$ are incomparable in~$\less$ and do not satisfy~$(\spadesuit)$. We now proceed in two steps:
\begin{enumerate}[(i)]
\item Our first goal is to show that either $a' \bless b$ or $b \bless c'$ which by transitivity shows that $(a,c)$ satisfies the $\WOIP$ condition. Assume that~$a' \not\bless b \not\bless c'$. The transitivity of~$\bless$ ensures that both pairs~$a',b$ and~$b,c'$ are incomparable in~$\bless$. Therefore, they are incomparable in~$\less$ and satisfy~$(\spadesuit)$. Let us focus on~$a',b$. Assume first that there is~$a' < d < b$ such that~$d \in \orientation\positive$ and~${a' \not\less d \not\more b}$. Since~${{\less} \in \IPIP}$, we cannot have~$a' < d < c'$, $d \in \orientation\positive$, $a' \not\less d$ and~$a' \less c'$. Therefore, $a'$ and~$c'$ do not satisfy~$(\spadesuit)$, which together with~$a' \not\less d$ implies that~$d \more c'$. We obtain~${d < b < c'}$ with~$d \not\more b \not\more c'$ while~$d \more c'$ contradicting that~${\less} \in \DWOIP(n)$. Assume now that there is~$a' < d < b$ such that~$d \in \orientation\negative$ and~$a' \not\more d \not\less b$. If~$a' \less c'$, then~$d \less c'$ since~${a' < d < c'}$, $d \in \orientation\negative$ and~${\less} \in \IPIP$. If~$a'$ and~$c'$ do not satisfy~$(\spadesuit)$, $d \in \orientation\negative$ and~$a' \not\more d$ imply~$d \less c'$. In both cases, we obtain~$d < b < c'$ with~$d \not\less b \not\less c'$ while~$d \less c'$ contradicting that~${{\less} \in \IWOIP(n)}$. Since we reach a contradiction in all cases, we conclude that~${a' \bless b}$~or~${b \bless c'}$.
\item We now want to check that the orientation constraint on $b$ is also satisfied. Assume~${b \in \orientation\positive}$. If~$a' \less c'$, we have~$a' \less b$ since~${\less} \in \IPIP$, and thus~$a' \bless b$ since~${\less} \subseteq {\bless}$. If~$a'$ and~$c'$ do not satisfy~$(\spadesuit)$, then~$a' \less b$ or~$b \more c'$, which implies~${a' \less b \more c'}$ by~$(\clubsuit)$, and thus~$a' \bless b$ since~${\less} \subseteq {\bless}$. We conclude that $b \in \orientation\positive \Rightarrow a' \bless b$ and by symmetry that~$b \in \orientation\negative \Rightarrow b \bless c'$.
\end{enumerate}
Since~$a \bless a'$, $c' \bless c$ and~$\bless$ is transitive, we obtain that $a \bless b$ or~$b \bless c$, and that $b \in \orientation\positive \Rightarrow a \bless b$ and~$b \in \orientation\negative \Rightarrow b \bless c$. We conclude that~${\bless} \in \PIP$.

\medskip
There is left to prove that any~$a < c$ incomparable in~$\bless$ satisfy~$(\spadesuit)$. Assume the opposite and consider~$a < c$ incomparable in~$\bless$ not satisfying~$(\spadesuit)$ with $c-a$ minimal. Since~$a$ and~$c$ are incomparable in~$\bless$, they are also incomparable in~$\less$ and satisfy~$(\spadesuit)$. Assume for example that there exists~$b \in \orientation\positive$ such that~$a \not\less b \not\more c$ (the other case is symmetric). Since~${\Dec{\less}} = {\Dec{\bless}}$, we have~$b \not\bmore c$. Since~$a$ and~$c$ do not satisfy~$(\spadesuit)$ in~$\bless$, we obtain that~$a \bless b$. We can assume that~$b$ is the maximal integer such that~$a < b < c$, $b \in \orientation\positive$ and~$a \bless b \not\bmore c$. Since~$a \bless b$ but~$a \not\bless c$, we have~$b \not\bless c$, so that~$b$ and~$c$ are incomparable in~$\bless$. By minimality of~$c-a$, we obtain that~$b$ and~$c$ satisfy~$(\spadesuit)$ in~$\bless$. We distinguish two cases:
\begin{enumerate}[(i)]
\item Assume that there exists~$b < d < c$ such that~$d \in \orientation\positive$ and~$b \not\bless d \not\bmore c$. Since~$a$ and~$c$ do not satisfy~$(\spadesuit)$ in~$\bless$, we have~$a \bless d \not\bmore c$, contradicting the maximality of~$b$.
\item Assume that there exists~$b < d < c$ such that~$d \in \orientation\negative$ and~$b \not\bmore d \not\bless c$. Since~$a$ and~$c$ do not satisfy~$(\spadesuit)$ in~$\bless$ and~$d \not\bless c$, we have~$a \bmore d$. We thus obtained~$a < b < d$ with $b \in \orientation\positive$, and $a \bmore d$ while $b \not\bmore d$ contradicting that~${\bless} \in \PIP$.
\end{enumerate}
Since we obtain a contradiction in both cases, we conclude that any~$a < c$ incomparable in~$\bless$ satisfy~$(\spadesuit)$.
\end{proof}

\begin{proof}[Proof of Claim~\ref{claim:contraction}]
We first prove that~${\less} \subseteq {\less_{\tree[S]}}$. Observe first that for a permutree~$\tree$ and a Schr\"oder permutree~$\tree[S]$ obtained from~$\tree$ by contracting a subset of edges~$E$, the poset~$\less_{\tree[S]}$ is obtained from the poset~$\less_{\tree}$ by deleting the sets
\begin{gather*}
\set{(a,d)}{a < d, \; \exists \; a \le b < c \le d, \; b \in \{a\} \cup \orientation\negative , \; c \in \{d\} \cup \orientation\positive, \; a \less_{\tree} b \less_{\tree} c \less_{\tree} d \text{ and } (b,c) \in E}, \\
\set{(d,a)}{a < d, \; \exists \; a \le b < c \le d, \; b \in \{a\} \cup \orientation\positive, \; c \in \{d\} \cup \orientation\negative, \; a \more_{\tree} b \more_{\tree} c \more_{\tree} d \text{ and } (c,b) \in E}.
\end{gather*}
Assume now that we had~${\less} \not\subseteq {\less_{\tree[S]}}$ and remember that ${\less} \subseteq {\less_{\tree[S]}}$ by construction. Since we only contract increasing edges in~$\less_{\tree}$ to obtain~$\less_{\tree[S]}$, this would imply that there exists~$a \le b < c \le d$ with~$b \in \{a\} \cup \orientation\negative$, $c \in \{d\} \cup \orientation\positive$, and such that~$a \less d$ while~$b \not\less d$. This would contradict that~${\less} \in \PIP$.

We now prove that~${\less_{\tree[S]}} \subseteq {\less}$. Observe first that~${\Dec{\less_{\tree[S]}}} \subseteq {\Dec{\bless}} = {\Dec{\less}}$. Assume now by contradiction that there exists~$a < c$ such that~$a \not\less c$ and~$a \less_{\tree[S]} c$, and choose such~$a < c$ with~$c-a$ minimal. Note that~$a$ and~$c$ are incomparable in~$\less$. We distinguish two cases:
\begin{enumerate}[(i)]
\item If~$a$ and~$c$ satisfy~$(\spadesuit)$, we can assume by symmetry that there exists~$a < b < c$ such that~$b \in \orientation\positive$ and~$a \not\less b \not\more c$. Since~$a < b < c$, $b \in \orientation\positive$, $a \less_{\tree[S]} c$ and~${\less_{\tree[S]}} \in \PIP$, we obtain that~$a \less_{\tree[S]} b$. Since~$a \not\less b$ and~$a \less_{\tree[S]} b$, this contradicts the minimality of~$c-a$.
\item If~$a$ and~$c$ do not satisfy~$(\spadesuit)$, then they satisfy~$(\clubsuit)$, and~$a \bless c$ is not a cover relation (otherwise, the relation~$a \bless c$ would have been contracted in~$\less_{\tree[S]}$). Let~$b \in [n] \ssm \{a,c\}$ be such that~$a \bless b \bless c$. If~$a < b < c$, we have~$a \not\more b \not\more c$, thus~$a \not\less b \not\less c$ by~$(\clubsuit)$, thus~$a \not\less_{\tree[S]} b \not\less_{\tree[S]} c$ by minimality of~$c-a$, contradicting that~$a \less_{\tree[S]} c$ and~${\less_{\tree[S]}} \in \WOIP(n)$. We can thus consider that~$b < a < c$ (the case~$a < c < b$ is symmetric). Note that we cannot have~$a \in \orientation\positive$ since~$b \not\bless a$ and~$b \bless c$ would contradict that~${\bless} \in \PIP$. We have~$b \more a$ (since~$b \bmore a$) and we can assume that~$b$ is maximal such that~$b < a$ and~$b \more a$. Observe that $b$ and~$c$ are incomparable in~$\less$ (indeed~$b \not\less c$ since~$b \more a$ and~$a \not\less c$, and~$b \not\more c$ since~$b \bless c$). Since~$b < a < c$ and~$b \more a$ while~$a \not\less c$, $b$ and~$c$ do not satisfy~$(\clubsuit)$, thus they satisfy~$(\spadesuit)$. We again have two cases:
\begin{itemize}
\item If there is~$b < d < c$ with~$d \in \orientation\positive$ with~$b \not\less d \not\more c$. If~$b < d < a$, we have~$b \more a$ and~${\less} \in \PIP$ implies~$d \more a$ contradicting the maximality of~$b$. Since~$d \in \orientation\positive$, we have~$d \ne a$. Finally, if~$a < d < c$, we have~$a \not\less d$ since~$d \not\more c$ and~$a$ and~$c$ satisfy~$(\clubsuit)$, and we obtain that~$a \not\less d \not\more c$ contradicting that~$a$ and~$c$ do not satisfy~$(\spadesuit)$. 
\item If there is~$b < d < c$ with~$d \in \orientation\negative$ with~$b \not\more d \not\less c$, then we have~$a < d < c$ (because~$b \not\more d$, $b \more a$ and~${\less} \in \PIP$). Since~$d \not\less c$ and~$a$ and~$c$ satisfy~$(\clubsuit)$, we obtain that~$a \not\more d \not\less c$, contradicting that~$a$ and~$c$ do not satisfy~$(\spadesuit)$.
\end{itemize}
\end{enumerate}
As we obtain a contradiction in all cases, we conclude that~${\less_{\tree[S]}} \subseteq {\less}$.
\end{proof}

\subsection{Proof of claims of Section~\ref{subsec:PIPdeletion}}
\label{subsec:appendixPIPdeletion}

\begin{proof}[Proof of Claim~\ref{claim:IPIPpmidPoset}]
First,~$\IPIPpmid{\less}$ is clearly antisymmetric as it is contained in the antisymmetric relation~$\less$. To prove that it is transitive, consider~$u, v, w \in [n]$ such that~${u \IPIPpmid{\less} v \IPIPpmid{\less} w}$. Since~${\IPIPpmid{\less}} \subseteq {\less}$, we have~$u \less v \less w$, so that~$u \less w$ by transitivity of~$\less$. Assume by means of contradiction that~$u \not\IPIPpmid{\less} w$. Thus, $u < w$ and there exists~$u \le n < p \le w$ such that~$n \in \{u\} \cup \orientation\negative$ while~$p \in \{w\} \cup \orientation\positive$ and~$n \not\less p$. We now distinguish three cases:
\begin{itemize}
\item If~$v \le n$, then~$n \not\less p$ and~$v \le n < p \le w$ contradicts our assumption that~$v \IPIPpmid{\less} w$.
\item If~$p \le v$, then~$n \not\less p$ and~$u \le n < p \le v$ contradicts our assumption that~$u \IPIPpmid{\less} v$.
\item Finally, if~$n < v < p$, then~$u \IPIPpmid{\less} v$ ensures that~$n \less v$, and~$v \IPIPpmid{\less} w$ ensures that~$v \less p$. Together with~$n \not\less v$, this contradicts the transitivity of~$\less$.
\end{itemize}
As we obtained a contradiction in each case, we conclude that~$\IPIPpmid{\less}$ is transitive. Since all our relations are reflexive, we conclude that~$\IPIPpmid{\less}$ is a poset.
\end{proof}

\begin{proof}[Proof of Claim~\ref{claim:IPIPpmidIPIPpm}]
We prove that~$\IPIPpmid{\less}$ is in~$\IPIPp$, the result follows by symmetry for~$\IPIPm$ and finally for~$\IPIPpm = \IPIPp \cap \IPIPm$. Assume that there exists~${a < b < c}$ with~$b \in \orientation\positive$ and~$a \IPIPpmid{\not\less} b$. Then there are witnesses~$a \le n < p \le b$ with~$n \in \{a\} \cup \orientation\negative$ while~$p \in \{b\} \cup \orientation\positive$ and~$n \not\less p$. Since~$b \in \orientation\positive$, we have~$p \in \orientation\positive$. Therefore, $n$ and~$p$ are also witnesses for~$a \IPIPpmid{\not\less} c$. This shows that~$\IPIPpmid{\less}$ is in~$\IPIPp$.
\end{proof}

\begin{proof}[Proof of Claim~\ref{claim:IPIPidIPIP}]
Let~${\bless} \in \IWOIP(n)$. Let~${a < b < c}$ be such that~$a \IPIPpmid{\not\bless} b \IPIPpmid{\not\bless} c$ and $a \IPIPpmid{\bless} c$. Then there exist witnesses~$a \le m < p \le b \le n < q \le c$ with~${m \in \{a\} \cup \orientation\negative}$, ${p \in \{b\} \cup \orientation\positive}$, $n \in \{b\} \cup \orientation\negative$ and~$q \in \{c\} \cup \orientation\positive$, and such that~$m \not\bless p$~and~$n \not\bless q$. If~$p \ne b$, then $p \in \orientation\positive$ and~${a \le m < p < c}$ are also witnesses for~$a \IPIPpmid{\not\bless} c$. By symmetry, we can thus assume that~$p = b = n$. Therefore, we have~$m \not\bless p=b=n \not\bless q$, which implies that~$m \not\bless q$ since~${\bless} \in \IWOIP(n)$. Since~$a \le m < q \le c$ with~$m \in \{a\} \cup \orientation\negative$, $q \in \{c\} \cup \orientation\positive$ and~$m \not\bless q$, we obtain that~$a \IPIPpmid{\not\bless} c$. We conclude that~${\bless} \in \IWOIP(n)$ implies~${\IPIPpmid{\bless}} \in \IWOIP(n)$. In particular, we obtain that~${\IPIPid{\less}} = {\IPIPpmid{(\IWOIPid{\less})}} \in \IPIP$ since~${\IWOIPid{\less}} \in \IWOIP(n)$ by Lemma~\ref{lem:IWOIPidDWOIPdd1}.
\end{proof}

\begin{proof}[Proof of Claim~\ref{claim:IPIPidDPIPdd2}]
Claim~\ref{claim:IPIPidDPIPdd2} is immediate for~$\varepsilon \in \{-,+\}$. For~$\varepsilon = \pm$, assume that there exists ${a \le n < p \le c}$ with~$n \in \{c\} \cup \orientation\negative$ and~$p \in \{b\} \cup \orientation\positive$ such that~$n \not\less p$. Then we also have~$n \not\bless p$ which implies~$n \not\bless c$ (since~${\bless} \in \IPIPp$) and~$a \not\bless p$ (since~${\bless} \in \IPIPm$), which in turn implies~$a \not\bless c$. Finally, this also implies the claim when~$\varepsilon = \varnothing$ by applying first Lemma~\ref{lem:IWOIPidDWOIPdd2} and then the claim for~$\varepsilon = \pm$.
\end{proof}

\begin{proof}[Proof of Claim~\ref{claim:blessSnakes}]
Otherwise, there exists~$a < c$ such that there is no $\orientation$-snake from~$a$ to~$c$ in~$\bless$. Choose such a pair~$a < c$ with~$c-a$ minimal. Since~$\less_\sigma$ is a total order, we have either~$a \less_\sigma c$ or~$a \more_\sigma c$. Assume for example that~$a \less_\sigma c$, the other case being symmetric. Since~$a \less_\sigma c$ while~$a \not\bless c$ (otherwise $a \bless c$ is an $\orientation$-snake), there exists~$a \le n < p \le c$ such that~$n \in \{a\} \cup \orientation\negative$ while~$p \in \{c\} \cup \orientation\positive$ and~$n \not\less_\sigma p$. Since~$\less_\sigma$ is a total order, we get~$n \more_\sigma p$. Moreover, we have either~$a \less_\sigma n$ or~$a \more_\sigma n$. In the latter case, we get by transitivity of~$\less_\sigma$ that~$a \more_\sigma p$. Therefore, up to forcing~$a = n$, we can assume that~$a \less_\sigma n$ and similarly up to forcing~$p = c$, we can assume that~$p \less_\sigma c$. It follows that~$a \less_\sigma n \more_\sigma p \less_\sigma c$ is an $\orientation$-snake from~$a$ to~$c$ in~$\less_\sigma$, where either~$a \ne n$ or~$p \ne c$ (because~$a \less_\sigma c$). By minimality of~$c-a$ in our choice of~$a < c$, there exists an $\orientation$-snake from~$a$ to~$n$, from~$n$ to~$p$, and from~$p$ to~$c$ in~$\bless$. Since~$n \in \{a\} \cup \orientation\negative$ and~$p \in \{c\} \cup \orientation\positive$, it is straightforward to construct from these snakes an $\orientation$-snake from~$a$ to~$c$ in~$\bless$, contradicting our assumption.
\end{proof}

\begin{proof}[Proof of Claim~\ref{claim:blessSpadeClub}]
Assume that there exists~$a < c$ incomparable in~$\bless$ that do not satisfy~$(\clubsuit)$ in~$\bless$. We choose such a pair~$a < c$ with~$c-a$ minimal. By symmetry, we can assume that there exists~$a < b < c$ such that~$a \bless b \not\bmore c$ and that~$b$ is maximal for this property. Since~${\bless} \subseteq {\less}$, we have~$a \less b$. We distinguish three cases:
\begin{enumerate}[(i)]
\item Assume first that~$a$ and~$c$ are incomparable in~$\less$. Since~${\less} \in \WOFP(n)$, Proposition~\ref{prop:characterizationWOFP} and~$a \less b$ imply that~$a \less b \more c$. Since~$b \more c$ while~$b \not\bmore c$, there is~$b \le p < n \le c$ with~$p \in \{b\} \cup \orientation\positive$ while~$n \in \{c\} \cup \orientation\negative$ and~$p \not\more n$. We again have two cases:
    \begin{itemize}
    \item If~$b \ne p$, we have~$p \not\bmore c$ (since otherwise~$p \not\more n$ would contradict that~${{\bless} \in \DPIP}$) and thus~$a \not\bless p$ (by maximality of~$b$). We thus obtained~$a < p < c$ with~$p \in \orientation\positive$ and~$a \not\bless p \not\bmore c$, so that~$a < c$ satisfy~$(\spadesuit)$ in~$\bless$.
    \item If~$b = p$, then~$n \ne c$. We have~$a \not\more n$ (since otherwise~$p \not\more n$ would contradict that~${{\bless} \in \PIP}$). Moreover, by minimality of~$c-a$, we have~$b = p$ and~$c$ satisfy~$(\clubsuit)$ in~$\bless$, so that~$p \not\bmore n$ implies that~$n \not\bless c$. We obtained that~$a < n < c$ with~$n \in \orientation\negative$ and~$a \not\bmore n \not\bless c$, so that~$a < c$ satisfy~$(\spadesuit)$ in~$\bless$.
    \end{itemize}
\item Assume now that~$a \less c$. Since~$a \less c$ while~$a \not\bless c$, there is~$a \le n < p \le c$ with~$n \in \{a\} \cup \orientation\negative$ while~$p \in \{c\} \cup \orientation\positive$ and~$n \not\less p$. Since~${\bless} \in \PIP$ and~$n \not\bless p$, we must have~$a \not\bless p$ and~$n \not\bless c$. Assume that~$a$ and~$c$ do not satisfy~$(\spadesuit)$ in~$\bless$. This implies that~$p \bmore c$ and~$a \bmore n$. Since~$a \not\bmore c$ and~$p \bmore c$, we obtain by transitivity of~$\bless$ that~$a$ and~$p$ are incomparable in~$\bless$. By minimality of~$c-a$, we obtain that~$a$ and~$p$ satisfy~$(\clubsuit)$. We now consider two cases:
	\begin{itemize}
	\item If~$b < p$, then~$a \bless b$ implies that~$b \bmore p$, which together with~$p \bmore c$ and~$b \not\bmore c$ contradicts the transitivity of~$\bless$.
	\item If~$p \le b$, then we have~$a \le n < p \le b$ with~$n \in \{a\} \cup \orientation\negative$ while~$p \in \{c\} \cup \orientation\positive$ and~$n \not\less p$, which contradicts that~$a \bless b$.
	\end{itemize}
Since we obtained a contradiction in both cases, we conclude that~$a$ and~$c$ satisfy~$(\spadesuit)$ in~$\bless$.
\item Assume finally that~$a \more c$. Then~$a \not\more b$ and~${\less} \in \DWOIP(n)$ implies that~$a \less b \more c$ and we are back to case~(i). \qedhere
\end{enumerate}
\end{proof}

\subsection{Proof of claims of Theorem~\ref{thm:coveringOrientationConflictFunctionsSublatticesIPos}}
\label{subsec:appendixCoveringOrientationConflictFunctionsSublatticesIPos}

\begin{proof}[Proof of Claim~\ref{claim:dashvDPIP}]
As $\conflicts_{\DPIP}$ is decreasing, we only consider~$(a,c) \in {\Dec{\dashv}}$. Since~${{\Dec{\dashv}} = {\Dec{\less}} \cap {\Dec{\bless}}}$, we have~$a \more c$ and~$a \bmore c$. Since both~${\less, \bless} \in \DPIP$, for any~$a < b < c$, if~$b \in \orientation\negative$ then~$a \more b$ and~$a \bmore b$ so that~$a \vdash b$, while if~$b \in \orientation\positive$ then~$b \more c$ and~$b \bmore c$ so that~$b \vdash c$. Note that the important point here is that the behavior of~$b$ is the same in~$\less$ and~$\bless$ as it is dictated by the orientation of~$b$.
\end{proof}

\begin{proof}[Proof of Claim~\ref{claim:tdddashvDPIP}]
Assume now that~${\tdd{\dashv}} \notin \DPIP$. Consider~$\{a,c\} \in \conflicts_{\DPIP}(\tdd{\dashv})$ with~$a < c$ and~$c-a$ minimal. Since~$\conflicts_{\DPIP}(\tdd{\dashv})$ is decreasing, we have~$a \tdd{\vdash} c$. Assume for the moment that there exists~$a < b < c$ such that~$b \in \orientation\negative$ and $a \not\tdd{\vdash} b$, and choose such~$b$ with~$b-a$ minimal. Since~${\dashv} \in \DPIP$, we have~$a \vdash b$ while~$a \not\tdd{\vdash} b$. By definition of~$\tdd{\dashv}$, there exists~$i \le b$ and~$j \ge a$ such that~$i \dashv b \dashv a \dashv j$ but~$i \not\dashv j$. From Lemma~\ref{lem:simplifytdd}, we know that either~$i \ne b$ or~$j \ne a$. We thus distinguish two cases.
\begin{enumerate}[(i)]
\item Assume that~$i \ne b$. Again by Lemma~\ref{lem:simplifytdd}, there exists~$a < k < b$ such that~$i \dashv k \dashv b$. Thus, we have~$k \not\vdash b$ (since~$\dashv$ is antisymmetric) while~$a \vdash b$ and~$a < k < b$, so $k \notin \orientation\positive$ (since~$\dashv$ if $\conflicts_{\DPIP}$-free). Since~$\orientation$ is covering, we therefore obtain that~$k \in \orientation\negative$. By minimality of~$b-a$ in our choice of~$b$, we obtain that~$a \tdd{\vdash} k$. But~$i \dashv k \dashv a \dashv j$ and~$a \tdd{\vdash} k$ implies that~$i \dashv j$, a contradiction to our assumption on~$i$ and~$j$.
\item Assume now that~$j \ne a$. Again by Lemma~\ref{lem:simplifytdd}, there exists~$a < k < b$ such that~${a \dashv k \dashv j}$. Thus, we have~$a \not\vdash k$ (since~$\vdash$ is antisymmetric) while~$a \vdash b$ and~$a < k < b$, so ${k \notin \orientation\negative}$ (since~$\less$ if $\conflicts_{\DPIP}$-free). Since~$\orientation$ is covering, we therefore obtain that~$k \in \orientation\positive$. Since~${{\dashv} \in \DPIP}$, $k \in \orientation\positive$ and~$a \vdash c$, we have~$k \vdash c$. We claim that~$k \tdd{\vdash} c$. Otherwise we could find~$i' \le c$ and~$j' \ge k$ such that~$i' \dashv c \dashv k \dashv j'$ while~$i' \not \dashv j'$. Since~$a \dashv k \dashv j'$ and~$\dashv$ is semitransitive, we would also have~$i' \dashv c \dashv a \dashv j'$ while~$i' \not \dashv j'$, contradicting the fact that~$a \tdd{\vdash} c$. Now by minimality of~$c-a$ in our choice of~$(a,c)$, we obtain that~$(c,k) \in {\tdd{\dashv}} \ssm \, \conflicts_{\DPIP}(\tdd{\dashv})$. Therefore, since~$b \in \orientation\negative$, we have~$k \tdd{\vdash} b$. But~$i \dashv b \dashv k \dashv j$ and~$k \tdd{\vdash} b$ implies that~$i \dashv j$, a contradiction to our assumption on~$i$~and~$j$.
\end{enumerate}
We therefore proved that~$a \tdd{\vdash} b$ for all~$a < b < c$ with~$b \in \orientation\negative$. The case of~$b \in \orientation\positive$ is symmetric and left to the reader. This concludes the proof.
\end{proof}

\subsection{Proof of claims of Proposition~\ref{prop:orientationIncompSublatticesIPos}}
\label{subsec:appendixOrientationIncompSublatticesIPos}

\begin{proof}[Proof of Claim~\ref{claim:dashvConflictFree}]
Assume that~$\dashv$ is not $\conflicts_{\incompE}$-free and let~$\{a,c\} \in \conflicts_{\incompE}(\dashv)$ with~$a < c$ and~$c-a$ minimal. Since~$a \not\dashv c$, we have~$a \not\less c$ and~$a \not\bless c$. Since~$a \not\vdash c$, we have~$a \not\more c$ or~$a \not\bmore c$. We can thus assume without loss of generality that~$a$ and~$c$ are incomparable in~$\less$. Since~$\less$ is $\conflicts_{\incompE}$-free, there exists~$a < b_1 < \dots < b_k < c$ such that either~$b_{2i} \in \orientation\positive$, $b_{2i+1} \in \orientation\negative$ and~$a \less b_1 \more b_2 \less b_3 \more \cdots$, or~$b_{2i} \in \orientation\negative$, $b_{2i+1} \in \orientation\positive$ and~$a \more b_1 \less b_2 \more b_3 \less \cdots$. We distinguish these two cases:
\begin{enumerate}[(i)]
\item In the former case, we obtain~$b_1 \in \orientation\negative$ and~$a \dashv b_1$ (since~${\Inc{\less}} \subseteq {\dashv}$).
\item In the latter case, we distinguish three cases according to the order of~$a$ and~$b_1$ in~$\bless$:
	\begin{itemize}
	\item if~$a \bless b_1$, then~$a \dashv b_1 \dashv b_2$ (since~${\Inc{\less}} \cup {\Inc{\bless}} \subseteq {\dashv}$) so that we obtain~$b_2 \in \orientation\negative$ and~$a \dashv b_2$.
	\item if~$a \bmore b_1$, we obtain~$b_1 \in \orientation\positive$ and~$a \vdash b_1$ (since~${\Dec{\less}} \cap {\Dec{\bless}} \subseteq {\dashv}$).
	\item if~$a$ and~$b_1$ are incomparable in~$\bless$, then they are also incomparable in~$\dashv$. By minimality of~$c-a$ in our choice of~$(a,c)$, we have~$\{a,b_1\} \notin \conflicts_{\incompE}(\dashv)$. Lemma~\ref{lem:orientationIncompCharacterization}\,(iii) thus ensures the existence of~$a < b < b_1$ such that either~$b \in \orientation\negative$ and $a \dashv b$, or $b \in \orientation\positive$ and~$a \vdash b$.
	\end{itemize}
\end{enumerate}
In all situations, we have found~$a < b < c$ such that either~$b \in \orientation\negative$ and $a \dashv b$, or $b \in \orientation\positive$ and~$a \vdash b$. Since~$\{b,c\} \notin \conflicts_{\incompE}(\dashv)$ by minimality of~$c-a$ in our choice of~$(a,c)$, Lemma~\ref{lem:orientationIncompCharacterization}\,(iii) thus contradicts that~$\{a,c\} \in \conflicts_{\incompE}(\dashv)$.
\end{proof}

\begin{proof}[Proof of Claim~\ref{claim:tdddashvConflictFree}]
Assume that~$\tdd{\dashv}$ is not $\conflicts_{\incompE}$-free and let~${\{a,c\} \in \conflicts_{\incompE}(\tdd{\dashv})}$ with~$a < c$ and~$c-a$ minimal. We distinguish two cases:
\begin{enumerate}[(i)]
\item If~$a \not\vdash c$, since~$\dashv$ is~$\conflicts_{\incompE}$-free, Lemma~\ref{lem:orientationIncompCharacterization}\,(iii) ensures that there exists~$a < b < c$ such that~$b \in \orientation\negative$ and $a \dashv b$, or $b \in \orientation\positive$ and~$a \vdash b$. In the former case, we also have~$a \tdd{\dashv} b$. In the latter case, we either have~$a \tdd{\vdash} b$ or $a$ and~$b$ are incomparable in~$\tdd{\dashv}$. By minimality of~$c-a$ in our choice of~$(a,c)$, we have~$\{a,b\} \notin \conflicts_{\incompE}(\tdd{\dashv})$. We thus obtain by Lemma~\ref{lem:orientationIncompCharacterization}\,(iii) that there exists~$a < b' < b$ such that~$b' \in \orientation\negative$ and $a \tdd{\dashv} b'$, or $b' \in \orientation\positive$ and~$a \tdd{\vdash} b'$.
\item If~$a \vdash c$, then there exists~$i \le c$ and~$j \ge a$ such that~$i \dashv c \dashv a \dashv j$ while~$i \not\dashv j$. From Lemma~\ref{lem:simplifytdd}, we can assume for example that~$i \ne c$ so that there exists~$a < k < c$ with~${i \dashv k \dashv c}$ (the proof when~$j \ne a$ is similar). Note that~$a \not\tdd{\dashv} k$ since otherwise~$a \tdd{\dashv} k \tdd{\dashv} c$ and~$a \not\tdd{\dashv} c$ would contradict the transitivity of~$\tdd{\dashv}$. Moreover, $a \not\tdd{\vdash} k$ since either we already have~$a \not\vdash k$, or $i \le c$ and~$j \ge a$ still satisfy~$i \dashv k \dashv a \dashv j$ while~$i \not\dashv j$. Therefore, $a$ and~$k$ are incomparable in~$\dashv$. By minimality of~$c-a$ in our choice of~$(a,c)$, we have~$\{a,k\} \notin \conflicts_{\incompE}(\tdd{\dashv})$. We thus obtain by Lemma~\ref{lem:orientationIncompCharacterization}\,(iii) that there exists~$a < b < k$ such that~$b \in \orientation\negative$ and $a \tdd{\dashv} b$, or $b \in \orientation\positive$ and~$a \tdd{\vdash} b$.
\end{enumerate}
In all situations, we have found~$a < b < c$ such that either~$b \in \orientation\negative$ and $a \tdd{\dashv} b$, or $b \in \orientation\positive$ and~$a \tdd{\vdash} b$. Since~$\{b,c\} \notin \conflicts_{\incompE}(\tdd{\dashv})$ by minimality of~$c-a$ in our choice of~$(a,c)$, Lemma~\ref{lem:orientationIncompCharacterization}\,(iii) thus contradicts that~$\{a,c\} \in \conflicts_{\incompE}(\tdd{\dashv})$.
\end{proof}

\end{document}